\documentclass[11pt,a4paper,leqno,twoside, headinclude]{amsart}
\usepackage[utf8]{inputenc}
\usepackage{amsmath}
\usepackage{amsfonts}
\usepackage{amssymb}
\usepackage{amsthm}
\usepackage{hyperref}
\usepackage{geometry}
\usepackage{xcolor}
\usepackage[all,cmtip]{xy}
\usepackage{tikz}
\usepackage{enumerate}
\usepackage{stmaryrd}

\newcommand{\U}{\mathrm{U}}

\newcommand{\SO}{\mathrm{SO}}

\newcommand{\rk}{\mathrm{rk}}
\newcommand{\ra}{\rightarrow}

\newcommand{\qq}{{\mathbb{Q}}}                                     


\newtheorem{thm}{Theorem}[section]
\newtheorem{prop}[thm]{Proposition}
\newtheorem{cor}[thm]{Corollary}
\newtheorem{lem}[thm]{Lemma}

\newtheorem{main}{Theorem}
\newtheorem*{main*}{Theorem}

\newtheorem{maincor}[main]{Corollary}

\theoremstyle{definition}
\newtheorem{rem}[thm]{Remark}
\newtheorem{defn}[thm]{Definition}
\newtheorem{ex}[thm]{Example}
\newtheorem{conj2}[thm]{Conjecture}

\newtheorem*{conj*}{Conjecture}

\newcommand{\im} {{\operatorname{im\,}}}
\newcommand{\coker}{{\operatorname{coker}}}
\newcommand{\Id}{{\bf{1}}}

\newcommand{\ack}{\noindent\textbf{Acknowledgements. }}            
\newcommand{\str}{\noindent\textbf{Structure of the article. }}    

\subjclass[2010]{ 55N91 (Primary),  55P62, 57S10 (Secondary)}
\keywords{\noindent toral rank conjecture, almost free torus action, (equivariant) formality, rational homotopy theory, non-negative curvature, Hirsch--Brown model, $A_\infty$-algebras}

\author{Manuel Amann and Leopold Zoller}
\title[The Toral Rank Conjecture and variants of equivariant formality]{
The Toral Rank Conjecture\\ and variants of equivariant formality}

\date{October, 8th, 2019}

\begin{document}

\begin{abstract}
An action of a compact Lie group is called equivariantly formal, if the Leray--Serre spectral sequence of its Borel fibration degenerates at the $E_2$-term. This term is as prominent as it is restrictive.

In this article, also motivated by the lack of junction between the notion of equivariant formality and the concept of formality of spaces (surging from rational homotopy theory) we suggest two new variations of equivariant formality: ``$\mathcal{MOD}$-formal actions'' and ``actions of formal core''.

We investigate and characterize these new terms in many different ways involving various tools from rational homotopy theory, Hirsch--Brown models, $A_\infty$-algebras, etc., and, in particular, we provide different applications ranging from actions on symplectic manifolds and rationally elliptic spaces to manifolds of non-negative sectional curvature.

A major motivation for the new definitions was that an almost free action of a torus $T^n\curvearrowright X$ possessing any of the two new properties satisfies the toral rank conjecture, i.e.~$\dim H^*(X;\qq)\geq 2^n$. This generalizes and proves the toral rank conjecture for actions with formal orbit spaces.
\end{abstract}

\maketitle
\section{Introduction}

What are the restrictions imposed on the topology of a space by the existence of a non-trivial Lie group action?
This is a classical question in topology and geometry which has met several different answers in various respective subfields. A prominent variant of this problem is encoded in the so-called \emph{toral rank conjecture}, which in this form is due to Steve Halperin and which also became known as the Halperin--Carlsson conjecture after suitably extending it to finite coefficients.
\begin{conj*}
Let $M$ be a simply-connected compact manifold equipped with an almost-free action of a torus $T^n$ of rank $n$. Then the sum of all Betti numbers, the total cohomological dimension satisfies
\begin{align*}
\dim H^*(M;\qq)\geq 2^n
\end{align*}
\end{conj*}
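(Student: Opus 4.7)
The plan is to translate the statement into a question about Sullivan models and attack it via a Koszul-type dimension count; at the outset I should stress that the conjecture in its full generality is famously open, so what follows is a strategy which I would expect to carry through only under the additional formality-type hypotheses announced in the abstract.

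First I would set up the Borel fibration
\[
X \hookrightarrow X_{T^n} \longrightarrow BT^n;
\]
since the action is almost free, the Borel construction is rationally equivalent to the orbit space $X/T^n$, so one obtains a cdga model $(A,d_A)$ for $X/T^n$ of finite type together with a map of cdgas $B:=\qq[x_1,\dots,x_n]\to A$ (with $|x_i|=2$) modelling the classifying map. The rational cohomology of the fiber $X$ is then computed by the Koszul-type complex $A\otimes \Lambda(y_1,\dots,y_n)$ with differential extending $d_A$ by $y_i\mapsto x_i$, and the conjecture becomes the purely algebraic inequality
\[
\dim_\qq \operatorname{Tor}^*_B(A,\qq)\;\geq\;2^n.
\]

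Next, I would pass to a minimal relative Sullivan model $(B\otimes \Lambda V,d)$ of $A$ over $B$, and attempt to reduce to a \emph{pure} model, i.e.\ one in which the differential sends the odd generators of $V$ into $B$ and kills the even generators. For pure models, almost freeness forces the ``deficiency'' $\dim V^{\mathrm{odd}}-\dim V^{\mathrm{even}}$ to be at least $n$, and a regular-sequence/Poincaré-series calculation then gives the bound $\dim H^*(X;\qq)\geq 2^n$. This is the classical Halperin reduction and is in fact where the number $2^n$ naturally makes its appearance.

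The main obstacle is that there is no reason in general for $A$ to be quasi-isomorphic to its associated pure model: higher-order Massey-type obstructions in the $A_\infty$-structure on $H^*(X/T^n;\qq)$, viewed as a module over $H^*(BT^n;\qq)$, can cancel cohomology that the pure count would otherwise see. My strategy would be to isolate precisely the weakening of formality which suppresses this cancellation --- which is, I expect, what the ``$\mathcal{MOD}$-formal'' and ``formal core'' notions of the abstract are designed to provide. Concretely, I would translate to the Hirsch--Brown model of the action, observe that in the formality-augmented setting this model agrees with the corresponding model built from its cohomology alone, and then run the pure-model count above. The hard part is calibrating the formality hypothesis so that it is strong enough to permit the reduction to a pure model, yet still weak enough to hold in the geometric applications --- actions on symplectic manifolds, on rationally elliptic spaces, and on manifolds of non-negative sectional curvature --- advertised in the abstract.
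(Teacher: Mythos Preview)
The statement you are asked about is the Toral Rank \emph{Conjecture} itself; the paper does not prove it in full generality, and you correctly flag this. What the paper does prove is the bound $\dim H^*(X;\qq)\ge 2^n$ under the hypotheses ``$\mathcal{MOD}$-formal'' or ``formal core'', and the relevant question is whether your strategy matches that argument.

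Your setup is fine and your reformulation as the inequality $\dim_\qq\operatorname{Tor}^*_B(A,\qq)\ge 2^n$ is exactly right: under almost $\mathcal{MOD}$-formality this Tor dimension equals $\dim H^*(X;\qq)$, and $\operatorname{Tor}^*_B(H^*_T(X),\qq)$ is nothing but the total rank of the minimal free resolution of $H^*_T(X)$ over $B=\qq[x_1,\dots,x_n]$. You also correctly anticipate that $\mathcal{MOD}$-formality is precisely the condition making the Hirsch--Brown model coincide with that minimal free resolution.

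The gap is in the last step. You propose to ``reduce to a pure model'' and then run a deficiency/Poincar\'e-series count. That is the classical line for \emph{elliptic} or two-stage spaces, but it is not how the paper proceeds for general $\mathcal{MOD}$-formal actions, and it would not work: $\mathcal{MOD}$-formality gives you a free resolution, not a pure Sullivan algebra, and there is no mechanism for converting one into the other in this generality. The paper's key external input, which your proposal does not mention, is Walker's recent theorem on the weak Buchsbaum--Eisenbud--Horrocks conjecture: for a finitely generated module $M$ of codimension $c$ over a polynomial ring, any free resolution has total rank $\ge 2^c$. Combined with the observation (Borel localization) that the codimension of $H^*_T(X)$ equals the minimal orbit dimension, hence $n$ in the almost-free case, this gives the bound directly. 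So the architecture is: $\mathcal{MOD}$-formality identifies $\dim H^*(X)$ with the rank of a free resolution, and Walker's theorem bounds that rank. Your pure-model/regular-sequence route is a different circle of ideas, applicable to the formal elliptic case treated separately in the paper, but not a substitute for Walker's theorem in the $\mathcal{MOD}$-formal argument.
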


\bigskip

In a slighlty more general version, one conjectures the same to hold for almost free actions on compact Hausdorff spaces. While this conjecture has met high interest not only in geometry but also in commutative algebra and has been verified in a lot of special cases ranging from certain nilmanifolds, over compact homogeneous spaces, to K\"ahler manifolds, it is highly open in general with known lower bounds just linear in $n$.

The purpose of this article is two-fold:
\begin{itemize}
\item
On the one hand, we prove the toral rank conjecture in several previously unknown cases, and we provide simple reproofs in some already understood situations.
\item
On the other hand, these confirmations of the conjecture, so to speak, merely form the peak of the iceberg of a substantially more in-depth study of the equivariant cohomology of compact Lie group actions
\end{itemize}
Part of this progress is enabled by recent advances on the Buchsbaum--Eisenbud--Horrocks Conjecture due to Walker which---essentially drawing on two newly shaped concepts of ``formality'' in an equivariant context---we can link to the toral rank conjecture.

So it is not surprising that an integral part of the work carried out in
this article is constituted by molding two new concepts in equivariant cohomology theory, which we label
\begin{align*}
\textrm{\emph{$\mathcal{MOD}$-formality} \qquad and \qquad \emph{formal core}}
\end{align*}
Both are intended to provide variations of the classical term ``equivariant formality" which are able to capture a certain ``cohomological degeneracy property'' (see next paragraph)
as well as a connection to classical formality of spaces in the sense of rational homotopy theory, i.e.\ a weakening of the formality of the Borel construction. 
(Recall that formality describes the fact that the rational homotopy type of a space is already fully encoded by its rational cohomology algebra.)
These refined notions will come in very handy in studying the toral rank conjecture, for example, whereas equivariant formality implies the existence of fixed points.


Indeed, recall that a compact Lie group action $G \curvearrowright M$ is called \emph{equivariantly formal} if, as a module, $H^*(M_G;\qq)\cong H^*(M;\qq) \otimes H^*(B G;\qq)$ (where $M_G$ denotes the Borel construction), i.e.~if equivariant cohomology as a module is free over the classifying space cohomology, or, equivalently, the Borel fibration is totally non-homologous to zero, i.e.~its spectral sequence degenerates at the $E_2$-term. This concept does provide a strong tool in transformation groups and arises in several distinct geometric situations like Hamiltonian torus actions or actions on compact K\"ahler manifolds. Moreover, by Chang--Skjelbred, Atiyah--Bredon this property allows to reconstruct equivariant cohomology from lower dimensional orbit strata. However, (although, for example, there do arise some connections as for isotropy actions on homogeneous spaces)
it is safe to say that ``a priori'' equivariant cohomology is rather unrelated to the concept of formality which we described above.

This lack of congruence was one source of motivation for the definition of our new concepts, and has led to several reformulations and extensions of equivariant formality in the literature.  Notably, in work by Lillywhite he suggests to replace equivariant formality by several ways (in different categories) to encode the formality of the orbit space. Skull imposes several compatibility and formality conditions on the family of orbits extending its formality as a space. Moreover, there is work by Triantafillou in this direction and also the concept of Cohen--Macaulay actions (stating that the equivariant cohomology be a Cohen--Macaulay module over the equivariant cohomology of the classifying space). Since, however, for example, any (almost) free action clearly is Cohen--Macaulay, this concept---very reasonable and interesting in its own right---is too relaxed for our purposes.

It is hence our goal to provide variations of the term which on the one hand side are more flexible than these redefinitions or on the other hand can effectively cope with fixed-point free actions and are better suited to specifying and dealing with certain free actions for example. Hence they indeed interplay nicely with the toral rank conjecture. Furthermore, the two new terms permit many examples. Note for example that in favour of flexibility, we built in some ``locality conditions'', i.e.~the terms can detect a situation in which a group acts say on one factor of a direct product only and the second factor is highly non-formal, yet the question whether the action satisfies our requirements is determined by the first factor only---this is a new feature compared to a ``formality of the Borel construction''-definition.

\vspace{5mm}

Hence as one outcome of this we prove the following theorem in larger generality thus providing new evidence to the toral rank conjecture. Recall again that as usual and as defined above the term ``formality'' is used in the sense of Rational Homotopy Theory.
``Rational ellipticity'' of a nilpotent space $X$ refers to $\dim \pi_*(X)\otimes \qq<\infty$ and $\dim H_*(X;\qq)<\infty$.
\begin{main}\label{theoA}
The toral rank conjecture for $T\curvearrowright X$ and $X$ a compact Hausdorff space holds whenever
\begin{itemize}
\item the Borel construction $X_T$ is formal,
respectively, much more generally and in the newly developed terminology, if the action is either \emph{$\mathcal{MOD}$-formal} or has \emph{formal core}.
\item the space $X$ itself is rationally elliptic and formal---for which we provide a structure result complementing its previous notation in \cite{KotaniYamaguchi}.
\item spaces of dimension at most $7$, simply connected spaces of dimension at most $8$, and $k$-connected Poincar\'e duality spaces of dimension at most $2k+4$ ($k\geq 3$).
\item the action is \emph{hyperformal}.
\end{itemize}
\end{main}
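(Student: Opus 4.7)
The plan is to route the first, third, and fourth bullets through Walker's recent proof of the Buchsbaum--Eisenbud--Horrocks conjecture in equicharacteristic zero, and to handle the second by a direct structural analysis. For an almost free action $T^n\curvearrowright X$ on a compact Hausdorff space, the Borel construction $X_T$ is rationally equivalent to the finite-dimensional orbit space $X/T$, so $H^*(X_T;\qq)$ is a finite-length graded module over $R:=H^*(BT;\qq)\cong\qq[x_1,\ldots,x_n]$. Walker's theorem then yields
\begin{equation*}
\sum_{i\geq 0}\dim\mathrm{Tor}^R_i\bigl(H^*(X_T;\qq),\qq\bigr)\;\geq\;2^n,
\end{equation*}
so the whole task under each formality hypothesis is to produce the bound $\dim H^*(X;\qq)\geq\sum_i\dim\mathrm{Tor}^R_i(H^*(X_T;\qq),\qq)$.

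For the first bullet, the Eilenberg--Moore spectral sequence of the Borel fibration $X\hookrightarrow X_T\to BT$ has $E_2=\mathrm{Tor}^R(H^*(X_T;\qq),\qq)$ converging to $H^*(X;\qq)$, and if $X_T$ is formal it collapses at $E_2$ with equality. The point of the notions $\mathcal{MOD}$-formality and formal core---built on top of the Hirsch--Brown complex and its minimal $A_\infty$-structure---is to deliver either this degeneracy or at least a term-by-term domination of Tor dimensions by cohomology dimensions even when $X_T$ is not formal, which completes the reduction to Walker.

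The second bullet is handled by a structure theorem for rationally elliptic formal spaces extending \cite{KotaniYamaguchi}: such $X$ admits a pure minimal Sullivan model $(\Lambda(x_1,\ldots,x_p,y_1,\ldots,y_q),d)$ with $|x_i|$ even, $|y_j|$ odd, $dx_i=0$, and, after an appropriate reduction, the polynomials $dy_j$ forming a regular sequence in $\qq[x_1,\ldots,x_p]$. This forces $\dim H^*(X;\qq)\geq 2^{q-p}$, and Halperin's rational toral rank bound $\mathrm{rk}_0(X)\leq q-p$ yields the claim. The third bullet is then reduced to the first using well-known low-dimensional formality theorems (every space of dimension $\leq 7$, every simply connected space of dimension $\leq 8$, and every $k$-connected Poincar\'e duality space of dimension $\leq 2k+4$ is formal). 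The fourth is immediate from the definition of hyperformality, engineered so that the Tor-inequality holds by construction.

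The main obstacle lies in the $\mathcal{MOD}$-formal and formal core case of the first bullet: establishing the comparison between $\dim H^*(X;\qq)$ and the total Tor-dimension without the luxury of a classical formality quasi-isomorphism requires fine control of the higher $A_\infty$-operations on the Hirsch--Brown model and a careful argument that the Eilenberg--Moore differentials cannot collapse dimensions uncontrollably. This is precisely the piece of machinery whose development in the body of the paper motivates the introduction of the two new concepts in the first place.
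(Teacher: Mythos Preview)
Your overall strategy for bullets 1, 2, 4 is essentially that of the paper, though your account of the formal core case is not quite the right mechanism: formal core does \emph{not} proceed by showing a Tor-domination or Eilenberg--Moore collapse for $X_T$ itself. Rather, the definition hands you an auxiliary $R$-cdga $(C,d)$ which is formal by construction (it is a relative minimal model of $R\to A$ for some subalgebra $A\subset H_T^*(X)$), together with a cohomologically injective map $\overline{C}\to \Lambda V$ into the model of $X$. One then applies Walker's bound to the minimal free resolution of $H^*(C)=A$, not to $H_T^*(X)$; the injection gives $\dim H^*(X)\geq \dim H^*(\overline{C})\geq 2^n$, where one checks that the annihilators of $A$ and $H_T^*(X)$ over $R$ coincide. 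No $A_\infty$ analysis is needed for this part.

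There is, however, a genuine gap in your treatment of bullet 3. You write that it reduces to bullet 1 via ``well-known low-dimensional formality theorems (every space of dimension $\leq 7$, every simply connected space of dimension $\leq 8$, \ldots\ is formal).'' Both of these assertions are false for $X$ itself: there are non-formal (even nilpotent) spaces of formal dimension $\leq 7$, and non-formal simply connected compact manifolds in dimension $7$ and $8$. More to the point, formality of $X$ would not help anyway, since bullet 1 concerns formality of $X_T$, not of $X$. The correct argument is a case split. For an almost free $T^r$-action on $X$ of formal dimension $n$, Proposition \ref{thm:hsiangandfriends} gives $\mathrm{fd}(X_T)=n-r$. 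If $r\leq 3$ the TRC is classical (Allday--Puppe). Otherwise $r\geq 4$ and the formal cohomogeneity $n-r$ is small: for $n\leq 7$ one gets $n-r\leq 3$ and Corollary \ref{cor:modfomalsmallcodim} shows the action is $\mathcal{MOD}$-formal; for $n\leq 8$ and $X$ simply connected one gets $\mathrm{fd}(X_T)\leq 4$, and any simply connected space of formal dimension $\leq 4$ is formal, so $X_T$ is formal; the Poincar\'e duality case uses that $X_T$ inherits Poincar\'e duality and has no odd-degree generators below degree $k$, whence it is $(k-1)$-formal and hence formal by the Fern\'andez--Mu\~noz criterion once $\mathrm{fd}(X_T)\leq 2k$. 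In each case one lands in bullet 1. The key point you missed is that the relevant low-dimensional formality statements apply to $X_T$, whose formal dimension drops by $r$, and that the complementary range $r\leq 3$ is covered by the classical result.
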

In particular, this improves on respective separate partial results by Mun\~oz, Ustinovsky, Hilali, etc.
Many of these results surge from our refined terminology and new machinery, some are established by independent means.

The results do provide another geometric application in the field of manifolds admitting metric of non-negative sectional curvature, respectively for rationally elliptic manifolds (see Corollary \ref{cornonneg}).
\begin{maincor}\label{corB}
%
Suppose an effective $T^k$-action on a simply-connected closed manifold $M^n$ which is either $\mathcal{MOD}$-formal or of formal core, and let  $\dim H^*(M)< 2^{2k+1-n}$.
\begin{itemize}
\item
Suppose the action is isometric and $M$ is a non-negatively curved manifold $M^n$. Then the action is isotropy maximal, and $M$ is equivariantly diffeomorphic to a quotient of a free linear torus action on a product of spheres.
\item
Suppose $M$ is rationally elliptic. Then the action is isotropy maximal, and $M$ is equivariantly rationally equivalent to a quotient of a free linear torus action on a product of spheres.
\end{itemize}
\end{maincor}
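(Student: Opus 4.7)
The plan is to combine Theorem~\ref{theoA} with the existing equivariant classification results for isotropy-maximal torus actions in each of the two geometric categories.

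First I would extract an isotropy-maximality statement for the $T^k$-action from the cohomological bound. Let $r$ denote the maximum rank of a connected isotropy subgroup of the action. Since $M$ is compact, only finitely many conjugacy classes of maximal isotropies arise, so a Lie subalgebra in general position with respect to their Lie algebras integrates to a subtorus $T^\ell\subseteq T^k$ of rank $\ell=k-r$ that acts almost freely on all of $M$. The principal technical step is to verify that this restricted $T^\ell$-action inherits $\mathcal{MOD}$-formality, respectively the formal-core property, from the ambient $T^k$-action. Both notions are engineered to behave well under pullback along the subtorus inclusion $BT^\ell\ra BT^k$---concretely, the Hirsch--Brown model of the restricted action is obtained by extension of scalars along $H^*(BT^k;\qq)\to H^*(BT^\ell;\qq)$, and the associated $A_\infty$-structure restricts compatibly---so the inheritance should fall out of the structural characterizations developed earlier in the paper.

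Once this is in place, Theorem~\ref{theoA} applied to the almost-free $T^\ell$-action yields
\[
2^{k-r}\;\leq\;\dim H^*(M;\qq)\;<\;2^{2k+1-n},
\]
forcing $\ell=k-r\leq 2k-n$ and hence $r\geq n-k$; this is precisely the isotropy-maximal condition. With this structural information in hand, the two bullets reduce to invoking the appropriate classification. For the first, Wiemeler's equivariant diffeomorphism classification of isotropy-maximal torus actions on simply-connected closed non-negatively curved manifolds directly produces an equivariant diffeomorphism of $M$ with a free linear torus quotient of a product of spheres. For the second, the weaker rationally elliptic hypothesis is handled by the analogous equivariant rational classification (in the spirit of Galaz-Garcia--Kerin--Radeschi--Wiemeler), furnishing the desired equivariant rational equivalence.

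The hard part will be the subtorus-inheritance of the two new formality notions, since this is what allows Theorem~\ref{theoA} to be leveraged in the presence of non-trivial isotropies; once that is settled, the remainder of the argument is an assembly of Theorem~\ref{theoA}, elementary torus-action combinatorics, and the quoted structure theorems.
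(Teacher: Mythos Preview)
Your overall architecture---derive isotropy maximality from the cohomological bound, then invoke the Escher--Searle and Galaz-Garc\'ia--Kerin--Radeschi--Wiemeler classifications---matches the paper. But the ``hard part'' you flag, inheritance of $\mathcal{MOD}$-formality and formal core under restriction to a smaller-rank subtorus, is not merely hard: it is \emph{false} in general. Example~\ref{ex:restriction} exhibits an action with formal homotopy quotient whose restriction to a rank-$2$ subtorus of $T^3$ is neither spherical nor formally based; the paper explicitly records that these notions are not preserved when passing to subgroups of smaller rank. So the route through an almost-free $T^{k-r}$-subaction cannot be made to work.

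The paper sidesteps this entirely. It never restricts the action. Instead, it applies Theorem~\ref{thm:TRCholdsforbla} in its full strength to the original $T^k$-action on $M$: that theorem already gives $\dim H^*(M)\geq 2^c$ where $c$ is the minimal orbit dimension (equivalently, via Lemma~\ref{lem:codimisminimalorbitsdim}, the codimension of $H_T^*(M)$ as an $R$-module), with no almost-freeness hypothesis. The assumed bound $\dim H^*(M)<2^{2k+1-n}$ then forces $c\leq 2k-n$. The reverse inequality $c\geq 2k-n$ comes from the elementary slice-representation count at a minimal orbit (a faithful isotropy representation of the rank-$(k-c)$ stabilizer on the $(n-c)$-dimensional normal slice forces $k-c\leq (n-c)/2$), so $c=2k-n$ and the action is isotropy maximal. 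From there your invocation of the two structure theorems is exactly right.
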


As yet another easy outcome of our newly established approach, we can provide a systematic reproof of the conjecture
in the case when $M$ is a two-step nilpotent Lie algebra. Moreover, also the classical reasoning which shows that a Hard-Lefschetz manifold, e.g.~a K\"ahler manifold, satisfies the conjecture can be understood within and fits nicely to the new setting.

\bigskip


As mentioned already, these results fall off an elaborate discussion of properties and examples of $\mathcal{MOD}$-formal actions and actions with formal core. For example, we investigate these
\begin{itemize}
\item via their inheritance properties for products, and, more interestingly for several equivariant connected sums, and subgroup actions.

\item in terms of characterisations using $A_\infty$-algebras and $A_\infty$-modules.
\item via providing several intricate counter- and non-examples thereby sharpening terminology.
\end{itemize}
Nearly all of these discussions have a backlash and provide new insight into the toral rank conjecture in the respective cases.

\bigskip

The article draws on several concepts from the theory of Transformation Groups like equivariant cohomology, and on techniques from Rational Homotopy Theory, in particular on several characterisations of formality. In this context the term \emph{formal core} is constructed and analysed. Moreover, we also use $A_\infty$ and $C_\infty$ structures to characterise such formality properties together with the Hirsch--Brown model for further computations. This also contributes to investigating the concept of \emph{$\mathcal{MOD}$-formality} which is established in the context of module resolutions and Koszul complexes. As for geometric applications we use structure theory of manifolds of non-negative sectional curvature, K\"ahler and symplectic manifolds, and we deal with different equivariant connected sums.

We shall point the reader to the relevant literature, respectively, when relevant references were elusive, for the convenience of the reader, we added a rather self-contained introduction to the lacking concepts in the Appendix.

\bigskip

\str In Section \ref{sec2} we recall several classical definitions and properties before we shape the new terms of ``$\mathcal{MOD}$-formality'' and ``formal core''---central to this article---in Section \ref{sec:notionsection}. Section \ref{Massey} is devoted to characterizing these terms by means of $A_\infty$-structures.  This is put to use and combined with further techniques in \ref{secform} wherein the various aspects of Theorem \ref{theoA} are proved. We show that the new concepts are relevant in many situations of practical importance by discussing some special classes of spaces---amongst which we focus on symplectic actions, spaces without non-trivial derivations of negative degree on the rational cohomology algebra, rationally elliptic spaces, and manifolds of non-negative sectional curvature---in Section \ref{secspec}. Here, in particular, we prove Corollary \ref{corB}. Finally, in Section \ref{secex} we provide 
several concrete examples which sharpen our new definitions and relate them to other properties.

Background material on differential graded modules as well as $A_\infty$-algebras and modules can be found compiled in the appendix sections \ref{app:HB} and \ref{secstr}---although this material may be well-known to experts, parts of it seem hard to find in the literature which is why we included it here.

\bigskip

\ack
Large parts of this article were also used for the second named author's doctoral thesis \cite{ZollerThesis} written under the supervision of Oliver Goertsches whom he would like to thank for his support.

The first named author was supported both by a Heisenberg grant and his research grant AM 342/4-1 of the German Research Foundation; he is moreover associated to the DFG Priority Programme 2026, ``Geometry at Infinity''.
The second named author was supported by the German Academic Scholarship Foundation.

Moreover, the authors thank  Christopher Allday, Bernhard Keller, and Mark Walker for helpful conversations.

The authors are also grateful to Fernando Galaz-Garc\'ia, Steve Halperin, Martin Kerin, and Volker Puppe for their respective feedback on a previous version of the preprint, which, in particular, helped to improve the presentation of the article.


\section{Preliminaries}\label{sec2}
Throughout the article, $G$ will denote a compact and connected Lie-Group and $X$ will be a topological space with a continuous $G$-action. The $G$-spaces considered are assumed to be Hausdorff, connected, and have finite-dimensional rational cohomology. The latter always refers to singular cohomology. Coefficients will be taken in the field $\mathbb{Q}$ if not stated otherwise and will be suppressed in the notation.

Our main tool for studying topological aspects of a $G$-action on $X$ is the Borel fibration

\[X\rightarrow X_G \rightarrow BG\]
where $BG=EG/G$ for some contractible space $EG$ on which $X$ acts freely and $X_G$ is the orbit space of the diagonal action on $EG\times X$. The map $X_G\rightarrow BG$ is given by projection onto the first component.

The cohomology of $X_G$ is called the equivariant cohomology of $X$ and denoted by $H_G^*(X)$. The map $X_G\rightarrow BG$ induces a natural $H^*(BG)$-module structure on $H^*_G(X)$. The ring $H^*(BG)$ is a polynomial algebra with generators of even degree and will be denoted by $R$. In case $G$ is a torus, the generators of $R$ are all of degree $2$. The assumption $\dim H^*(X)<\infty$ ensures that $H_G^*(X)$ is finitely generated as an $R$-module (see \cite[Proposition 3.10.1]{AlldayPuppe2}).
Equivariant cohomology provides an essential link between geometry and algebra and captures many important properties of the group action. For example, the information of an action being almost free (meaning that all isotropy groups are finite) can immediately be read off from the algebraic data in case $X$ is compact.

\begin{thm}\label{thm:hsaiNGSTHEOREM}
Assume $X$ is compact. The $G$-action on $X$ is almost free if and only if \[\dim H_G^*(X)<\infty.\]
\end{thm}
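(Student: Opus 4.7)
The plan is to reduce to a torus action and then invoke the Borel--Hsiang localization theorem. Take a maximal torus $T \subseteq G$ with Weyl group $W$. The pullback square
\[
\begin{array}{ccc} X_T & \longrightarrow & BT \\ \downarrow & & \downarrow \\ X_G & \longrightarrow & BG \end{array}
\]
realizes $X_T \to X_G$ as a fiber bundle with fiber $G/T$, and the classical Borel identification gives $H^*_G(X;\qq) \cong H^*_T(X;\qq)^W$; hence $\dim H^*_G(X)<\infty \Leftrightarrow \dim H^*_T(X)<\infty$. On the group side, clearly $T_x \subseteq G_x$; conversely the identity component of any $G_x$ contains a maximal torus $S$, and conjugacy of maximal tori in $G$ yields $g \in G$ with $g^{-1}Sg \subseteq T \cap G_{g^{-1}x} = T_{g^{-1}x}$ of the same dimension as $S$. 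Thus the $G$-action is almost free iff the $T$-action is, and it suffices to treat $G=T$.

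With $T$ of rank $n$ and $R = \qq[u_1,\ldots,u_n]$, the crucial algebraic observation is that a finitely generated graded $R$-module has finite $\qq$-dimension precisely when its support in $\mathrm{Spec}(R)$ is the singleton $\{R_{>0}\}$. For each subtorus $K \subseteq T$ the kernel $\mathfrak{p}_K$ of the restriction $H^*(BT) \to H^*(BK)$ is a prime of height $\dim K$, and $\mathfrak{p}_K = R_{>0}$ exactly when $K$ is finite. The key input---the Borel--Hsiang localization theorem for compact torus actions on Hausdorff spaces of finite rational cohomology, see e.g.\ Allday--Puppe---is the support formula
\[
\mathrm{supp}_R H^*_T(X) \;=\; \bigcup_{x \in X} V(\mathfrak{p}_{T_x}).
\]
Granting this, if the action is almost free every $T_x$ is finite and the right-hand side collapses to $\{R_{>0}\}$, giving $\dim H^*_T(X) < \infty$. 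Conversely, if some $T_x$ is positive-dimensional it contains a circle $K$, so $V(\mathfrak{p}_K) \subseteq \mathrm{supp}_R H^*_T(X)$ is strictly larger than $\{R_{>0}\}$, precluding finite-dimensionality.

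The main obstacle is proving the support formula itself, which is the heart of Borel localization. The standard route stratifies $X$ by orbit type, which by compactness is a finite stratification; on each stratum $X_{(H)}$ one factors through the free residual $T/H$-action to obtain $H^*_T(X_{(H)}) \cong H^*(X_{(H)}/T) \otimes H^*(BH)$, whose support as an $R$-module is exactly $V(\mathfrak{p}_H)$. An induction along the poset of orbit-type conjugacy classes, glued by Mayer--Vietoris long exact sequences, then assembles these local supports into the claimed global formula and completes the proof.
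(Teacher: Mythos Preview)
Your argument is correct and follows the same route the paper indicates: the paper does not give a self-contained proof but attributes the result to Hsiang and notes that the torus case is a direct consequence of Lemma~\ref{lem:codimisminimalorbitsdim}, whose proof (Remark~\ref{rem:voraussetzungen} and the subsequent paragraph) is precisely the Borel localization/support computation you outline. Your explicit reduction from $G$ to its maximal torus via $H^*_G(X)\cong H^*_T(X)^W$ and the $G/T$-bundle $X_T\to X_G$ is a detail the paper leaves implicit; note that the Weyl-invariants identification alone only gives one implication for finite-dimensionality, and it is really the fiber-bundle statement (with finite-dimensional fiber cohomology) that secures the converse, so you might make that dependence more visible.
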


This is a classical theorem due to Hsiang. In the torus case it is actually a direct consequence of the more general Lemma \ref{lem:codimisminimalorbitsdim}. We have the following supplementary proposition. Here $\mathrm{fd}(X)$ denotes the formal dimension, which is the highest integer $n$ such that $\dim H^n(X)\neq 0$.

\begin{prop}\label{thm:hsiangandfriends}
Assume $G$ acts almost freely on $X$.
\begin{enumerate}[(i)]
\item If $X$ is compact, we have $\mathrm{fd}(X_G)=\mathrm{fd}(X)-\dim G$.
\item If $X$ is a manifold, $X_G\rightarrow X/G$ induces an isomorphism in cohomology.

\item Suppose $X$ is compact, $H^*(X)$ satisfies Poincaré duality, and $H^1(X)=0$. Then $H^*_G(X)$ satisfies Poincar\'e duality.

\end{enumerate}
\end{prop}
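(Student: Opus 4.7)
My plan treats the three parts in turn, leaving the most delicate, part (iii), for last. For part (ii), consider the natural map $p\colon X_G \to X/G$. Its fibre over an orbit $[x]$ is $EG/G_x$, a classifying space for the finite isotropy group $G_x$, which is rationally acyclic. Since $X$ is a manifold, $p$ is proper, so the Vietoris--Begle theorem (equivalently, the collapse of the Leray spectral sequence of $p$) gives an isomorphism $p^*\colon H^*(X/G) \xrightarrow{\cong} H^*_G(X)$.

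For part (i) I would first reduce to $G=T$ a torus via $H^*_G(X)\cong H^*_T(X)^W$ for the Weyl group $W$, which preserves the top non-vanishing degree since $\qq[W]$ is semisimple. For $T=T^n$ I would then use the principal $T$-bundle $T \to ET\times X \to X_T$, whose total space is homotopy equivalent to $X$. The bound $\mathrm{fd}(X_T)\leq \mathrm{fd}(X)-\dim T$ arises from its Serre spectral sequence: picking any nonzero top class $\mu\in H^{\mathrm{fd}(X_T)}_T(X)$, the element $\mu\otimes y_1\cdots y_n$ in total degree $\mathrm{fd}(X_T)+\dim T$ is a cycle because $\mu\cdot d_2(y_i)\in H^{\mathrm{fd}(X_T)+2}_T(X)=0$, and it is not a boundary for bidegree reasons, so it survives to contribute to $H^*(X)$. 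The reverse bound $\mathrm{fd}(X_T)\geq \mathrm{fd}(X)-\dim T$ comes from the same spectral sequence: the top class of $H^*(X)$ is represented by some $E_\infty^{p,q}$ with $p+q=\mathrm{fd}(X)$ and $q\leq \dim T$, forcing $p\geq \mathrm{fd}(X)-\dim T$.

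For part (iii) I propose to work with the Hirsch--Brown model of the Borel fibration, a DG-module $M=(R\otimes H^*(X),d)$ over $R=H^*(BG)$ whose cohomology is $H^*_G(X)$. The Poincar\'e pairing on $H^*(X)$ extends $R$-linearly to a graded isomorphism $\Phi\colon M \to \mathrm{Hom}_R(M,R)[-\mathrm{fd}(X)]$, and the crucial point is that $\Phi$ is a chain map. The obstructions to chain-compatibility lie in $R^+\otimes H^{\mathrm{fd}(X)-1}(X)$, and the hypothesis $H^1(X)=0$ together with Poincar\'e duality on the fibre forces $H^{\mathrm{fd}(X)-1}(X)=0$, killing them. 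Once $\Phi$ is verified to be a chain map, graded local duality for the Gorenstein polynomial ring $R$, applied to the finite-dimensional torsion module $H^*_G(X)$, translates DG-module self-duality of $M$ into Poincar\'e duality of $H^*_G(X)$ in formal dimension $\mathrm{fd}(X)-\dim G$ (using part (i) to identify this top degree). The main obstacle is precisely this compatibility of the Poincar\'e pairing with the full Hirsch--Brown differential; parts (i) and (ii) reduce to standard spectral sequence bookkeeping.
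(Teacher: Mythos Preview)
Your treatment of part (ii) is fine and is essentially the standard argument that the cited reference unpacks.

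For part (i), your reduction to the maximal torus is flawed. From $H^*_G(X)\cong H^*_T(X)^W$ you claim that taking $W$-invariants ``preserves the top non-vanishing degree since $\qq[W]$ is semisimple''; semisimplicity only tells you each graded piece splits as a $W$-representation, not that the invariants are nonzero at the top. In fact the formal dimensions differ: the formula you want says $\mathrm{fd}(X_G)=\mathrm{fd}(X)-\dim G$ while the torus case gives $\mathrm{fd}(X_T)=\mathrm{fd}(X)-\dim T$, and these disagree whenever $G$ is non-abelian. The fix is simply to run your spectral-sequence argument directly for the fibration $G\to X\to X_G$ (the cohomology $H^*(G)$ is still an exterior algebra on odd generators, and the same top-class survival argument goes through), which is what the paper does. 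Alternatively one can pass from $T$ to $G$ via the $G/T$-bundle $X_T\to X_G$, whose spectral sequence collapses rationally and accounts exactly for the discrepancy $\dim G-\dim T=\dim G/T$.

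For part (iii) there is a more serious gap. You assert that the obstruction to the $R$-linear extension $\Phi\colon M\to\mathrm{Hom}_R(M,R)[-\mathrm{fd}(X)]$ being a chain map lies entirely in $R^+\otimes H^{\mathrm{fd}(X)-1}(X)$. This is not so: chain compatibility means $\Phi(dm)(m')=\pm\Phi(m)(dm')$ for \emph{all} $m,m'$, which is a global skew-adjointness condition on the Hirsch--Brown differential with respect to the Poincar\'e pairing. Writing $d(1\otimes x)=\sum r_j\otimes x_j$ with $r_j\in R^+$, the required identities involve all graded pieces $R^k\otimes H^{*}(X)$ for $k\geq 2$, not only $k=2$; the hypothesis $H^{\mathrm{fd}(X)-1}(X)=0$ kills the lowest-order obstruction but nothing beyond it, and there is no reason the naive $\Phi$ is a chain map on the nose. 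What is true---and what the paper invokes---is the Gorenstein fibration machinery of F\'elix--Halperin--Thomas: $BG$ is a Gorenstein space, and for a fibration with simply-connected Poincar\'e duality fibre over a Gorenstein base with finite-dimensional total cohomology, the total space is again a Poincar\'e duality space. That theory effectively constructs the duality quasi-isomorphism you are after, but it does so by a careful inductive argument (and uses $H^1(X)=0$ in a more indirect way), not by the one-step vanishing you propose.
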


Part $(i)$ is an easy observation using the Serre spectral sequence of the fibration (up to homotopy) $G\rightarrow X\rightarrow X_G$. For $(ii)$ see \cite[Theorem 7.6]{AMIG}. Finally, $(iii)$ follows from \cite[Theorems 3.6 and 4.3]{FHT2} applied to the Borel fibration $X\rightarrow{} X_G \to B G$. For this we use \cite[Proposition 3.4]{FHT2} in order to identify $B G$ as a Gorenstein space, as it is simply-connected with finite dimensional rational homotopy. Moreover, we note that the prerequiste of ``finite right $\operatorname{M-cat}$'' in \cite[Theorems 3.6]{FHT2} is satisfied, since $\dim H^*_G(X;\qq)<\infty$ implies finite Lusternik--Schnirelmann category. Finite LS-category however, in its algebraic definition on Sullivan models, is analogous to and actually stronger than the definition of ``finite right $\operatorname{M-cat}$'', as it requires the defining retract to be even multiplicative.

Rather contrary to the above case, where $H_G^*(X)$ is a torsion module, we formally recall the following classical

\begin{defn}
The $G$-action on $X$ is equivariantly formal if one of the following equivalent conditions holds:
\begin{enumerate}[(i)]
\item $H_G^*(X)$ is a free $R$-module.
\item $H_G^*(X)\cong R\otimes H^*(X)$ as modules.
\item The Serre spectral sequence of the Borel fibration is totally non-homologous to zero (TNHZ), i.e.\ it collapses at $E_2$.
\item The map $H_G^*(X)\rightarrow H^*(X)$ is surjective.
\end{enumerate}
\end{defn}
The equivalence of (ii), (iii), and (iv) follows from standard considerations on the Serre spectral sequence of the Borel fibration. For the equivalence of the condition (i) see \cite[Cor.\ 4.2.3]{AlldayPuppe2}, \cite[Prop.\ 2.3]{GoertschesRollenske}.

While cohomology is a powerful tool to extract information from the Borel fibration, we want to go deeper to the cochain level. Our main tool is the language of rational homotopy theory and in particular commutative differential graded algebras (cdga) and Sullivan models. We assume the reader is familiar with those theories and refer to \cite{Bibel} for missing definitions. The point here is to preemptively sort out technical difficulties that arise in later discussions, as well as to comment on the problem of realizing algebra through geometry. We fix some notation: if $V$ is a graded vector space, then $\Lambda V$ will denote the free unital commutative graded algebra on $V$. If not stated otherwise, all cdgas will be assumed to be non-negatively graded. We will furthermore assume all (cd)gas to be unital which means that they come with a fixed multiplicative unit element which is preserved by morphisms.

One of the central objects in this article is the following construction: we fix a Sullivan minimal model of the algebra $A_{pl}(BG)$ of the piecewise linear forms on $BG$. It is of the form $(R,0)$ where $R=H^*(BG)$. The fibration induces a map $(R,0)\rightarrow A_{pl}(X_G)$ for which we choose a relative minimal model. This results in an extension sequence
\[(R,0)\rightarrow (R\otimes \Lambda V,D)\rightarrow (\Lambda V,d)\]
where the first map is the inclusion of the relative minimal model and the second one is the projection onto $\Lambda V$ (with the induced differential) via the canonical augmentation of $R$ that sends $R^+$ to $0$ and is the identity on $R^0=\mathbb{Q}$. Then said projection is actually a Sullivan model for $X\rightarrow X_G$. We will refer to such an extension sequence as a (minimal) model for the Borel fibration.

\begin{rem}\label{rem:relmodelseindeutig}
\begin{enumerate}[(i)]
\item Throughout the article, we will work with a fixed minimal model $R\rightarrow A_{pl}(BG)$.
As the minimal model of the Borel fibration will be a central object, we want to point out that it is independent of the choices made in the construction: if $(R',0)\rightarrow A_{pl}(BG)$ is another minimal model and $(R'\otimes\Lambda V',D')$ is a relative minimal model for $R'\rightarrow A_{pl}(X_G)$, then there is an isomorphism $(R\otimes\Lambda V,D)\cong (R'\otimes \Lambda V',D')$ that restricts to an isomorphism $R\cong R'$. This follows from the basic homotopy theory of cdgas: by uniqueness of the minimal model there is an isomorphism $R\cong R'$ such that $\varphi_0\colon R\rightarrow A_{pl}(X_G)$ and $\varphi_1\colon R\cong R'\rightarrow A_{pl}(X_G)$ are homotopic. Let $h\colon R\rightarrow A_{pl}(X_G)\otimes (t,dt)$ be a homotopy such that $\varphi_0=p_0\circ h$ and $\varphi_1=p_1\circ h$, where $p_i$ is evaluation at $t=i$. Consider the commutative diagram

\[\xymatrix{
 R\ar[r]\ar[d] &  A_{pl}(X_G)\otimes (t,dt)\ar[d]^{p_0}\\
 R\otimes \Lambda V\ar[r]^{\varphi_0}\ar@{-->}[ur] & A_{pl}(X_G)
}\]

with the dashed arrow obtained by relative lifting (see e.g.\ \cite[Lemma 14.4]{Bibel}). As a result we deduce that by changing the homotopy class of $\varphi_0$, through composing the dashed arrow with $p_1$, we can obtain a relative minimal model for $\varphi_1$. Then the claim follows from uniqueness of the relative minimal model.

\item The Sullivan model $(R\otimes \Lambda V,D)$ for $X_G$ is not minimal in general (however it is always minimal for torus actions on simply-connected spaces). Still, it is usually our preferred choice of model since it comes with a fixed $R$-module structure. Equivariant maps induce morphisms that respect this structure: given an equivariant map between $G$-spaces, we obtain a strictly commutative diagram between the Borel fibrations. If minimal models of the Borel fibrations are constructed as above, then one can show through relative lifting (see \cite[Prop.\ 14.6]{Bibel}) that there is a strictly commutative diagram
\[\xymatrix{
(R,0)\ar[r]\ar[d]^{\Id_R} & (R\otimes \Lambda V,D)\ar[r]\ar[d] &(\Lambda V,d)\ar[d]\\
(R,0)\ar[r] & (R\otimes \Lambda W,D)\ar[r] & (\Lambda W,d)
}\]
in which the rows are the relative minimal models and the horizontal morphisms are Sullivan representatives for the corresponding maps between the Borel fibrations. If such structure is present, we will usually assume the morphisms to be of this type.
\end{enumerate}
\end{rem}

As we will often care about $R$-module structures, the following lemma will be useful throughout the article. By an $R$-cdga we mean a morphism $(R,0)\rightarrow B$ of cdgas. We will often just write $B$ in case the specific morphism is not important or clear from the context.

\begin{lem}\label{lem:fbchar}
\begin{enumerate}[(i)]
\item For any cdga $B$, two morphisms $(R,0)\rightarrow B$ are homotopic if and only if they induce the same map on cohomology.
\item Consider any $R$-cdga $B$ and a relative Sullivan algebra $(R,0)\rightarrow(R\otimes \Lambda V,D)$. Then any morphism $(R\otimes\Lambda V,D)\rightarrow B$ such that $\varphi^*\colon H^*(R\otimes \Lambda V)\rightarrow H^*(B)$ respects the $R$-module structures is homotopic to a morphism of $R$-cdgas.
\end{enumerate}
\end{lem}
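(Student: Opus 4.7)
The forward direction of (i) is immediate: any cdga homotopy induces an equality on cohomology. For the converse, my plan is to exploit the very rigid structure of $R$: since $G$ is compact and connected, $R=H^*(BG;\qq)$ is a free graded-commutative algebra on a set of generators $\{w_i\}$, all of even degree, carrying the zero differential. Given $\varphi,\psi\colon (R,0)\to B$ with $\varphi^*=\psi^*$, on each generator $w_i$ one chooses $b_i\in B^{|w_i|-1}$ with $\psi(w_i)-\varphi(w_i)=db_i$. I would then define $h\colon R\to B\otimes \Lambda(t,dt)$ by
\[
h(w_i)\;=\;\varphi(w_i)+t\cdot db_i+dt\cdot b_i
\]
on generators, extending multiplicatively. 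A brief Leibniz-rule check gives $dh(w_i)=0$ (the contributions $dt\cdot db_i$ from the second summand and $-dt\cdot db_i$ from the third cancel, using $|dt|=1$), so $h$ is a cdga morphism since $d=0$ on $R$. Evaluation at $t=0,1$ recovers $\varphi$ and $\psi$, giving the desired homotopy.

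For (ii) I plan to apply (i) to the two morphisms $\varphi|_R,\iota\colon(R,0)\to B$, where $\iota$ is the given $R$-cdga structure map. These agree on cohomology: if $\varphi^*$ respects the $R$-module structure then $\varphi^*(r)=\varphi^*(r\cdot 1)=r\cdot \varphi^*(1)=\iota^*(r)$, since unital cdga morphisms preserve the unit. Part (i) thus supplies a homotopy $h\colon R\to B\otimes \Lambda(t,dt)$ with $p_0\circ h=\varphi|_R$ and $p_1\circ h=\iota$. I would then consider the commutative diagram
\[
\xymatrix{
R\ar[r]^-{h}\ar[d] & B\otimes \Lambda(t,dt)\ar[d]^{p_0}\\
R\otimes \Lambda V\ar[r]^-{\varphi}\ar@{-->}[ur] & B
}
\]
in which the left vertical arrow is a relative Sullivan inclusion (hence a cofibration) and the right one is a surjective quasi-isomorphism. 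The relative lifting lemma \cite[Prop.\ 14.6]{Bibel} produces $H\colon R\otimes \Lambda V\to B\otimes \Lambda(t,dt)$ with $H|_R=h$ and $p_0\circ H=\varphi$. Setting $\varphi':=p_1\circ H$ then yields a morphism of $R$-cdgas, since $\varphi'|_R=p_1\circ h=\iota$, and $H$ provides the desired homotopy $\varphi\simeq \varphi'$.

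The genuine substance lies in part (i); the crux is that $R$ carries the zero differential and is freely generated, so two morphisms out of $R$ agreeing on cohomology may be compared generator by generator via explicit primitives. Part (ii) is then a formal consequence: one first corrects the structure map on $R$ via the homotopy from (i), and then extends that correction to $R\otimes \Lambda V$ by one application of relative lifting. The only technical hazard I anticipate is the sign bookkeeping in the explicit homotopy formula of (i); no conceptual obstacle remains.
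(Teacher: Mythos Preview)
Your proof is correct and follows essentially the same approach as the paper: for (i) you construct the explicit homotopy $h(w_i)=\varphi(w_i)+t\,db_i+dt\cdot b_i$ on generators (the paper writes this as $f(X_i)+(g(X_i)-f(X_i))t-v_i\,dt$, an equivalent formula), and for (ii) you apply (i) to $\varphi|_R$ versus the structure map and then extend the homotopy via relative lifting, exactly as the paper does (it cites \cite[Proposition 2.22]{AMIG} for homotopy extension where you cite \cite[Prop.\ 14.6]{Bibel}).
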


\begin{proof}
For the proof of $(i)$, let $f,g$ be two such morphisms and $R=\Lambda(X_1,\ldots,X_r)$. Then, by assumption, for any $X_i$ there is some $v_i\in B$ satisfying $D(v_i)=g(X_i)-f(X_i)$.
We can define a homotopy $h\colon R\rightarrow B\otimes \Lambda(t,dt)$, with $t$ of degree $0$, by setting \[h(X_i)=f(X_i)+(g(X_i)-f(X_i))t-v_idt.\]
In the situation of $(ii)$, we have a diagram
\[\xymatrix{R \ar[dr]\ar[d]& \\ R\otimes\Lambda V\ar[r] & B}\]
which commutes on the level of cohomology. By $(i)$ it is homotopy commutative so the claim follows by extension of homotopies (see e.g.\ \cite[Proposition 2.22]{AMIG})
\end{proof}

\begin{rem}\label{rem:andereextseq} We will frequently make use of the fact that we can obtain the model $(\Lambda V,d)$ of a space $X$ from the (preferred) model of $X_G$ by forming another extension sequence
\[(R\otimes \Lambda V,D)\rightarrow (R\otimes \Lambda V\otimes S,D)\rightarrow (S,0),\]
where $S=\Lambda(s_1,\ldots,s_r)$ is generated in odd degrees and $D$ maps the $s_i$ bijectively to the generators of $R$. In fact, we have $S=H^*(G)$. Sending $R$ and $S$ to $0$ yields a quasi-isomorphism $(R\otimes \Lambda V\otimes S,D)\simeq (\Lambda V,d)$. The extension sequence above is a model for $G\rightarrow X\rightarrow X_G$, which is a fibration up to homotopy equivalence.
\end{rem}

For free torus actions, it is also possible to pass from algebra to geometry: given a base space $Y$, any choice of $r$ classes from $H^2(Y)$ defines a morphism $R\rightarrow H^*(Y)$ which lifts (uniquely up to homotopy by Lemma \ref{lem:fbchar}) to a map $R\rightarrow A_{pl}(Y)$. In particular we obtain a unique relative minimal model of the form $R\rightarrow R\otimes \Lambda V$.
 In the proposition below, we expand on the discussion in \cite[Prop.\ 7.17]{AMIG} and show that such algebraic data is always realizable by the Borel fibration of a free torus action.

\begin{prop}\label{prop:realizeasaction}
Let $R=\Lambda(X_1,\ldots,X_r)$ with $X_i$ in degree $2$, $(\Lambda V,d)$ be a finite type minimal Sullivan algebra, and
\[(R,0)\rightarrow (R\otimes \Lambda V,D)\rightarrow (\Lambda V,d)\]
be an extension sequence with maps given by canonical inclusion and projection. Then there exists a free $T^r$-action on some space $X$ such that the above sequence is the minimal model of the associated Borel fibration.
If the cohomology of the middle cdga is finite-dimensional, then we can take $X$ to be compact. If additionally $H^*(\Lambda V,d)$ is simply-connected, satisfies Poincaré duality with fundamental class in degree $n$, and $n-r$ is not divisible by $4$, then we can take $X$ to be a compact simply-connected manifold.
\end{prop}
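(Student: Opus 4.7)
The plan is to construct $X$ as the total space of a principal $T^r$-bundle over a space $Y$ that realises the middle term $(R\otimes\Lambda V,D)$ of the extension sequence. Apply Sullivan's spatial realisation functor to this cdga to obtain a finite-type nilpotent CW complex $Y$ whose minimal model is $(R\otimes\Lambda V,D)$. The inclusion $(R,0)\hookrightarrow(R\otimes\Lambda V,D)$ realises as a continuous map $f\colon Y\to BT^r$ whose effect on second cohomology sends the Euler classes of the universal circle factors to the images of $X_1,\dots,X_r$ in $H^2(Y;\qq)$. Rescaling the $X_i$ by nonzero rationals (an isomorphism of $R$) allows us to assume these are integral classes, so that $f$ classifies an honest principal $T^r$-bundle $\pi\colon X\to Y$.

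Since $T^r$ acts freely on $X$ with quotient $Y$, the Borel construction $X_{T^r}=X\times_{T^r}ET^r$ is homotopy equivalent to $Y$, and the Borel fibration $X\to X_{T^r}\to BT^r$ coincides up to homotopy with the sequence $X\to Y\xrightarrow{f} BT^r$. By construction, together with the uniqueness discussed in Remark~\ref{rem:relmodelseindeutig}, its minimal model is precisely the prescribed extension sequence.

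For the compact refinement, finite-dimensionality of $H^*(R\otimes\Lambda V,D)$ makes $Y$ a nilpotent space of finite type whose rational cohomology vanishes above some degree $N$. Standard rational CW approximation---realising the minimal model through cell attachments up to dimension $N$ and then attaching cells to kill higher rational homotopy groups---provides a finite CW complex $Y'$ rationally equivalent to $Y$; the associated principal $T^r$-bundle then gives a compact $X$ with the prescribed model.

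For the manifold case, simple-connectedness of $H^*(\Lambda V,d)$ forces $V^{\leq 1}=0$, so the $X_i$ remain linearly independent in $H^2(Y;\qq)$ since no degree-two boundaries exist. Proposition~\ref{thm:hsiangandfriends} shows that $H^*(Y)=H^*_G(X)$ satisfies rational Poincar\'e duality of formal dimension $n-r$, i.e.\ $Y$ is a simply-connected rational Poincar\'e complex of that dimension. As $n-r\not\equiv 0\pmod 4$, the signature obstruction vanishes and Sullivan's classical rational surgery theorem yields a closed simply-connected smooth manifold $M^{n-r}$ rationally equivalent to $Y$. Performing the pullback construction over $M$ with integral classes spanning a primitive rank-$r$ sublattice of $H^2(M;\mathbb{Z})/\mathrm{torsion}$ (possible by elementary linear algebra from the rational independence of the $X_i$) produces a closed smooth $n$-manifold $X$; primitivity ensures surjectivity of the connecting map $\pi_2(M)\to\pi_1(T^r)=\mathbb{Z}^r$ in the long exact sequence of $T^r\to X\to M$, hence $\pi_1(X)=0$. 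The main obstacle is precisely this final manifold realisation step: it invokes Sullivan's rational surgery theorem (which is the source of the restriction $n-r\not\equiv 0\pmod 4$) and requires the delicate integral/primitive adjustment of the classifying classes so that the total space is genuinely simply-connected rather than merely a finite quotient of such.
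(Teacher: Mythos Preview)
Your proof is correct and follows essentially the same strategy as the paper: realise $(R\otimes\Lambda V,D)$ by a space $Y$, rescale the $X_i$ to integral classes so as to classify a principal $T^r$-bundle $X\to Y$, identify this with the Borel fibration, and then refine $Y$ to a finite CW complex (resp.\ to a closed simply-connected manifold via rational surgery, citing \cite[Theorem 3.2]{AMIG}) in the additional cases. You are in fact more explicit than the paper on one point---the primitivity adjustment of the integral Euler classes needed to ensure that the connecting map $\pi_2(M)\to\pi_1(T^r)$ is surjective and hence that $X$ is genuinely simply-connected---whereas the paper is slightly more careful in identifying the Borel classifying map with $f$ via a transgression argument.
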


In the above proposition, $(\Lambda V,d)$ is the minimal model of $X$. In particular, the cohomology of $X$ is not necessarily finite-dimensional, which is an exception to the general assumption of all $G$-spaces having finite-dimensional cohomology. Note that even if $\dim H^*(\Lambda V,d)<\infty$, we can only choose $X$ to be compact if also the cohomology of $(R\otimes \Lambda V,D)$ is finite-dimensional as otherwise Theorem \ref{thm:hsaiNGSTHEOREM} would be violated.

\begin{proof}
Let $Y$ be a CW-complex with Sullivan model $(R\otimes \Lambda V,D)$. We find an integer $k$ such that the cohomology classes of the $kX_i$ in $H^2(Y;\mathbb{Q})$ come from classes in $H^2(Y;\mathbb{Z})$. Those uniquely determine the homotopy class of a map $f\colon Y\rightarrow K(\mathbb{Z}^r,2)=BT^r$. The minimal model $R\rightarrow A_{pl}(BT^r)$ can be chosen in a way such that the canonical inclusion $R\rightarrow R\otimes V$ is a Sullivan model for $f$. Pulling back the universal principal bundle along $f$ yields a principal bundle
\[T^r\rightarrow X\rightarrow Y.\]
We consider the following commutative diagram of principal $T^r$-bundles

\[\xymatrix{T^r\ar[d] & T^r\ar[l]\ar[d]\ar[r]& T^r\ar[d]\ar[r] & T^r\ar[d]\\
ET^r\ar[d] & ET^r\times X\ar[l]\ar[d]\ar[r] & X\ar[d]\ar[r] & ET^r\ar[d]\\
BT^r & X_{T^r}\ar[l]\ar[r] &Y\ar[r] & BT^r
}\]
in which the left morphism of principal bundles is given by projection on the first component, the central one is projection on the second component, and the right one is the pullback diagram induced by $f$. Note that the central morphism actually consists of weak equivalences so $(R\otimes \Lambda V,D)$ is a model for $X_{T^r}$. By naturality of the spectral sequence, the transgressions of the associated Serre spectral sequence commute with the maps between the base spaces. Thus the inner triangles in the diagram

\[\xymatrix{ & & H^2(Y)\\
H^2(BT^r)\ar@/^15pt/[rru]^{f^*} \ar@/_15pt/[rrd] & H^1(T)\ar[ur]\ar[l]\ar[rd] & \\ & & H^2(X_{T^r})\ar[uu]
}\]
are commutative. But the transgression $H^1(T^r)\rightarrow H^2(BT^r)$ is actually an isomorphism so the whole diagram commutes. Since $R$ is generated in degree $2$, it follows by Lemma \ref{lem:fbchar} that the canonical inclusion $R\rightarrow R\otimes \Lambda V$ is not only a model for $f$ but also for the Borel fibration of $X$.

If $(R\otimes \Lambda V,D)$ has finite-dimensional cohomology we can choose $Y$ to be a finite CW-complex and homotope $f$ such that it has image in some compact skeleton. As a consequence, $X$ will be compact. If $(V,d)$ is additionally simply-connected and satisfies Poincaré duality with fundamental class of degree $n$, then $(R\otimes \Lambda V,D)$ satisfies Poincaré duality with fundamental class in degree $n-r$ (by the same reasoning as Proposition \ref{thm:hsiangandfriends} $(iii)$). Then by \cite[Theorem 3.2]{AMIG} we can choose $Y$ as a compact simply-connected manifold. As before we can homotope $f$ to have image in a compact skeleton which is in fact contained in some $(\mathbb{C}P^N)^r$ for $N$ large enough. If we go on to homotope $f$ to a smooth map, then $X$ will be a smooth compact simply-connected manifold.
\end{proof}

\section{New definitions and their properties}\label{sec:notionsection}

\subsection{$\mathcal{MOD}$-Formality}
Let $X$ be a $G$-space and $(R,0)\rightarrow(R\otimes \Lambda V,D)\rightarrow(\Lambda V,d)$ be a model of the Borel fibration. In particular $(R\otimes \Lambda V,D)$ is a Sullivan model for $X_G$. When it comes to common generalizations of equivariantly formal actions and actions with formal homotopy quotient, we have the following natural

\begin{defn}
The action is called $\mathcal{MOD}$-formal if $(R\otimes\Lambda V,D)$ is formal as a differential graded $R$-module (dg$R$m), i.e.\ it is connected to $(H_G^*(X),0)$ via quasi-isomorphisms of dg$R$ms.
\end{defn}

\begin{rem}
In the definition above, as opposed to the condition of $X_G$ being a formal space, we only require the quasi-isomorphisms to be multiplicative with respect to the classes coming from $BG$. In this way one gets rid of formality obstructions that exist within $X$ independently of the action.
\end{rem}

\begin{lem}\label{lem:efismodformal} Equivariantly formal actions and actions with formal homotopy quotient are $ \mathcal{MOD}$-formal.
\end{lem}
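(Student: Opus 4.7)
\medskip
\noindent\textbf{Proof plan.} I would treat the two conditions separately, reducing each to the construction of an explicit quasi-isomorphism of dg$R$ms between $(R\otimes\Lambda V,D)$ and $(H_G^*(X),0)$.

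For an equivariantly formal action, the key input is that $H_G^*(X)$ is free as an $R$-module. The plan is to fix an $R$-basis $\{[x_\alpha]\}$ of $H_G^*(X)$, lift each $[x_\alpha]$ to a cocycle $\tilde x_\alpha \in (R\otimes \Lambda V, D)$, and extend $R$-linearly to obtain a map of dg$R$ms $\phi\colon (H_G^*(X),0)\to (R\otimes\Lambda V, D)$. By construction $\phi^*$ sends the basis to itself, and $R$-linearity then forces $\phi^*$ to be the identity on $H_G^*(X)$, so $\phi$ is the desired quasi-isomorphism.

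For an action with formal homotopy quotient the difficulty is that formality of $X_G$ is in principle a non-equivariant statement, whereas $\mathcal{MOD}$-formality requires compatibility with the $R$-structure. The plan is first to replace the zig-zag of cdga quasi-isomorphisms coming from formality by a direct cdga quasi-isomorphism $\varphi\colon (R\otimes\Lambda V,D)\to (H_G^*(X),0)$; this is possible since $(R\otimes\Lambda V,D)$ is Sullivan and hence cofibrant, so standard Sullivan lifting applies. Postcomposing $\varphi$ with the cdga automorphism $(\varphi^*)^{-1}$ of the zero-differential cdga $(H_G^*(X),0)$, I may further arrange that $\varphi^*$ is the identity on cohomology, which is in particular $R$-linear. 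Then Lemma \ref{lem:fbchar}(ii) furnishes an $R$-cdga morphism $\tilde\varphi$ homotopic to $\varphi$; being still a quasi-isomorphism, $\tilde\varphi$ is the sought dg$R$m quasi-isomorphism.

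The main obstacle is bridging cdga formality and $R$-linearity in the second case. This is precisely what Lemma \ref{lem:fbchar}(ii) is designed to handle, provided one has positioned the cdga morphism so that its induced cohomology map respects the $R$-module structure; the postcomposition trick above ensures this. The equivariantly formal case, by contrast, is little more than an unpacking of freeness of $H_G^*(X)$ over $R$.
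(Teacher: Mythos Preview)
Your proof is correct and, for the equivariantly formal case, essentially identical to the paper's. For the formal homotopy quotient case your argument works, but the detour through Lemma~\ref{lem:fbchar}(ii) is unnecessary, and the paper's route is shorter. Once you have arranged that the cdga quasi-isomorphism $\psi=(\varphi^*)^{-1}\circ\varphi\colon (R\otimes\Lambda V,D)\to (H_G^*(X),0)$ induces the identity on cohomology, it is \emph{already} a morphism of dg$R$ms: every $r\in R$ is closed, so $\psi(r)=\psi^*([r])=[r]$ is exactly the image of $r$ under the structure map $R\to H_G^*(X)$, and then multiplicativity of $\psi$ gives $\psi(r\cdot x)=\psi(r)\cdot\psi(x)=[r]\cdot\psi(x)$ for all $x$. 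In other words, the ``bridge'' you were worried about between cdga formality and $R$-linearity is automatic for cdga maps once the induced cohomology map is the identity, precisely because $R$ sits inside the model as closed elements. The paper phrases this as the quasi-isomorphism ``covering the canonical projection on closed elements''. Your approach via Lemma~\ref{lem:fbchar}(ii) reaches the same conclusion but trades a one-line observation for a homotopy-replacement argument.
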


\begin{proof}
If $X_G$ is formal, there is a quasi-isomorphism of cdgas $(R\otimes\Lambda V,D)\rightarrow (H_G^*(X),0)$ which covers the canonical projection on closed elements. This is in particular a morphism of dg$R$ms so the statement follows.

Now if $H_G^*(X)$ is free, let $b_1,\ldots,b_k\in R\otimes\Lambda V$ be representatives of an $R$-basis. Then the inclusion $H_G^*(X)\cong R\otimes \langle b_1,\ldots,b_k\rangle_\mathbb{Q}\rightarrow R\otimes\Lambda V$ induces an isomorphism on cohomology if we take the differential on the left hand side to be trivial.
\end{proof}

\begin{ex}\label{ex:trivialex}
When it comes to examples of actions that are neither equivariantly formal nor have a formal homotopy quotient but satisfy $\mathcal{MOD}$-formality, there are a few trivial candidates: For example every $S^1$-action can be seen to automatically be $\mathcal{MOD}$-formal (see Remark \ref{rem:S1modformal}). Also take any $G$-action on $X$ such that $X_G$ is formal and let $Y$ be a non-formal space. Then $(X\times Y)_G=X_G\times Y$ is not formal (see \cite[Prop. 5]{BodyDouglas}). The Hirsch--Brown model of $X\times Y$ however arises from the Hirsch--Brown model $(R\otimes H^*(X),D)$ of $X$ by tensoring with $H^*(Y)$ and extending the differential to $R\otimes H^*(X)\otimes H^*(Y)$ in the obvious way. A quasi-isomorphism $(R\otimes H^*(X),D)\simeq (H_G^*(X),0)$ thus induces a quasi-isomorphism $(R\otimes H^*(X)\otimes H^*(Y),D)\rightarrow (H_G^*(X)\otimes H^*(Y),0)$ so the action is $\mathcal{MOD}$-formal.
The discussion in Section \ref{Massey} is helpful for constructing more interesting examples as Example \ref{ex:nontrivmodform}.
\end{ex}

One of the defining traits of $\mathcal{MOD}$-formal actions is given by the following observation:
Let $X$ be a $\mathcal{MOD}$-formal $G$-space. As minimal models of dg$R$ms are unique among a quasi-isomorphism type, it follows that the Hirsch--Brown model of the action is the minimal model of the dg$R$m $(H^*_G(X),0)$. To construct the latter, recall the notion of minimal graded free resolution from commutative algebra: a graded free resolution of $H^*(X_{G})$ is an exact complex
\[0\leftarrow H^*_G(X)\leftarrow F_0\xleftarrow{d} F_1\xleftarrow{d}\ldots\xleftarrow{d} F_r\leftarrow 0\]
consisting of free graded $R$-modules and graded maps. In the usual conventions, those maps are of degree $0$ but we apply suitable degree shifts to consider them to be of degree $1$. The resolution is said to be minimal if $d(F_i)\subset \mathfrak{m}F_{i-1}$, where $\mathfrak{m}=R^{+}$ is the maximal homogeneous ideal. Thus for a minimal resolution, the projection map
\[\bigoplus_{i=0}^r F_i\rightarrow F_0/d(F_1)\cong H^*_G(X)\]
defines a minimal dg$R$m-model for $(H^*_G(X),0)$, where we equip $\bigoplus_i F_i$ with the differential that sends $F_0$ to $0$ and equals $d$ on $F_i$ for $i\geq 1$. We have shown

\begin{thm}\label{thm:modformalfreeres}
An action is $\mathcal{MOD}$-formal if and only if the minimal Hirsch--Brown model is isomorphic to the minimal graded free resolution of the $R$-module $H_G^*(X)$ as a differential graded $R$-module.
\end{thm}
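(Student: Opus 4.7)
The plan is to recognize that the statement is essentially the assembly of two observations that have already been set up in the paragraph immediately preceding it, glued together by the uniqueness of minimal dg$R$m-models (a standard fact compiled in the appendix \ref{app:HB}). Concretely, one needs two structural inputs: (a) the Hirsch--Brown model is, by its very construction, the minimal free dg$R$m-model of $(R\otimes\Lambda V, D)$; (b) if $0\leftarrow H_G^*(X)\leftarrow F_0\leftarrow \ldots \leftarrow F_r\leftarrow 0$ is a minimal graded free resolution, then the total complex $\bigoplus_i F_i$ with the differential sending $F_0\to 0$ and acting as the resolution differential on $F_i$ for $i\geq 1$ is a free dg$R$m with a natural quasi-isomorphism to $(H_G^*(X),0)$; and it is minimal in the dg$R$m sense because the resolution-minimality condition $d(F_i)\subset \mathfrak{m}F_{i-1}$ is literally the statement that the induced differential on $\bigoplus_i F_i \otimes_R \qq$ vanishes.

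For the forward direction, I would argue as follows. Assume the action is $\mathcal{MOD}$-formal, so that $(R\otimes\Lambda V, D)$ and $(H_G^*(X),0)$ are connected by a zig-zag of dg$R$m quasi-isomorphisms. Then both the Hirsch--Brown model (via (a) and invariance of minimal models under quasi-isomorphism) and the free resolution (via (b)) are minimal free dg$R$m-models of the same quasi-isomorphism type. The uniqueness theorem for minimal dg$R$m-models then furnishes an isomorphism between them.

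For the converse, if the minimal Hirsch--Brown model is dg$R$m-isomorphic to the total complex of the minimal graded free resolution, then it admits a quasi-isomorphism to $(H_G^*(X),0)$; composing with the canonical dg$R$m quasi-isomorphism between $(R\otimes\Lambda V, D)$ and its Hirsch--Brown model produces the desired zig-zag showing $\mathcal{MOD}$-formality.

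The only point requiring actual verification, and hence the main obstacle, is lining up the two notions of minimality—the commutative-algebra notion ($d(F_i)\subset \mathfrak{m}F_{i-1}$) and the dg$R$m notion (vanishing of the induced differential on $M\otimes_R \qq$)—together with the degree-shift convention that turns the resolution, which classically has degree-$0$ maps between graded objects, into a single dg$R$m with a degree-$1$ differential. This is bookkeeping rather than a deep issue, and most of it is already carried out explicitly in the paragraph preceding the theorem, so the proof should consist essentially of this brief identification followed by an appeal to dg$R$m-model uniqueness.
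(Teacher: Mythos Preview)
Your proposal is correct and follows essentially the same approach as the paper: the paper's proof is precisely the paragraph preceding the theorem, which identifies the total complex of the minimal free resolution as the minimal dg$R$m-model of $(H_G^*(X),0)$ and then invokes uniqueness of minimal models (Propositions \ref{HBconstruction}--\ref{uniqueness}). You have additionally spelled out the converse direction explicitly, which the paper leaves implicit in ``We have shown''.
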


\begin{rem}
It is, of course, not true that the rational homotopy type of $X$ and $X_T$ is ``a formal consequence'' of the $R$-Algebra structure on $H_G^*(X)$, as is the case for actions with formal homotopy quotient. However for $\mathcal{MOD}$-formal actions, the spirit of formality still lives on in the fact that the (additive) cohomology of $X$ can be retrieved from the $R$-module $H_G^*(X)$. Also, the algebra structure on the image of $H^*_G(X)\rightarrow H^*(X)$ can be reconstructed from the $R$-algebra structure of $H^*_G(X)$, which holds more generally for spherical actions which we define below. Note however that this map is usually not surjective as its surjectivity is equivalent to classical equivariant formality.
\end{rem}

\begin{rem}\label{rem:S1modformal}
In view of the characterization of Theorem \ref{thm:modformalfreeres}, one can see from the explicit construction of the minimal Hirsch--Brown model in Proposition \ref{HBconstruction} that any $S^1$-action is $\mathcal{MOD}$-formal. Assume that we have constructed the model up until a certain degree and that it has the structure of a free resolution
\[0\leftarrow F_0\xleftarrow{D} F_1\leftarrow 0\]
with cohomology concentrated in $F_0$. Then when adding another generator to generate cohomology, we can obviously add it to $F_0$ and keep the structure of a free resolution. When adding a generator $\alpha$ in order to kill cohomology, we can choose $D(\alpha)$ to be some (non-exact) element of $F_0$. Also $D$ is injective on $F_1\oplus R\alpha$, which means the free resolution structure is preserved. To see this, we write $R=\mathbb{Q}[X_1]$ and assume the existence of $v\in F_1$ with $D(v+X_1^k\alpha)=0$ for some $k\geq 1$. Then, since $\alpha$ is of maximal degree among the generators, $v$ is divisible by $X_1^k$ so $D\alpha=-D(vX_1^{-k})\in D(F_1)$ was already exact, which is a contradiction.
\end{rem}

Before investigating further aspects of formality and their relations, we give some reformulations of $\mathcal{MOD}$-formality which will be useful throughout the article.

\begin{lem}\label{formality criterion}
Let $(R\otimes H^*(X),D)$ be the Hirsch--Brown model of a $G$-action on a space $X$. The following are equivalent:
\begin{enumerate}[(i)]
\item The action is $\mathcal{MOD}$-formal.
\item There exists a splitting $R\otimes H^*(X)=V\oplus W$ of the Hirsch--Brown model into free submodules such that $D(V)=0$ and every closed element in $W$ is exact.
\item There is a vector space splitting $R\otimes H^*(X)=\ker D\oplus C$ such that $C\oplus\im D$ is an $R$-submodule.
\end{enumerate}
\end{lem}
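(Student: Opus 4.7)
My plan is to establish the equivalences through the cycle (i)$\Rightarrow$(ii)$\Rightarrow$(iii)$\Rightarrow$(i), working throughout with the Hirsch--Brown model $(M,D):=(R\otimes H^*(X),D)$ and using that $(M,D)$ is quasi-isomorphic as a dg$R$-module to the Sullivan model $(R\otimes\Lambda V,D)$ of $X_G$. In particular, $\mathcal{MOD}$-formality is equivalent to the existence of a dg$R$-quasi-isomorphism $(M,D)\simeq (H^*_G(X),0)$.

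For (i)$\Rightarrow$(ii), I would invoke Theorem \ref{thm:modformalfreeres} to identify $(M,D)$ as a dg$R$-module with the minimal graded free resolution $\bigoplus_{i\geq 0}F_i$ of $H^*_G(X)$. Setting $V:=F_0$ and $W:=\bigoplus_{i\geq 1}F_i$ gives a splitting into free $R$-submodules, $D$ vanishes on $V$ by the construction of the resolution's differential, and exactness of the resolution at each $F_i$ for $i\geq 1$ forces any $D$-closed element of $W$ to be exact.

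For (ii)$\Rightarrow$(iii), let $Z=\ker D$ and $B=\im D$. The assumption $D(V)=0$ yields $Z=V\oplus(Z\cap W)$, while the hypothesis that every closed element of $W$ is exact reads exactly as $Z\cap W\subseteq B$. I then pick a vector space complement $C$ of $Z\cap W$ inside $W$, which immediately gives $M=Z\oplus C$ as vector spaces. For $c\in C$ and $r\in R$, the element $rc$ lies in $W$ and splits as $rc=z_0+c'$ with $z_0\in Z\cap W\subseteq B$ and $c'\in C$; combined with the $R$-stability of $B$ (automatic from $R$-linearity of $D$), this shows $C\oplus B$ is an $R$-submodule.

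For (iii)$\Rightarrow$(i), set $N=C\oplus B$. The decomposition $M=Z\oplus C$ together with $B\subseteq Z$ produces a natural vector space isomorphism $M/N\cong Z/B=H^*_G(X)$. The key step, and the one I expect to be the main technical obstacle, is to check that this is an isomorphism of $R$-modules: given $c\in C$, the fact that $N$ is an $R$-submodule forces $rc\in C\oplus B$, so writing $rc=z'+c''$ in $Z\oplus C$ the component $z'$ must in fact lie in $B$ and not merely in $Z$, which is precisely what is needed for $R$-linearity. Since $\im D=B\subseteq N$, the projection $\pi\colon (M,D)\to (H^*_G(X),0)$ is then a dg$R$-morphism, and tracking $z\in Z$ directly shows it induces the identity on cohomology. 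This produces the required dg$R$-quasi-isomorphism and hence $\mathcal{MOD}$-formality.
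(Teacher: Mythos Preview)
Your proof is correct and follows essentially the same cycle (i)$\Rightarrow$(ii)$\Rightarrow$(iii)$\Rightarrow$(i) as the paper, with the same key ideas: identifying the Hirsch--Brown model with the minimal free resolution for (i)$\Rightarrow$(ii), choosing the complement $C$ inside $W$ for (ii)$\Rightarrow$(iii), and defining the projection to cohomology (your $\pi$, the paper's $\varphi$) for (iii)$\Rightarrow$(i). Your argument for (ii)$\Rightarrow$(iii) is in fact slightly more streamlined than the paper's, and your choice $W=\bigoplus_{i\geq 1}F_i$ silently corrects the paper's typo $\bigoplus_{i\geq 2}F_i$.
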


\begin{proof}
As we have seen, in case the action is $\mathcal{MOD}$-formal, we can choose the minimal Hirsch--Brown model to be the minimal free resolution $\bigoplus F_i$ of $H_G^*(X)$. The desired decomposition in $(ii)$ is given by $V=F_0$ and $W=\bigoplus_{i\geq 2} F_i$.

In the situation of $(ii)$ we can choose a vector space splitting $R\otimes H^*(X)=\ker D\oplus C$ such that $C\subset W$. Now if $\alpha$ is any $R$-linear combination of elements in  $C\oplus \im D$, then, since $\im D=\im D|_C$, we find some $c\in C$ with $D\alpha=Dc$. Thus $\alpha-c$ is the sum of exact elements and closed elements in $W$ which are exact by assumption. Consequently we have $\alpha-c\in\im D$ and $\alpha\in C\oplus\im D$.

Finally, assume $(iii)$ holds. We define a map $\varphi$ from $R\otimes H^*(X)$ to its cohomology such that it is the canonical projection on $\ker D$ and trivial on $C$. This obviously commutes with the differential (which is trivial on $H_G^*(X)$) and all that remains to show is the $R$-linearity of $\varphi$. This clearly holds on $\ker D$ and by assumption, for any $c\in C$, $r\in R$ we have $rc\in C\oplus \im D=\ker \varphi$ which proves the claim.
\end{proof}

\begin{defn}
We say that an action is \emph{almost $\mathcal{MOD}$-formal} if $\dim H^*(X)$ is equal to the rank of the minimal free resolution of $H_G^*(X)$ as an $R$-module.
\end{defn}

\begin{rem}\label{rem:EilenbergMoore}
The Eilenberg-Moore spectral sequence of the Borel fibration converges to $H^*(X)$ and has $E_2^{*,*}=\mathrm{Tor}_{R}^{*,*}(H_G^*(X),R/\mathfrak{m})$. By the definition of $\mathrm{Tor}$, the $\mathbb{Q}$-dimension of the right hand expression is precisely the total rank of the minimal free resolution of $H_G^*(X)$. We deduce that almost $\mathcal{MOD}$-formality is equivalent to the $E_2$-degeneration of the Eilenberg-Moore spectral sequence.
\end{rem}

In case $G=T$ is a torus, almost $\mathcal{MOD}$-formality has the following interpretation in terms of another spectral sequence (see also \cite[Lemma 1.4]{ustinovsky} and \cite[Remark 7]{munoz}):
taking one step back in the Barratt-Puppe sequence of the Borel fibration, we obtain the fibration (up to homotopy equivalence)
\[T\rightarrow X\rightarrow X_T,\]
which in the free case is equivalent to $T\rightarrow X\rightarrow X/T$.

\begin{prop}\label{E3collapse} The $T$-action is almost $\mathcal{MOD}$-formal if and only if the
Serre spectral sequence of \[T\rightarrow X\rightarrow X_T\]
degenerates at the $E_3$ term.
\end{prop}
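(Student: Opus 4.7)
The plan is to compute the $E_3$-page of the Serre spectral sequence and identify it with $\mathrm{Tor}_R(H_T^*(X),\mathbb{Q})$, then compare with Remark \ref{rem:EilenbergMoore}.

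First I would recall, via Remark \ref{rem:EilenbergMoore}, that almost $\mathcal{MOD}$-formality is equivalent to the statement
\[\dim_\mathbb{Q}\mathrm{Tor}_R^{*,*}(H_T^*(X),\mathbb{Q})=\dim_\mathbb{Q} H^*(X),\]
since the total rank of the minimal free resolution of $H^*_T(X)$ equals $\dim\mathrm{Tor}_R(H_T^*(X),\mathbb{Q})$. Since $\dim E_r$ is weakly decreasing in $r$ and $\dim E_\infty=\dim H^*(X)$, the Serre spectral sequence of $T\to X\to X_T$ degenerates at $E_3$ if and only if $\dim E_3=\dim H^*(X)$. So it suffices to identify $E_3$ with $\mathrm{Tor}_R(H^*_T(X),\mathbb{Q})$.

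Next I would determine the differential $d_2$. The fibration $T\to X\to X_T$ is, up to homotopy, the principal $T$-bundle $T\to ET\times X\to X_T$, so its Serre spectral sequence has $E_2=H^*(X_T)\otimes H^*(T)=H^*_T(X)\otimes \Lambda(s_1,\dots,s_r)$ with $s_i\in H^1(T)$. The classes $s_i$ transgress to the pullbacks of the polynomial generators $X_i\in H^2(BT)=R$ along $X_T\to BT$; in other words, $d_2(s_i)=X_i$ viewed as an element of $H_T^*(X)$. Combined with the Leibniz rule and $H^*_T(X)$-linearity on the base row, $d_2$ is exactly the Koszul differential on $H^*_T(X)\otimes \Lambda S$ coming from the sequence of regular parameters $X_1,\dots,X_r\in R$. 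This identification is the main step; the subtlety is to verify that the multiplicative structure of the Serre spectral sequence makes $d_2$ equal to the Koszul differential on all of $E_2$, not just on the generators $s_i$.

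Now since the Koszul complex $(R\otimes\Lambda S,d^K)$ with $d^K(s_i)=X_i$ is a free resolution of $\mathbb{Q}$ as an $R$-module, tensoring with $H^*_T(X)$ gives a complex computing $\mathrm{Tor}_R(H^*_T(X),\mathbb{Q})$; but this is precisely the $d_2$-complex on $E_2$. Hence
\[E_3\;\cong\;\mathrm{Tor}_R^{*,*}(H^*_T(X),\mathbb{Q}).\]
Combining this with the first paragraph, $E_3$-degeneration is equivalent to $\dim E_3=\dim H^*(X)$, equivalent to $\dim \mathrm{Tor}_R(H^*_T(X),\mathbb{Q})=\dim H^*(X)$, equivalent to almost $\mathcal{MOD}$-formality by Remark \ref{rem:EilenbergMoore}. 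This completes the proof.
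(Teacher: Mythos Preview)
Your proof is correct and follows essentially the same approach as the paper: identify $(E_2,d_2)$ with the Koszul complex $H_T^*(X)\otimes\Lambda(s_1,\dots,s_r)$ (the paper does this via Remark~\ref{rem:andereextseq} rather than transgression, but it amounts to the same computation), deduce $E_3\cong\mathrm{Tor}_R(H_T^*(X),\mathbb{Q})$, and then compare total dimensions using Remark~\ref{rem:EilenbergMoore}. The paper phrases the last step slightly differently (invoking commutativity of $\mathrm{Tor}$ to match the rank of the minimal free resolution of $H_T^*(X)$), but the logic is identical.
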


\begin{proof}
The second page of the spectral sequence is the Koszul complex $H_T(X)\otimes S$ where $S=\Lambda (s_i)$ consists of a degree 1 generator for each variable $X_i$ of $R$ and the differential maps $s_i$ to the image of $X_i$ in $H_T^2(X)$ (see Remark \ref{rem:andereextseq}). Thus the $E_3$-page is precisely $\mathrm{Tor}_R(H_T^*(X),R/\mathfrak{m})$. By the commutativity of $\mathrm{Tor}$, we deduce that $\dim_\mathbb{Q}E_3$ is the rank of the minimal free resolution of $H_T^*(X)$. As the spectral sequence converges to $H^*(X)$, this is equal to $\dim H^*(X)$ if and only if there are no more nontrivial differentials starting from $E_3$.
\end{proof}

Before we study implications, let us discuss one more closely related property. The choice of name in the following definition is motivated by \cite[Section 8]{StasheffHalperin}, which discusses similar properties in the context of path space fibrations.

\begin{defn}
We call the action \emph{spherical} if $\ker(H^*_G(X)\ra H^*(X))$ is equal to $\mathfrak{m}H^*_G(X)$.
\end{defn}

Of course, $\mathfrak{m}H_G^*(X)$ is always contained in the kernel because on the level of Sullivan models the map is just the projection
\[R\otimes \Lambda V\rightarrow\Lambda V\]
obtained by sending $\mathfrak{m}$ to $0$. However the kernel on the level of cohomology may be bigger for there may be Massey products represented in $\mathfrak{m}\otimes \Lambda V$ which on the level of cohomology might not lie in the multiples of $\mathfrak{m}$, see e.g.\ Example \ref{ex:restriction}. In this light, being spherical is a restriction on the existence of nontrivial Massey products and therefore related to formality properties.
Again, for $G=T$ we can express this as degeneracy in a spectral sequence.

\begin{prop}\label{sphericalSS}
The action of a torus $T$ on $X$ is spherical if and only if in the Serre spectral sequence of
\[T\rightarrow X\rightarrow X_T\]
no nontrivial differentials enter $E_r^{*,0}$ for $r\geq 3$.
\end{prop}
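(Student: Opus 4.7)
The plan is to identify $E_3^{*,0}$ and $E_\infty^{*,0}$ of the Serre spectral sequence explicitly, and then translate the condition that no nontrivial differentials enter $E_r^{*,0}$ for $r\geq 3$ into the algebraic definition of sphericality.

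First I would exploit the Sullivan model of $T\to X\to X_T$ given in Remark \ref{rem:andereextseq}, namely $(R\otimes\Lambda V\otimes S,D)$ with $S=\Lambda(s_1,\ldots,s_r)$ and $D(s_i)=X_i$, in the same way as in the proof of Proposition \ref{E3collapse}. This identifies the $E_2$-page with $H_T^*(X)\otimes S$ and exhibits $d_2$ as the Koszul derivation sending each $s_i$ to the image of $X_i$ in $H_T^2(X)$. In particular the image of $d_2\colon E_2^{*-2,1}\to E_2^{*,0}$ is precisely the ideal $\mathfrak{m}H_T^*(X)$, so that
\[E_3^{*,0}\;\cong\;H_T^*(X)/\mathfrak{m}H_T^*(X).\]

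Next I would identify $E_\infty^{*,0}$ by the standard edge-homomorphism argument: since $E_\infty^{*,0}$ is the lowest nonzero piece of the Serre filtration on $H^*(X)$, it is naturally isomorphic to the image of the pullback $\pi^*\colon H^*(X_T)\to H^*(X)$, which is exactly the restriction map $H_T^*(X)\to H^*(X)$. Hence
\[E_\infty^{*,0}\;\cong\;H_T^*(X)/\ker\bigl(H_T^*(X)\to H^*(X)\bigr).\]
Because $\mathfrak{m}H_T^*(X)$ always lies in this kernel (as noted in the discussion preceding the statement), the edge homomorphism factors as the canonical surjection $E_3^{*,0}\twoheadrightarrow E_\infty^{*,0}$.

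Finally I would note that for every $r\geq 2$ no differential $d_r$ can leave $E_r^{*,0}$, since the target would have negative fiber degree; so the only way the rows $E_r^{*,0}$ can shrink as $r$ grows is through differentials entering them. Consequently, the hypothesis that no nontrivial $d_r$ enters $E_r^{*,0}$ for $r\geq 3$ is equivalent to $E_3^{*,0}=E_\infty^{*,0}$, i.e.\ to the canonical surjection being an isomorphism. Under the two identifications above this translates precisely to the equality $\mathfrak{m}H_T^*(X)=\ker(H_T^*(X)\to H^*(X))$, which is the definition of sphericality. The main (mildly delicate) step is the identification of $E_3^{*,0}$ as $H_T^*(X)/\mathfrak{m}H_T^*(X)$; everything else is a direct translation once the direction and target of $d_2$ have been pinned down.
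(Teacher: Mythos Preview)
Your proof is correct and follows essentially the same approach as the paper's own proof: identify $E_2^{*,0}\cong H_T^*(X)$ via the edge homomorphism, compute the image of $d_2$ in the bottom row as $\mathfrak{m}H_T^*(X)$ from the Koszul description of the $E_2$-page, and then observe that further shrinkage of $E_r^{*,0}$ can only come from incoming differentials. Your write-up is simply more explicit about the edge-homomorphism identification of $E_\infty^{*,0}$ and the fact that no differentials leave the bottom row.
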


\begin{proof}
Since the map $H_T^*(X)\rightarrow H^*(X)$ factors as $H^*(X_T)\cong E_2^{*,0}\rightarrow E_\infty^{*,0}\subset H^*(X)$, it suffices to analyse the spectral sequence. From the description of the $E_2$-page in the proof of Proposition \ref{E3collapse} we deduce that the image of $d_2$ in $E_2^{*,0}$ corresponds exactly to $\mathfrak{m}H_T^*(X)$. Thus there is more in the kernel if and only if there is a nontrivial differential mapping to the bottom row after the $E_2$ page.
\end{proof}

Of course, the above proposition as well as Proposition \ref{E3collapse} can be (less elegantly) formulated for arbitrary compact $G$. The problem however is that the cohomological generators of $H^*(G)$ will not transgress on the $E_2$-page for degree reasons.
Still, for a general $G$-action we have the following

\begin{thm}
For any $G$-action we have the implications \[\mathcal{MOD}\text{-formal} \Rightarrow \text{almost $\mathcal{MOD}$-formal} \Rightarrow spherical.\]
\end{thm}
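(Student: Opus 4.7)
The plan is to prove the two implications separately, each by a short appeal to the spectral-sequence characterizations already established. No new geometric input is required.

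For $\mathcal{MOD}$-formality $\Rightarrow$ almost $\mathcal{MOD}$-formality, I would apply Theorem \ref{thm:modformalfreeres} directly. In the $\mathcal{MOD}$-formal case the minimal Hirsch--Brown model is isomorphic, as a dg$R$m, to the minimal graded free resolution of $H_G^*(X)$. The minimal Hirsch--Brown model has underlying free $R$-module $R\otimes H^*(X)$, hence total rank $\dim H^*(X)$. Comparing free ranks on either side of the isomorphism immediately yields the defining equality of almost $\mathcal{MOD}$-formality.

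For almost $\mathcal{MOD}$-formality $\Rightarrow$ sphericality, I would invoke the reformulation from Remark \ref{rem:EilenbergMoore}: almost $\mathcal{MOD}$-formality is the $E_2$-degeneration of the Eilenberg--Moore spectral sequence of the Borel fibration, whose second page is $E_2^{p,*}=\mathrm{Tor}_R^{p,*}(H_G^*(X),\qq)$ and which converges to $H^*(X)$. The bottom line is
\[
E_2^{0,*}\;=\;H_G^*(X)\otimes_R \qq\;=\;H_G^*(X)/\mathfrak{m}H_G^*(X),
\]
and the edge homomorphism
\[
H_G^*(X)\;\twoheadrightarrow\;H_G^*(X)/\mathfrak{m}H_G^*(X)\;=\;E_2^{0,*}\;\hookrightarrow\;E_\infty^{0,*}\;\hookrightarrow\;H^*(X)
\]
coincides with the canonical restriction $H_G^*(X)\to H^*(X)$. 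Under $E_2$-degeneration we have $E_2^{0,*}=E_\infty^{0,*}$, whence the kernel of $H_G^*(X)\to H^*(X)$ is exactly $\mathfrak{m}H_G^*(X)$, i.e.\ the action is spherical.

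There is no serious obstacle; the mildly delicate point is identifying the edge map of the Eilenberg--Moore spectral sequence with the standard restriction $H_G^*(X)\to H^*(X)$. This is the usual naturality statement for the EM construction applied to the pullback square obtained from $EG\to BG$, and in the torus case it is essentially already encoded in Proposition \ref{sphericalSS} once one reads the $d_2$ of the Koszul complex appearing there as the $d_2$ of the EM spectral sequence. Everything else is bookkeeping on free ranks and filtrations.
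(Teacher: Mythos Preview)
Your proof is correct and follows essentially the same route as the paper: the first implication is an immediate rank comparison via Theorem~\ref{thm:modformalfreeres}, and the second is handled through the Eilenberg--Moore spectral sequence and its edge homomorphism, exactly as the paper does (the paper also notes the torus shortcut via Propositions~\ref{E3collapse} and~\ref{sphericalSS}, which you allude to).

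One small notational slip: in your displayed edge factorization, the arrow $E_2^{0,*}\hookrightarrow E_\infty^{0,*}$ points the wrong way. In the paper's convention differentials \emph{start} from the $0$-column, so $E_\infty^{0,*}\subset E_2^{0,*}$; in the more common second-quadrant convention one instead has a surjection $E_2^{0,*}\twoheadrightarrow E_\infty^{0,*}$. Either way, your argument is unaffected since you only invoke the degenerate case $E_2^{0,*}=E_\infty^{0,*}$, but you should fix the arrow to avoid confusion.
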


\begin{proof}
The first implication is a consequence of Theorem \ref{thm:modformalfreeres}.
For torus actions the second implication is a consequence of the above propositions. In the general setting we instead consider the Eilenberg-Moore spectral sequence of the Borel-fibration. The column $E_\infty^{0,*}$ can be identified with the image of the map $H^*_G(X)\rightarrow H^*(X)$ (see \cite[Exercise 7.5]{McCleary}).
This is a subspace of $E_2^{0,*}=\mathrm{Tor}^{0,*}_R(H_G^*(X),R/\mathfrak{m})=H_G^*(X)/\mathfrak{m}H_G^*(X)$. Hence for dimensional reasons (all objects are degree wise finite-dimensional), being spherical is equivalent to the vanishing of all differentials starting in this column. By Remark \ref{rem:EilenbergMoore} this holds if the action is almost $\mathcal{MOD}$-formal.
\end{proof}

In the above theorem, none of the converse implications hold, not even for simply-connected compact manifolds. This is demonstrated by Examples \ref{ex:sphericalnotalmostmod} and \ref{ex:almostmodnotmod} as well as Remark \ref{rem:manifoldcounterexample}. In the discussion, the following description of spherical actions will be helpful.

\begin{lem}\label{spherical-generator-lem}
The $G$-action on $X$ is spherical if and only if there is a generating set of the $R$-module $H_G^*(X)$ such that the restriction $H_G^*(X)\rightarrow H^*(X)$ is injective on its $\mathbb{Q}$-span.
\end{lem}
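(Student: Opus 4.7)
The plan is to prove both implications by exploiting the fact that the map $\pi\colon H^*_G(X)\to H^*(X)$ always contains $\mathfrak{m}H^*_G(X)$ in its kernel (indeed, on models $\pi$ is the projection $R\otimes\Lambda V\to\Lambda V$ killing $R^+$), so that sphericality is precisely the statement that the induced map $\bar\pi\colon H^*_G(X)/\mathfrak{m}H^*_G(X)\to H^*(X)$ is injective. The lemma is then essentially a translation between this quotient-injectivity and injectivity on the $\mathbb{Q}$-span of a generating set, via graded Nakayama.

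For the ``$\Leftarrow$'' direction I would start with a generating set $\{\alpha_i\}$ whose $\mathbb{Q}$-span $U$ is mapped injectively by $\pi$, and show $\ker\pi\subseteq\mathfrak{m}H^*_G(X)$. Given $x\in\ker\pi$, write $x=\sum r_i\alpha_i$ with $r_i\in R$ and split each scalar as $r_i=c_i+m_i$ with $c_i\in\mathbb{Q}$ and $m_i\in\mathfrak{m}$. Then $\sum m_i\alpha_i\in\mathfrak{m}H^*_G(X)\subseteq\ker\pi$, so $\sum c_i\alpha_i\in U\cap\ker\pi=0$ by hypothesis, and $x\in\mathfrak{m}H^*_G(X)$ as required.

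For the ``$\Rightarrow$'' direction I would invoke graded Nakayama's lemma (valid since $H^*_G(X)$ is finitely generated over the positively graded ring $R$, with $\mathfrak{m}=R^+$): pick any $\mathbb{Q}$-basis of the finite-dimensional graded vector space $Q:=H^*_G(X)/\mathfrak{m}H^*_G(X)$ and lift it to a set $\{\alpha_i\}\subset H^*_G(X)$. By Nakayama this set generates $H^*_G(X)$ as an $R$-module, and its $\mathbb{Q}$-span $U$ maps isomorphically onto $Q$. Sphericality gives $\ker\pi=\mathfrak{m}H^*_G(X)$, so $\pi$ factors through an injection $\bar\pi\colon Q\hookrightarrow H^*(X)$; composing with the isomorphism $U\cong Q$ shows $\pi|_U$ is injective.

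I do not anticipate any real obstacle: the only point requiring a touch of care is justifying graded Nakayama in this context (so that lifting a basis of the quotient yields a genuine generating set), and making sure the finite generation hypothesis on $H^*_G(X)$ is invoked explicitly. Once the equivalence $\ker\pi=\mathfrak{m}H^*_G(X)\Leftrightarrow\bar\pi\text{ injective}$ is noted, both directions reduce to standard linear algebra modulo $\mathfrak{m}$.
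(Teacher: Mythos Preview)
Your proof is correct and follows essentially the same approach as the paper: both reduce the statement to the observation that sphericality is equivalent to injectivity of the induced map $H^*_G(X)/\mathfrak{m}H^*_G(X)\to H^*(X)$, and then translate this into the generating-set condition via Nakayama. The only cosmetic difference is that the paper phrases the equivalence abstractly (a subspace $V$ surjecting onto $H^*_G(X)/\mathfrak{m}H^*_G(X)$ and injecting into $H^*_G(X)/\ker(r)$) and concludes by a degreewise dimension count, whereas you write out the element chase explicitly; the content is the same.
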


\begin{proof}
The condition is equivalent to the existence of a $\mathbb{Q}$-subspace $V\subset H_G^*(X)$ such that the projection $V\rightarrow H_G^*(X)/\mathfrak{m}H_G^*(X)$ is surjective and the projection $V\rightarrow H_G^*(X)/\ker(r)$ is injective. Since all spaces are degreewise finite-dimensional and $\mathfrak{m}H_G^*(X)\subset \ker(r)$, the condition is equivalent to equality in the last inclusion.
\end{proof}

\subsection{Formally based actions}
As for the previously introduced notions, we search for ways to impose the triviality of certain Massey products in the equivariant cohomology and thus create a more local version of formality.

One of the more direct ways to do this is to demand that, rationally, the map $X_G\rightarrow BG$ factors cohomologically injectively through a formal space. On models this is equivalent to the following condition: let \[(R,0)\rightarrow (R\otimes \Lambda V,D)\rightarrow (\Lambda V,d)\]
be a model of the Borel fibration of the action. Let $A\subset H_G^*(X)$ be a subalgebra that contains $\im(R\rightarrow H^*_G(X))$ and let $(C,d)\simeq (A,0)$ be a relative minimal model of the canonical morphism $(R,0)\rightarrow(A,0)$.

\begin{defn}\label{def:formallybased}
We call the action \emph{formally based with respect to} $A$ if there exists a morphism $(C,d)\rightarrow (R\otimes \Lambda V,D)$ of $R$-cdgas for which the induced map \[A=H^*(A,0)\cong H^*(C,d)\rightarrow H^*(R\otimes \Lambda V,D)\cong H_G^*(X)\]
is the inclusion. If the action is formally based with respect to $\im(R\rightarrow H^*_G(X))$, we also just refer to it as being \emph{formally based}.
\end{defn}

\begin{rem}\begin{enumerate}[(i)]\label{rem:formallybased}
\item The existence of the morphism $(C,d)\rightarrow (R\otimes \Lambda V,D)$ in the definition does not depend on the particular choice of relative minimal models (see Remark \ref{rem:relmodelseindeutig}). However, note that its homotopy class may not be unique.
\item If an action is formally based with respect to some $A$, then it is automatically formally based with respect to any $A'$ satisfying $\im(R\rightarrow H^*_G(X))\subset A'\subset A$ as the inclusion $A'\subset A$ lifts to the Sullivan models. In particular the action is formally based if the homotopy quotient $X_G$ is formal.
\item The existence of an orbit with isotropy of maximal rank implies the injectivity of $R\rightarrow
 H^*_G(X)$ (the converse follows by Borel localization in case $X$ is compact). In this case, for $A=\im(R\rightarrow H^*_G(X))$, one may choose $(C,d)=(R,0)$ so the action is formally based.
\end{enumerate}
\end{rem}

A special case of this is given by the following

\begin{defn}
An action is \emph{hyperformal} if the kernel of $R\rightarrow H^*_G(X)$ is generated by a homogeneous regular sequence.
\end{defn}

\begin{lem}\label{lem:hyperformal is formally based}
Hyperformal actions are formally based.
\end{lem}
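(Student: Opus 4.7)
The plan is to exhibit an explicit relative minimal model of $(R,0)\to(A,0)$ as a Koszul complex and then to build the required $R$-cdga morphism by lifting the generators of the regular sequence to primitives in the Borel model. Recall that $A=\im(R\to H^*_G(X))=R/I$ where $I=(x_1,\ldots,x_k)$ with $x_1,\ldots,x_k$ a homogeneous regular sequence, and that by definition one needs a morphism of $R$-cdgas from the relative minimal model of $(R,0)\to(A,0)$ into the model $(R\otimes\Lambda V,D)$ of the Borel fibration, inducing the natural inclusion $A\hookrightarrow H^*_G(X)$ on cohomology.

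First, set $(C,d)=(R\otimes\Lambda(s_1,\ldots,s_k),d)$ with $|s_i|=|x_i|-1$ and $d(s_i)=x_i$. This is a relative Sullivan algebra over $R$; it is minimal because $d(s_i)=x_i\in R^+$ lies in $R^+\otimes\Lambda(s_1,\ldots,s_k)$, which satisfies the relative-minimality condition $d(V)\subset R^+\otimes\Lambda V+R\otimes\Lambda^{\geq 2}V$. By the classical fact that the Koszul complex on a regular sequence is a free resolution of $R/I$, the projection sending all $s_i$ to $0$ is a quasi-isomorphism $(C,d)\xrightarrow{\simeq}(R/I,0)=(A,0)$, so $(C,d)$ is a relative minimal Sullivan model of $(R,0)\to(A,0)$.

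Next I construct an $R$-cdga morphism $\varphi\colon (C,d)\to(R\otimes\Lambda V,D)$ inducing the inclusion $A\hookrightarrow H^*_G(X)$. Since each $x_i\in I=\ker(R\to H^*_G(X))$ is a coboundary in $(R\otimes\Lambda V,D)$, pick $t_i\in R\otimes\Lambda V$ with $D(t_i)=x_i$ and let $\varphi$ be the unique $R$-algebra map with $\varphi(s_i)=t_i$. It commutes with the differentials because
\[\varphi(d(s_i))=\varphi(x_i)=x_i=D(t_i)=D(\varphi(s_i)),\]
so $\varphi$ is a morphism of $R$-cdgas. On cohomology $\varphi^*$ sends the class of $r\in R$ (which generates $H^*(C,d)=R/I=A$) to the class of $r$ in $H^*_G(X)$; under the identification $A=\im(R\to H^*_G(X))$ this is exactly the inclusion, verifying the formally based condition.

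No step presents a serious obstacle: the whole argument rests on the standard identification of the Koszul resolution as a relative minimal Sullivan model for $R\to R/I$ when $I$ is generated by a regular sequence, together with the tautological fact that elements of $I$ become exact in $(R\otimes\Lambda V,D)$. The only minor subtlety worth spelling out is the verification of the relative minimality of $(C,d)$, but this is immediate from $d(s_i)\in R^+$.
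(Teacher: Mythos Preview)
Your proof is correct and follows essentially the same approach as the paper's own proof: both build the relative minimal model of $(R,0)\to(A,0)$ as the Koszul complex on the regular sequence, then lift the odd generators $s_i$ to primitives of the $x_i$ in $(R\otimes\Lambda V,D)$. Your write-up is simply more detailed in verifying relative minimality and that the resulting map is a morphism of $R$-cdgas inducing the inclusion on cohomology.
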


\begin{proof}
In the notation above we have $A=R/(a_1,\ldots,a_k)$ for homogeneous $a_i$ which form a regular sequence in $R$. A relative minimal model of $(R,0)\rightarrow(A,0)$ is given by $(C,d)=(R\otimes \Lambda (s_1,\ldots,s_k),d)$ with $d|_R=0$ and $d(s_i)=a_i$. The fact that the $a_i$ form a regular sequence implies that the map $C\rightarrow A$, defined by sending the $s_i$ to $0$ and projecting $R$ canonically to $A$, is a quasi-isomorphism. Also the $a_i$ are exact in $R\otimes \Lambda V$ by definition. We choose $z_i\in R\otimes \Lambda V$ with $D(z_i)=a_i$. The desired lift $C\rightarrow R\otimes \Lambda V$ is now obtained by sending $s_i$ to $z_i$ and $R$ identically to $R$.
\end{proof}

It is natural to not only demand ``formality of the image'' of $H^*(BG)\rightarrow H^*_G(X)$ through the formally based condition but to also pay attention to how said image is embedded in the ambient space. Especially with regards to the TRC, it is interesting to impose additional degeneracy conditions. For example, this shows up in the requirements of Theorem \ref{MasseyThm} compared to those of Theorem \ref{thm:Masseyformallybased}.

We will investigate another such condition:
given an action that is formally based with respect to some $A\subset H^*_G(X)$, we define the algebra $(C,d)$ as above. We may form the cdga $(\overline{C},\overline{d})$ where $\overline{C}=C/(R^+)$, which is again a Sullivan algebra. Now any morphism $(C,d)\rightarrow (R\otimes \Lambda V,D)$ of $R$-cdgas induces a morphism $(\overline{C},\overline{d})\rightarrow (\Lambda V,d)$ into the model of $X$ because $\Lambda V=(R\otimes \Lambda V)/(R^+)$.

\begin{defn}\label{def:injformbased}
Let $A$ be an $R$-subalgebra of $H_G^*(X)$. We say an action has \emph{formal core with respect to $A$} if there is a morphism $(C,d)\rightarrow (R\otimes \Lambda V,D)$ as in Definition \ref{def:formallybased} such that additionally the induced morphism $(\overline{C},\overline{d})\rightarrow (\Lambda V,d)$, obtained by dividing out $R^+$, is cohomologically injective. If such an $A$ exists, we also just refer to the action as having \emph{formal core}.
\end{defn}

\begin{rem}\begin{enumerate}[(i)]\label{rem:formalcore}
\item If $X_G$ is formal, then we can choose $A=H_G^*(X)$, $C=R\otimes \Lambda V$. The map $\overline{C}\rightarrow \Lambda V$ is just the identity and the action has formal core.
\item Other than for the notion of being formally based, having formal core with respect to $A$ does not imply formal core with respect to any $A'\subset A$: the injectivity of $H^*(\overline{C},\overline{d})\rightarrow H^*_G(X)$ depends on the particular choice of $A$. This can be observed in Example \ref{ex:biquotnichtinjbased}.
\item In case the action has an orbit with isotropy of maximal rank, as in Remark \ref{rem:formallybased}, the action is formally based with $(C,d)=(R,0)$. Thus $\overline{C}=\mathbb{Q}$ and the action automatically has formal core.
\item Any $S^1$-action has formal core: we are either in case $(iii)$ or $\im(R\rightarrow H_{S^1}(X))$ is isomorphic to $R/(X_1^n)$ for some $n$, where $R=\mathbb{Q}[X_1]$. We may thus set $C=(R\otimes \Lambda (\alpha),D)$ with $D\alpha= X_1^n$ and obtain a lift $C\rightarrow R\otimes \Lambda V$ by mapping $\alpha$ to some $\beta$ with $D\beta=X_1^n$. It suffices to argue that the projection $\overline{\beta}\in \Lambda V$ is not exact. If $d\gamma=\overline{\beta}$, then $\beta-D\gamma$ is divisible by $X_1$ and thus $D(X_1^{-1}(\beta-D\gamma))=X_1^{n-1}$ which is a contradiction.
\end{enumerate}
\end{rem}

\begin{prop}
Let $f\colon X\rightarrow Y$ be an equivariant map of $G$-spaces that induces injections on both regular and equivariant cohomology (e.g.\ an equivariant retract). If the action on $Y$ has formal core, then so does the action on $X$.
\end{prop}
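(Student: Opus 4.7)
The plan is to transport the formal core data from $Y$ to $X$ by postcomposing with a Sullivan representative of $f$, using the two injectivity hypotheses to preserve the two injectivity conditions required by Definition \ref{def:injformbased}.

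First, suppose the action on $Y$ has formal core with respect to some $R$-subalgebra $A\subset H_G^*(Y)$ containing $\im(R\to H_G^*(Y))$. Since $f^*\colon H_G^*(Y)\to H_G^*(X)$ is $R$-linear and injective, the image $A':=f^*(A)\subset H_G^*(X)$ is an $R$-subalgebra containing $\im(R\to H_G^*(X))$, and $f^*$ restricts to an $R$-algebra isomorphism $A\cong A'$. In particular, a relative minimal model $(C,d)$ of $(R,0)\to(A,0)$ also serves as a relative minimal model of $(R,0)\to(A',0)$. I would take this $(C,d)$ and the formal-core morphism $\varphi\colon(C,d)\to(R\otimes\Lambda W,D_Y)$ for $Y$ as the given data. (Here $(R\otimes\Lambda W,D_Y)\to(\Lambda W,d_Y)$ denotes the chosen model of the Borel fibration of $Y$.)

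Next, using Remark \ref{rem:relmodelseindeutig}(ii), I would choose Sullivan representatives of $f$ fitting into a strictly commutative diagram of $R$-cdga morphisms
\[
\xymatrix{
(R,0)\ar[r]\ar[d]^{\Id_R} & (R\otimes\Lambda W,D_Y)\ar[r]\ar[d]^{\psi} & (\Lambda W,d_Y)\ar[d]^{\overline\psi}\\
(R,0)\ar[r] & (R\otimes\Lambda V,D_X)\ar[r] & (\Lambda V,d_X)
}
\]
so that $\psi$ represents $f_G\colon X_G\to Y_G$ and $\overline\psi$, obtained by dividing out $R^+$, represents $f\colon X\to Y$. I would then define
\[
\varphi':=\psi\circ\varphi\colon(C,d)\longrightarrow(R\otimes\Lambda V,D_X),
\]
which is still a morphism of $R$-cdgas, and note that the induced map on $\overline{C}:=C/(R^+)$ is precisely $\overline\psi\circ\overline\varphi$.

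It remains to verify the two conditions of Definition \ref{def:injformbased} for $\varphi'$ with respect to $A'$. On equivariant cohomology, $\varphi'$ induces the composition $A\xrightarrow{\mathrm{incl}}H_G^*(Y)\xrightarrow{f^*}H_G^*(X)$, and via the isomorphism $A\cong A'$ this is exactly the inclusion $A'\hookrightarrow H_G^*(X)$. For the second condition, the induced map $H^*(\overline{C},\overline{d})\to H^*(\Lambda V,d_X)=H^*(X)$ is the composition
\[
H^*(\overline{C},\overline{d})\longrightarrow H^*(Y)\xrightarrow{\ f^*\ }H^*(X);
\]
the first factor is injective by the formal core assumption on $Y$, and the second is injective by hypothesis. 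The composition is therefore injective, so the action on $X$ has formal core with respect to $A'$. The argument is essentially a diagram-chase, and I do not foresee a real obstacle: the only subtlety is the simultaneous need for injectivity of $f^*$ on both regular and equivariant cohomology, which is exactly what the hypothesis supplies.
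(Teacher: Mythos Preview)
Your proof is correct and follows essentially the same approach as the paper's own argument: both pass to Sullivan representatives of $f$ compatible with the models of the Borel fibrations, postcompose the formal-core morphism $\varphi$ for $Y$ with $\psi$, and then use the two injectivity hypotheses on $f^*$ to verify the two injectivity conditions in Definition~\ref{def:injformbased}. Your write-up is slightly more explicit about why the same $(C,d)$ serves as a relative minimal model for $A'=f^*(A)$, but the substance is identical.
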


\begin{proof}
The equivariant map $f$ induces a commutative diagram
\[\xymatrix{R\ar[r]& \mathcal{M}_G^X\ar[r]&\mathcal{M}^X\\
R\ar[r]\ar[u]_{\Id_R} & \mathcal{M}_G^Y\ar[r]\ar[u]_{f_G^*} &\mathcal{M}^Y\ar[u]_{f^*}}\]
in which the rows are relative minimal models for the Borel fibrations of $X$ and $Y$. Suppose $Y$ has formal core with respect to $A\subset H_G^*(Y)$ and let $\iota\colon (C,d)\rightarrow \mathcal{M}_G^Y$ and $\overline{\iota}\colon (\overline{C},\overline{d})\rightarrow \mathcal{M}^Y$ be constructed as above. By assumption the maps $f_G^*\circ\iota$ and $f\circ\overline{\iota}$ are both injective on cohomology. Consequently the action on $X$ has formal core with respect to $f_G^*(A)\subset H_G^*(X)$.
\end{proof}

\begin{thm}\begin{enumerate}[(i)]
\item Let $G$ act on $X$ such that the action is almost free and formally based with respect to $A\subset H_G^*(X)$. If $A$ satisfies Poincaré duality, then the action has formal core with respect to $A$.
\item Any hyperformal action has formal core.
\end{enumerate}
\end{thm}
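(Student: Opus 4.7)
The plan in both parts is to exploit the following Poincar\'e-duality reduction: if $P$ is a Poincar\'e duality algebra with top class $\omega_{\textup{top}}$ and $f\colon P\to Q$ is a graded algebra map, then $f$ is injective if and only if $f(\omega_{\textup{top}})\neq 0$. Indeed, for any $0\neq\alpha\in P$, Poincar\'e duality supplies $\beta$ with $\alpha\beta=\omega_{\textup{top}}$, and then $f(\alpha)f(\beta)=f(\omega_{\textup{top}})\neq 0$ forces $f(\alpha)\neq 0$. Applied to $\overline{\iota}^*\colon H^*(\overline{C})\to H^*(X)$, this splits the work in each part into (a) identifying a Poincar\'e duality structure on $H^*(\overline{C})$, and (b) verifying that $\overline{\iota}^*$ does not annihilate the top class.

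For part (i), I first fix a lift $\iota\colon (C,d)\to(R\otimes\Lambda V,D)$ as in Definition \ref{def:formallybased}. Since $(C,d)$ is a relative Sullivan algebra over $R$ with $H^*(C,d)=A$ and is $R$-free as a graded module, $C$ serves as an $R$-free dg-resolution of $A$, yielding $H^*(\overline{C})\cong\operatorname{Tor}_R^*(A,\qq)$. The Poincar\'e duality assumption on $A$ together with the Gorenstein property of the polynomial ring $R$ endows $\operatorname{Tor}_R^*(A,\qq)$ with a Poincar\'e duality structure---this comes from the self-duality (up to shift) of the minimal $R$-free resolution of a Gorenstein quotient of $R$. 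For step (b), the almost-free hypothesis enters via Hsiang's Theorem \ref{thm:hsaiNGSTHEOREM}: $H^*_G(X)$ is then finite-dimensional, so the Eilenberg--Moore spectral sequence $\operatorname{Tor}_R^*(H^*_G(X),\qq)\Rightarrow H^*(X)$ has bounded homological filtration $\operatorname{rk}(R)$. I would then verify that the inclusion $A\hookrightarrow H^*_G(X)$ induces an injection on the top $\operatorname{Tor}$ degree and that this top class survives the spectral sequence to a non-zero class in $H^*(X)$.

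For part (ii), Lemma \ref{lem:hyperformal is formally based} lets me take $\overline{C}=\Lambda(s_1,\ldots,s_k)$ with zero differential, which is trivially a Poincar\'e duality algebra with top class $s_1\cdots s_k$; moreover $\overline{\iota}^*(s_i)=[\overline{z_i}]$ for some chosen $z_i\in R\otimes\Lambda V$ with $Dz_i=a_i$. Step (b) then becomes the claim $[\overline{z_1}\cdots\overline{z_k}]\neq 0$ in $H^*(X)$. My approach is a Borel-type transgression argument in the Serre spectral sequence of $X\to X_G\to BG$: each $[\overline{z_i}]$ is transgressive with transgression $a_i$, and the regular sequence property of $a_1,\ldots,a_k$ in $R$ forces a Koszul-type iterated transgression making $[\overline{z_1}\cdots\overline{z_k}]$ responsible (via Leibniz) for eventually killing $a_1\cdots a_k$, which is non-zero in $R$. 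Concretely, an exactness assumption $\overline{z_1}\cdots\overline{z_k}=d\mu$ lifts to $z_1\cdots z_k=D\tilde\mu+\eta$ in $R\otimes\Lambda V$ with $\eta\in R^+\cdot(R\otimes\Lambda V)$; applying $D$ produces $\sum_i\pm a_iz_{\hat\imath}=D\eta$, and the regularity of the $a_i$ together with the injectivity of $A\hookrightarrow H^*_G(X)$ can be leveraged to derive a contradiction.

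The main obstacle I expect is step (b) in both parts: the Poincar\'e duality reduction itself is routine, but carefully tracking the top class---through the Eilenberg--Moore spectral sequence in (i), and via transgression combined with the regular-sequence structure in (ii)---is where the real work lies.
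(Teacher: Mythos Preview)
Your overall architecture---reduce to the survival of a single top class via Poincar\'e duality, then track that class through a spectral sequence---matches the paper's. The differences lie in which spectral sequence you use and, more importantly, in how you handle part (ii).

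For part (i), the paper works not with $\overline{C}$ directly but with the quasi-isomorphic Hirsch extension $(C\otimes S,D)$, where $S=\Lambda(s_1,\dots,s_r)$ and $Ds_i=X_i$; the map to compare is $\varphi\colon C\otimes S\to R\otimes\Lambda V\otimes S$. Poincar\'e duality of $H^*(C\otimes S)$ follows from that of $A=H^*(C)$ via \cite[Lemma 38.2]{Bibel}, which is the same fact you invoke through the Gorenstein/Tor language. The survival argument, however, is simpler than your Eilenberg--Moore route: filtering both sides by $C$-degree (resp.\ $R\otimes\Lambda V$-degree) gives a Serre-type spectral sequence with $E_2=A\otimes S\to H^*_G(X)\otimes S$, and the fundamental class $\omega_A\otimes s_1\cdots s_r$ sits in the \emph{top $S$-row}. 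Differentials strictly lower the $S$-degree, so nothing in the target spectral sequence can hit this row; the image therefore persists to $E_\infty$ automatically. This single observation simultaneously handles both of your sub-steps (injectivity on top Tor and survival), which in your EMSS framing would require separate arguments---and the first of these (that $A\hookrightarrow H^*_G(X)$ induces an injection on $\mathrm{Tor}^r_R$) is not obvious without exactly this kind of degree argument.

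For part (ii) the paper does \emph{not} argue directly. Instead it observes that if the regular sequence $f_1,\dots,f_k$ is maximal ($k=r$), then $H^*(C)=R/(f_1,\dots,f_r)$ is a complete intersection, hence Poincar\'e duality, and (i) applies verbatim. If $k<r$, one simply completes $f_1,\dots,f_k$ to a maximal regular sequence $f_1,\dots,f_r$, extends $\psi$ to $C\otimes\Lambda(a_{k+1},\dots,a_r)\to R\otimes\Lambda V\otimes\Lambda(a_{k+1},\dots,a_r)$ by sending the new $a_i$ to themselves, and notes that cohomological injectivity of $\overline{\psi}\otimes\mathrm{id}$ is equivalent to that of $\overline{\psi}$. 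This reduces (ii) to (i) in two lines and sidesteps your transgression/contradiction argument entirely. Your direct approach is not wrong in spirit, but the sketch has loose ends: the chosen $z_i$ need not lie in $1\otimes\Lambda V$, so the $[\overline{z_i}]$ are not transgressive in the classical sense, and the final ``can be leveraged to derive a contradiction'' hides exactly the inductive regularity argument that the reduction to (i) packages cleanly.
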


\begin{proof}
Let $C$ and $\overline{C}$ be defined as above. By assumption
we have a commutative diagram
\[\xymatrix{(C\otimes S,d)\ar[r]\ar[d] & (R\otimes \Lambda V\otimes  S,D)\ar[d]\\
(\overline{C},\overline{d})\ar[r] &(\Lambda V,d)}\]
where the differentials in the top row map the generators of $S=\Lambda (s_1,\ldots,s_r)$ bijectively onto the generators of $R=\Lambda (X_1,\ldots,X_r)$ and the vertical maps are quasi-isomorphisms defined by sending $R^+$ and $S^+$ to $0$. It suffices to show cohomological injectivity of the top horizontal map which we denote by $\varphi$.

As $A=H^*(C)$ is a Poincaré duality algebra, it follows that the cohomology of $C\otimes S$ satisfies Poincaré duality as well (see \cite[Lemma 38.2]{Bibel}). The fundamental class of $C\otimes S$ is contained in every nontrivial ideal of
$H^*(C\otimes  S)$. Hence it maps to $0$ under \[\varphi^*\colon H^*(C\otimes S)\rightarrow H^*(R\otimes \Lambda V\otimes S)\]
if and only if $\ker \varphi^*\neq 0$. As a consequence, we only need to prove injectivity of $\varphi^*$ on the top degree cohomology.

To see this, filter $C\otimes  S$ by degree in $C$ and $R\otimes \Lambda V\otimes S$ by degree in $R\otimes \Lambda V$. The morphism $\varphi$ respects this filtration and consequently induces a map between the spectral sequences. On the $E_2$ page, this morphism is given by the inclusion
\[A\otimes S\rightarrow H_G^*(X)\otimes S.\]
Since the top degree cohomology of $A\otimes S$ is located in the top row, its image on the right cannot be killed by any differential and hence induces a nontrivial element on the $\infty$-page. This shows that $\varphi^*$ is not $0$ in the top degree which concludes the proof of $(i)$.

In the situation of $(ii)$, if the kernel of $R\rightarrow H_G^*(X)$ is generated by a homogeneous regular sequence $f_1,\ldots,f_k$, then the action is formally based by Lemma \ref{lem:hyperformal is formally based} and we have a morphism of $R$-cdgas $\psi\colon(C,d)\rightarrow (R\otimes \Lambda V,D)$, where $C=R\otimes\Lambda(a_1,\ldots,a_k)$ and $da_i=f_i$. We want to prove that the map $\overline{\psi}\colon\overline{C}\rightarrow\Lambda V$ is cohomologically injective. Observe that if the regular sequence was maximal in $R=\Lambda(X_1,\ldots,X_r)$, meaning $k=r$, then $H^*(C)$ would be a Poincaré duality algebra and we could use $(i)$ to finish the proof.

If $k<r$ we may complete the $f_i$ to a maximal regular sequence and extend $\psi$ to a map $(C\otimes \Lambda(a_{k+1},\ldots,a_r),d) \rightarrow (R\otimes \Lambda V\otimes \Lambda(a_{k+1},\ldots,a_r),D)$ where the differentials map the additional $a_i$ onto the additional $f_i$. Cohomological injectivity of
\[\overline{\psi}\otimes \Id\colon \overline{C}\otimes \Lambda(a_{k+1},\ldots,a_r)\rightarrow \Lambda V\otimes \Lambda(a_{k+1},\ldots,a_r)\] is equivalent to the original map $\overline{\psi}$ being injective. Applying $(i)$ yields the claim.
\end{proof}

\begin{rem}
We want to investigate how all of the previously defined notions interact.
As it turns out, the concepts introduced in this section seem to be rather independent from the notion of $\mathcal{MOD}$-formality: Example \ref{ex:injectively based, not spherical} is a hyperformal action (with formal core), which is not spherical and thus in particular not (almost) $\mathcal{MOD}$-formal.
Furthermore, in \ref{ex:masseynotnecessary}, we construct an example of a $\mathcal{MOD}$-formal action which is not formally based. However, we want to point out that while no direct implications exist, common roots can be found in the vanishing of certain Massey products, which is best captured in the degeneracy of minimal $C_\infty$-structures: in Theorem \ref{MasseyThm} we give a sufficient condition for $\mathcal{MOD}$-formality that builds upon the notion of being formally based (see also Theorem \ref{thm:Masseyformallybased}).
\end{rem}

\begin{rem}
A class of examples that has a formal homotopy quotient, and is thus in particular $\mathcal{MOD}$-formal and has formal core, is provided by $2$-step nilpotent Lie algebras. An associated nilmanifold $M$ has the rational structure of the total space of a torus fibration over a torus, i.e.~
\begin{align*}
T_1\hookrightarrow{} M \to T_2,
\end{align*}
where $T_1$ has maximal dimension among the tori acting almost freely on a space in the rational homotopy type of $M$ (see e.g.\ \cite[Sections 3.2 and 7.3.4]{AMIG}). So, rationally, we are dealing with the equivalent of a free $T_1$-action on $M$ with formal quotient $T_2$. In view of Theorem \ref{thm:TRCholdsforbla} this gives another proof of the TRC for 2-step nilpotent Lie Algebras.
\end{rem}

\subsection{Inheritance under elementary constructions}

\subsubsection{Products} If $X$ is a $G$-space and $Y$ is a $G'$ space, then $X\times Y$ is naturally a $G\times G'$ space.

\begin{prop}\label{prop:productinheritance}
Let $X$ be a $G$-space, $Y$ be a $G'$ space. If both, the $G$- and the $G'$-action, satisfy one of the conditions of being spherical, (almost) $\mathcal{MOD}$-formal, formally based or having formal core, then the same is true for the $G\times G'$-action on $X\times Y$.
\end{prop}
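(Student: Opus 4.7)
The key structural observation is that the Borel construction commutes with products: $(X\times Y)_{G\times G'}\simeq X_G\times Y_{G'}$ and $B(G\times G')\simeq BG\times BG'$. If $(R,0)\ra(R\otimes\Lambda V,D)\ra(\Lambda V,d)$ and $(R',0)\ra(R'\otimes\Lambda W,D')\ra(\Lambda W,d')$ are models of the two Borel fibrations, then the tensor product
\[(R\otimes R',0)\ra(R\otimes\Lambda V\otimes R'\otimes\Lambda W,\,D\otimes\mathrm{Id}+\mathrm{Id}\otimes D')\ra(\Lambda V\otimes\Lambda W,\,d\otimes\mathrm{Id}+\mathrm{Id}\otimes d')\]
is a model of the Borel fibration of the $(G\times G')$-action on $X\times Y$ (noting that $R\otimes R'$ is the minimal model of $B(G\times G')$). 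Since we work over a field, the K\"unneth theorem furnishes $H^*_{G\times G'}(X\times Y)\cong H^*_G(X)\otimes H^*_{G'}(Y)$ as $(R\otimes R')$-modules, $H^*(X\times Y)\cong H^*(X)\otimes H^*(Y)$, and the restriction map is the tensor product of the two individual restriction maps. The proof then reduces to verifying, for each of the five properties, that the defining algebraic data is stable under tensor product.

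For $\mathcal{MOD}$-formality, a zigzag of dg$R$-module quasi-isomorphisms connecting $(R\otimes\Lambda V,D)$ with $(H^*_G(X),0)$ is tensored term-by-term with an analogous zigzag for $Y$ (on each stage taking the identity on the object from the other side); by K\"unneth, tensor products of quasi-isomorphisms over $\qq$ remain quasi-isomorphisms, yielding a zigzag of dg$(R\otimes R')$-module quasi-isomorphisms from the model of $(X\times Y)_{G\times G'}$ to $(H^*_{G\times G'}(X\times Y),0)$. For almost $\mathcal{MOD}$-formality, one checks that the tensor product of minimal graded free resolutions $F_\bullet\ra H^*_G(X)$ over $R$ and $F'_\bullet\ra H^*_{G'}(Y)$ over $R'$ is a minimal graded free resolution over $R\otimes R'$: exactness follows from $\qq$-flatness, and minimality because the total differential lands in the ideal $(R\otimes R')^+(F_\bullet\otimes F'_\bullet)$, using $(R\otimes R')^+=R^+\otimes R'+R\otimes R'^+$. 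The total rank of this resolution is the product of the individual ranks, matching $\dim H^*(X\times Y)=\dim H^*(X)\cdot\dim H^*(Y)$.

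For the spherical property, Lemma \ref{spherical-generator-lem} applies: choosing generating sets of $H^*_G(X)$ and $H^*_{G'}(Y)$ whose $\qq$-spans inject into $H^*(X)$ and $H^*(Y)$ respectively, the pairwise products generate $H^*_{G\times G'}(X\times Y)$ as an $(R\otimes R')$-module, and the restriction on their $\qq$-span is the tensor product of two injections, hence injective. For the formally based case with respect to subalgebras $A\subset H^*_G(X)$ and $A'\subset H^*_{G'}(Y)$, the subalgebra $A\otimes A'$ contains the image of $R\otimes R'$, and if $(C,d)$, $(C',d')$ are relative minimal models for $R\ra A$, $R'\ra A'$, then $C\otimes C'$ is a relative minimal model for $R\otimes R'\ra A\otimes A'$; tensoring the two lifts yields the required morphism of $(R\otimes R')$-cdgas. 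For formal core, the additional input is the natural identification $\overline{C\otimes C'}\cong\overline{C}\otimes\overline{C'}$ (again using $(R\otimes R')^+=R^+\otimes R'+R\otimes R'^+$), so the induced map $\overline{C}\otimes\overline{C'}\ra\Lambda V\otimes\Lambda W$ is the tensor product of two cohomologically injective maps, which remains cohomologically injective over $\qq$ by K\"unneth. The main obstacle throughout is not conceptual but bookkeeping---ensuring the identifications above are compatible with the $(R\otimes R')$-module structures---but over a field every potentially delicate Tor-vanishing issue is automatic, so the arguments go through smoothly.
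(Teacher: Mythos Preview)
Your proposal is correct and follows essentially the same approach as the paper: both set up the tensor product of the two Borel-fibration models as the model for the product action, and then verify each property by tensoring the relevant algebraic witnesses. The only cosmetic differences are that the paper argues the spherical case by directly computing the kernel of the restriction map rather than invoking Lemma~\ref{spherical-generator-lem}, and handles almost $\mathcal{MOD}$-formality via the Hirsch--Brown model rather than the minimal free resolution---but these are equivalent formulations of the same argument.
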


\begin{proof}
Let $R\rightarrow \mathcal{M}_G^X\rightarrow \mathcal{M}^X$ and $R'\rightarrow \mathcal{M}_{G'}^Y\rightarrow \mathcal{M}^Y$ be relative minimal models for the respective Borel fibrations. Then
\[R\otimes R'\rightarrow \mathcal{M}_G^X\otimes \mathcal{M}_{G'}^Y\rightarrow \mathcal{M}^X\otimes \mathcal{M}^Y\]
is a relative minimal model for the Borel fibration of the $G\times G'$-action. The associated Hirsch--Brown model is given by the tensor product of the minimal Hirsch--Brown models of the $G$- and the $G'$-action hence the statement for almost $\mathcal{MOD}$-formal actions follows. Furthermore, formality is compatible with the tensor product and we obtain the proposition under the assumption of $\mathcal{MOD}$-formality. In the case of spherical actions, the kernel of $H_G^*(X)\otimes H_{G'}^*(Y)\rightarrow H^*(X)\otimes H^*(Y)$ is given by \[(R^+\cdot H_G^*(X))\otimes H_{G'}^*(Y)+H_G^*(X)\otimes (R'^+\cdot H_{G'}^*(Y))= (R\otimes R')^+\cdot H_{G\times G'}^*(X\times Y).\]
Finally, if the $G$-action is formally based with respect to $A\subset H_G^*(X)$ and the $G'$-action is formally based with respect to $A'\subset H_{G'}^*(Y)$, then, by considering the tensor product of the occurring maps, we see that the $G\times G'$-action is formally based with respect to $A\otimes A'\subset H_G^*(X)\otimes H_{G'}^*(Y)\cong H_{G\times G'}^*(X\times Y)$. In the same way one obtains the statement for actions with formal core.
\end{proof}

\subsubsection{Gluing}
In this section we assume all $G$-spaces to be Tychonoff spaces (so e.g.\ CW-complexes) in order to ensure the existence of tubular neighbourhoods (see \cite[Theorem 5.4]{bredon}).
Let $X$ and $Y$ be two $G$-spaces. If $G\rightarrow X$ and $G\rightarrow Y$ are equivariant maps onto orbits of $X$ and $Y$, we may construct their pushout $X\vee_G Y$, which is naturally a $G$-space. If the stabilizers of the image of $1\in G$ in $X$ and $Y$ agree, then $X\vee_G Y$ is just $X$ and $Y$ glued together at these orbits.

\begin{prop}\label{prop:orbitgluing}
Suppose that $X$ and $Y$ are $\mathcal{MOD}$-formal (resp.\ have formal homotopy quotients $X_G$ and $Y_G$) and have an almost free $G$-orbit of the same orbit type. Then the $G$-space $X\vee_G Y$ obtained by gluing $X$ and $Y$ along this orbit is $\mathcal{MOD}$-formal (resp.\ has formal homotopy quotient).
\end{prop}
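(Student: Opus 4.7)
The plan is to reduce the gluing to a wedge-sum construction in rational homotopy theory. Since the Borel construction $EG \times_G(-)$ commutes with pushouts of $G$-spaces, applying it to the pushout defining $X \vee_G Y$ yields
\[ (X \vee_G Y)_G \;\simeq\; X_G \cup_{(G/H)_G} Y_G \;=\; X_G \cup_{BH} Y_G, \]
where $H$ is the common (finite) isotropy group of the identified orbit, using $(G/H)_G = EG/H \simeq BH$. Because $H$ is finite, $H^*(BH;\qq) = \qq$, so the inclusions $BH \hookrightarrow X_G, Y_G$ are rationally equivalent to inclusions of basepoints, and $(X \vee_G Y)_G$ is rationally homotopy equivalent to the pointed wedge $X_G \vee Y_G$. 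This identification is the starting point for both halves of the proposition.

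For the formal-homotopy-quotient statement, I would invoke the classical fact that a wedge of simply connected formal cdgas is formal. Concretely, if $\mathcal{M}(X_G)$ and $\mathcal{M}(Y_G)$ are Sullivan models with augmentations $\epsilon_X, \epsilon_Y$ coming from the chosen basepoints, then the fibre product $\mathcal{M}(X_G) \times_\qq \mathcal{M}(Y_G)$ is a cdga model for the wedge. Formality-witnessing quasi-isomorphisms $\mathcal{M}(X_G) \to (H^*(X_G),0)$ covering the projection onto closed elements respect augmentations, so their fibre product yields a quasi-isomorphism to $H^*(X_G) \times_\qq H^*(Y_G) \cong H^*((X \vee_G Y)_G)$, establishing formality.

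For the $\mathcal{MOD}$-formal half, the same fibre-product strategy applies on the level of Hirsch--Brown models. A dg$R$-module model of $(X \vee_G Y)_G$ is the fibre product $\mathcal{H}^X \times_\qq \mathcal{H}^Y$ of the Hirsch--Brown models of $X$ and $Y$ over the augmentation $R \otimes H^*(-) \to \qq$; its cohomology is $H^*_G(X) \times_\qq H^*_G(Y) \cong H^*_G(X \vee_G Y)$ by Mayer--Vietoris. Given dg$R$-module quasi-isomorphisms $\phi^X\colon \mathcal{H}^X \to (H^*_G(X),0)$ and $\phi^Y\colon\mathcal{H}^Y \to (H^*_G(Y),0)$ supplied by $\mathcal{MOD}$-formality of $X$ and $Y$, one observes that in degree zero the induced $R$-linear maps $R \to \qq$ are necessarily scalar multiples of the augmentation, so augmentations are respected automatically, and $\phi^X \times \phi^Y$ is a well-defined dg$R$-module map. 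That it is a quasi-isomorphism follows from the five-lemma applied to the Mayer--Vietoris sequences on both sides.

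The main technical nuisance I foresee is bookkeeping around the fibre product of dg$R$-modules: one must verify that it really does model $(X \vee_G Y)_G$ (requiring surjectivity of the augmentations, which is immediate since $1 \mapsto 1$) and that the induced map on fibre products is $R$-linear. A clean way to handle this is to use the direct-sum decomposition $\mathcal{H}^X \times_\qq \mathcal{H}^Y \cong \qq \oplus \widetilde{\mathcal{H}}^X \oplus \widetilde{\mathcal{H}}^Y$, where tildes denote augmentation ideals and $R^+$ acts trivially on the $\qq$-summand; in this form the dg$R$-module structure, the compatibility with augmentations, and the $R$-linearity of the pieced-together map all become transparent.
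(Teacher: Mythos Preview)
Your approach is essentially the same as the paper's. The paper uses tubular neighbourhoods and the equivariant Mayer--Vietoris sequence to identify $H^*_G(X\vee_G Y)\cong H^*_G(X)\oplus_\qq H^*_G(Y)$ and to show that $\mathcal{M}_G^X\oplus_\qq\mathcal{M}_G^Y$ (their notation for your fibre product) is an $R$-cdga model for $(X\vee_G Y)_G$; from there it simply pieces together the two formality (resp.\ $\mathcal{MOD}$-formality) quasi-isomorphisms exactly as you do. Your route via ``Borel construction commutes with pushouts, and $BH$ is rationally a point'' is a slightly slicker packaging of the same Mayer--Vietoris computation, and your explicit remark about degree-zero compatibility of the dg$R$-module maps is a point the paper passes over in silence, but the substance of both arguments is identical.
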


In the proof we make use of the following notation: if $A$ and $B$ are connected cdgas, then $A\oplus_\mathbb{Q}B$ is the sub-cdga of the product cdga $A\oplus B$ which in degree $0$ is generated by $(1,1)$ and agrees with $A\oplus B$ in positive degrees. We use the analogous notation for dg$R$ms whose degree $0$ component is $\mathbb{Q}$.

\begin{proof}
The orbits are equivariant retracts of open $G$-invariant neighbourhoods in $X$ and $Y$. Using these, we can cover $X\vee_G Y$ with open sets $U,V$ which equivariantly retract onto $X$ and $Y$ and whose intersection retracts equivariantly onto the glued orbit. We consider the equivariant Mayer--Vietoris sequence of this cover. As the equivariant cohomology of an almost free orbit is just $\mathbb{Q}$, it follows that \[H^*_G(X\vee_G Y) \cong H^*_G(X)\oplus_\mathbb{Q} H^*_G(Y).\]

We now turn our attention to models. Let $\mathcal{M}_G^{X\vee_G Y}$, $\mathcal{M}_G^X$, and $\mathcal{M}_G^Y$ denote the Sullivan models of the homotopy quotients of the respective $G$-spaces as in Remark \ref{rem:relmodelseindeutig} $(ii)$. The previous discussion implies that the induced map $\mathcal{M}_G^{X\vee_G Y}\rightarrow \mathcal{M}_G^X\oplus_\mathbb{Q}\mathcal{M}_G^Y$ is a quasi-isomorphism. In fact it can be chosen as a morphism of $R$-cdgas, where the $R$-module structure on the right is the diagonal one.
If the actions have formal homotopy quotient, then we have a quasi-isomorphism \[\mathcal{M}_G^X\oplus_\mathbb{Q}\mathcal{M}_G^Y\rightarrow H_G^*(X)\oplus_\mathbb{Q}H^*_G(Y)\]
of cdgas which ends the proof. In case the actions are $\mathcal{MOD}$-formal, let $\mathfrak{M}_X$ and $\mathfrak{M}_Y$ denote the Hirsch--Brown models of the action  and note that by the previous discussion, $\mathfrak{M}_X\oplus_\mathbb{Q} \mathfrak{M}_Y$ is a dg$R$m-model for $\mathcal{M}_G^{X\vee_G Y}$. We have a quasi-isomorphism
\[\mathfrak{M}_G^X\oplus_\mathbb{Q}\mathfrak{M}_G^Y\rightarrow H_G^*(X)\oplus_\mathbb{Q}H^*_G(Y)\]
of dg$R$ms, which finishes the proof.
\end{proof}

\begin{prop}\label{prop:orbitglueingformalcore}
Let $X$ and $Y$ be formally based $G$-spaces. Then $X\vee_G Y$ obtained by gluing $X$ and $Y$ along an almost free orbit of the same orbit type is formally based. Moreover, if the actions have formal core and the Sullivan models for $X_G$ and $Y_G$ in a relative minimal model of the respective Borel fibrations are already minimal, then the action on $X\vee_G Y$ has formal core.
\end{prop}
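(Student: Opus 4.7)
The proof adapts the strategy of Proposition~\ref{prop:orbitgluing}, carefully tracking the witnessing data for being formally based and having formal core through the gluing construction.

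For the first assertion, let $\iota_X \colon (C_X, d_X) \to \mathcal{M}_G^X$ and $\iota_Y \colon (C_Y, d_Y) \to \mathcal{M}_G^Y$ be the $R$-cdga morphisms exhibiting that $X$ and $Y$ are formally based with respect to $R$-subalgebras $A_X \subset H_G^*(X)$ and $A_Y \subset H_G^*(Y)$. I would set $A := A_X \oplus_{\mathbb{Q}} A_Y$, viewed as an $R$-subalgebra of $H_G^*(X \vee_G Y) \cong H_G^*(X) \oplus_{\mathbb{Q}} H_G^*(Y)$ with the diagonal $R$-action; since both $A_X$ and $A_Y$ contain the image of $R$, so does $A$ (diagonally). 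The combined morphism $\iota_X \oplus_{\mathbb{Q}} \iota_Y \colon C_X \oplus_{\mathbb{Q}} C_Y \to \mathcal{M}_G^X \oplus_{\mathbb{Q}} \mathcal{M}_G^Y$ is one of $R$-cdgas and induces on cohomology the inclusion $A \hookrightarrow H_G^*(X \vee_G Y)$. Choosing any relative minimal model $(C, d)$ of $(R, 0) \to (A, 0)$ and applying Sullivan lifting (cf.\ \cite[Prop.\ 14.6]{Bibel}) together with Lemma~\ref{lem:fbchar}(ii) for strictness, I obtain an $R$-cdga morphism $C \to C_X \oplus_{\mathbb{Q}} C_Y$ covering the quasi-isomorphism to $A$. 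Composing with $\iota_X \oplus_{\mathbb{Q}} \iota_Y$ and lifting once more through the $R$-cdga quasi-isomorphism $\mathcal{M}_G^{X \vee_G Y} \to \mathcal{M}_G^X \oplus_{\mathbb{Q}} \mathcal{M}_G^Y$ from Proposition~\ref{prop:orbitgluing} yields the desired morphism $C \to \mathcal{M}_G^{X \vee_G Y}$, proving that $X \vee_G Y$ is formally based with respect to $A$.

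For the formal core assertion, the minimality hypothesis ensures that $\mathcal{M}^X = \mathcal{M}_G^X/R^+$ and $\mathcal{M}^Y = \mathcal{M}_G^Y/R^+$ are the minimal Sullivan models of $X$ and $Y$, and by the formal core hypothesis the induced morphisms $\overline{\iota_X} \colon \overline{C_X} \to \mathcal{M}^X$ and $\overline{\iota_Y} \colon \overline{C_Y} \to \mathcal{M}^Y$ are cohomologically injective. Minimality should moreover allow $\mathcal{M}_G^{X \vee_G Y}$ to be arranged as absolutely minimal, so that $\mathcal{M}^{X \vee_G Y} = \mathcal{M}_G^{X \vee_G Y}/R^+$ is the minimal Sullivan model of $X \vee_G Y$. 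Dividing the constructed morphism $C \to \mathcal{M}_G^{X \vee_G Y}$ by $R^+$ yields $\overline{C} \to \mathcal{M}^{X \vee_G Y}$, and the remaining task is to verify that this is cohomologically injective.

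The main obstacle is that the convenient algebraic target $\mathcal{M}^X \oplus_{\mathbb{Q}} \mathcal{M}^Y$ models the pointed wedge $X \vee Y$, not the orbit-glued space $X \vee_G Y$, because the glued orbit $G/H \simeq_{\mathbb{Q}} G$ contributes nontrivial rational cohomology via Mayer--Vietoris. To circumvent this, I would replace the quotient-by-$R^+$ operation with tensoring by $S = \Lambda(s_1, \ldots, s_r)$ with $D s_i = X_i$ (cf.\ Remark~\ref{rem:andereextseq}): tensoring the quasi-isomorphism of Proposition~\ref{prop:orbitgluing} by $S$ produces a quasi-isomorphism between models of $X \vee_G Y$, and $C \otimes S \simeq \overline{C}$ since the $X_i$ form a regular sequence in $C$. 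Hence cohomological injectivity of $\overline{C} \to \mathcal{M}^{X \vee_G Y}$ reduces to that of $C \otimes S \to (\mathcal{M}_G^X \oplus_{\mathbb{Q}} \mathcal{M}_G^Y) \otimes S$, which factors through the quasi-isomorphism $(C_X \oplus_{\mathbb{Q}} C_Y) \otimes S$. Now, the individual maps $C_X \otimes S \to \mathcal{M}_G^X \otimes S$ and $C_Y \otimes S \to \mathcal{M}_G^Y \otimes S$ are cohomologically injective (being equivalent to $\overline{\iota_X}$ and $\overline{\iota_Y}$), so a twofold application of the five-lemma to the short exact sequences $0 \to (C_X^+ \otimes S) \oplus (C_Y^+ \otimes S) \to (C_X \oplus_{\mathbb{Q}} C_Y) \otimes S \to S \to 0$ and its analogue on the $\mathcal{M}_G$-side propagates injectivity through the $\oplus_{\mathbb{Q}}$-construction and completes the verification of formal core.
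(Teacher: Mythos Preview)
Your argument for the formally based part is correct and matches the paper. Your reduction of the formal core statement to cohomological injectivity of $(C_X \oplus_\mathbb{Q} C_Y)\otimes S \to (\mathcal{M}_G^X \oplus_\mathbb{Q} \mathcal{M}_G^Y)\otimes S$ is also the right move and parallels the paper. The gap is in the very last step.

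For your five-lemma on the short exact sequence
\[0 \to (C_X^+ \otimes S)\oplus (C_Y^+ \otimes S) \to (C_X \oplus_\mathbb{Q} C_Y)\otimes S \to S \to 0\]
(and its $\mathcal{M}_G$-analogue) to yield injectivity of the middle vertical map, you need cohomological injectivity of the \emph{left} vertical map $(C_X^+\otimes S)\oplus(C_Y^+\otimes S)\to ((\mathcal{M}_G^X)^+\otimes S)\oplus((\mathcal{M}_G^Y)^+\otimes S)$. The formal core hypothesis only gives injectivity of the \emph{full} maps $C_X\otimes S\to \mathcal{M}_G^X\otimes S$, and a second application of the four/five lemma does \emph{not} let you pass from the full version to the $(\cdot)^+$ version: that direction would require \emph{surjectivity} of $H^*(C_X\otimes S)\to H^*(\mathcal{M}_G^X\otimes S)$, which is certainly not available. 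So the ``twofold five-lemma'' does not close.

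This is precisely where the minimality hypothesis on $\mathcal{M}_G^X$ and $\mathcal{M}_G^Y$ must be used, and your write-up never invokes it. The paper instead compares $(C_X\oplus_\mathbb{Q} C_Y)\otimes S$ with the \emph{full} direct sum $(C_X\oplus C_Y)\otimes S$ (cokernel $(1,0)\otimes S$ with trivial differential), so that the known injectivity applies directly on the larger side; the kernel $K$ on cohomology is then represented by $D((1,0)\otimes S)\subset \langle (X_i,0)\rangle\otimes S$, and minimality shows these classes cannot become exact in $(\mathcal{M}_G^X\oplus_\mathbb{Q}\mathcal{M}_G^Y)\otimes S$ because the differential of anything in the positive-degree part lands in decomposables of the $\mathcal{M}$-factor, whereas the $X_i$ are indecomposable generators. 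Your short exact sequence can also be made to work by an analogous minimality argument (concretely: showing the connecting homomorphism $H^*(S)\to H^{*+1}((\mathcal{M}_G^X)^+\otimes S)$ is injective, which forces $\ker\delta_{\mathcal{M}}\subset\ker\delta_C$ and repairs the four-lemma), but that argument is missing.
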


If $X_G$ and $Y_G$ are nilpotent spaces, then the technical minimality condition on their models in the above proposition is equivalent to the surjectivity of \[\pi_k(X_G)\otimes \mathbb{Q}\rightarrow \pi_k(BG)\otimes \mathbb{Q}\quad\text{and}\quad\pi_k(Y_G)\otimes \mathbb{Q}\rightarrow \pi_k(BG)\otimes \mathbb{Q}\] for $k\geq 2$.
In particular the condition is automatically fulfilled in case $G$ is a torus and $X$ and $Y$ are simply-connected.

\begin{proof}
Suppose the $G$-spaces are formally based with respect to $A\subset H_G^*(X)$ and $A'\subset H_G^*(Y)$. Let $R\rightarrow \mathcal{M}_G^X$ and $R\rightarrow \mathcal{M}_G^Y$ be relative minimal models for the respective Borel fibrations. As in the proof of the previous proposition, we see that $\mathcal{M}_G^X\oplus_\mathbb{Q}\mathcal{M}_G^Y$ and $\mathcal{M}_G^{X\vee_G Y}$ are quasi-isomorphic as $R$-cdgas, where the $R$-module structure of the sum is defined by the diagonal inclusion. Let $R\otimes C$ and $R\otimes C'$ be relative minimal models for $R\rightarrow A$ and $R\rightarrow A'$. Also let $R\rightarrow C''$ be a relative minimal model for $R\rightarrow A\oplus_\mathbb{Q} A'$. We may lift $C''\rightarrow A\oplus_\mathbb{Q} A'$ to $C\oplus_\mathbb{Q} C'$ by applying the lifting lemma of Sullivan algebras to each summand separately.
By assumption we have maps $C\rightarrow \mathcal{M}_G^X$ and $C'\rightarrow \mathcal{M}_G^Y$ inducing the inclusion on cohomology. Piecing everything together we obtain the composition
\[C''\rightarrow C\oplus_\mathbb{Q} C'\rightarrow \mathcal{M}_G^X\oplus_\mathbb{Q} \mathcal{M}_G^Y\]
which induces the inclusion $A\oplus_\mathbb{Q}A'\rightarrow H_G^*(X)\oplus_\mathbb{Q} H_G^*(Y)\cong H_G^*(X\vee_G Y)$ on cohomology. Lifting the morphism to $\mathcal{M}_G^{X\vee_G Y}$ relative to $R$ shows that the action on $X\vee_G Y$ is formally based with respect to $A\oplus_\mathbb{Q} A'$.

Now assume that the actions on $X$ and $Y$ have formal core with respect to $A$ and $A'$. For $X$ (and analogously for $Y$) this means that we can assume the map $(C\otimes S,D)\rightarrow (\mathcal{M}_G^X\otimes S,D)$ between Hirsch extensions to be injective on cohomology, where $D$ maps generators $(s_i)$ of $S=\Lambda (s_i)$ to the generators $X_i$ of $R=\Lambda(X_1,\ldots,X_r)$. In order to show that $X\vee_G Y$ has formal core, it suffices to show that \[((C\oplus_\mathbb{Q} C')\otimes S,D)\rightarrow ((\mathcal{M}_G^X\oplus_\mathbb{Q}\mathcal{M}_G^Y)\otimes S,D)\] is injective on cohomology where $Ds_i=(X_i,X_i)$. Consider the commutative diagram
\[\xymatrix{
(C\oplus_\mathbb{Q} C')\otimes S\ar[r]\ar[d] & (\mathcal{M}_G^X\oplus_\mathbb{Q}\mathcal{M}_G^Y)\otimes S\ar[d]\\
(C\oplus C')\otimes S\ar[r] & (\mathcal{M}_G^X\oplus\mathcal{M}_G^Y)\otimes S
}\]
in which the bottom horizontal map is actually the direct sum of $C\otimes S\rightarrow \mathcal{M}_G^X\otimes S$ and $C'\otimes S\rightarrow \mathcal{M}_G^Y\otimes S$. By assumption this map is injective on cohomology, and it remains to prove that, on cohomology, the top horizontal map is injective on the kernel
\[K=\ker\left( H^*((C\oplus_\mathbb{Q} C')\otimes S)\longrightarrow H^*((C\oplus C')\otimes S)\right)\]
of the morphism induced by the left vertical inclusion.
Observe that the cokernel of this inclusion has a basis represented by a basis of $(1,0)\otimes S$. The differential induced on the cokernel vanishes. The resulting short exact sequence of complexes
\[0\longrightarrow ((C\oplus_\mathbb{Q} C)\otimes S,D) \longrightarrow ((C\oplus C)\otimes S,D)\longrightarrow ((1,0)\otimes S,0)\longrightarrow 0\]
induces a long exact sequence on homology from which we see that
$K$ is represented by $D((1,0)\otimes S)$.

Thus it suffices to show that $D((1,0)\otimes S)$ descends injectively to the cohomology of $(\mathcal{M}_G^X\oplus_\mathbb{Q}\mathcal{M}_G^Y)\otimes S$.
By assumption, $\mathcal{M}_G^X$ and $\mathcal{M}_G^Y$ are actually minimal as Sullivan models. Consequently, $D((\mathcal{M}_G^X\oplus \mathcal{M}_G^Y)^+\otimes S)\subset ((\mathcal{M}_G^X)^+\cdot (\mathcal{M}_G^X)^+\oplus (\mathcal{M}_G^Y)^+\cdot (\mathcal{M}_G^Y)^+)\otimes S)$. The only way to hit an element of $D((1,0)\otimes S)\subset \langle (X_1,0),\ldots, (X_r,0)\rangle\otimes S$ with the differential of $(\mathcal{M}_G^X\oplus_\mathbb{Q}\mathcal{M}_G^Y)\otimes S$ is as the image of some element in $(1,1)\otimes S$. These, however, are never $0$ in the second component which proves that $K$ maps injectively to $H^*((\mathcal{M}_G^X\oplus_\mathbb{Q}\mathcal{M}_G^Y)\otimes S)$.
\end{proof}

Let $M$ be a smooth $G$-manifold with (possibly empty) boundary. By the slice theorem, the orbit of any interior point $x\in M$ has a $G$-invariant tubular neighbourhood which is equivariantly diffeomorphic to
\[V_x=G\times_{G_x} T_x M/T_x(G\cdot x)\]
where $G_x$ acts on the right hand side via the isotropy action and $G\cdot x$ corresponds to $[G,0]\subset V_x$. Now if $N$ is a $G$-manifold with boundary of the same dimension, with an interior point $y\in N$ such that $G_y=G_x$ and $T_x M/T_x(G\cdot x)\cong T_y N/T_y(G\cdot y)$ as representations, then we may form the equivariant connected sum as follows: the isomorphism defines an equivariant diffeomorphism $\varphi\colon V_x\cong V_y$ of tubular neighbourhoods.
Choose some $G_x$-invariant inner product on $T_x M/T_x(G\cdot x)$, and let $S\subset T_x M/T_x(G\cdot x)$ be the associated unit sphere. The equivariant connected sum $M\#_G N$ is the quotient of $(M-\{G\cdot x\})\sqcup (N-\{G\cdot y\})$ obtained by gluing the points $[g,ts]$ and $\varphi([g,(1-t)s])$ for all $g\in G$, $t\in (0,1)$, and $s\in S$. The result is a $G$-manifold with boundary whose homotopy type does however depend on the choice of $\varphi$.

\begin{prop}\label{prop:connectedsummodformal}
Let $M$ and $N$ be almost free $G$-manifolds with boundary such that the equivariant connected sum $M\#_G N$ at some interior orbit is defined.
\begin{enumerate}[(i)]
\item If $M$ and $N$ are $\mathcal{MOD}$-formal, then so is $M\#_G N$.
\item If $M$ and $N$ are simply-connected and $M_G$ and $N_G$ are formal, then also $(M\#_G N)_G$ is formal.
\end{enumerate}
\end{prop}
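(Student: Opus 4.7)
My plan is to realize $M\#_G N$ as an equivariant homotopy pushout and transport the formality properties through it, in analogy with Propositions \ref{prop:orbitgluing} and \ref{prop:orbitglueingformalcore}. Let $D_x\subset M$ and $D_y\subset N$ denote closed $G$-invariant tubular neighborhoods of the gluing orbits, set $A=\overline{M\setminus D_x}$, $B=\overline{N\setminus D_y}$, and write $E=\partial D_x\cong_G \partial D_y \cong_G G\times_{G_x}S$ for the common sphere bundle. Then one has equivariant homotopy pushouts
\[
M\simeq_G A\cup_E D_x,\qquad N\simeq_G B\cup_E D_y,\qquad M\#_G N\simeq_G A\cup_E B,
\]
and correspondingly for the Borel constructions. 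Almost-freeness of the action forces $G_x$ to be finite, whence $(D_x)_G \simeq BG_x$ is rationally contractible and $H_G^*(E;\qq)\cong H^*(S;\qq)^{G_x}$ is concentrated in degrees $0$ and $\dim S$.

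For part (i), I would mimic the Mayer--Vietoris argument of Proposition \ref{prop:orbitgluing} but on Hirsch--Brown models. Covering $M\#_G N$ by $G$-invariant open sets equivariantly deformation retracting onto $A$ and $B$, with intersection retracting onto $E$, yields a short exact sequence of dg$R$ms which represents the Hirsch--Brown model $\mathfrak{M}_{M\#_G N}$ as a homotopy pullback of $\mathfrak{M}_A$ and $\mathfrak{M}_B$ over $\mathfrak{M}_E$. Analogously, $\mathfrak{M}_M$ and $\mathfrak{M}_N$ arise as homotopy pullbacks involving the rationally trivial factor $\mathfrak{M}_{D_x}\simeq(\qq,0)$. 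By $\mathcal{MOD}$-formality of $M$ and $N$, the restrictions $\mathfrak{M}_A\to\mathfrak{M}_E$ and $\mathfrak{M}_B\to\mathfrak{M}_E$ can be realized, up to dg$R$m quasi-isomorphism, by cohomology-level morphisms into $(H_G^*(E),0)$, so that their homotopy pullback yields a formal dg$R$m model for $M\#_G N$. Part (ii) proceeds identically on cdga Sullivan models rather than dg$R$ms, the simple-connectedness of $M$ and $N$ together with the finiteness of $G_x$ ensuring that the topological pushouts are faithfully realized in the rational-homotopy category.

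The main obstacle, distinguishing this situation from the orbit-gluing of Proposition \ref{prop:orbitgluing}, is the non-trivial top rational cohomology class of $E$, which must be controlled uniformly on both sides of the pushout. The key observation is that this class maps to zero in both $H_G^*(M)$ and $H_G^*(N)$ through its factorization via the rationally trivial $(D_x)_G$ and $(D_y)_G$. This compatibility is precisely what allows the restriction morphisms to be simultaneously rendered formal, guaranteeing that the homotopy-pullback construction correctly transports $\mathcal{MOD}$-formality (respectively formality of the homotopy quotient) from $M$ and $N$ to $M\#_G N$.
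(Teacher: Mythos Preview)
Your pushout framework is natural, but there is a genuine gap at the heart of the argument. The hypothesis gives you $\mathcal{MOD}$-formality of $M$ and $N$, yet the homotopy pullback you need to analyze involves the \emph{complements} $A=\overline{M\setminus D_x}$ and $B=\overline{N\setminus D_y}$, together with the restriction maps $\mathfrak{M}_A\to\mathfrak{M}_E$ and $\mathfrak{M}_B\to\mathfrak{M}_E$. You never explain how $\mathcal{MOD}$-formality of $M$ yields any formality statement for $A$, let alone for the \emph{morphism} $\mathfrak{M}_A\to\mathfrak{M}_E$. These are different spaces: on orbit spaces, $A/G$ is $M/G$ with an open disk removed, and its cohomology differs from that of $M/G$. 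Even if you could show $A$ and $B$ are $\mathcal{MOD}$-formal, formality of the two objects does not imply that the restriction morphisms can be simultaneously replaced by maps of trivial dg$R$ms into $(H_G^*(E),0)$; formality of maps is a strictly stronger condition, and the ``key observation'' you cite (that the top class of $E$ dies in $H_G^*(M)$ via the contractible $D_x$) concerns the wrong map --- it constrains $E\to M$, not $E\to A$.

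The paper avoids this difficulty entirely by not decomposing $M$ and $N$. Instead it constructs an equivariant collapse map $p\colon M\#_G N\to M\vee_G N$ onto the orbit-wedge, which is already known to be $\mathcal{MOD}$-formal by Proposition~\ref{prop:orbitgluing}. A direct Mayer--Vietoris computation shows $p^*$ is either an isomorphism, or differs from one by a single class in degree $n-1$ or $n$ (where $n=\dim M-\dim G$). Because the Hirsch--Brown model of $M\vee_G N$ is a minimal free resolution, one can explicitly adjoin one generator (and then inductively kill the resulting excess cohomology, which lives only above the formal dimension of $(M\#_G N)_G$) to obtain a free resolution quasi-isomorphic to $\mathfrak{M}_{M\#_G N}$. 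Part~(ii) runs the identical argument with bigraded Sullivan models in place of free resolutions. The point is that working with $M\vee_G N$ lets you invoke the formality hypothesis on $M$ and $N$ \emph{as stated}, rather than on auxiliary pieces.
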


We want to point out that in the proof of $(ii)$, one of the cases is essentially the proof of the fact that the (nonequivariant) connected sum of compact simply-connected manifolds preserves formality (see e.g.\ \cite[Theorem 3.13]{AMIG}).

\begin{proof}
In the notation of the construction of $M\#_G N$, let $D\subset T_x M/T_x(G\cdot x)$ be the unit disk. The collapsing of $D-\{0\}$ induces an equivariant map $G\times_{G_x} (D-\{0\})\rightarrow G\cdot x$ which induces an equivariant map
\[p\colon M\#_G N\longrightarrow M\vee_G N.\]
Both are almost free $G$-spaces so the map on equivariant cohomology can be determined from the orbit spaces (see Proposition \ref{thm:hsiangandfriends}). There it induces the collapse of the subspace
\[X:=(G\times_{G_x} (D-\{0\}))/G \cong (D-\{0\})/G_x\simeq S/G_x.\]
Thus we may understand the map $p^*\colon H^*_G(M\vee_G N)\rightarrow H^*_G(M\#_G N)$ via the long exact homology sequence
\[\cdots\rightarrow H^k((M\#_G N)/G, X)\rightarrow H^k((M\#_G N)/G)\rightarrow H^k(X)\rightarrow\cdots\]
where we can identify $H^k((M\#_G N)/G, X)\cong H^k(M/G\vee N/G)=H_G^k(M\vee_G N)$ in positive degrees.

The algebra $H^*(X)=H^*(S)^{G_x}$ is isomorphic to either $\mathbb{Q}$ or $H^*(S)$. Let $n=\dim M-\dim G$, so $\dim S=n-1$. There are three possible scenarios. The map $p^*$ is either surjective with $1$-dimensional kernel in degree $n$, injective with $1$-dimensional cokernel in degree $n-1$, or a quasi-isomorphism. In the last case we are done since $M\vee_G N$ is $\mathcal{MOD}$-formal by Proposition \ref{prop:orbitgluing}.
Otherwise consider the map $\tilde{p}\colon \mathfrak{M}_{\vee}\rightarrow \mathfrak{M}_{\#}$ between minimal Hirsch--Brown models of $M\vee_G N$ and $M\#_G N$. As $M\vee_G N$ is $\mathcal{MOD}$-formal, the Hirsch--Brown model takes the form of a free resolution
\[\mathfrak{M}_\vee=\left(\bigoplus_{i\geq 0} F_i,d\right)\]
with $d\colon F_i\rightarrow F_{i-1}$ being exact at every $i\geq 1$. If $p^*$ has nontrivial kernel, we add a generator $\alpha$ of degree $n-1$ to $F_1$ and define $d\alpha\in F_0$ to be a representative for the generator of $\ker p^*$. Also $\tilde{p}(d\alpha)$ is exact in $\mathfrak{M}_\#$ so we may extend $\tilde{p}$ to $\alpha$. At this point, $\tilde{p}$ induces an isomorphism $F_0/d(F_1)\cong H^*_G(M\#_G N)$ but there may now be additional cohomology represented in $F_1$. As the newly introduced generator lives in degree $n-1$ and $R$ is simply-connected, $\ker_d|_{F_1}$ remains unchanged up to degree $n+1$. But the cohomology of $M\#_G N$ vanishes in degrees above $n$ so it follows that $\tilde{p}$ maps the newly introduced cohomology to exact elements. Hence if we introduce new generators in $F_2$ and use them to kill the cohomology generated in $F_1$, then $\tilde{p}$ extends to the new generators. We may repeat this process inductively and obtain a free resolution quasi-isomorphic to $\mathfrak{M}_\#$. Thus $M\#_G N$ is $\mathcal{MOD}$-formal.

If $p^*$ is injective, then we start by adding a generator $\alpha$ to $F_0$ in degree $n-1$ and define $\tilde{p}(\alpha)$ to be a representative of the cokernel of $p^*$. Now add generators of degree $\geq n$ to $F_1$ and map them to a minimal generating set of $\ker(F_0\rightarrow H_G^*(M\#_G N))$. Again $\tilde{p}$ extends to the new $F_1$. We are now in the same position as before and we analogously conclude that $M\#_G N$ is $\mathcal{MOD}$-formal. This proves $(i)$.

The proof of $(ii)$ works by applying the analogous argument to the cdga machinery. Consider the map $\varphi\colon \mathcal{M}_\vee\rightarrow \mathcal{M}_\#$ between the Sullivan minimal models of $(M\vee_G N)_G$ and $(M\#_G N)_G$. As $(M\vee_G N)_G$ is formal by Proposition \ref{prop:orbitgluing}, it has a bigraded minimal model with an additional lower grading
\[\mathcal{M}_\vee=\Lambda \left(\bigoplus_{i\geq 0} V_i\right)\]
and cohomology concentrated in $(\Lambda V)_0$ as in e.g.\ \cite[Theorem 2.93]{AMIG}. We observe that $M\vee_G N$ is simply-connected if $M$ and $N$ are and therefore $\mathcal{M}_\vee^1=0$. The proof now proceeds as before with the role of $F_i$ replaced by $(\Lambda V)_i$. If e.g.\ $p^*$ is surjective with $1$-dimensional kernel in degree $n$ (and necessarily lower degree $0$), then we add a generator to $V_1$ and use it to kill the existing kernel in cohomology. As $\mathcal{M}_\vee^1=0$, no new cohomology is generated up until cohomological degree $n+1$. Now for the same reasons as in the module case, we can extend $\varphi$ to a quasi-isomorphism from a bigraded minimal model in the above sense, showing that $M\#_G N$ is formal. The other cases transfer analogously.
\end{proof}

\begin{prop}
Let $M$ and $N$ be almost free $m$-dimensional $G$-manifolds with boundary such that the equivariant connected sum along an interior orbit is defined. Assume that they have formal core with respect to $A\subset H_G^*(M)$, $A'\subset H_G^*(N)$, with $A^{m-\dim G}=A'^{m-\dim G}=0$, and that the conditions of Proposition \ref{prop:orbitglueingformalcore} are satisfied.
Then $M\#_G N$ has formal core.
\end{prop}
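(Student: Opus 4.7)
My plan is to combine the frameworks of Propositions \ref{prop:orbitglueingformalcore} and \ref{prop:connectedsummodformal}. By Proposition \ref{prop:orbitglueingformalcore} (applicable thanks to the hypotheses), $M \vee_G N$ has formal core with respect to $A \oplus_{\mathbb{Q}} A' \subset H_G^*(M \vee_G N)$; I fix a relative minimal model $(C'', d)$ of $(R, 0) \to (A \oplus_{\mathbb{Q}} A', 0)$ together with the structural $R$-cdga morphism $\iota \colon (C'', d) \to \mathcal{M}_G^{M \vee_G N}$ whose mod-$R^+$ reduction $\bar\iota \colon (\overline{C''}, \bar d) \to \mathcal{M}^{M \vee_G N}$ is cohomologically injective. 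Write $n = m - \dim G$ and use the equivariant collapse $p \colon M \#_G N \to M \vee_G N$ from the proof of Proposition \ref{prop:connectedsummodformal}. By Lemma \ref{lem:fbchar} combined with Remark \ref{rem:relmodelseindeutig}, $p$ can be realized by an $R$-cdga Sullivan representative $\tilde p \colon \mathcal{M}_G^{M \vee_G N} \to \mathcal{M}_G^{M \#_G N}$.

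First I would establish the formally based part. The analysis in the proof of Proposition \ref{prop:connectedsummodformal} shows that the kernel of $p^* \colon H_G^*(M \vee_G N) \to H_G^*(M \#_G N)$ is at most one-dimensional and concentrated in degree $n$. The hypothesis $A^n = A'^n = 0$ forces $(A \oplus_{\mathbb{Q}} A')^n = 0$, so the restriction of $p^*$ to $A \oplus_{\mathbb{Q}} A'$ is injective. Setting $B := p^*(A \oplus_{\mathbb{Q}} A')$, which contains the image of $R$, the composition $\tilde p \circ \iota$ is an $R$-cdga morphism $(C'', d) \to \mathcal{M}_G^{M \#_G N}$ inducing the inclusion $B \hookrightarrow H_G^*(M \#_G N)$ on cohomology. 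This exhibits the action on $M \#_G N$ as formally based with respect to $B$, with $(C'', d)$ serving as a relative minimal model of $(R, 0) \to (B, 0)$ via the isomorphism $A \oplus_{\mathbb{Q}} A' \cong B$.

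The hard part is verifying the core injectivity of the reduction $\bar\iota_{\#} := \bar{\tilde p} \circ \bar\iota \colon (\overline{C''}, \bar d) \to \mathcal{M}^{M \#_G N}$. Since $\bar\iota^*$ is injective by the formal core of $M \vee_G N$, it suffices to show that $\bar{\tilde p}^* \colon H^*(M \vee_G N) \to H^*(M \#_G N)$ is injective on $\im \bar\iota^*$. I would approach this by comparing the equivariant Mayer--Vietoris sequences of the natural open covers of $M \vee_G N$ and $M \#_G N$, whose pairwise intersections deformation retract respectively onto the orbit $G/G_x$ and the unit sphere bundle $G \times_{G_x} S^{n-1}$, with the comparison induced by the bundle projection $G \times_{G_x} S^{n-1} \to G/G_x$. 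The degree constraint $A^n = A'^n = 0$ is expected to confine $\im \bar\iota^*$ to degrees in which this comparison is an isomorphism, and a residual argument patterned on the second half of the proof of Proposition \ref{prop:orbitglueingformalcore}---exploiting the hypothesized minimality of $\mathcal{M}_G^M$ and $\mathcal{M}_G^N$ via a decomposition of $(C'' \oplus C'') \otimes S$ into $(C'' \oplus_{\mathbb{Q}} C'') \otimes S$ plus a complementary Koszul piece---should then complete the verification that $M \#_G N$ has formal core with respect to $B$.
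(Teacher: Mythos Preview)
Your reduction to $M\vee_G N$ via Proposition \ref{prop:orbitglueingformalcore} and the formally based verification are correct and match the paper's argument exactly.

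The gap is in the core injectivity step. You assert that the hypothesis $A^n=A'^n=0$ ``is expected to confine $\im\bar\iota^*$ to degrees in which this comparison is an isomorphism.'' This is false in general: the vanishing of $A$ in degree $n$ says nothing about the degrees in which $H^*(\overline{C''})$ lives. For instance, with $R=\mathbb{Q}[X]$, $|X|=2$, and $A=R/(X^2)$ concentrated in degrees $0$ and $2$, one has $C''=R\otimes\Lambda(s)$ with $|s|=3$, so $\overline{C''}=\Lambda(s)$ has nontrivial cohomology in degree $3$; if $n=3$ the image of $\bar\iota^*$ need not avoid that degree. More generally $H^*(\overline{C''})$ computes $\mathrm{Tor}_R(A,\mathbb{Q})$, which typically spreads well beyond the degrees supporting $A$ itself. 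So your Mayer--Vietoris degree comparison, as stated, does not isolate $\im\bar\iota^*$ from the kernel of $\bar{\tilde p}^*$.

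The paper circumvents this by working not with $\overline{C''}\to\mathcal{M}^{M\vee_G N}\to\mathcal{M}^{M\#_G N}$ directly but with the quasi-isomorphic Hirsch extensions $C''\otimes S\to\mathcal{M}_\vee\otimes S\to\mathcal{M}_\#\otimes S$, and then filtering each by degree in the left-hand (equivariant) factor. On the resulting $E_2$-pages the maps become $(A\oplus_\mathbb{Q}A')\otimes S\to H_G^*(M\vee_G N)\otimes S\to H_G^*(M\#_G N)\otimes S$, and here the hypothesis $A^n=A'^n=0$ genuinely forces the image of the first map into columns $E_2^{<n,*}$. A case analysis of $p^*$ (isomorphism, injective with cokernel in degree $n-1$, or surjective with kernel in degree $n$) together with an inductive chase through the pages then shows that the kernel of $(\varphi\otimes\Id_S)$ on $E_\infty$ is confined to $E_\infty^{n,*}$, disjoint from the image. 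The point is that the degree constraint must be read as a \emph{filtration}-degree constraint, not a total-degree one; your proposal conflates the two.
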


We want to point out that the condition $A^{m-\dim G}=0$ is automatically fulfilled if $M$ has non-empty boundary, is non-compact, or is not orientable: in this case $H^m(M)=0$ whence $H_G^{m-\dim G}(M)=0$ by Proposition \ref{thm:hsiangandfriends}. In case $M$ is closed and orientable, it just means that $A$ is supposed to not contain the fundamental class of $M/G$.

\begin{proof}
Let $n=m-\dim G$ and recall the equivariant map $M\#_G N\rightarrow M\vee_G N$ from the proof of the previous proposition. As we showed there, on cohomology, it is either injective or has $1$-dimensional kernel in degree $n$. Hence if $A$ and $A'$ are trivial in degree $n$, then
\[\psi\colon A\oplus_\mathbb{Q} A'\longrightarrow H_G^*(M\vee_G N)\longrightarrow H_G^*(M\#_G N)\]
is injective.

Let $\mathcal{M}_\vee$ and $\mathcal{M}_\#$ be models for $(M\vee N)_G$ and $(M\#_G N)$ arising from relative minimal models of the respective Borel fibrations and let $\varphi\colon \mathcal{M}_\vee\rightarrow\mathcal{M}_\#$ be a map of $R$-cdgas that is a Sullivan representative of the equivariant map $M\#_G N\rightarrow M\vee_G N$. By Proposition \ref{prop:orbitglueingformalcore}, $M\vee_G N$ has formal core with respect to $A\oplus_\mathbb{Q} A'$, which means we have a map $\phi\colon C''\rightarrow \mathcal{M}_\vee$ of $R$-cdgas as in the proof of \ref{prop:orbitglueingformalcore} ($X$ and $Y$ replaced by $M$ and $N$). Since $\psi$ is injective, we see that $M\#_G N$ is formally based with respect to $\psi(A\oplus_\mathbb{Q} A')$ by considering the composition $\varphi\circ \phi$.

To prove that the action has formal core it remains to see that this map is still cohomologically injective when extending it to the Hirsch extensions
\[C''\otimes S\longrightarrow \mathcal{M}_\vee\otimes S\longrightarrow\mathcal{M}_\#\otimes S\]
in which the differential maps generators of $S=\Lambda(s_i)$ bijectively to generators of $R$. Cohomological injectivity of $\phi\otimes \Id_S$ is part of Proposition \ref{prop:orbitglueingformalcore}, so we only need to prove injectivity of $(\varphi\otimes\Id_S)^*$ on the cohomological image of $\phi\otimes \Id_S$. To see this we consider the maps between the Serre spectral sequences arising by filtering the Hirsch extensions above in the degrees of the left hand cdgas. The second pages are isomorphic to the tensor products of the (non-twisted) cohomologies of the respective factors so by naturality we obtain the maps
\[(A\oplus_\mathbb{Q} A')\otimes S\longrightarrow H_G^*(M\vee_G N)\otimes S\longrightarrow H_G^*(M\#_G N)\otimes S.\]
Again, we distinguish the three possible cases for the map $\varphi^*$. If it is an isomorphism, then so is $\varphi^*\otimes \Id_S$ and we are done. If $\varphi^*$ is injective and its cokernel is generated by some $\alpha\in H^{n-1}(M\#_G N)$, then the map $\varphi^*\otimes \Id_S$ on the second pages is an isomorphism up to the column  $\alpha\otimes S\subset E_2^{n-1,*}$. As the differentials in the spectral sequence vanish on this column for degree reasons, we deduce that $\varphi\otimes \Id_S$ induces an injective map between the $E_\infty$-pages which implies injectivity on cohomology.

Finally, consider the case where $\varphi^*$ is an isomorphism up to $1$-dimensional kernel contained in $ H_G^n(M\vee_G N)$. We claim that the kernel of the map induced by $\varphi\otimes \Id_S$ on the $E_\infty$-pages is contained in $E_\infty^{n,*}$. Since the image of $\phi^*\otimes \Id_S$ on the $E_2$ pages is contained in $E_2^{<n,*}$, and the same degree restrictions carry over to the $E_\infty$ pages, this will imply injectivity of $(\varphi\circ\phi)\otimes \Id_S$ on the $E_\infty$-pages and thus finish the proof of the proposition. The claim can be verified via induction: assume the map between the $r$th pages is injective on $E_r^{<n,*}$ and an isomorphism on $E_r^{\leq n-r,*}$. Then it is a straightforward diagram chase to show that on the $(r+1)$th pages the induced map is injective on $E_{r+1}^{<n,*}$ and an isomorphism on $E_{r+1}^{\leq n-r-1,*}$.

\end{proof}

\begin{prop}
Let $M_1$ and $M_2$ be orientable $G$-manifolds and that there are fixed points $x_1\in M_1$ and $x_2\in M_2$ which have equivariantly diffeomorphic tubular neighbourhoods. Then we may form the $G$-equivariant connected sum at those fixed points, and the induced $G$-action on $M_1\#M_2$ (as well as the one on $M_1 \vee M_2$) has formal core.
\end{prop}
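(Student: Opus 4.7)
For $M_1\vee M_2$, the statement is immediate from Remark~\ref{rem:formalcore}(iii): the wedge point $x_1=x_2$ is a $G$-fixed point, hence an orbit of maximal isotropy rank, and one takes $(C,d)=(R,0)$ with $\overline{C}=\mathbb{Q}$.

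For $M_1\# M_2$, my plan is to construct the formal core at the level of Sullivan models by exploiting the equivariant collapse map $p\colon M_1\# M_2\to M_1\vee M_2$ from the proof of Proposition~\ref{prop:connectedsummodformal}, which contracts the equatorial sphere $S=S(V)$ of the connecting neck to the wedge point; here $V=T_{x_1}M_1\cong T_{x_2}M_2$ is the common tangent representation. Choosing an $R$-cdga Sullivan representative $p^*\colon\mathcal{M}_G^{M_1\vee M_2}\to\mathcal{M}_G^{M_1\# M_2}$, I would set $A=\mathrm{im}(R\to H_G^*(M_1\# M_2))$, choose a relative minimal model $(R,0)\to(C,d)\simeq(A,0)$, and lift to a morphism $\phi\colon(C,d)\to\mathcal{M}_G^{M_1\# M_2}$ of $R$-cdgas. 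Such a lift exists by standard Sullivan theory since every element of $\ker(R\to H_G^*(M_1\# M_2))$ is exact in $\mathcal{M}_G^{M_1\# M_2}$, exhibiting the action as formally based with respect to $A$.

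The crux, and the main obstacle, is then to verify that the induced morphism $\overline{\phi}\colon(\overline{C},\overline{d})\to\mathcal{M}^{M_1\# M_2}$ is cohomologically injective. The plan here is to identify $\ker(R\to H_G^*(M_1\# M_2))$ with Euler-type data of the representation $V$ via the Mayer--Vietoris sequence for the cover $M_1\# M_2=(M_1\setminus D)\cup_S(M_2\setminus D)$, compared against the Thom long exact sequences of the pairs $(M_i,\{x_i\})$ and the Gysin sequence of the representation sphere $S=S(V)$. Since $R$ is a retract of $H_G^*(M_i)$ via the fixed point $x_i$, this comparison confines the kernel to Euler-class data coming from $V$, and the corresponding Koszul generators of $C$ are realized in $\mathcal{M}^{M_1\# M_2}$ by non-exact classes pulled back from the neck sphere $S\hookrightarrow M_1\# M_2$. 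In the favourable subcase $V^G\neq 0$, the subsphere $S\cap V^G$ consists of $G$-fixed points of $M_1\# M_2$ and Remark~\ref{rem:formalcore}(iii) already delivers the formal core directly; the generic case $V^G=0$ is precisely where the Mayer--Vietoris/Thom diagram chase will be needed.
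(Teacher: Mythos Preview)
Your treatment of $M_1\vee M_2$ matches the paper's. For $M_1\# M_2$, however, you are working much harder than necessary and leaving the decisive case as an unexecuted sketch.

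The paper observes that $R\to H_G^*(M_1\# M_2)$ is \emph{always} injective, so Remark~\ref{rem:formalcore}(iii) applies with $(C,d)=(R,0)$ and $\overline{C}=\mathbb{Q}$, and there is nothing further to verify. The injectivity follows from the Serre spectral sequence comparison along your collapse map $p$: since the $M_i$ are orientable, $p^*\colon H^*(M_1\vee M_2)\to H^*(M_1\# M_2)$ is surjective (one merely identifies the two fundamental classes), so the induced map of $E_2$-pages of the Borel spectral sequences is surjective with kernel confined to the top fibre row. A short induction on the pages shows that the induced map remains an isomorphism on all rows below the top, in particular on the bottom row $E_r^{*,0}$. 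Since the wedge point is a fixed point, the Borel fibration of $M_1\vee M_2$ has a section and its bottom row survives entirely; hence the same holds for $M_1\# M_2$, giving $R\hookrightarrow H_G^*(M_1\# M_2)$.

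Your proposed route---computing $\ker(R\to H_G^*(M_1\# M_2))$ through Mayer--Vietoris and Gysin data of the neck sphere and then realizing Koszul generators of $C$ by classes pulled back from $S$---would, if carried to completion, simply rediscover that this kernel is zero. The case split on $V^G$ is therefore superfluous: what you call the ``favourable subcase'' is in fact the general situation as far as injectivity of $R$ is concerned, and the ``generic case $V^G=0$'' that you defer to a diagram chase dissolves once you run the spectral sequence comparison above.
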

\begin{proof}
Since tubular neighbourhoods are equivariantly diffeomorphic, we may form the equivariant connected sum $M_1\# M_2$ at $x_1$ and $x_2$ to which the $G$-action extends. We have a $G$-equivariant contraction map $p\colon M_1\# M_2\to M_1\vee M_2$ between the connected sum and the one-point union at $x_1\sim x_2$. This morphism induces a morphism of Leray--Serre spectral sequences of associated $G$-Borel fibrations.

The action on $M_1\vee M_2$ has a fixed-point $x_1\sim x_2$. Thus the Borel fibration has a section and $H^*(BG)\hookrightarrow{} H^*_G(M_1\vee M_2)$. (Hence the action on the one-point union is of formal core.)

Since the cohomology $H^*(M_1\# M_2)$ is a quotient of $H^*(M_1 \vee M_2)$ (by gluing volume forms), i.e.~the corresponding $E_2$-term is a quotient of the one-point union and all differentials are the induced ones, we deduce that $H^*(B G)$ also injects into $H_G^*(M_1 \# M_2)$. This yields the result.
\end{proof}

\subsubsection{Subgroups}
We investigate how the previously defined notions behave under restriction of the action to subgroups.
As it turns out, problems arise
when restricting to subgroups of smaller rank. The only one of the discussed concepts which behaves well under restriction to arbitrary subgroups is the classical equivariant formality. Example \ref{ex:restriction} is an action with formal homotopy quotient such that the restriction to a certain subgroup is neither formally based nor spherical. However, we have the following

\begin{prop}
A $G$-action fulfils one of the conditions of being spherical, almost $\mathcal{MOD}$-formal, or $\mathcal{MOD}$-formal if and only if the respective condition is fulfilled by the action of a maximal subtorus.
\end{prop}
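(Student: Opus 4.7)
My plan is to deduce all three equivalences from the single structural observation that the Borel fibration for the maximal torus $T\subset G$ is obtained from the one for $G$ by base change along the ring map $R_G := H^*(BG) \hookrightarrow H^*(BT) =: R_T$, which realises $R_T$ as a free $R_G$-module of rank $|W|$ (with $W=N_G(T)/T$ the Weyl group) and, in particular, as a faithfully flat extension. Concretely, starting from a compatible relative minimal Sullivan model $R_G\to(R_G\otimes\Lambda V,D)\to(\Lambda V,d)$ of the $G$-Borel fibration, tensoring over $R_G$ with $R_T$ produces one for the $T$-Borel fibration, because $X_T\to X_G$ is a $G/T$-bundle and $H^*(BT)$ is $R_G$-freely generated by $H^*(G/T)$. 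In particular $H^*_T(X)\cong H^*_G(X)\otimes_{R_G}R_T$, and the minimal Hirsch--Brown model for $T$ is obtained from that for $G$ by extension of scalars; similarly for the minimal free resolution of equivariant cohomology.

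For the spherical case, the restriction $H^*_R(X)\to H^*(X)$ kills $\mathfrak{m}_R$, so it factors through $H^*_R(X)/\mathfrak{m}_R H^*_R(X)$, and being spherical is exactly injectivity of the induced map. The natural identification
\[
H^*_T(X)/\mathfrak{m}_T H^*_T(X)\;\cong\;H^*_G(X)\otimes_{R_G}\mathbb{Q}\;\cong\;H^*_G(X)/\mathfrak{m}_G H^*_G(X)
\]
identifies the two factored maps (both being the Leray--Serre edge to the fibre), so sphericity of one action is equivalent to sphericity of the other. For almost $\mathcal{MOD}$-formality I invoke Remark \ref{rem:EilenbergMoore}: the rank of the minimal free resolution is $\dim_{\mathbb{Q}}\mathrm{Tor}_R(H^*_R(X),\mathbb{Q})$. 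Flatness of $R_T/R_G$ gives the standard change-of-rings identification $\mathrm{Tor}_{R_T}(H^*_G(X)\otimes_{R_G}R_T,\mathbb{Q})\cong \mathrm{Tor}_{R_G}(H^*_G(X),\mathbb{Q})$, so the two ranks coincide and the equivalence is immediate.

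For $\mathcal{MOD}$-formality, the direction $G\Rightarrow T$ is routine: extending any zigzag of dg$R_G$-module quasi-isomorphisms HB$_G\simeq(H^*_G(X),0)$ by $-\otimes_{R_G}R_T$ preserves quasi-isomorphisms (by flatness) and yields HB$_T\simeq(H^*_T(X),0)$ over $R_T$. For the converse $T\Rightarrow G$ I use the natural $W$-action: $R_T$ is $W$-Galois over $R_G=R_T^W$, and this action extends to HB$_T=$ HB$_G\otimes_{R_G}R_T$, to $H^*_T(X)$, and to the minimal free resolution $F^\bullet_T=F^\bullet_G\otimes_{R_G}R_T$ of $H^*_T(X)$, with $W$-invariants recovering HB$_G$, $H^*_G(X)$, and $F^\bullet_G$ respectively. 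Since $W$ is finite and we work over $\mathbb{Q}$, Maschke's theorem makes $(-)^W$ an exact functor on $\mathbb{Q}[W]$-modules and hence preserves quasi-isomorphisms of $W$-equivariant complexes. The strategy is then to produce a $W$-equivariant dg$R_T$-module isomorphism HB$_T^{\min}\cong F^\bullet_T$ (whose existence follows from $T$-$\mathcal{MOD}$-formality combined with Theorem \ref{thm:modformalfreeres}) and apply $(-)^W$ to obtain HB$_G^{\min}\cong F^\bullet_G$ as dg$R_G$-modules.

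The main obstacle is turning an a priori not $W$-equivariant isomorphism $\varphi\colon$ HB$_T^{\min}\to F^\bullet_T$ into a $W$-equivariant one: naive averaging $|W|^{-1}\sum_w w\varphi w^{-1}$ is $W$-equivariant and chain but may fail to be a quasi-isomorphism. I would circumvent this by applying $W$-equivariant Galois descent in the category of dg$R_T$-modules: both HB$_T^{\min}$ and $F^\bullet_T$ carry compatible $W$-actions and both have underlying free $R_T$-modules of the same ($W$-equivariant) graded type, so one can inductively build a $W$-equivariant isomorphism by constructing it on each homological/polynomial degree and using that the $W$-representations on the relevant generators match (since both arise by base change from an $R_G$-structure). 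Alternatively, one can bypass the splitting argument altogether by working with the $A_\infty$-module characterisation (Section \ref{Massey}): the $A_\infty$-module structure on $H^*_T(X)$ over $R_T$ is obtained from the one on $H^*_G(X)$ over $R_G$ by base change, and its higher operations vanish iff those of the $R_G$-structure do, since $R_T$ is faithfully flat over $R_G$. Either route yields the required descent and completes the proof.
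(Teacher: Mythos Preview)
Your treatment of the spherical and almost $\mathcal{MOD}$-formal cases, and of the forward direction $G\Rightarrow T$ for $\mathcal{MOD}$-formality, is correct and essentially matches the paper's argument (the paper phrases the spherical case via the generating-set criterion of Lemma~\ref{spherical-generator-lem} rather than via the quotient $H^*_R(X)/\mathfrak{m}_R H^*_R(X)$, but this is cosmetic).

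The gap is in the converse $T\Rightarrow G$ for $\mathcal{MOD}$-formality. Both of your proposed routes leave the crucial step unproved. In the $W$-equivariant approach you correctly identify the obstacle (the isomorphism $\mathrm{HB}_T^{\min}\cong F_T^\bullet$ supplied by Theorem~\ref{thm:modformalfreeres} need not be $W$-equivariant, and averaging can destroy the quasi-isomorphism property), but your fix ``one can inductively build a $W$-equivariant isomorphism \ldots\ using that the $W$-representations on the relevant generators match'' is not an argument: matching $W$-representations on generators gives a $W$-equivariant graded isomorphism, not one that intertwines the differentials, and there is no mechanism here forcing the two $W$-equivariant differentials to be conjugate by a $W$-map. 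In the $A_\infty$ route, your claim that ``higher operations vanish iff those of the $R_G$-structure do, since $R_T$ is faithfully flat'' conflates a specific minimal $A_\infty$-module structure with the property of formality. $\mathcal{MOD}$-formality means \emph{some} minimal $A_\infty$-$R_T$-module structure on $H^*_T(X)$ has trivial $m_{\geq 3}$; the one obtained by base change from $R_G$ is only $A_\infty$-isomorphic to it, and an $A_\infty$-isomorphism generally does not preserve vanishing of higher operations. Faithful flatness detects whether a given map is zero, not whether a structure is isomorphic to one with zero higher maps.

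The paper avoids $W$-equivariance entirely. It uses the splitting of $R_T\otimes_{R_G}\mathfrak{M}_G$ as a direct sum of degree-shifted copies of $\mathfrak{M}_G$ \emph{as dg$R_G$-modules} (not as dg$R_T$-modules), together with criterion~(iii) of Lemma~\ref{formality criterion}: from a complement $C$ witnessing $\mathcal{MOD}$-formality over $R_T$, one produces a complement $C'\subset\mathfrak{M}_G$ by projecting $\tilde{D}^{-1}(\mathfrak{M}_G\cap\im\tilde{D})\subset C$ onto the summand $\mathfrak{M}_G$. The verification that $C'$ satisfies~(iii) for the $R_G$-action is then a short direct computation exploiting that the differential respects the $R_G$-module splitting. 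This is both more elementary and more robust than either descent strategy you outline.
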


\begin{proof}
Let $G$ act on $X$ and let $T$ be a maximal torus of $G$. The central observation needed for the proof is the fact that the Borel fibration of the $T$-action is the pullback of the Borel fibration of the $G$-action along the map $BT\rightarrow BG$. We denote the minimal models of $BT$ and $BG$ by $R$ and $S$. Now if
\[(S,0)\rightarrow (S\otimes \Lambda V,D)\rightarrow (\Lambda V,d)\]
is a minimal model for the fibration of the $G$-action, a minimal model for the pullback fibration is given by
\[(R,0)\rightarrow (R\otimes_S(S\otimes\Lambda V),\Id_R\otimes_S D)\rightarrow (\Lambda V,d)\]
(see \cite[Prop.\ 15.8]{Bibel}). Since $T$ is maximal, the map $S\rightarrow R$ turns $R$ into a finitely generated, free $S$-module. In particular, $R\otimes_S(S\otimes\Lambda V)\cong R\otimes \Lambda V$ splits as a sum  of multiple degree shifted copies of $S\otimes \Lambda V$. This decomposition is respected by the differential so it induces an analogous splitting of the cohomology. We obtain $H_T^*(X)=R\otimes_S H_G^*(X)$ (actually as algebras although the splitting is one of $S$-modules).

As tensoring with $R$ over $S$ is exact, we deduce that the minimal free resolution of $H_T^*(X)$ is obtained from the one of $H^*_G(X)$ by tensoring with $R$. This implies that the $G$-action is almost $\mathcal{MOD}$-formal if and only if this holds for the $T$-action.

In view of the statement for spherical actions, this also shows that a minimal generating set of $H_G^*(X)$, i.e.\ one which descends to a basis of $H_G^*(X)/S^+\cdot H_G^*(X)$, is also a minimal generating set of $H_T^*(X)$. By Lemma \ref{spherical-generator-lem}, the condition of being spherical is equivalent to the restriction to $H^*(X)$ being injective on the span of such a generating set. The claim now follows from the observation that the inclusion \[H_G^*(X)=S\otimes_S H_G^*(X)\subset R\otimes_S H_G^*(X)=H_T^*(X)\]
commutes with the restriction to $H^*(X)$.

We turn our attention to $\mathcal{MOD}$-formal actions. Let $(\mathfrak{M},\tilde{D})\rightarrow (S\otimes\Lambda V,D)$ be the minimal Hirsch--Brown model of the $G$-action, with $\mathfrak{M}=S\otimes H^*(X)$. By the previous discussion it follows that \[(R\otimes H^*(X),\tilde{D})\cong (R\otimes_S \mathfrak{M},\Id_R\otimes_S \tilde{D})\rightarrow (R\otimes_S (S\otimes \Lambda V),D)\] induces a quasi-isomorphism. Note that the induced differential on $R\otimes_S \mathfrak{M}$ satisfies the minimality condition so this is indeed the minimal Hirsch--Brown model of the $T$-action.

A quasi-isomorphism $(\mathfrak{M},\tilde{D})\rightarrow (H_G^*(X),0)$ induces a quasi-isomorphism \[(R\otimes_S \mathfrak{M},\tilde{D})\rightarrow (R\otimes_S H_G^*(X),0).\]
Thus $\mathcal{MOD}$-formality of the $G$-action implies $\mathcal{MOD}$-formality of the $T$-action. For the converse implication we use criterion $(iii)$ of Lemma \ref{formality criterion} by which we obtain a vector space splitting $R\otimes_S \mathfrak{M}=\ker \tilde{D}\oplus C$ such that $C\oplus\im \tilde{D}$ is an $R$-submodule. As argued above, $R\otimes_S \mathfrak{M}$ splits as the sum of multiple degree shifted copies of $\mathfrak{M}$ when regarded as a differential graded $S$-module. We identify $\mathfrak{M}$ with the summand $S\otimes_S \mathfrak{M}\subset R\otimes_S \mathfrak{M}$. Denote by $\pi\colon R\otimes_S \mathfrak{M}\rightarrow \mathfrak{M}$ the projection onto this summand. The differential restricts to an isomorphism $\tilde{D}\colon C\rightarrow\im \tilde{D}$ and we denote its inverse by $\tilde{D}^{-1}$. Set \[C'=\pi\circ \tilde{D}^{-1}(\mathfrak{M}\cap\im\tilde{D}).\]
We claim that $C'$ fulfils the requirements of Lemma \ref{formality criterion} $(iii)$ with respect to the $G$-action. We observe that in the commutative diagram

\[\xymatrix{ \tilde{D}^{-1}(\mathfrak{M}\cap\im\tilde{D}\ar[d]^\pi\ar[rd]^{\tilde{D}}) & \\ C' \ar[r]^{\tilde{D}} & \mathfrak{M}\cap \im \tilde{D}}\]
all maps are isomorphisms. It follows that $C'$ is a complement of $\ker \tilde{D}|_\mathfrak{M}$ in $\mathfrak{M}$ and it remains to show that a closed $S$-linear combination
$\sum s_i c_i'$
of elements in $C'$ is already exact in $\mathfrak{M}$. As $\tilde{D}|_C$ is an isomorphism onto $\im \tilde{D}$, there are unique elements $c_i\in C$ with $\tilde{D}c_i=\tilde{D}c_i'$. They fulfil $\pi(c_i)=c_i'$ and have closed elements in all the other components with respect to the decomposition of $R\otimes_S \mathfrak{M}$ (because the differential respects the decomposition). Consequently, the element $\sum s_i c_i$ is closed. By the choice of $C$ it follows that it is already exact which is equivalent to exactness in every component. In particular, $\sum s_i c_i'$ is exact in $\mathfrak{M}$, which proves the claim.
\end{proof}

\begin{prop}
If the $G$-action is formally based or has formal core, then the same holds for the action of its maximal torus.
\end{prop}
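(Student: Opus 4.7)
The plan is to mimic the pullback argument from the preceding proposition. Let $T\subset G$ be a maximal torus, set $S=H^*(BG)$ and $R=H^*(BT)$, so that $R$ is a finitely generated free (hence flat) $S$-module. Consequently $R\otimes_S-$ preserves quasi-isomorphisms and injections, and sends relative Sullivan algebras over $S$ to relative Sullivan algebras over $R$ (and even preserves relative minimality, since the image of $S^+$ in $R$ lies in $R^+$). Starting from a relative minimal model $(S,0)\to(S\otimes\Lambda V,D)\to(\Lambda V,d)$ of the $G$-Borel fibration, pullback along $BT\to BG$ yields the relative minimal model $(R,0)\to(R\otimes\Lambda V,D)\to(\Lambda V,d)$ of the $T$-Borel fibration together with $H^*_T(X)=R\otimes_S H^*_G(X)$, exactly as exploited in the proof of the previous proposition.

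Now suppose the $G$-action is formally based with respect to $A\subset H_G^*(X)$, witnessed by a morphism of $S$-cdgas $\varphi\colon(C,d)\to(S\otimes\Lambda V,D)$, where $(S,0)\to(C,d)$ is a relative minimal Sullivan model of $(S,0)\to(A,0)$. Form the morphism of $R$-cdgas
\begin{align*}
\Id_R\otimes_S\varphi\colon R\otimes_S C\longrightarrow R\otimes_S(S\otimes\Lambda V)=R\otimes\Lambda V,
\end{align*}
in which $(R,0)\to R\otimes_S C$ is a relative minimal Sullivan model of $(R,0)\to(R\otimes_S A,0)$, and on cohomology the displayed map is the injection $R\otimes_S A\hookrightarrow R\otimes_S H_G^*(X)=H_T^*(X)$. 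Setting $A':=R\otimes_S A$, one sees that $A'$ is an $R$-subalgebra of $H_T^*(X)$ containing $\im(R\to H_T^*(X))$, since $1\in A$ yields $R\otimes 1\subset A'$. Thus the $T$-action is formally based with respect to $A'$.

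For the statement on formal core, the central observation is the identification
\begin{align*}
\overline{R\otimes_S C}=(R\otimes_S C)/R^+=(R/R^+)\otimes_S C=\mathbb{Q}\otimes_S C=C/S^+C=\overline{C}^{G},
\end{align*}
using that the composition $S\to R\to R/R^+=\mathbb{Q}$ agrees with the augmentation of $S$ (any graded multiplicative map from the connected positively graded polynomial ring $S$ to $\mathbb{Q}$ annihilates $S^+$), together with the analogous identification $\overline{R\otimes\Lambda V}=\Lambda V$. Under these identifications, the morphism $\overline{R\otimes_S C}\to\Lambda V$ induced by $\Id_R\otimes_S\varphi$ is precisely the $G$-witness $\overline{C}^G\to\Lambda V$ of formal core, so cohomological injectivity transfers and the $T$-action has formal core with respect to $A'$. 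The main subtlety is the bookkeeping of this compatibility between the $S$- and $R$-reductions modulo their positive parts; everything falls into place from the flatness of $R$ over $S$ and the identity $R^+\cdot(R\otimes_S C)=R^+\otimes_S C$.
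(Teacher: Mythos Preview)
Your proof is correct and follows essentially the same approach as the paper: both arguments transport the $S$-structures to $R$-structures via $R\otimes_S-$, using flatness to preserve quasi-isomorphisms and relative minimality, and then identify $(R\otimes_S C)/R^+$ with $C/S^+$ to transfer the formal core condition. Your presentation is slightly more explicit in spelling out the chain of identifications for the quotient, while the paper simply asserts that the natural map $C/S^+\to(R\otimes_S C)/R^+$ is an isomorphism.
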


\begin{proof}
Let $S,R$ as above and assume the $G$-action is formally based with respect to some $A\subset H_G^*(X)$. Let $(C,d)\simeq (A,0)$ be a relative minimal model of the canonical morphism $(S,0)\rightarrow(A,0)$. The map $R\rightarrow H_T^*(X)$ corresponds to $R\otimes_S S\rightarrow R\otimes_S H_G^*(X)$ which has image $R\otimes_S A$, and the induced map $(R\otimes_S C,d)\simeq (R\otimes_S A,0)$ is a relative minimal model for $(R\otimes_S S,0)\rightarrow (R\otimes_S A,0)$. Clearly, a morphism $(C,d)\rightarrow (S\otimes \Lambda V,D)$ of $S$-cdgas induces a morphism $(R\otimes _S C,d)\rightarrow (R\otimes_S (S\otimes \Lambda V),D)$ of $R$-cdgas. Thus the $T$-action is formally based if the $G$-action is.

If the $G$-action has formal core with respect to $A$, then we may take $C/S^+\rightarrow \Lambda V$ to be cohomologically injective. It factors through the morphism $C/S^+\rightarrow (R\otimes_S C)/R^+$, which is an isomorphism. This implies that also $(R\otimes_S C)/R^+\rightarrow \Lambda V$ is cohomologically injective. Consequently, the $T$-action has formal core as well.
\end{proof}

\section{Higher $A_\infty$-operations}\label{Massey}

As established earlier (see Prop. \ref{prop:realizeasaction} and before), fixing a base space $Y$, there is a correspondence between free torus actions with orbit space $Y$ and degree $2$ cohomology classes of $Y$. This correspondence is one-to-one in a suitable rational sense so it is a natural question how the formality properties of those actions are encoded in the corresponding cohomology classes. The usual algebra structure on the cohomology is not sufficient for answering this kind of question. Instead, in this section we attack the problem via certain higher operations on the cohomology.

We consider, more generally, any $G$ action on $X$. As before, let $R\otimes\Lambda V$ be a Sullivan model for $X_G$. Then we can consider its minimal $C_\infty$-model $(H_G^*(X);0,m_2,m_3,\ldots)$ which is unique up to isomorphism of $C_\infty$-algebras (see Section \ref{sec:minmodsec}). It is known that $X_G$ is formal if and only if it admits a $C_\infty$-model of the form $(H_G^*(X),0,m_2,0,\ldots)$ where all higher operations vanish (see Theorem \ref{thm:formalityofcdga}). Thus there is a characterization of actions with formal homotopy quotient in terms of the higher operation on the equivariant cohomology. Our goal is to find something similar for $\mathcal{MOD}$-formal actions.

\begin{thm}\label{MasseyThm}
Let $A\subset H^*_G(X)$ be the image of $H^*(BG)\rightarrow H_G^*(X)$. If the unital minimal $C_\infty$-model $(H^*_G(X);m_i)$ can be chosen in a way that $m_i$ vanishes on the subspace $H_G^*(X)\otimes A^{\otimes i-1}$ for $i\geq 3$, then the action is $\mathcal{MOD}$-formal.
\end{thm}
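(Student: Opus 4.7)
The plan is to use the $A_\infty$-module machinery from Appendix \ref{secstr}. First, I would recall the following $A_\infty$-reformulation of $\mathcal{MOD}$-formality: the dg$R$m $(R\otimes\Lambda V,D)$ possesses a minimal $A_\infty$-module model over $R$ whose underlying graded $R$-module is $(H_G^*(X),0)$, equipped with transferred higher operations $\mu_i\colon R^{\otimes i-1}\otimes H_G^*(X)\to H_G^*(X)$ for $i\geq 2$, and the action is $\mathcal{MOD}$-formal precisely when this model can be chosen so that $\mu_i=0$ for $i\geq 3$. Thus the task reduces to producing such a vanishing minimal $A_\infty$-module model from the hypothesis on the $C_\infty$-algebra operations.

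Second, the key is that the minimal $C_\infty$-algebra $(H_G^*(X);m_i)$ modelling $(R\otimes\Lambda V,D)$ as a cdga already contains the needed module information. The canonical morphism $R\to A\hookrightarrow H_G^*(X)$ is a strict morphism of $C_\infty$-algebras (since $R$ has trivial higher operations as a polynomial algebra). Consequently, setting
\[\mu_i(r_1,\ldots,r_{i-1},x)\;:=\;m_i(\bar r_1,\ldots,\bar r_{i-1},x),\]
where $\bar r_j\in A$ denotes the image of $r_j\in R$, endows $H_G^*(X)$ with the structure of an $A_\infty$-module over $R$: the Stasheff identities for the $\mu_i$ follow from the $C_\infty$-relations on $(H_G^*(X);m_i)$ using that $R\to A$ is strict and $R$ has trivial higher multiplications. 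Moreover, a $C_\infty$-quasi-isomorphism $(H_G^*(X);m_i)\to(R\otimes\Lambda V,D)$ becomes, upon restricting all but one tensor slot to $A$ (with the $A$-entries interpreted via the chosen lift $A\to R\subset R\otimes\Lambda V$), an $A_\infty$-module quasi-isomorphism over $R$. Being defined on cohomology with vanishing differential, this $A_\infty$-module structure is automatically minimal, hence must coincide, up to $A_\infty$-module isomorphism, with the minimal $A_\infty$-module model considered in the first step.

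The conclusion is then immediate: the hypothesis that $m_i$ vanishes on $H_G^*(X)\otimes A^{\otimes i-1}$ for $i\geq 3$ translates, by the displayed formula, into $\mu_i=0$ for $i\geq 3$, which is exactly $\mathcal{MOD}$-formality.

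The main obstacle is the second step: one must verify carefully that the restricted operations actually satisfy the $A_\infty$-module Stasheff identities and that the $A_\infty$-module obtained in this way genuinely realises the minimal model of $(R\otimes\Lambda V,D)$ as a dg$R$-module, rather than an $A_\infty$-module over $A$ or some intermediate object. Both points are essentially bookkeeping on the Stasheff relations, using the strictness of $R\to A$ to discard all terms that would arise from higher operations on $R$; the sign and partition combinatorics is routine but must be laid out with care. Once this correspondence between the $C_\infty$-structure on $H_G^*(X)$ and the $A_\infty$-module structure on the Hirsch--Brown model is established, the theorem follows at once.
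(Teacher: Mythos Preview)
Your overall strategy is the same as the paper's: restrict the $C_\infty$-operations along the strict morphism $f\colon R\to H_G^*(X)$ to obtain an $A_\infty$-$R$-module structure on $H_G^*(X)$, observe that the hypothesis kills all higher module operations, and compare with the standard dg$R$m $(R\otimes\Lambda V,D)$. However, there is a genuine gap in your second step, and it is not the ``routine bookkeeping'' you suggest.

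When you restrict the $C_\infty$-quasi-isomorphism $\varphi\colon(H_G^*(X);m_i)\to(R\otimes\Lambda V,D)$ to $A$-slots, what Lemma~\ref{lem:trianglemorphism} produces is an $A_\infty$-$R$-module morphism from $f^*H_G^*(X)$ to $(\varphi\circ f)^*(R\otimes\Lambda V)$. The target carries the $A_\infty$-$R$-module structure pulled back along the $C_\infty$-morphism $\varphi\circ f\colon R\to R\otimes\Lambda V$, \emph{not} along the standard cdga inclusion $i\colon R\hookrightarrow R\otimes\Lambda V$. These two maps agree on cohomology, but there is no a~priori reason for them to coincide or even be homotopic as $A_\infty$-morphisms, and only $i^*(R\otimes\Lambda V)$ is the dg$R$m whose formality you are after. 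Your parenthetical ``with the $A$-entries interpreted via the chosen lift $A\to R\subset R\otimes\Lambda V$'' does not resolve this: you cannot simply redirect where $\varphi$ sends elements of $A$ without destroying the Stasheff identities, and in any case there is in general no algebra section $A\to R$.

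The paper closes precisely this gap with a homotopy-theoretic argument that is the real content of the proof: it invokes the equivalence $\mathrm{Ho}(\mathrm{cdga}^+)\simeq\mathrm{Ho}(C_\infty\text{-}\mathrm{alg}^+)$ (Theorem~\ref{thm:homotopycategories}) to replace $\varphi\circ f$ by an honest cdga morphism $\psi\colon R\to R\otimes\Lambda V$, then uses Lemma~\ref{lem:fbchar} (two cdga maps out of $R$ inducing the same map on cohomology are already homotopic) to conclude $\psi\simeq i$, hence $\varphi\circ f\simeq i$ as $A_\infty$-maps. Only then does Lemma~\ref{lem:homotopicrestriction} yield $(\varphi\circ f)^*(R\otimes\Lambda V)\simeq i^*(R\otimes\Lambda V)$ as $A_\infty$-$R$-modules. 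This comparison of the two $R$-structures on $R\otimes\Lambda V$ is the crux of the argument and cannot be absorbed into sign-and-partition combinatorics.
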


\begin{rem}\label{Cinftyexampleschmiede}
The above theorem is particularly useful for constructing free $\mathcal{MOD}$-formal torus actions (see Example \ref{ex:nontrivmodform}): isomorphism classes of simply-connected, minimal, unital $C_\infty$-models are in one-to-one correspondence with simply-connected rational homotopy types. Hence, starting with any finite-dimensional, simply-connected, minimal, unital $C_\infty$-algebra $(H,m_i)$, we find a finite $CW$-complex $Y$ with minimal $C_\infty$-model $(H;m_i)$. Now any choice of $r$ elements in $H^2(Y)=H^2$ defines (the rational homotopy type of) a free $T^r$-space with orbit space $M$ (see Prop.\ \ref{prop:realizeasaction}). If we choose the degree $2$ classes in a way that their spanned subalgebra $A\subset H$ fulfils $m_i(x,a_1,\ldots,a_{i-1})=0$ for $a_i\in A$, $i\geq 3$, then the corresponding action will be $\mathcal{MOD}$-formal.
\end{rem}
\begin{proof}
Let
\[(R,0)\rightarrow (R\otimes \Lambda V,D)\rightarrow (\Lambda V,d)\] be a Sullivan minimal model of the Borel fibration of the action. Suppose there is a minimal unital $C_\infty$-model $\varphi\colon (H_G^*(X);m_i)\rightarrow (R\otimes \Lambda V,D)$ satisfying the properties from the theorem. Then the canonical map $f_1\colon R\rightarrow H_G^*(X)$ can be extended to a $C_\infty$-morphism $f\colon (R,0)\rightarrow (H_G^*(X);m_i)$ by setting the higher components to be trivial. This yields a diagram
\[\xymatrix{(R,0)\ar[r]\ar[dr]^f& (R\otimes \Lambda V,D)\\ & (H_G^*(X);m_i)\ar[u]_\varphi}\]
of augmented $C_\infty$-algebras (see Remark \ref{rem:augmented}) which commutes on the level of cohomology. We claim that it commutes up to homotopy of $A_\infty$-algebras.

The inclusion functor $\mathrm{cdga}^+\rightarrow \mathcal{C}_\infty\text{-}\mathrm{alg}^+$ between the augmented cdgas and augmented $C_\infty$-algebras induces an equivalence between the homotopy categories
$\mathrm{Ho}(\mathrm{cdga}^+)$ and $ \mathrm{Ho}(\mathcal{C}_\infty\text{-}\mathrm{alg}^+)$, which are the localizations of the respective categories at the quasi-isomorphisms (see Theorem \ref{thm:homotopycategories}). Also, since $(R,0)$ and $(R\otimes \Lambda V,D)$ are both free cdgas, they are both fibrant and cofibrant with respect to a model category structure whose weak equivalences are the quasi-isomorphisms (see e.g.\ \cite[Section B.6.11]{LodayValette}). Through the equivalence
\[\mathrm{cdga}^+_{cf}/\sim\rightarrow\mathrm{Ho}(\mathrm{cdga}^+)\] ($\sim$ being the homotopy relation), we deduce that the equivalence class of the morphism $\varphi\circ f$ in $\mathrm{Ho}(\mathcal{C}_\infty\text{-}\mathrm{alg}^+)$ contains a (unital) cdga-morphism $\psi\colon (R,0)\rightarrow (R\otimes \Lambda V,D)$. This is not necessarily the standard inclusion, which we denote by $i$, but it induces the same map in cohomology. By Lemma \ref{lem:fbchar}, this already implies that $i$ and $\psi$ are homotopic.
It follows that $i$ and $\varphi\circ f$ give rise to the same morphism in $\mathrm{Ho}(C_\infty\text{-}\mathrm{alg})$, which implies they are in particular homotopic when considered as $A_\infty$-morphisms through the forgetful functor (see \cite[Cor. 1.3.1.3]{lefevre}).

Now by Lemma \ref{lem:homotopicrestriction} the two $A_\infty$-$R$-module structures on $R\otimes\Lambda V$ defined by the morphisms $i$ and $\varphi\circ f$ are quasi-isomorphic as $A_\infty$-$R$-modules and by Lemma \ref{lem:trianglemorphism} they are also quasi-isomorphic to the $A_\infty$-$R$-module structure on $H_G^*(X)$ defined by $f$. As the higher operations of $H_G^*(X)$ vanish on the image of $f$ by assumption, all but the binary operation of the $A_\infty$-$R$-module $H_G^*(X)$ vanish. Thus the latter is just the differential graded $R$-module $(H_G^*(X),0)$.

We have shown that the differential graded $R$-modules $(H^*_G(X),0)$ and $(R\otimes \Lambda V,D)$ are quasi-isomorphic as $A_\infty$-$R$-modules. But then Theorem \ref{thm:homotopycategories} implies that they are also quasi-isomorphic as ordinary differential graded $R$-modules.
\end{proof}

By the formal cohomogeneity of a $G$-action on $X$ we mean the difference $\mathrm{fd}(X)-\dim G$ of the formal dimensions of $X$ and $G$, where formal dimension is the highest degree in which nontrivial cohomology exists.

\begin{cor}\label{cor:modfomalsmallcodim}
Let $G$ act almost freely on $X$. Let $c$ be the formal cohomogeneity of the action and assume that one of the following holds:

\begin{enumerate}[(i)]
\item $c\leq 3$.
\item $G$ is semisimple, $X$ is $k$-connected for $0\leq k\leq 3$ and $c\leq 7+k$.
\end{enumerate}
Then the action is $\mathcal{MOD}$-formal.
\end{cor}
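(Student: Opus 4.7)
The plan is to reduce both cases directly to Theorem \ref{MasseyThm} via a pure degree count on the higher operations of a strictly unital minimal $C_\infty$-model. First, since the action is almost free (and $X$ is assumed compact by the overall conventions), Proposition \ref{thm:hsiangandfriends}(i) gives $\mathrm{fd}(X_G)=c$, so $H_G^*(X)$ is concentrated in degrees $0,1,\ldots,c$. Choose a strictly unital minimal $C_\infty$-model $(H_G^*(X);m_i)$; this is possible by standard results on unital minimal models of cdgas, and it has the property that $m_i(\ldots,1,\ldots)=0$ whenever $i\neq 2$. In particular, to verify the hypothesis of Theorem \ref{MasseyThm} it is enough to show $m_i(x,a_1,\ldots,a_{i-1})=0$ for $i\geq 3$ whenever the first argument $x\in H_G^*(X)$ and the $a_j\in A$ all have strictly positive degree.

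Recall that $m_i$ has degree $2-i$, so
\[
|m_i(x,a_1,\ldots,a_{i-1})|=|x|+\sum_{j=1}^{i-1}|a_j|+(2-i).
\]
Since $A=\im(H^*(BG)\to H_G^*(X))$ and $H^*(BG)$ has no cohomology in odd degrees nor in degree $2$ when $G$ is semisimple, we obtain a uniform lower bound on $|a_j|$: for any compact connected $G$ we have $|a_j|\geq 2$, and when $G$ is semisimple we have $|a_j|\geq 4$ (as $H^p(BG)=0$ for $1\leq p\leq 3$). For the first argument we only know $|x|\geq 1$ in general, but stronger bounds are available under connectivity hypotheses; when $G$ is semisimple and $X$ is $k$-connected with $k\leq 3$, an inspection of the Serre spectral sequence of $X\to X_G\to BG$ using $H^p(BG)=0$ for $1\leq p\leq 3$ shows that $H^i_G(X)=0$ for $1\leq i\leq k$, so in fact $|x|\geq k+1$.

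Now the argument in case (i) is immediate: with $|x|\geq 1$ and $|a_j|\geq 2$, for $i\geq 3$
\[
|m_i(x,a_1,\ldots,a_{i-1})|\geq 1+2(i-1)+(2-i)=i+1\geq 4,
\]
so the output lives in a degree beyond $c\leq 3$ and must vanish. In case (ii), with $|x|\geq k+1$ and $|a_j|\geq 4$, for $i\geq 3$
\[
|m_i(x,a_1,\ldots,a_{i-1})|\geq (k+1)+4(i-1)+(2-i)=k+3i-1\geq k+8,
\]
which again exceeds $c\leq k+7$ and forces $m_i=0$. In both cases the hypothesis of Theorem \ref{MasseyThm} is satisfied and $\mathcal{MOD}$-formality follows.

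The only mildly non-routine ingredients are the two preliminary reductions: existence of a strictly unital minimal $C_\infty$-model (so that the unital vanishing convention applies to all slots), and the connectivity estimate $H^i_G(X)=0$ for $1\leq i\leq k$ in case (ii), which hinges on $BG$ being $3$-connected for semisimple $G$. Everything else is just bookkeeping of degrees, and no case analysis on $m_3$ beyond that is required because the bound grows in $i$.
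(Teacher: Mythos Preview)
Your proof is correct and follows essentially the same approach as the paper: both reduce to Theorem \ref{MasseyThm} by choosing a strictly unital minimal $C_\infty$-model and doing the same degree count to show the higher operations $m_i$ land above degree $c$. Your justification of the vanishing $H^i_G(X)=0$ for $1\le i\le k$ via the Serre spectral sequence is slightly more explicit than the paper's one-line remark that $X_G$ is $k$-connected when $X$ is and $G$ is semisimple, but the content is identical.
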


\begin{proof} It follows from Theorem \ref{thm:hsiangandfriends} that $H_G^*(X)$ vanishes in degrees above the codimension $c$. We argue that in the situation of $(i)$ and $(ii)$, the conditions of Theorem \ref{MasseyThm} are fulfilled for degree reasons.
Choose a unital minimal $C_\infty$-model structure on $H_G^*(X)$, which means that the higher operations $m_i$, $i\geq 3$ vanish if the argument has a tensor component of degree $0$. Thus we only need to check the vanishing of the $m_i$ on $H_G^+(X)\otimes A^+\otimes\ldots\otimes A^+$ where $A$ is the image of $H^*(BG)\rightarrow H_G^*(X)$. In the situation of $(i)$, the minimal nonzero and nontrivial degree of $A^+$ is at least $2$. Hence $m_i$, which is of degree $2-i$, takes values in degrees $\geq i+1$ when restricted to this subspace. This proves $(i)$.

If $G$ is semisimple, then the first nontrivial degree of $A^+$ is $4$. If $X$ is $k$-connected, $0\leq k\leq 3$, so is $X_G$ and it follows that $m_i$ takes values in degrees $\geq k+3i-1$ when restricted to $H_G^+(X)\otimes A^+\otimes\ldots\otimes A^+$. This implies $(ii)$.
\end{proof}

\begin{rem}
Instead of arguing via minimal $C_\infty$-models, the corollary above could also be deduced from analogous degree considerations in the minimal $A_\infty$-$R$-module model without the detour through algebras made in Theorem \ref{MasseyThm}.
\end{rem}

The precise nature of the connection between Massey products of algebras and $\mathcal{MOD}$-formality is hard to grasp and the sufficient condition of Theorem \ref{MasseyThm} is not necessary as shown by example \ref{ex:masseynotnecessary}: in the example, the action is $\mathcal{MOD}$-formal despite the existence of nontrivial quadruple Massey products which cause $m_4$ to be nontrivial on $A^{\otimes 4}$ for any $C_\infty$-model structure on the equivariant cohomology. We want to add that contrary to this observation, the nontriviality of certain quadruple Massey products in $A^4$ can be an obstruction to $\mathcal{MOD}$-formality in the right situation.

Other than $\mathcal{MOD}$-formality, the notion of being formally based has a precise description via higher $C_\infty$-operations and is equivalent to a weakened form of the requirement of Theorem \ref{MasseyThm}.

\begin{thm}\label{thm:Masseyformallybased}
Let $A\subset H_G^*(X)$ be an $R$-subalgebra. The action is formally based with respect to $A$ if and only if the unital minimal $C_\infty$-model $(H_G^*(X);m_i)$
can be chosen in a way that $m_i$ vanishes on $A^{\otimes i}$ for $i\geq 3$.
\end{thm}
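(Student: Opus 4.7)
The plan is to mirror the proof of Theorem~\ref{MasseyThm}, which leverages the equivalence $\mathrm{Ho}(\mathrm{cdga}^+)\simeq\mathrm{Ho}(\mathcal{C}_\infty\text{-}\mathrm{alg}^+)$ of Theorem~\ref{thm:homotopycategories}, but now adapted to the algebra setting underlying the ``formally based'' condition. For the backward implication, suppose $(H_G^*(X);m_i)$ is a unital minimal $C_\infty$-model with $m_i|_{A^{\otimes i}}=0$ for $i\geq 3$. The key observation is that this vanishing is exactly what makes the inclusion $A\hookrightarrow H_G^*(X)$ into a strict unital $C_\infty$-morphism $\mathrm{inc}\colon (A,0,m_2|_{A\otimes A},0,\ldots)\to (H_G^*(X);m_i)$, meaning one with $f_1=\mathrm{inc}$ and $f_i=0$ for $i\geq 2$. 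Composing with a $C_\infty$-quasi-isomorphism $\varphi\colon(H_G^*(X);m_i)\to(R\otimes\Lambda V,D)$ and pre-composing with the cdga quasi-isomorphism $(C,d)\to(A,0)$ yields a $C_\infty$-morphism $(C,d)\to(R\otimes\Lambda V,D)$ inducing $A\hookrightarrow H_G^*(X)$ on cohomology. Exactly as in the proof of Theorem~\ref{MasseyThm}, fibrant--cofibrancy of the Sullivan algebras in $\mathrm{cdga}^+$ combined with the equivalence of homotopy categories produces an honest cdga morphism $\psi\colon(C,d)\to(R\otimes\Lambda V,D)$ inducing the same map on cohomology, and Lemma~\ref{lem:fbchar}$(ii)$ then homotopes $\psi$ to an $R$-cdga morphism, establishing that the action is formally based with respect to $A$.

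For the forward implication, assume an $R$-cdga morphism $\iota\colon(C,d)\to(R\otimes\Lambda V,D)$ inducing $A\hookrightarrow H_G^*(X)$ on cohomology. Since $(C,d)\simeq(A,0)$ is formal, Theorem~\ref{thm:formalityofcdga} says that its unital minimal $C_\infty$-model is the strict cdga $(A,0,m_2^A,0,\ldots)$. Fix any unital minimal $C_\infty$-model $\Phi_0\colon(H_G^*(X);m_i^0)\to(R\otimes\Lambda V,D)$. Functoriality of minimal $C_\infty$-models (see Section~\ref{sec:minmodsec}) applied to $\iota$ produces a unital $C_\infty$-morphism $f\colon(A,0,m_2^A,0,\ldots)\to(H_G^*(X);m_i^0)$ with $f_1$ equal to the inclusion $A\hookrightarrow H_G^*(X)$. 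I would then construct, by induction on $n\geq 2$, a unital $C_\infty$-isomorphism $g\colon(H_G^*(X);m_i^0)\to(H_G^*(X);m_i)$ with $g_1=\mathrm{id}$ such that $g\circ f$ is the strict inclusion: the equation $(g\circ f)_n=0$ prescribes $g_n$ on $A^{\otimes n}$ in terms of $f_n$ and the lower $g_j,f_j$, and $g_n$ is extended freely to a complement of $A^{\otimes n}$ in $H_G^*(X)^{\otimes n}$. Defining $m_i$ as the push-forward of $m_i^0$ along $g$ (uniquely determined by the $C_\infty$-morphism equations since $g_1=\mathrm{id}$) yields the required unital minimal $C_\infty$-model, because strictness of $g\circ f=\mathrm{inc}$ combined with the $C_\infty$-morphism calculation of the backward direction forces $m_i|_{A^{\otimes i}}=0$ for $i\geq 3$.

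The main obstacle is the recursion in the forward direction: one must verify that the extensions of the $g_n$ off $A^{\otimes n}$ can be chosen compatibly with the unital convention (which is possible because $f$ is unital, so the prescribed values of $g_n$ already vanish on tensors with a unit factor, and one picks complements preserving this) and that the resulting push-forward $m_i$ is itself a unital minimal $C_\infty$-structure. An essentially equivalent but more geometric alternative would be to construct directly a strong deformation retract of $(R\otimes\Lambda V,D)$ onto a cohomology representative whose restriction to the subcomplex $\iota(C)$ is an SDR onto a representative of $A\subset H_G^*(X)$; the locality of the Kadeishvili/homological perturbation formula then guarantees that the transferred minimal $C_\infty$-structure on $H_G^*(X)$ restricts on $A^{\otimes i}$ to the transferred structure from $C$, which is trivial in degrees $i\geq 3$ by formality of $C$.
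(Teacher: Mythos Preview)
Your backward implication matches the paper's proof exactly: the paper also observes that the vanishing hypothesis makes the inclusion $A\hookrightarrow H_G^*(X)$ a strict unital $C_\infty$-morphism, passes through the zigzag $(C,d)\leftarrow A\rightarrow H_G^*(X)\rightarrow (R\otimes\Lambda V,D)$ in $\mathrm{Ho}(\mathcal{C}_\infty\text{-alg}^+)$, and then uses the equivalence of homotopy categories together with Lemma~\ref{lem:fbchar} to extract an $R$-cdga morphism.

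For the forward implication the paper takes a shorter route: it simply invokes Lemma~\ref{lem:inftyminmodkram}(iii), which constructs the unital minimal $C_\infty$-model of $R\otimes\Lambda V$ via the explicit Kadeishvili recursion, choosing the maps $f_n^B$ on $\im(\varphi^*)^{\otimes n}$ compatibly with those for the formal cdga $C$ so that $m_n^B\circ(\varphi^*)^{\otimes n}=\varphi^*\circ m_n^A=0$ automatically. Your alternative via a strong deformation retract and homological perturbation is essentially this same argument in different clothing. Your primary route --- fixing an arbitrary minimal model $(m_i^0)$, producing a possibly non-strict lift $f$, and then building a straightening $C_\infty$-isomorphism $g$ --- is a genuinely different strategy: more conceptual in that it decouples the existence of a $C_\infty$-lift from any particular transfer formula, but at the cost of more bookkeeping.

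One point in your primary route needs more care than you give it. When you write that ``$g_n$ is extended freely to a complement of $A^{\otimes n}$'', you must also ensure the extension vanishes on all shuffles in $H_G^*(X)^{\otimes n}$, not merely on the shuffles lying in $A^{\otimes n}$; otherwise $g$ is only an $A_\infty$-isomorphism and the push-forward $m_i$ need not be $C_\infty$. This can be arranged in characteristic~$0$ because the shuffle subspace of $\overline{T}sH$ is the kernel of a natural idempotent (the Eulerian projector), so $(\text{shuffles in }H^{\otimes n})\cap A^{\otimes n}$ coincides with the shuffles in $A^{\otimes n}$, and the values prescribed by $(g\circ f)_n=0$ already vanish there since they arise from a $C_\infty$-composition; one then extends by zero on a complement of $A^{\otimes n}+(\text{shuffles})$. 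You flag unitality as the main obstacle but not this shuffle compatibility, which is the more delicate issue. The paper's approach sidesteps all of this by building the correct $C_\infty$-structure from the outset rather than correcting an arbitrary one afterwards.
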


\begin{proof}
Suppose that the action is formally based which means we have a morphism \[\varphi\colon (C,d)\rightarrow (R\otimes \Lambda V,D)\] of $R$-cdgas, where $(C,d)$ is a relative minimal model for $R\rightarrow A$. Then by part $(iii)$ of Lemma \ref{lem:inftyminmodkram}, we can construct the unital $C_\infty$-model of $X_G$ in the desired way.

Conversely suppose we have a unital minimal model $(H_G^*(X);m_i^{X_G})$ where the $m_i^{X_G}$ vanish on $A$. The cdga $(C,d)$ is formal with cohomology equal to $A$ so by part $(ii)$ of Lemma \ref{lem:inftyminmodkram} we can choose a unital minimal $C_\infty$-model of the form $(A;m_i^A)$ with $m_i^A=0$ for $i\neq 2$ and $m_2^A$ the ordinary multiplication.
The inclusion $(A;m_i^A)\rightarrow (H_G^*(X);m_i^{X_G})$ defines a unital morphism of $C_\infty$-algebras with trivial higher components. Thus we have a morphism $(C,d)\rightarrow (R\otimes \Lambda V,D)$ in $\mathrm{Ho}(C_\infty\text{-alg}^+)$ defined by \[(C,d)\leftarrow (A;m_i^A)\rightarrow (H_G^*(X);m_i^{X_G})\rightarrow (R\otimes \Lambda V,D).\] We observe that $(C,d)$ and $(R\otimes \Lambda V,D)$ are Sullivan cdgas and conclude as in the proof of Theorem \ref{MasseyThm} that the morphism in $\mathrm{Ho}(C_\infty\text{-alg}^+)$ is represented by a morphism $(C,d)\rightarrow (R\otimes \Lambda V,D)$ of cdgas. Then it is also represented by a morphism of $R$-cdgas by Lemma \ref{lem:fbchar}.

\end{proof}

\section{Formality and the TRC}\label{secform}

We investigate the effects of certain aspects of formality with regards to the TRC. Those aspects can be divided into two categories: the equivariant ones that manifest on the homotopy quotient $X_T$ and, on the other hand, formality properties of the space $X$ itself. The considerations of the previous sections fall into the first catergory and will be discussed first, followed by a discussion of actions on spaces of low dimensions. Formality of the space $X$ does not seem to yield immediate results when attacking the TRC in full generality. However, in the third part of this section we recall how to deduce TRC in case $X$ is formal and elliptic. This has been noted previously in \cite{KotaniYamaguchi}, and we do not claim originality of the result. However, it essentially builds upon  a more general structural observation on formal elliptic spaces, which we did not find explicitly stated. We feel like there is some value to a compact display of the material and this seems like a fitting place to do so.

\subsection{The TRC and refinements of equivariant formality}

Formality can help link the Buchsbaum--Eisenbud--Horrocks Conjecture {(see below)} and the Toral Rank Conjecture. For example it was observed in \cite{ustinovsky} (see also \cite{munoz}) that such a link is given by the fact that the Serre spectral sequence of the homotopy fibration
\[T \rightarrow X\rightarrow X_T\]
collapses at $E_3$ if $X_T$ is a formal space. In our language this comes down to the fact that those actions are in particular almost $\mathcal{MOD}$-formal (see Prop.\ \ref{E3collapse}).
Rather recently, in \cite{walker}, the following breakthrough theorem was proved, solving a weak form of the Buchsbaum--Eisenbud--Horrocks conjecture.

\begin{thm}\label{thm:BEHC}
Let $R$ be a commutative Noetherian ring that is locally a complete intersection such that $spec(R)$ is connected. Further, let $M$ be a nonzero finitely generated $R$-module of finite projective dimension such that $M$ is $2$-torsion free and
\[0\leftarrow M\leftarrow P_0\leftarrow\ldots\leftarrow P_d\leftarrow 0\]
a projective resolution. Then
\[\sum_{i=0}^d \rk_R(P_i)\geq 2^c,\]
where $c$ is the codimension of $M$.
\end{thm}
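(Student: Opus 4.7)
The plan follows Walker's approach via Adams operations on the Grothendieck group of perfect complexes, which is the only approach I know that achieves the full exponential bound rather than a polynomial one.

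First I would reduce to the local case. By localizing at a prime $\mathfrak{p}$ in the support of $M$ with $\mathrm{ht}(\mathfrak{p}) = c$, one preserves the hypothesis of being a locally complete intersection, $\mathrm{Spec}(R_\mathfrak{p})$ is automatically connected, projective dimension can only decrease, and $\sum\rk_{R_\mathfrak{p}}((P_i)_\mathfrak{p})\le \sum \rk_R(P_i)$. So it suffices to establish the inequality when $(R,\mathfrak{m},k)$ is a local complete intersection with $\mathrm{char}(k)\ne 2$ and $M$ has finite length equal to the codimension $c=\dim R$. In this situation projectives are free, and $P_\bullet$ is a perfect complex with finite-length homology concentrated in degree $0$.

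The heart of the proof is the construction of Adams operations $\psi^n \colon K_0(\mathrm{Perf}(R))\to K_0(\mathrm{Perf}(R))_\qq$ on the Grothendieck group of perfect complexes, satisfying an Euler-characteristic identity of the shape
\[\chi_R\bigl(\psi^n[F_\bullet]\bigr) = n^{c} \cdot \chi_R([F_\bullet])\]
for perfect complexes $F_\bullet$ with finite-length homology. For regular local rings this is classical Gillet--Soulé; the point of the complete intersection hypothesis is precisely to allow one to still define such operations (via a presentation of $R$ as a quotient of a regular ring by a regular sequence and a suitable descent/Koszul-type argument). Specializing to $n=2$ and $F_\bullet = P_\bullet$, the left-hand side equals the alternating sum of the ranks of the free modules appearing in $\psi^2[P_\bullet]$, and this alternating sum is bounded in absolute value by the \emph{total} rank $\sum_i \rk_R(P_i)$. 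The $2$-torsion-freeness hypothesis enters to ensure that the rational action of $\psi^2$ genuinely refines to an integral statement, so that sign cancellations cannot bring the bound below $2^c\chi_R([P_\bullet]) = 2^c\cdot\mathrm{length}(M)\ge 2^c$.

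The main obstacle, as I see it, is precisely the construction of Adams operations in the locally complete intersection setting with the correct Riemann--Roch-type formula; for regular rings everything is formal, but in the presence of the ``conormal bundle'' of the regular sequence defining the complete intersection, there are nontrivial error terms that must be controlled. A secondary obstacle is ensuring that the passage from the local case back to the global one through localization is lossless — this is where connectedness of $\mathrm{Spec}(R)$ is needed, to guarantee that the codimension is well-defined and that one cannot somehow produce a smaller bound by splitting $R$ across components.
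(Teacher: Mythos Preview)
The paper does not supply its own proof of this theorem; it is quoted from \cite{walker} as external input and then applied as a black box in Section~\ref{secform}. Your sketch is a faithful high-level outline of Walker's actual argument via the second Adams operation on $K_0$ of perfect complexes over a local complete intersection, so there is nothing in the paper to compare it against.

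Two minor corrections on the role of the hypotheses in your write-up: the $2$-torsion freeness (equivalently, residue characteristic $\ne 2$ after localization) is needed because in characteristic $2$ the operation $\psi^2$ collapses to Frobenius and the key identity $\chi(\psi^2 F)=2^{\dim R}\chi(F)$ fails, rather than for an integrality refinement; and connectedness of $\mathrm{Spec}(R)$ is there so that the rank of a projective module is a well-defined integer rather than merely a locally constant function on $\mathrm{Spec}(R)$.
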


In its strong incarnation one conjectures the more specific bounds \[{\rk_R(P_i)\geq}  {{c}\choose{i}}\]
to hold.
The link to the TRC is provided by the equivariant cohomology: for a torus $T$ acting on a space $X$, take $R=H^*(BT)$ and $M=H_T^*(X)$. Then $R$ is a polynomial ring and thus regular which implies all the conditions of the above theorem. Also $H_T^*(X)$ is finitely generated (see \cite[Prop. 3.10.1]{AlldayPuppe2}) and of finite projective dimension. The codimension $c$ of $H_T^*(X)$ has a nice geometrical interpretation:

\begin{lem}\label{lem:codimisminimalorbitsdim}
Let $X$ be a compact $T$-space. Then the codimension of $H_T^*(X)$ as an $R$-module is the minimal dimension among the orbits.
\end{lem}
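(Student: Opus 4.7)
My plan is to reduce both sides of the claimed equality to a statement about the support of $H_T^*(X)$ in $\operatorname{Spec}(R)$. Since $R = H^*(BT)$ is a polynomial algebra with $\dim R = r := \dim T$, I have $\operatorname{codim}_R H_T^*(X) = r - \dim\bigl(R/\!\operatorname{Ann}_R H_T^*(X)\bigr) = r - \dim\operatorname{Supp}(H_T^*(X))$. On the orbit side, writing $T_x$ for the isotropy at $x$, the dimension of the orbit $T\cdot x$ equals $r - \dim T_x$, so the minimal orbit dimension equals $r - \max_{x\in X}\dim T_x$. The lemma thus reduces to the identity
\[\dim\operatorname{Supp}(H_T^*(X)) \;=\; \max_{x\in X}\dim T_x.\]

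The key ingredient is the Borel localization theorem for compact $T$-spaces, which I would invoke from \cite{AlldayPuppe2}. Recall that the homogeneous primes of $R$ are in bijection with subtori $K\subseteq T$ via $K \mapsto \mathfrak{p}_K := \ker(R\to H^*(BK))$, and one has $\dim(R/\mathfrak{p}_K) = \dim K$. Borel localization yields $H_T^*(X)_{\mathfrak{p}_K} \cong H_T^*(X^K)_{\mathfrak{p}_K}$, so $\mathfrak{p}_K$ lies in the support of $H_T^*(X)$ precisely when $X^K \neq \emptyset$, which in turn happens exactly when $K$ is contained in the identity component $T_x^0$ of some isotropy group. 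Taking the maximum of $\dim(R/\mathfrak{p}_K) = \dim K$ over such $K$ gives $\max_x \dim T_x^0 = \max_x \dim T_x$, and the lemma follows.

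The main obstacle I anticipate is justifying the localization identification under the paper's standing hypothesis that $X$ is merely a compact Hausdorff space with $\dim H^*(X) < \infty$, rather than a finite $T$-CW complex. This is addressed in \cite[Ch.\ 3]{AlldayPuppe2}, where Borel's theorem is established in exactly this generality, and it is this form that I would cite. As a consistency check, in the extreme case of an almost free action every $T_x$ is finite, giving $\max_x \dim T_x = 0$ and hence $\operatorname{codim}_R H_T^*(X) = r$, which forces $H_T^*(X)$ to be finite-dimensional and so recovers Theorem \ref{thm:hsaiNGSTHEOREM} as asserted in the remark preceding the lemma.
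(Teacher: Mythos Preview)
Your overall strategy---reducing to a support computation and invoking Borel localization---is the same as the paper's, but there is a genuine gap. The assertion that ``the homogeneous primes of $R$ are in bijection with subtori $K\subseteq T$'' is false over $\mathbb{Q}$: for instance $(X_1^2+X_2^2)\subset \mathbb{Q}[X_1,X_2]$ is a homogeneous prime not of the form $\mathfrak{p}_K$. Your argument only tests membership in $\operatorname{Supp}(H_T^*(X))$ at the linear primes $\mathfrak{p}_K$, so as written it does not rule out that $\dim\operatorname{Supp}$ is achieved at a non-linear prime. The fix is to run localization at an \emph{arbitrary} prime $\mathfrak{p}$: with $S=R\setminus\mathfrak{p}$ one has $x\in X^S$ iff $\mathfrak{p}_{T_x^0}\subset\mathfrak{p}$, whence $\operatorname{Supp}(H_T^*(X))=\bigcup_x V(\mathfrak{p}_{T_x^0})$ is a union of linear subspaces and the dimension is indeed realized at some $\mathfrak{p}_K$.

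The paper's proof avoids this detour. For the inequality $\operatorname{ht}(\mathrm{Ann})\leq c$ it argues elementarily: the map $R\to H_T^*(X)$ factors through $H_T^*(O)$ for any orbit $O$, so $\mathrm{Ann}(H_T^*(X))=\ker(R\to H_T^*(X))$ sits inside a linear prime of height $c$. For the reverse inequality it takes an arbitrary prime $\mathfrak{p}$ of height $c-1$, observes that no $\ker(R\to H_T^*(O))$ can be contained in it, hence $X^S=\emptyset$ for $S=R\setminus\mathfrak{p}$, and applies Borel localization only in this vanishing case. This matters for your second concern: the paper stresses (Remark~\ref{rem:voraussetzungen}) that the case $X^S=\emptyset$ has a short proof for singular cohomology on compact Hausdorff (hence Tychonoff) spaces, whereas the full isomorphism you cite from \cite{AlldayPuppe2} is proved there for Alexander--Spanier cohomology and needs an extra hypothesis (e.g.\ local contractibility) to transfer to singular cohomology.
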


\begin{rem}\label{rem:voraussetzungen}
The lemma is a consequence of Borel localization and has been observed e.g.\ in \cite[Proposition 5.1]{FranzPuppe}. We will give a short proof nonetheless for the sake of a compact presentation of this essential ingredient and to provide clarity on the necessary topological requirements. Those stem solely from Borel localization which states that for a multiplicative subset $S\subset R$, there is an isomorphism of localizations
\[S^{-1}H_T(X)\longrightarrow S^{-1}H_T(X^S),\]
where $X^S=\{x\in X~|~S^{-1}H_T(T\cdot x)\neq 0\}$. In fact we will only be concerned with the case $X^S=\emptyset$ in which case Borel localization has an easy proof which works for singular cohomology given the existence of tubular neighbourhoods (which is assured by the the fact that compact Hausdorff spaces are Tychonoff) see e.g.\ \cite[Theorem III.1]{hsiang}. There are other conditions under which Borel localization is known to hold, see \cite[Section 3.2]{AlldayPuppe2}. In particular, the compactness condition can be replaced by other finiteness conditions. Note that the latter reference uses Alexander-Spanier cohomology which agrees with singular cohomology under the additional condition of $X$ being locally contractible.
\end{rem}

\begin{proof}[Proof of Lemma \ref{lem:codimisminimalorbitsdim}]
If $X$ has an orbit $O=T/H$ of dimension $c$, then $\ker(R\rightarrow H_T^*(O)=H^*(BH))$ is generated by $c$ linearly independent generators of $R^2$ and is thus an ideal of height $c$. Since this map factors through $H_T^*(X)$, it follows that $\mathrm{Ann}(H_T^*(X))=\ker(R\rightarrow H_T^*(X))$ is contained in this ideal and is therefore of height $\leq c$.

Assume now that $c$ is the minimal dimension among the orbits and let $\mathfrak{p}$ be a prime ideal of height $c-1$. By the previous considerations, $\ker(R\rightarrow H_T^*(O))$ is not contained in $\mathfrak{p}$ for any orbit $O\subset X$. Hence, setting $S=R\backslash \mathfrak{p}$, we have $X^S=0$. Borel localization implies $S^{-1} H_T^*(X)=0$ which means that every element from $H_T^*(X)$ is annihilated by some element in $S$. Since $H_T^*(X)$ is finitely generated, we obtain an element in $S\cap \mathrm{Ann}(H^*_T(X))$ by taking products. We have shown that $\mathrm{Ann}(H^*_T(X))$ is not contained in $\mathfrak{p}$.
\end{proof}

Thus the only missing piece is linking the projective resolution of $H_T^*(X)$ to $H^*(X)$. The number $\sum\rk_R(P_i)$ from the above theorem gives an upper bound for $\dim H^*(X)$, which is not sharp in general as it is the dimension of the $E_2$ page in Remark \ref{rem:EilenbergMoore}. Equality holds if and only if said spectral sequences collapse at $E_2$.

\begin{thm}\label{thm:TRCholdsforbla}
Suppose the $T$-action on the compact space $X$ is (almost) $\mathcal{MOD}$-formal or has formal core. Then
\[\dim H^*(X)\geq 2^{c},\]
where $c$ is the minimal dimension among the orbits. In particular the TRC holds for those kinds of actions.
\end{thm}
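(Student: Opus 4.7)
The plan is to combine Walker's breakthrough Theorem \ref{thm:BEHC} with the refined $R$-module interpretations of (almost) $\mathcal{MOD}$-formality and of formal core developed earlier in the paper. Throughout, $R=H^*(BT)$ is a polynomial ring, hence regular (so finite projective dimension is automatic), Noetherian with connected spectrum, and every $R$-module in sight is $2$-torsion free since we work over $\qq$; so Walker's hypotheses are satisfied for free. By Lemma \ref{lem:codimisminimalorbitsdim}, the codimension of $H_T^*(X)$ as an $R$-module is exactly the minimal orbit dimension $c$.

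In view of the chain $\mathcal{MOD}\text{-formal}\Rightarrow\text{almost }\mathcal{MOD}\text{-formal}$ proved earlier, for the first case it suffices to treat the almost $\mathcal{MOD}$-formal situation. There, by definition $\dim H^*(X)$ coincides with the total rank $\sum_i \rk_R(F_i)$ of the minimal free resolution of the $R$-module $H_T^*(X)$, so Theorem \ref{thm:BEHC} applied to $M=H_T^*(X)$ immediately yields
\[
\dim H^*(X) \;=\; \sum_i \rk_R(F_i)\;\geq\; 2^{c}.
\]

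For the formal core case, fix a subalgebra $A\subset H_T^*(X)$ with respect to which the action has formal core, and let $(C,d)\simeq (A,0)$ be the relative minimal Sullivan model of $R\to A$, with $C=R\otimes \Lambda W$ free as an $R$-module. First, $A$ has the same codimension $c$ as $H_T^*(X)$: both are unital $R$-algebras with $A\hookrightarrow H_T^*(X)$ an injection, so the annihilators coincide with $\ker(R\to A)=\ker(R\to H_T^*(X))$, hence the heights agree. Second, since $C$ is free (thus flat) over $R$ and $C\to A$ is a quasi-isomorphism of $R$-dg-modules, one has
\[
\dim H^*(\overline{C},\overline{d})\;=\;\dim \mathrm{Tor}_R^*(A,\qq)\;=\;\sum_i \rk_R(P_i),
\]
where $P_\bullet$ is any minimal free $R$-resolution of $A$. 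Third, the formal core property provides an injection $H^*(\overline{C},\overline{d})\hookrightarrow H^*(\Lambda V,d)=H^*(X)$. Combining these with Walker's Theorem \ref{thm:BEHC} applied to $A$ yields
\[
\dim H^*(X)\;\geq\;\dim H^*(\overline{C},\overline{d})\;=\;\sum_i \rk_R(P_i)\;\geq\; 2^{c}.
\]

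The main obstacle is the middle identification $\dim H^*(\overline{C},\overline{d})=\dim \mathrm{Tor}_R^*(A,\qq)$. While morally automatic from $(C,d)$ being a flat quasi-isomorphic model of $A$, the bookkeeping deserves care, since a relative minimal Sullivan algebra is cohomologically graded whereas a classical minimal free resolution is homologically graded, and the induced differential $\overline{d}$ on $\overline{C}$ does not vanish in general (it records the higher-order parts of $d$). The cleanest route is to view $(C,d)$ as a cofibrant replacement of $A$ in the projective model structure on $R$-dg-modules, so that $C\otimes_R \qq=\overline{C}$ computes $A\otimes^{\mathbf{L}}_R \qq$ and the total dimension of $\mathrm{Tor}$ is insensitive to the grading convention on the resolution. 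All remaining ingredients---Walker's theorem, the annihilator computation, and Lemma \ref{lem:codimisminimalorbitsdim}---are already in place, so once this identification is secured, both cases merge into the claimed bound; the ``in particular'' statement is then automatic since for an almost free $T$-action all orbits have dimension $\dim T$, giving $c=\dim T$.
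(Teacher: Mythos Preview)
Your proof is correct and follows essentially the same route as the paper's: reduce the (almost) $\mathcal{MOD}$-formal case to Walker's bound via the definition of almost $\mathcal{MOD}$-formality, and in the formal core case apply Walker to the $R$-module $A=H^*(C)$ after identifying $\dim H^*(\overline{C},\overline{d})$ with the total rank of the minimal free resolution of $A$, then use the cohomological injection $H^*(\overline{C},\overline{d})\hookrightarrow H^*(X)$ and the equality of annihilators. The paper phrases the formal core step more tersely by noting that $(C,d)$ is itself a formal dg$R$m and invoking ``the same arguments as in the $\mathcal{MOD}$-formal case'', whereas you spell out the $\mathrm{Tor}$ identification via the cofibrant-replacement viewpoint; these are the same argument in different clothing.
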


\begin{proof}
The rank of the minimal Hirsch--Brown model is precisely $\dim H^*(X)$ so the statement for (almost) $\mathcal{MOD}$-formal actions follows directly from Theorems \ref{thm:modformalfreeres} and \ref{thm:BEHC}.

Regarding actions with formal core with respect to some $A\subset H^*_G(X)$, note first that $\dim H^*(X) \geq \dim H^*(\overline{C},\overline{d})$ where we use the notation surrounding Definitions \ref{def:formallybased} and \ref{def:injformbased}. The map
$(R,0)\rightarrow (C,d)$ turns $(C,d)$ into a formal dg$R$m so by the same arguments as in the $\mathcal{MOD}$-formal case we obtain $\dim H^*(\overline{C},\overline{d})\geq 2^{c'}$, where $c'$ is the height of the annihilator of $H^*(C)$ as an $R$-module.
But the annihilators of $H_T^*(X)$ and $H^*(C,d)$ are just given by the kernel of the map $R\rightarrow H^*(C,d)\subset H^*_T(X)$. In particular, $c=c'$ and
\begin{align*}
 \dim H^*(X)\geq \dim H^*(\overline{C},\overline{d}) \geq 2^{c}.
\end{align*}
When $T$ acts almost freely, we have $c=\dim T$ which yields the TRC.
\end{proof}

\begin{rem} In the above proof we did essentially only use the Hirsch--Brown model of the action, disregarding much of the rich multiplicative structure encoded in the rational homotopy types of $X_T$ and $X$. This reduction of information leads to much easier models and is the key to the success of the argument. However, this strategy can not be successful when attacking the TRC in full generality: in \cite{IyengarWalker} an example of a finitely generated free differential graded $R$-module (with $R$ a polynomial ring in $r$ variables) is given whose total rank is less than $2^r$ but which has finite dimensional (nonzero) cohomology. This shows that in general the information needed to prove the TRC is not contained in the Hirsch--Brown model. The condition of $\mathcal{MOD}$-formality provides a natural setting in which the Hirsch--Brown model is indeed sufficient for the proof of the TRC. Note that, while this new counterexample raises some doubts, it is not a counterexample to the TRC as not every free dg$R$m comes from an $R$-cdga and this example in particular is shown to be not topologically realizable.
\end{rem}

Also, there is the following addendum to Theorem \ref{thm:BEHC} from \cite{walker}.

\begin{thm}\label{thm:BEHCequality}
Suppose $R$ is a local (Noetherian, commutative) ring of Krull dimension $d$ which is the quotient of a regular local ring by a regular sequence. Assume further that $2$ is invertible in $R$ and let $M$ be a finitely generated $R$-module of finite projective dimension and finite length. If the sum of the Betti numbers of $M$ is $2^d$ then $M$ is the quotient of $R$ by a regular sequence of $d$ elements.
\end{thm}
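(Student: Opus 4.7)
The model case is the Koszul complex: if $f_1, \ldots, f_d$ is a regular sequence in $R$, then $K(f_1, \ldots, f_d)$ is a minimal free resolution of $M = R/(f_1, \ldots, f_d)$ with $\rk P_i = \binom{d}{i}$, whence $\sum \rk(P_i) = 2^d$. The plan is to reverse this implication, using the extremal total Betti number hypothesis to force $M$ into precisely this form.

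First I would set up the structural reduction. Since $M$ has finite length we have $\mathrm{depth}(M) = 0$, and since $R$ is a complete intersection and hence Cohen--Macaulay of depth $d$, the Auslander--Buchsbaum formula gives $\mathrm{pd}(M) = d$. The minimal free resolution thus has the shape
\[0 \to P_d \to P_{d-1} \to \cdots \to P_0 \to M \to 0, \qquad \sum_{i=0}^d \rk(P_i) = 2^d.\]
The whole theorem is then reduced to the single claim that $\beta_0(M) = 1$ and $\beta_1(M) = d$. Indeed, $\beta_0 = 1$ forces $M = R/I$ for some ideal $I$; $\beta_1 = d$ says $I$ is minimally generated by $d$ elements $f_1, \ldots, f_d$; the finite length hypothesis makes $I$ an $\mathfrak{m}$-primary ideal, hence of height $d$; and in a Cohen--Macaulay local ring any height-$d$ ideal generated by exactly $d$ elements is automatically generated by a regular sequence (the generators form a system of parameters, which in the Cohen--Macaulay setting is always regular).

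To extract $\beta_0 = 1$ and $\beta_1 = d$ from the extremal hypothesis, I would revisit the proof of Theorem \ref{thm:BEHC}. In Walker's work the inequality $\sum \beta_i \geq 2^d$ arises, after inverting $2$, from a comparison between the class $[P_\bullet]$ of the resolution in a suitable Grothendieck group of perfect complexes and the image of $[M] = \ell(M) \cdot [R/\mathfrak{m}]$ under the Adams operation $\psi^2$; the factor $2^d$ appears as the eigenvalue produced by the $d$-fold action of $\psi^2$ on a full codimension class. Equality in the total Betti number bound then rigidifies this comparison, forcing the individual Betti numbers to attain their extremal Koszul values $\binom{d}{i}$, and in particular $\beta_0 = 1$, $\beta_1 = d$.

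The hard part is exactly this last rigidity step, and it is where the two apparently technical hypotheses earn their keep: inverting $2$ is needed for Walker's Adams operation formalism to be available, while the complete intersection hypothesis on $R$ provides the K-theoretic framework in which the sharp lower bound $2^d$ admits a structurally meaningful equality case. Once this rigidity is in hand, the remaining implications are the elementary commutative algebra outlined above.
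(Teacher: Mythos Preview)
The paper does not prove this theorem; it is quoted verbatim as a result of Walker \cite{walker} and used as a black box in the proof of Proposition~\ref{prop:eq}. There is therefore no proof in the paper to compare your proposal against.

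As for the content of your sketch: the reduction in your second paragraph is clean and correct. Auslander--Buchsbaum gives $\mathrm{pd}(M)=d$, and once you know $\beta_0=1$ and $\beta_1=d$, the argument that $M=R/(f_1,\ldots,f_d)$ with the $f_i$ a regular sequence is exactly right (finite length forces $\mathfrak{m}$-primary, height $d$, and in a Cohen--Macaulay local ring a system of parameters is regular). The genuine content, as you acknowledge, is the rigidity step extracting $\beta_0=1$ from equality in Walker's bound. Your description of this step is more of an invocation than an argument: you assert that equality ``forces the individual Betti numbers to attain their extremal Koszul values,'' but this is precisely what needs to be proved, and Walker's actual argument for the equality case is not simply a matter of reading off eigenvalues. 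If you want to claim a proof here you would need to engage with the specifics of how the Adams operation $\psi^2$ interacts with the Euler characteristic pairing and why equality pins down the class of $[P_\bullet]$ exactly. As written, your proposal correctly identifies the architecture of the argument and isolates the one nontrivial step, but does not supply that step.
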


For torus actions, we deduce the following

\begin{prop}\label{prop:eq}
Suppose a $T$-action on a compact space $X$ is almost $\mathcal{MOD}$-formal or has formal core and fulfils
\[\dim H^*(X)= 2^{c},\]
where $c$ is the minimal dimension among the orbits.
Then $X$ is rationally equivalent to a product of $c$ odd-dimensional spheres.
\end{prop}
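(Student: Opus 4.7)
The strategy is to invoke the equality statement of the Buchsbaum--Eisenbud--Horrocks theorem (Theorem~\ref{thm:BEHCequality}) in order to force the resolutions appearing in the proof of Theorem~\ref{thm:TRCholdsforbla} into complete intersection form, and then to read off the rational homotopy type of $X$ via the Koszul Sullivan model associated to the resulting regular sequence.

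First I would handle the almost $\mathcal{MOD}$-formal case. By assumption, the total rank of the minimal free resolution of $H_T^*(X)$ over $R = H^*(BT)$ equals $\dim H^*(X) = 2^{c}$, and Lemma~\ref{lem:codimisminimalorbitsdim} guarantees that this module has codimension $c$. Localizing at an appropriate prime of height $c$ (in the almost free case $c = \dim T$, and one may simply use the graded maximal ideal $R^+$, for which $H_T^*(X)$ has finite length by Theorem~\ref{thm:hsaiNGSTHEOREM}), Theorem~\ref{thm:BEHCequality} then forces
\[
H_T^*(X) \;\cong\; R/(f_1,\ldots,f_c)
\]
for a regular sequence $(f_1,\ldots,f_c)$; in particular, the action is hyperformal. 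By Lemma~\ref{lem:hyperformal is formally based}, the Koszul cdga $(C,d) = (R \otimes \Lambda(s_1,\ldots,s_c), d)$, with $d s_i = f_i$, is a Sullivan model for $H_T^*(X) = A$, and any lift of the formally based data to a Sullivan model $(R \otimes \Lambda V,D)$ of $X_T$ yields an $R$-cdga map $(C,d) \to (R \otimes \Lambda V,D)$ which is an isomorphism on cohomology, hence a quasi-isomorphism of cdgas. Dividing out $R^+$ exhibits $\overline{C} = \Lambda(s_1,\ldots,s_c)$ with trivial induced differential as a Sullivan model for $X$, and because each $s_i$ sits in the odd degree $|f_i|-1$, this identifies $X$ rationally with $S^{|s_1|} \times \cdots \times S^{|s_c|}$.

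For the formal core case I would feed the same equality version of Buchsbaum--Eisenbud--Horrocks into the inequality chain $\dim H^*(X) \geq \dim H^*(\overline C,\overline d) \geq 2^{c}$ extracted from the proof of Theorem~\ref{thm:TRCholdsforbla}: the hypothesis forces equality throughout. The equality $\dim H^*(\overline C,\overline d) = 2^{c}$ places the minimal free resolution of $A$ into the equality case of Theorem~\ref{thm:BEHCequality}, so $A \cong R/(f_1,\ldots,f_c)$ for a regular sequence, and hence as before $\overline{C} = \Lambda(s_1,\ldots,s_c)$ with trivial differential and $|s_i|$ odd. The remaining equality $\dim H^*(X) = \dim H^*(\overline C,\overline d)$, together with the cohomological injectivity of $\overline{C} \to (\Lambda V,d)$ built into the definition of formal core, forces this morphism to be a quasi-isomorphism, whence $X$ is again rationally a product of $c$ odd spheres.

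The main technical obstacle I foresee is ensuring that Walker's equality theorem, which is stated for local rings, applies cleanly to the graded setting of $R = H^*(BT)$, and that the resulting isomorphism descends to a graded isomorphism of $R$-modules. In the almost free regime this is essentially automatic, since $H_T^*(X)$ and $A$ are already finite-dimensional over $\qq$ and supported at $R^+$, so the graded-local version of Theorem~\ref{thm:BEHCequality} applies directly; the general compact case needs a little extra bookkeeping involving a minimal prime of the annihilator, but no new ideas beyond those already used in Theorem~\ref{thm:TRCholdsforbla}.
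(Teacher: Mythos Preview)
Your proposal is correct and follows essentially the paper's strategy: localize, apply Walker's equality theorem (Theorem~\ref{thm:BEHCequality}) to force the equivariant cohomology into complete-intersection form, then read off the rational homotopy type from the associated Koszul model. The paper differs only in organization---it reduces the formal-core case to the almost-$\mathcal{MOD}$-formal case first (the equality $\dim H^*(X)=\dim H^*(\overline{C},\overline{d})$ makes $(C,d)$ a model for $X_T$) and invokes intrinsic formality of complete intersections rather than your hyperformal route---and the localization bookkeeping you flag is handled there by localizing at a minimal prime over the annihilator and using Theorem~\ref{thm:BEHC} to see that the localized resolution stays minimal, so that the global ranks $\binom{c}{i}$ can be read off.
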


\begin{proof}
If the action is formally based with respect to some $A\subset H_G^*(X)$, observe that, in the notation surrounding Definitions \ref{def:formallybased} and \ref{def:injformbased}, $\dim H^*(\overline{C},\overline{d})\geq 2^c$. As it cohomologically injects into $H^*(X)$, we deduce that $(\overline{C},\overline{d})$ is a model for $X$. Consequently, $(C,d)$ is a model for $X_T$ and the action is $\mathcal{MOD}$-formal.

For an almost $\mathcal{MOD}$-formal action, the total rank of the minimal graded free resolution of $H_T^*(X)$ as an $R$-module is $\dim H^*(X)=2^c$. Let $\mathfrak{p}$ be a minimal prime containing $\mathrm{Ann}(H_T^*(X))$. Then $H_T^*(X)_\mathfrak{p}$ has finite length. Since localization is exact, we obtain a free resolution of $H_T^*(X)_\mathfrak{p}$ by localizing the minimal graded free resolution of $H_T^*(X)$. The codimension of $H_T^*(X)_\mathfrak{p}$ is also $c$ and thus this resolution has to be minimal by Theorem \ref{thm:BEHC}. This means that the sum of the Betti numbers of $H_T^*(X)_\mathfrak{p}$ is equal to $2^c$.

We may now apply Theorem \ref{thm:BEHCequality} and conclude that $H_T^*(X)_\mathfrak{p}$ is a quotient of $R_\mathfrak{p}$ by a regular sequence of $c$ elements. Since the minimal free resolution of $H_T^*(X)_\mathfrak{p}$ was constructed from the one of $H_T^*(X)$, we conclude that also $H_T^*(X)$ is a quotient of $R$ by $c$ elements. Those elements span $\mathrm{Ann}(H_T^*(X))$ which is of height $c$, so it follows that they also form a regular sequence in $R$. To see this formally, note that a sequence of homogeneous elements of positive degree is regular in $R$ if and only if it is regular in $R_\mathfrak{m}$, where $\mathfrak{m}=R^+$. After localizing at $R^+$ we may use \cite[Corollary 17.7]{eisenbud} combined with the fact that $R$ is a Cohen--Macaulay ring.

We observe that, in this special case, the $R$-module structure determines also the algebra structure on $H^*_T(X)$. A quotient of a polynomial ring by a regular sequence is intrinsically formal (see \cite[Remark 3.1]{FelixHalperin}) so a Sullivan model for $X_T$ is given by the Koszul complex $(R\otimes \Lambda Z,D)$ where $D$ maps a basis of $Z$ to the regular sequence. It follows that $(\Lambda Z,0)$ is a model for $X$.

\end{proof}

\subsection{Small dimensions}

As we have shown in Corollary \ref{cor:modfomalsmallcodim}, actions of small enough codimension are $\mathcal{MOD}$-formal and hence fulfil the TRC by Theorem \ref{thm:TRCholdsforbla}. We can achieve stronger results by placing additional topological restrictions on $X$. Recall that by the formal cohomogeneity of a $G$-action on $X$ we mean the number $\mathrm{fd}(X)-\dim G$.

\begin{lem}\label{lem:smallcodims}
Let $G$ act almost freely on $X$ with formal cohomogeneity $c$ and assume one of the following holds
\begin{enumerate}[(i)]
\item $X$ is simply-connected and $c\leq 4$.
\item $X$ is simply-connected, satisfies Poincaré duality, and $c\leq 2k$, where $k\geq 3$ is the minimal odd degree such that $\pi_k(X)\otimes\mathbb{Q}\neq 0$.
\end{enumerate}
Then $X_G$ is formal.
\end{lem}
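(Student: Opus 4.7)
The plan splits along the two hypotheses, with a common preliminary observation: since $X$ is simply-connected and $BG$ is simply-connected for any compact connected Lie group $G$, the long exact homotopy sequence of the Borel fibration $X\to X_G\to BG$ gives $\pi_1(X_G)=0$, and $\mathrm{fd}(X_G)=c$ by Proposition~\ref{thm:hsiangandfriends}$(i)$.

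For case $(i)$, I would apply the same degree argument as in Corollary~\ref{cor:modfomalsmallcodim}$(i)$, but to the full cohomology $H^*(X_G)$ rather than merely to $\im(R\to H^*_G(X))$. In a unital minimal $C_\infty$-model $(H^*(X_G);m_i)$, for $i\geq 3$ the operation $m_i$ sends $H^+(X_G)^{\otimes i}$ into degrees $\geq 2i+(2-i)=i+2\geq 5>c$. Hence $m_i=0$ for $i\geq 3$ and $X_G$ is formal.

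For case $(ii)$, I would invoke the $s$-formality criterion of Fern\'andez--Mu\~noz: a simply-connected Poincar\'e duality space of formal dimension $n$ is formal if and only if it is $\lfloor(n-1)/2\rfloor$-formal. By Proposition~\ref{thm:hsiangandfriends}$(iii)$, using $H^1(X)=0$, the space $X_G$ is a simply-connected Poincar\'e duality space of formal dimension $c\leq 2k$, so $\lfloor(c-1)/2\rfloor\leq k-1$ and it suffices to establish $(k-1)$-formality. The crucial input here is the hypothesis on $k$: by the long exact sequence of rational homotopy for the Borel fibration together with $\pi_*(BG)\otimes\qq$ being concentrated in even degrees, the minimal Sullivan model $(\Lambda V,d)$ of $X_G$ satisfies $V^i=0$ for all odd $i<k$, i.e.\ $V^{<k}$ sits entirely in even degrees. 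A short parity argument then forces $d|_{V^{<k}}=0$: for $v\in V^i$ with $i<k$ even, minimality gives $dv\in\Lambda^{\geq 2}V^{<i}$, yet this algebra is polynomial in even generators and hence concentrated in even degrees, while $|dv|=i+1$ is odd. Thus $V^{<k}$ is purely closed, and choosing $N^i=0$ for all $i\leq k-1$ renders the $s$-formality condition vacuous.

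The main obstacle I anticipate lies not in either degree-count but in justifying the applicability of the Fern\'andez--Mu\~noz criterion in our possibly non-manifold setting---although its proof relies only on simple-connectedness and Poincar\'e duality of the cohomology, both of which $X_G$ enjoys. Should a direct citation seem too much of a stretch, the same conclusion can be reached by an inductive construction of a formality quasi-isomorphism $\Lambda V\to H^*(X_G)$: the purely closed nature of $V^{<k}$ removes all low-degree ambiguity, and for the remaining higher generators the obstructions live in degrees bounded by $c\leq 2k$, where Poincar\'e duality can be brought to bear.
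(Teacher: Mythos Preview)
Your proposal is correct and follows essentially the same route as the paper: both cases use $\mathrm{fd}(X_G)=c$ and simple-connectedness of $X_G$, case~$(i)$ is handled by the identical degree count in the unital $C_\infty$-model, and case~$(ii)$ is handled by observing that the minimal model of $X_G$ has no odd generators below degree $k$ (hence $d|_{V^{<k}}=0$, giving $(k-1)$-formality) and then invoking the Fern\'andez--Mu\~noz criterion. Your worry about the non-manifold setting is unnecessary---the paper cites \cite[Theorem 3.1]{FernandezMunoz} without further comment, as that result depends only on Poincar\'e duality of the rational cohomology, which $X_G$ has by Proposition~\ref{thm:hsiangandfriends}$(iii)$.
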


\begin{proof}
By Proposition \ref{thm:hsiangandfriends} we have $\mathrm{fd}(X_G)=c$. If $X$ is simply-connected, so is $X_G$. Any simply-connected space of formal dimension $\leq 4$ is formal as higher operations in the minimal unital $C_\infty$-model vanish for degree reasons. This proves the lemma under condition $(i)$.

In the situation of $(ii)$, $X_G$ is also a Poincaré duality space by Proposition \ref{thm:hsiangandfriends}. The Sullivan minimal model of $X$ does not have an odd degree generator up until degree $k$. As the rational homotopy of $BG$ is concentrated in even degrees, we deduce that the minimal model of $X_G$ also does not have odd generators of degree $<k$. This implies that the differential in the minimal model of $X_G$ vanishes in degrees from $1$ up to $k-1$. Thus $X_G$ is $(k-1)$-formal and by \cite[Theorem 3.1]{FernandezMunoz}, $X_G$ is formal because $\mathrm{fd}(X_G)\leq 2k$.
\end{proof}

Not only do we know that the TRC holds for small cohomogeneities but it is also a classical result that it holds for actions of $T^r$ if $r\leq 3$ (\cite[Theorem 4.4.3]{AlldayPuppe2}). Together with Corollary \ref{cor:modfomalsmallcodim}, Lemma \ref{lem:smallcodims}, and Theorem \ref{thm:TRCholdsforbla} this yields

\begin{thm}\label{thm:smallcodims}
The toral rank conjecture holds for
\begin{enumerate}[(i)]
\item spaces of formal dimension $\leq 7$.
\item simply-connected spaces of formal dimension $\leq 8$.
\item simply-connected Poincaré duality spaces of formal dimension $\leq 2k+4$, where $k\geq 3$ is the minimal odd degree such that $\pi_k(X)\otimes \mathbb{Q}\neq 0$.
\end{enumerate}
\end{thm}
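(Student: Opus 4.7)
The proof will be a clean case analysis on the rank $r$ of the acting torus, leveraging the already-proven ingredients assembled in the preceding sections. For $r \leq 3$ there is nothing to do: the TRC in low torus rank is the classical result \cite[Theorem 4.4.3]{AlldayPuppe2} cited just before the theorem. So the entire content is to handle the regime $r \geq 4$, and the strategy is to observe that in each of the three dimension ranges, rank $r \geq 4$ forces the formal cohomogeneity $c := \mathrm{fd}(X) - r$ to drop below the threshold at which one of our formality-type results applies.

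More precisely, I would argue as follows. Assume $T=T^r$ acts almost freely on $X$ with $r \geq 4$.

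\smallskip
\textbf{Case (i):} If $\mathrm{fd}(X) \leq 7$, then $c = \mathrm{fd}(X) - r \leq 3$. Hence Corollary \ref{cor:modfomalsmallcodim}(i) applies and yields that the action is $\mathcal{MOD}$-formal. Theorem \ref{thm:TRCholdsforbla} then gives $\dim H^*(X) \geq 2^r$.

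\smallskip
\textbf{Case (ii):} If $X$ is simply-connected with $\mathrm{fd}(X) \leq 8$, then $c \leq 4$. Simple-connectivity of $X$ carries over to $X_T$, so Lemma \ref{lem:smallcodims}(i) gives that $X_T$ is formal. By Lemma \ref{lem:efismodformal} the action is then $\mathcal{MOD}$-formal, and Theorem \ref{thm:TRCholdsforbla} concludes.

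\smallskip
\textbf{Case (iii):} If $X$ is a simply-connected Poincaré duality space with $\mathrm{fd}(X) \leq 2k+4$ (for $k$ the minimal odd degree with nontrivial rational homotopy, $k \geq 3$), then $c \leq 2k$. Simple-connectivity passes to $X_T$, and Poincaré duality of $H^*(X_T)$ follows from Proposition \ref{thm:hsiangandfriends}(iii). The minimal odd degree with nontrivial $\pi_*(X_T)\otimes\qq$ is still $k$, since $B T$ has rational homotopy concentrated in even degrees (and $k \geq 3$ ensures it is not overtaken by anything arising from $T$). Thus the hypotheses of Lemma \ref{lem:smallcodims}(ii) are met, $X_T$ is formal, and we conclude as before via Lemma \ref{lem:efismodformal} and Theorem \ref{thm:TRCholdsforbla}.

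\smallskip
There is no real obstacle; the proof is genuinely a patchwork of the preceding results. The only points requiring mild care are the verification that the topological hypotheses of Lemma \ref{lem:smallcodims} survive passage from $X$ to $X_T$ (namely simple-connectivity, Poincaré duality, and the location of the first odd rational homotopy group), and the bookkeeping that in each case the inequality $r \geq 4$ reduces the formal cohomogeneity to the required threshold. These are immediate from Proposition \ref{thm:hsiangandfriends} and the structure of $H^*(BT)$ respectively, so the argument truly amounts to assembling the pieces already proved.
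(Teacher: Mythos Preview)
Your proof is correct and follows essentially the same route as the paper's own argument: handle $r\leq 3$ via the classical result \cite[Theorem 4.4.3]{AlldayPuppe2}, and for $r\geq 4$ observe that the formal cohomogeneity drops into the range covered by Corollary~\ref{cor:modfomalsmallcodim} or Lemma~\ref{lem:smallcodims}, after which Theorem~\ref{thm:TRCholdsforbla} concludes. One small remark: in case~(iii) your discussion of simple-connectivity, Poincar\'e duality, and odd rational homotopy of $X_T$ is superfluous, since the hypotheses of Lemma~\ref{lem:smallcodims}(ii) are stated for $X$ itself and are given directly by the assumptions of the theorem; those properties of $X_T$ are established inside the proof of that lemma, not needed as inputs to it.
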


Case $(iii)$ does in particular imply the TRC for simply-connected orientable manifolds of dimension $\leq 10$. This was proved earlier in \cite[Théorème A]{hilali}, also using formality of the homotopy quotient but concluding differently.

\subsection{The TRC for formal elliptic spaces}

Recall that an elliptic space of positive Euler characteristic, i.e.~a \emph{positively elliptic space}, has a minimal model given by a pure algebra $(\Lambda V,d)$ such that $d$ maps a basis of $V^{odd}$ to a maximal regular sequence of $V^{even}$. Recall that we say a fibration is totally non-homologous to zero (TNHZ) if its Serre spectral sequence collapses at $E_2$.


\begin{prop}\label{prop:fibration}
Let $X$ be a formal elliptic space. Then rationally it is the total space of a TNHZ fibration with model
\[(\Lambda B,0)\rightarrow (\Lambda B\otimes \Lambda V,D)\rightarrow (\Lambda V,d),\]
where $B=B^{odd}$ and $(\Lambda V,d)$ is positively elliptic.
\end{prop}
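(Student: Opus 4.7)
My strategy is to exploit formality to reduce the proposition to a multiplicative decomposition of the cohomology ring, and then to assemble the desired fibration factor-by-factor. Since $X$ is formal, any Sullivan model of the cdga $(H^*(X),0)$ is a model of $X$, so it suffices to exhibit one of the prescribed form. Ellipticity moreover guarantees that $H:=H^*(X)$ is a finite-dimensional Poincar\'e duality algebra, by Halperin's theorem.

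The technical core is a tensor decomposition
\[H \;\cong\; \Lambda B \otimes \overline H\]
as graded commutative algebras, where $B=B^{\mathrm{odd}}$ sits in odd degrees and $\overline H$ is a finite-dimensional PD algebra concentrated in even degrees. Concretely, one picks $B$ as a graded complement to the decomposables $H^+\cdot H^{\mathrm{odd}}$ inside $H^{\mathrm{odd}}$, chooses $\overline H$ as a PD-compatible even complement, and then combines Poincar\'e duality with formality to verify that the multiplication map $\Lambda B\otimes \overline H \to H$ is an algebra isomorphism. This is the main obstacle and where formality plays an essential role: without it, the decomposition can genuinely fail. For instance, the elliptic pure model $(\Lambda(x_1,x_2)\otimes \Lambda(y_1,y_2,y_3),d)$ with $dy_i\in\{x_1^2,x_1x_2,x_2^2\}$ is elliptic but not formal (its cohomology has no such splitting, as one sees by comparing the sizes of the minimal Sullivan models of $(H,0)$ and of the space). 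The role of formality is to exclude the Massey-type obstructions that couple $B$ to $\overline H$.

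Granted the decomposition, the rest is largely bookkeeping. Because $\overline H$ is a finite-dimensional PD algebra in even degrees admitting a finite-dimensional Sullivan model, it must be a complete intersection, hence the cohomology ring of a positively elliptic ($F_0$-)space. By Halperin's intrinsic formality of $F_0$-spaces its minimal Sullivan model is a pure algebra $(\Lambda V,d)$ with $\dim V^{\mathrm{even}}=\dim V^{\mathrm{odd}}$ and with $d$ mapping a basis of $V^{\mathrm{odd}}$ onto a maximal regular sequence in $\Lambda V^{\mathrm{even}}$: positively elliptic as required. The factor $(\Lambda B,0)$ is itself a minimal cdga. Their tensor product $(\Lambda B\otimes \Lambda V,D)$ with $D|_B=0$ and $D|_{\Lambda V}=d$ is then a Sullivan model of $(\Lambda B\otimes \overline H,0)\cong(H,0)$, and hence of $X$.

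Read as the relative Sullivan algebra
\[(\Lambda B,0)\hookrightarrow(\Lambda B\otimes\Lambda V,D)\twoheadrightarrow(\Lambda V,d),\]
this realizes $X$ rationally as the total space of a fibration, via standard Sullivan realization, whose base is the rational product of odd spheres dual to $B$ and whose fiber is modelled by $(\Lambda V,d)$. TNHZ is then automatic: since $D$ vanishes on $\Lambda B$, the cohomology of the total complex is simply $\Lambda B\otimes H^*(\Lambda V,d)$, which already matches the $E_2$-page of the associated Serre spectral sequence and forces its collapse at $E_2$.
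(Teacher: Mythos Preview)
Your central claim---that the cohomology $H=H^*(X)$ of a formal elliptic space always factors as a tensor product of graded algebras $H\cong\Lambda B\otimes\overline H$ with $\overline H$ concentrated in even degrees---is false, and with it the rest of the argument collapses. Consider the minimal Sullivan algebra $(\Lambda(x,y_1,y_2,z),D)$ with $|x|=2$, $|y_1|=|y_2|=3$, $|z|=5$, $Dx=Dy_i=0$ and $Dz=x^3+y_1y_2$. This is a two-stage model with $D$ sending $z$ to a non-zero-divisor in $\Lambda(x,y_1,y_2)$, hence formal and elliptic with $H=\Lambda(x,y_1,y_2)/(x^3+y_1y_2)$. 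The only even indecomposable class is $x$, so any even subalgebra $\overline H\subset H$ of the required shape must contain $x$; but then $x^3=-y_1y_2\neq 0$ forces $\overline H^6\neq 0$, while the Poincar\'e polynomial computation (which you would need to match) gives $P_{\overline H}(t)=1+t^2+t^4$. Thus no such $\overline H$ exists. Your reading of ``TNHZ'' as ``rational product'' is the underlying error: collapse of the Serre spectral sequence at $E_2$ gives $H^*(X)\cong\Lambda B\otimes H^*(\Lambda V,d)$ only as $\Lambda B$-modules, not as algebras.

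The paper avoids this trap by working at the level of models rather than cohomology. It invokes the F\'elix--Halperin two-stage model $(\Lambda Z_0\otimes\Lambda Z_1,D)$ of a formal elliptic space, with $DZ_0=0$ and $D$ mapping a basis of $Z_1=Z_1^{\mathrm{odd}}$ to a regular sequence $a_1,\dots,a_k\in\Lambda Z_0$, and then sets $B=Z_0^{\mathrm{odd}}$ and $V=Z_0^{\mathrm{even}}\oplus Z_1$. The point is that $D$ on $V$ lands in $\Lambda Z_0=\Lambda B\otimes\Lambda V^{\mathrm{even}}$ and genuinely involves $B$ (in the example above, $Dz=x^3+y_1y_2$): the extension $(\Lambda B,0)\to(\Lambda B\otimes\Lambda V,D)\to(\Lambda V,d)$ is a \emph{twisted} relative Sullivan algebra, not the product $(\Lambda B,0)\otimes(\Lambda V,d)$. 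What remains is to check that the projections $\overline a_i\in\Lambda Z_0^{\mathrm{even}}$ still form a (maximal) regular sequence, so that $(\Lambda V,d)$ is positively elliptic, and that the spectral sequence collapses because $E_2^{0,*}$ is represented by $D$-closed polynomials in $Z_0^{\mathrm{even}}$. Neither step follows from a cohomology-ring decomposition alone.
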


\begin{proof}
By \cite{FelixHalperin}, $X$ has a two-stage model of the form $(\Lambda Z,D)$, where $Z=Z_0\oplus Z_1$, such that $Z_1=Z_1^{odd}$ and $D$ maps a basis of $Z_1$ to a regular sequence $a_1,\ldots,a_k$ in $\Lambda Z_0$. Now set $V=Z_0^{even}\oplus Z_1$ and $B=Z_0^{odd}$, which produces the desired extension sequence. We now show that $(\Lambda V,d)$ is positively elliptic, where $d$ is the differential obtained by projecting $B^+$ to $0$.

Observe that $a_1,\ldots,a_k$ is in particular a regular sequence in the strictly commutative ring \[(\Lambda Z_0)^{even}=\Lambda Z_0^{even}\otimes (\Lambda Z_0^{odd})^{even}.\]
The right hand tensor factor has Krull dimension $0$ whence the Krull dimension of $(\Lambda Z_0)^{even}$ is equal to  
$r:=\dim Z_0^{even}$. In particular, we have $k\leq r$. Denote by $J\subset\Lambda Z_0^{even}$ the ideal generated by the canonical projections $\overline{a_1},\ldots,\overline{a_k}$ of the $a_i$ to $\Lambda Z_0^{even}$.
It follows from the odd spectral sequence of the elliptic algebra $(\Lambda Z,D)$ that the quotient $\Lambda Z_0^{even}/J$ is finite-dimensional. In particular, $J$ has height $r$. Since $J$ is generated by $k\leq r$ elements, it follows that $r=k$ and that the $\overline{a_i}$ form a regular sequence in $\Lambda Z_0^{even}$.

It remains to prove that the Serre spectral sequence collapses at $E_2$. Observe that $E_2^{*,*}=H^*(\Lambda B,d)\otimes H^*(\Lambda V,d)$. In particular we have $E_2^{0,*}=H^*(\Lambda V,d)$. This is completely represented by polynomials in $Z_0^{even}$ due to the fact that $(\Lambda V,d)$ is positively elliptic. As $D$ vanishes on $Z_0^{even}$, it follows that also the differentials of the spectral sequence vanish on $E_r^{0,*}$, $r\geq 2$, which causes the spectral sequence to collapse.
\end{proof}

\begin{cor}[\cite{KotaniYamaguchi}]
The TRC holds for formal elliptic spaces.
\end{cor}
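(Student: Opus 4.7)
The plan is to leverage Proposition \ref{prop:fibration} so that the TRC for $X$ reduces to the trivial toral-rank estimate for a product of odd-dimensional spheres. First, Proposition \ref{prop:fibration} furnishes a rational TNHZ fibration $F\to X\to Y$ with minimal model
\[(\Lambda B,0)\to(\Lambda B\otimes \Lambda V,D)\to (\Lambda V,d),\]
where $B=B^{\mathrm{odd}}$ and $(\Lambda V,d)$ is positively elliptic. Since the fibration is TNHZ, the K\"unneth identity yields $\dim H^*(X)=\dim H^*(\Lambda B,0)\cdot\dim H^*(\Lambda V,d)$. The left factor is the cohomology of a product of $\dim B$ odd-dimensional spheres, so $\dim H^*(\Lambda B,0)=2^{\dim B}$. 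A positively elliptic space has cohomology concentrated in even degrees, so $\dim H^*(\Lambda V,d)=\chi(\Lambda V,d)\geq 1$. Therefore $\dim H^*(X)=2^{\dim B}\cdot\chi(\Lambda V,d)\geq 2^{\dim B}$.

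Second, I would show that every almost free action $T^n\curvearrowright X$ satisfies $n\leq \dim B$. The classical subadditivity of toral rank along a fibration (due to Allday--Halperin) gives $n\leq \mathrm{rk}_0(X)\leq \mathrm{rk}_0(F)+\mathrm{rk}_0(Y)$. Because $\chi(F)>0$, any almost free torus action on $F$ would force $\chi(F)=0$, a contradiction; so $\mathrm{rk}_0(F)=0$. The base $Y$ is rationally a product of $\dim B$ odd-dimensional spheres, which has toral rank exactly $\dim B$ (each $S^{2k+1}$ carries a free $S^1$-action). Combining, $n\leq\dim B$, and consequently
\[\dim H^*(X)\geq 2^{\dim B}\geq 2^n,\]
which is the desired conclusion.

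The main obstacle is the subadditivity input; if invoking it feels unsatisfying, one can substitute a direct computation with the two-stage model $(\Lambda Z,D)$ of $X$ underlying Proposition \ref{prop:fibration}, where $Z=Z_0\oplus Z_1$. Positive ellipticity of $(\Lambda V,d)$ forces $\dim Z_1=\dim Z_0^{\mathrm{even}}$, whence $\dim Z^{\mathrm{odd}}-\dim Z^{\mathrm{even}}=\dim B$. Halperin's toral-rank upper bound $\mathrm{rk}_0(X)\leq \dim\pi_{\mathrm{odd}}(X)\otimes\qq-\dim\pi_{\mathrm{even}}(X)\otimes\qq$, valid in the elliptic setting, then yields $n\leq\dim B$ directly and the proof concludes as above.
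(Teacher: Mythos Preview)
Your proposal is correct, and your second alternative---bounding $\mathrm{rk}_0(X)$ by $-\chi_\pi(X)=\dim B$ via Halperin's inequality after reading off $\dim Z^{\mathrm{odd}}-\dim Z^{\mathrm{even}}=\dim B$ from the two-stage model---is exactly the paper's argument. Your first route through Allday--Halperin subadditivity is a valid variant but a detour: since $-\chi_\pi$ is already additive along the fibration and dominates $\mathrm{rk}_0$, the direct homotopy-Euler-characteristic computation subsumes it without needing the separate bounds $\mathrm{rk}_0(F)=0$ and $\mathrm{rk}_0(Y)=\dim B$.
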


\begin{proof}
Let $X$ be formal and elliptic and display its model as in the previous proposition. For the homotopy Euler characteristic of $X$, we obtain $\chi_\pi(X)=- \dim B$ which implies $\mathrm{rk}_0(X)\leq \dim B$. On the other hand, due to the collapse of the Serre spectral sequence, we have \[\dim H^*(X)=\dim H^*(\Lambda V,d)\cdot 2^{\dim B}.\]
\end{proof}

\section{Special classes of spaces and actions}\label{secspec}

\subsection{Vanishing derivations and symplectic actions}

In recent decades, the topology of torus actions on symplectic and K\"ahler manifolds has been a very successful field of study. In particular it was proved in \cite{AlldayPuppe1} that Hard Lefschetz manifolds satisfy the TRC (going back to the study of derivations on the cohomology algebra by \cite{blanchard}) and the result has been generalized to free actions on cohomologically symplectic spaces of Lefschetz type in \cite{Allday} and \cite{OpreaLupton}. It is therefore no surprise that among those spaces, we find classes of examples satisfying our formality properties. However, it will turn out that in the generalized setting of Lefschetz type spaces, the topology alone will not yield the formality properties we seek and we will need to resort to the geometric condition of the action being symplectic, which means that it is smooth and leaves invariant the symplectic form. Let us recall some notions.

\begin{defn}
A $2n$-dimensional compact symplectic manifold $(M,\omega)$ is said to be of Lefschetz type if multiplication with $\omega^{n-1}$ defines an isomorphism $H^1(M;\mathbb{R})\rightarrow H^{2n-1}(M;\mathbb{R})$.
\end{defn}

\begin{thm}\label{thm:symplecticexamples}
Let $T$ be a torus acting on $X$ such that one of the following holds:
\begin{enumerate}[(i)]
\item Any derivation of negative odd degree on $H^*(X)$ vanishes if it vanishes on $H^1(X)$.
\item $X$ is a compact symplectic manifold of Lefschetz type and the $T$-action is smooth and symplectic.
\end{enumerate}
Then the action is $\mathcal{MOD}$-formal and has formal core.
\end{thm}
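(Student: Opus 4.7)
Plan. In both cases I aim to produce a splitting of the Hirsch--Brown model of the form required by Lemma \ref{formality criterion}, and to deduce the formal core property either from the existence of a fixed point via Remark \ref{rem:formalcore}(iii) or from a direct construction.

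Under hypothesis (i), I would analyze the Serre spectral sequence of the Borel fibration $X \to X_T \to B T$. Its differentials $d_r$, $r \geq 2$, on the column $E_r^{0,*}$ land in $H^r(BT) \otimes E_r^{0,*-r+1}$; since $H^*(BT)$ is concentrated in even degrees, the only nontrivial such differentials occur for even $r$ and shift the fiber degree by the odd number $r-1$. Contracting against a basis of $H^r(BT)$ produces maps on $H^*(X)$ (or its subquotients at higher pages) which are derivations of negative odd degree with respect to the cup product. For $r \geq 4$ these derivations automatically vanish on $H^1(X)$, as the target $H^{2-r}(X)$ is trivial. At $r = 2$ the differential restricted to $H^1(X)$ is the transgression into $H^2(BT)$; I would eliminate it by passing to the subtorus $T' \subset T$ on which this transgression vanishes and then running the argument with $T'$. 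Hypothesis (i) now forces the remaining derivations to vanish, giving $E_2$-collapse and equivariant formality of the $T'$-action, hence $\mathcal{MOD}$-formality (Lemma \ref{lem:efismodformal}). Tracking the comparison between $T$- and $T'$-equivariant cohomology transfers $\mathcal{MOD}$-formality back to the $T$-action. Equivariant formality on a compact space with nontrivial rational cohomology implies the existence of a fixed point by Borel localization, so Remark \ref{rem:formalcore}(iii) yields formal core.

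Under hypothesis (ii), I would exploit the Lefschetz $\mathfrak{sl}(2)$-action on $H^*(X;\mathbb{R})$ induced by multiplication by the symplectic class $\omega$ and its dual. Because the action is symplectic, $\omega$ is $T$-invariant, and (by a Ginzburg/Kirwan-type argument for symplectic torus actions on Lefschetz manifolds, which applies also in the non-Hamiltonian case up to a primitive correction) $\omega$ admits an equivariant lift $\tilde{\omega} \in H_T^*(X;\mathbb{R})$. The primitive decomposition $H^*(X) = \bigoplus_r \omega^r \cdot P^*(X)$ together with this equivariant lift gives an explicit splitting of $R \otimes H^*(X)$ into $V \oplus W$ as in Lemma \ref{formality criterion}, where $V$ is generated by representatives built from primitive classes and their iterates under $\tilde{\omega}$; one then checks that every closed element of $W$ is exact by the Lefschetz isomorphisms applied in each primitive block. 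Formal core is extracted analogously: the subalgebra $A$ is chosen from the $R$-algebra generated by powers of $\tilde{\omega}$, and the cohomologically injective lift $(\overline{C}, \overline{d}) \to (\Lambda V, d)$ follows from the non-equivariant Lefschetz isomorphism, which ensures that no nontrivial element becomes exact after quotienting by $R^+$.

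The main obstacles are, in (i), controlling the $r = 2$ transgression on $H^1(X)$ and verifying that $\mathcal{MOD}$-formality is preserved under the subtorus reduction (the general compatibility of $\mathcal{MOD}$-formality with subgroup restriction is delicate, as noted in the paper), and, in (ii), establishing the existence and suitable naturality of the equivariant lift $\tilde{\omega}$ in the non-Hamiltonian setting. Both difficulties follow established patterns in transformation group theory and equivariant symplectic geometry, but each requires care in tracking the module-theoretic structure of the Hirsch--Brown model.
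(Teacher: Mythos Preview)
Your overall architecture for (i) --- passing to the subtorus $T'$ on which the degree-$2$ transgression vanishes and then invoking the derivation hypothesis to obtain equivariant formality of the $T'$-action --- matches the paper's Lemma \ref{lem:symplecticef}. But two steps that you treat as routine are exactly where the content lies.

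First, your formal core argument is incorrect. Equivariant formality of the $T'$-action (and hence a $T'$-fixed point) does \emph{not} give a $T$-fixed point: by construction the classes $X_i$ generating $\ker(R^2\to H_T^*(X))$ are exact in $H_T^*(X)$, so $R\to H_T^*(X)$ is not injective and Remark \ref{rem:formalcore}(iii) does not apply to the $T$-action. The paper instead identifies $(C,d)=(\Lambda(X_i,Y_j,s_i),D)$ with $Ds_i=X_i$, so that $\overline{C}=(\Lambda(s_i),0)$, and then proves directly that the product $s_1\cdots s_k$ survives to a nonzero class in $H^*(X)$. This last step uses that the $s_i$ transgress to the $X_i$ in the Borel spectral sequence together with the derivation property of $d_2$; it is not a fixed-point argument.

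Second, the transfer of $\mathcal{MOD}$-formality from $T'$ back to $T$ is not a general restriction/induction fact (the paper's subgroup results go the other direction, from $G$ to its maximal torus). The paper does not ``track the comparison'' abstractly; it writes down an explicit chain of dg$R$m quasi-isomorphisms
\[
(\Lambda(X_i,Y_j)\otimes\Lambda V,D)\;\longrightarrow\;(\Lambda(Y_j)\otimes\Lambda W,\tilde D)\;\longrightarrow\;(H^*(\Lambda(Y_j)\otimes\Lambda W),0),
\]
the first obtained by killing the contractible $\Lambda(X_i,s_i)$, the second from equivariant formality of an auxiliary TNHZ extension. Both are viewed as $\Lambda(X_i,Y_j)$-modules via the projection sending $X_i\mapsto 0$. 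Your proposal does not indicate how to produce this chain.

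For (ii), your primitive-decomposition route diverges from the paper, which treats (i) and (ii) uniformly via Lemma \ref{lem:symplecticef}. More importantly, your key input --- an equivariant lift $\tilde\omega$ in the ``non-Hamiltonian case up to a primitive correction'' --- is not available in general: having $[\omega]$ lift to $H_T^*(X)$ is essentially the definition of Hamiltonian. The paper's point is that under the Lefschetz hypothesis, once one restricts to $T'$ (so that $d_2[\omega]=0$), the action \emph{is} Hamiltonian, and then the argument proceeds as in (i).
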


The property in $(i)$ is fulfilled in particular for compact Kähler or more generally Hard Lefschetz manifolds (see \cite[Théorème II.1.2]{blanchard}). It is however a little more general as it holds e.g.\ for any space with cohomology concentrated in even degrees and is stable under products (\cite[Prop.\ 3.5]{BazzLupOprea}).
The proof of the theorem relies on the following property that unifies both types of actions.

\begin{lem}\label{lem:symplecticef}
Suppose the $T$-action on $X$ satisfies either condition $(i)$ or $(ii)$ above and that the map $H^2(BT)\rightarrow H^2_T(X)$ is injective. Then the action is equivariantly formal.
\end{lem}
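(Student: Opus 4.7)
The plan is to prove equivariant formality by showing that the Serre spectral sequence of the Borel fibration $X\rightarrow X_T\rightarrow BT$ collapses at $E_2$. Writing $R=H^*(BT)$, any differential $d_r$ with $r$ odd vanishes on the fiber column $E_r^{0,*}$ because $H^r(BT)=0$, so it suffices to kill each even differential $d_{2k}$ inductively in $k\geq 1$.

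Suppose $d_2,\ldots,d_{2k-2}$ vanish on the fiber column, so that $E_{2k}^{0,*}=H^*(X)$. Multiplicativity and $R$-linearity of the spectral sequence let us decompose
\[d_{2k}(x)=\sum_\alpha a_\alpha\otimes\theta_\alpha(x)\]
through a basis $\{a_\alpha\}$ of $H^{2k}(BT)$, and a short Leibniz computation shows that each $\theta_\alpha\colon H^*(X)\rightarrow H^{*-(2k-1)}(X)$ is a graded derivation of odd negative degree $-(2k-1)$. By $R$-linearity of $d_{2k}$ the induction step reduces to showing $\theta_\alpha\equiv 0$. A preliminary observation is that $\theta_\alpha$ already vanishes on $H^1(X)$: for $k\geq 2$ the target $H^{2-2k}(X)$ is zero, and for $k=1$ the identification $E_\infty^{2,0}=R^2/\im(d_2|_{H^1(X)})$ together with the injectivity of $H^2(BT)\rightarrow H^2_T(X)$ forces $d_2|_{H^1(X)}=0$ and hence $\theta_\alpha|_{H^1(X)}=0$.

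Under hypothesis (i), the derivation-vanishing assumption on $H^*(X)$ is tailor-made to yield $\theta_\alpha\equiv 0$, closing the induction. Under (ii), the plan is to exploit the $T$-invariance of $\omega$: the Leibniz rule gives the bracket identity $[\theta_\alpha,L]=\theta_\alpha([\omega])\cdot(-)$ with $L=[\omega]\cup(-)$, and combining this with the Lefschetz-type isomorphism $L^{n-1}\colon H^1(X)\xrightarrow{\cong}H^{2n-1}(X)$, the vanishing of $\theta_\alpha$ on $H^1(X)$, and the injectivity hypothesis should force $\theta_\alpha([\omega])=0$. This makes $\theta_\alpha$ commute with $L$, and propagating the vanishing along the $L$-module structure on $H^*(X)$ then gives $\theta_\alpha\equiv 0$.

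The main obstacle is case (ii): the Lefschetz-type condition is strictly weaker than Hard Lefschetz, so derivation-vanishing is not a purely algebraic consequence of vanishing on $H^1(X)$ as in (i). The geometric input that $T$ preserves $\omega$ must be used to couple the transgression derivations with the Lefschetz operator, and thereby control $\theta_\alpha$ beyond $H^1(X)$; the most natural vehicle is a Cartan-model analysis of equivariant cohomology, or an appeal to the Blanchard-style derivation results generalized to the Lefschetz-type setting with a symplectic group action, as invoked earlier in the paper.
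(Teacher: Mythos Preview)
Your treatment of case $(i)$ is correct and matches the paper's argument (which simply cites the Blanchard-type result in \cite[Proposition 4.40]{AMIG}): the differentials on the fiber column are derivations of odd negative degree vanishing on $H^1(X)$, hence vanish identically.

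Case $(ii)$, however, has a genuine gap. Your plan is to show $\theta_\alpha([\omega])=0$, deduce that each $\theta_\alpha$ commutes with the Lefschetz operator $L$, and then ``propagate the vanishing along the $L$-module structure on $H^*(X)$''. This last step cannot work: the Lefschetz-type hypothesis only gives the single isomorphism $L^{n-1}\colon H^1\cong H^{2n-1}$, not Hard Lefschetz, so $H^*(X)$ is in no sense generated over $\mathbb{Q}[L]$ by low degrees, and commutation with $L$ together with vanishing on $H^1$ does not force $\theta_\alpha\equiv 0$. In fact the paper remarks immediately after the lemma that the conclusion \emph{fails} for cohomologically symplectic Lefschetz-type spaces without the geometric hypothesis (citing an example of Allday), so no purely cohomological derivation argument can succeed here.

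The paper's route is quite different and does not attempt to kill all differentials algebraically. It shows only that $d_2[\omega]=0$, via a short contradiction: if $d_2[\omega]=v\neq 0$ in $H^2(BT)\otimes H^1(X)$, Poincar\'e duality produces $x\in H^{2n-1}(X)$ with $xv\neq 0$; writing $x=[\omega]^{n-1}u$ by Lefschetz type and using $d_2u=0$ (from the injectivity hypothesis) and $[\omega]^n u=0$, one computes $0=d_2([\omega]^n u)=n[\omega]^{n-1}uv=nxv$, a contradiction. In the Cartan model, $d_2[\omega]=0$ means precisely that the contractions $\iota_{\xi}\omega$ are exact, so the symplectic action is Hamiltonian, and one then invokes Kirwan's theorem that Hamiltonian torus actions on compact symplectic manifolds are equivariantly formal. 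You gesture toward a Cartan-model argument at the end, and that is indeed the right vehicle, but the substance of the proof---the Poincar\'e-duality contradiction and the passage through ``Hamiltonian $\Rightarrow$ equivariantly formal''---is missing from your proposal.
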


\begin{proof} Observe that the injectivity assumption is equivalent to the vanishing of the transgression on the second page of the Serre spectral sequence of the Borel fibration. Under condition $(i)$ the lemma follows from \cite[Proposition 4.40]{AMIG} (note that differentials on odd pages vanish automatically as $R$ is concentrated in even degrees). In case $(ii)$, note first that it suffices to consider real coefficients. We may thus work with the spectral sequence of the Borel fibration which is obtained from the Cartan model by filtering in polynomial degree. The symplectic form induces an element $[\omega]\in E_2^{0,2}$ and we claim that $d_2[\omega]=0$.
If $d_2[\omega]=v\in E_2^{2,1}= H^2(BT)\otimes H^1(X)$ was nontrivial, Poincaré duality of $X$ would imply the existence of some element $x\in E_2^{0,2n-1}=H^{2n-1}(X)$ such that $xv\neq 0$. As $X$ is of Lefschetz type, we may write $x=[\omega]^{n-1}u$ for some $u\in E_2^{0,1}$. By assumption we have $d_2u=0$ and $[\omega]^nu=0$ for degree reasons. This implies $0=d_2[\omega]^nu=n[\omega]^{n-1}uv$ which is a contradiction.

By the definition of the Cartan differential, this means precisely that contractions of $\omega$ with the fundamental vector fields are exact. Consequently the action is Hamiltonian and thus equivariantly formal (\cite[Prop. 5.8]{kirwan}).
\end{proof}

\begin{rem}
Interestingly, although the proof of the TRC generalizes to cohomologically symplectic spaces of Lefschetz type in the purely topological setting, the statement of $(ii)$ in the above lemma is false without the geometric assumptions: \cite[Example 1]{Allday2} is an $S^1$-action with a fixed point on a simply-connected cohomologically symplectic space that is however not equivariantly formal.
\end{rem}

\begin{proof}[Proof of Theorem \ref{thm:symplecticexamples}]
Set $V=\ker(R^2\rightarrow H_T^*(X))$. There is a subtorus $T'\subset T$ such that the kernel of $H^2(BT)\rightarrow H^2(BT')$ is exactly $V$. Let $X_i\in V$ be a basis and $Y_i\in R^2$ be a basis of a complement of $V$. Then we have the following commutative diagram
\[\xymatrix{ (\Lambda (X_i, Y_j),0)\ar[r]\ar[d] & (\Lambda (X_i,Y_i)\otimes \Lambda V,D)\ar[d]\ar[r] & (\Lambda V,d)\ar[d]^{\Id_{\Lambda V}}\\
(\Lambda(Y_j),0)\ar[r] & (\Lambda (Y_j)\otimes \Lambda V,\overline{D})\ar[r]& (\Lambda V,d)
}\]
where the top row is a model for the Borel fibration of the $T$ action on $X$, the bottom row is a model for the Borel fibration of the restricted $T'$-action (note that the $T'$-Borel fibration is up to homotopy the pullback of the $T$-Borel fibration along $BT'\rightarrow BT$), and the vertical maps are defined by sending the $X_i$ to $0$. By construction, the $Y_j$ map injectively into the cohomology of $(\Lambda (Y_j)\otimes \Lambda V,\overline{D})$ and thus the bottom row fibration is TNHZ by Lemma \ref{lem:symplecticef}.

By assumption there are $s_i\in V^1$ with $D(s_i)=X_i$. In particular $\Lambda V=\Lambda (s_i)\otimes \Lambda W$ where $d(s_i)=0$ and $\overline{D}(s_i)=0$. Thus in the bottom row it makes sense to quotient out the ideal generated by the $s_i$, which yields an extension sequence
\begin{equation}
(\Lambda (Y_j),0)\rightarrow(\Lambda(Y_j)\otimes \Lambda W,\tilde{D})\rightarrow (\Lambda W,\overline{d}).\tag{$*$}
\end{equation}
We argue that this is TNHZ as well. By naturality of spectral sequences it suffices to argue that the projection $(\Lambda V,d)\rightarrow (\Lambda W,\overline{d})$ is surjective on cohomology. To see this, consider the composition
\[ (\Lambda(X_i,s_i) \otimes\Lambda W,\overline{D'})\rightarrow (\Lambda V,d)\rightarrow (\Lambda W,\overline{d})
\]
where $\overline{D'}$ is obtained from $D$ by dividing out the $Y_j$. It is defined by sending the contractible algebra $(\Lambda (X_i,s_i),D)$ to $0$ and is thus a quasi-isomorphism. In particular this shows cohomological surjectivity of the second morphism and thus degeneracy of the Serre spectral sequence of ($*$) at $E_2$.

As in the proof of Lemma \ref{lem:efismodformal}, we obtain a quasi-isomorphism between the free differential graded $\Lambda(Y_j)$-modules $(\Lambda(Y_j)\otimes \Lambda W,\tilde{D})$ and $(H^*(\Lambda(Y_j)\otimes \Lambda W),0)$, which we consider as $\Lambda(X_i,Y_j)$-modules via the projection. As before, sending $\Lambda (X_i,s_i)$ to $0$ yields a quasi-isomorphism
\[(\Lambda (X_i,Y_i)\otimes \Lambda V,D)\rightarrow (\Lambda(Y_j)\otimes \Lambda W,\tilde{D})\] of $\Lambda(X_i,Y_j)$ modules. We have shown that the action is $\mathcal{MOD}$-formal.

To see that it has formal core, set $A=\im(R\rightarrow H_G^*(X))$ and note that in the language of Definition \ref{def:formallybased}, $\Lambda C$ is given by $(\Lambda(X_i,Y_j,s_i),D)$. Thus the morphism $\overline{C}\rightarrow \Lambda V$ is just given by the inclusion of $(\Lambda(s_i),0)$. Note that in the Serre spectral sequence of the Borel fibration the classes of the $s_i$ transgress onto the classes of the $X_i$. Using the fact that the differential on the second page is a derivation one can inductively conclude that the product of the classes of all $s_i$ is nonzero in $H^*(\Lambda V,d)$ (see the proof of \cite[Theorem 2.2]{AlldayPuppe1}). This shows the cohomological injectivity of $\overline{C}\rightarrow \Lambda V$.

\end{proof}

\subsection{Rational ellipticity and non-negative sectional curvature}

The goal of this subsection is to provide a variation of the maximal symmetry rank conjecture for manifolds of non-negative sectional curvature. The Bott--Grove--Halperin conjecture which states that (almost) non-negatively curved manifolds are (rationally) elliptic provides a link to rationally elliptic spaces. Let us begin with some easy general observations.



The dimension formula (see \cite[Theorem 32.6]{Bibel})) for rationally elliptic spaces  implies that the maximum  homotopy Euler characteristic achievable for a simply-connected rationally elliptic space in dimension $n$ is given by $\lfloor n/3\rfloor$, indeed by
\begin{itemize}
\item $n/3$ if $n\equiv 0 \mod 3$ and all rational homotopy groups concentrated in degree $3$,
\item $(n-2)/3$ if $n\equiv 2 \mod 3$ and all rational homotopy groups concentrated in degree $3$ except for one in degree $5$, or all in degree $3$ except for two more corresponding to the ones of $S^2$.
\end{itemize}
In case $n\equiv 1\mod 3$ the maximal realizable homotopy Euler characteristic is $(n-4)/3$ because of the assumption of simply-connectedness.
Due to degree restrictions it follows in all cases that any representative splits as a product of the corresponding sphere factors. Hence we can find manifolds equipped with even  free torus actions of a torus of rank $\lfloor n/3\rfloor$ in any case. The second part of the following corollary provides a variation of Proposition \ref{prop:eq}.
\begin{cor}\label{CorEll}
\begin{itemize}
\item An almost free $T^{\lfloor \tfrac{n}{3}\rfloor}$-action on the simply-connected compact rationally elliptic space $X$ has formal core and is $\mathcal{MOD}$-formal.
\item Let $X$ be rationally elliptic with $\pi_*(X)\otimes \qq=\pi_\textrm{odd} (X)\otimes \qq$. If $X$ admits an almost free $T^{\chi_\pi(M)}$-action which is either of formal core or $\mathcal{MOD}$-formal, then $X$ rationally is a product of odd spheres.
    \end{itemize}
\end{cor}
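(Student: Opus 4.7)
The plan is to dispatch Part 2 via the toral rank bound of Theorem \ref{thm:TRCholdsforbla} combined with Proposition \ref{prop:eq}, and to handle Part 1 by establishing that the Borel construction $X_T$ is positively elliptic (an $F_0$-space), from which formality of $X_T$ follows by classical results.

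For Part 1, I would first chain the inequalities
\[\lfloor n/3\rfloor\leq \rk_0(X)\leq -\chi_\pi(X)\leq \lfloor n/3\rfloor,\]
invoking the given almost free action, the toral rank theorem, and the dimension bound on $-\chi_\pi(X)$ in simply-connected elliptic spaces of dimension $n$ that is recorded immediately before the corollary. All three are therefore equalities. The critical step is additivity of $\chi_\pi$ under the Borel fibration $X\to X_T\to BT$: the long exact sequence of rational homotopy groups yields
\[\chi_\pi(X_T)=\chi_\pi(X)+\chi_\pi(BT)=-\lfloor n/3\rfloor+\lfloor n/3\rfloor=0.\]
Since $\pi_*(X_T)\otimes\qq$ is finite-dimensional (again from the same exact sequence) and $H^*(X_T;\qq)=H_T^*(X)$ is finite-dimensional by almost freeness via Theorem \ref{thm:hsaiNGSTHEOREM}, the space $X_T$ is rationally elliptic. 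Being elliptic with vanishing homotopy Euler characteristic, $X_T$ is an $F_0$-space; by classical results of Halperin its rational cohomology is of the form $\Lambda V^{\mathrm{even}}/(dV^{\mathrm{odd}})$, which is a complete intersection and hence intrinsically formal by \cite[Remark 3.1]{FelixHalperin}. Thus $X_T$ is formal, and Lemma \ref{lem:efismodformal} together with Remark \ref{rem:formalcore}(i) deliver both $\mathcal{MOD}$-formality and formal core.

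For Part 2, the hypothesis $\pi_*(X)\otimes\qq=\pi_{\mathrm{odd}}(X)\otimes\qq$ forces the Sullivan minimal model $(\Lambda V,d)$ to have $V=V^{\mathrm{odd}}$ finite-dimensional of dimension $-\chi_\pi(X)$, so that
\[\dim H^*(X;\qq)\leq \dim \Lambda V=2^{-\chi_\pi(X)}.\]
The minimal orbit dimension of the almost free action equals $-\chi_\pi(X)$, so Theorem \ref{thm:TRCholdsforbla} applied under the formal core or $\mathcal{MOD}$-formal hypothesis gives the reverse bound $\dim H^*(X;\qq)\geq 2^{-\chi_\pi(X)}$. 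Equality now permits invoking Proposition \ref{prop:eq}, which directly characterises $X$ rationally as a product of $-\chi_\pi(X)$ odd-dimensional spheres.

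The chief obstacle lies in Part 1, namely the verification that $X_T$ is $F_0$ and the subsequent appeal to the Halperin-type intrinsic formality of complete intersection algebras; once these are in hand, Part 2 reduces essentially to a dimension count matching the paper's own lower bound.
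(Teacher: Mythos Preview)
Your argument is correct, and in Part 1 it is in fact cleaner than the paper's. The paper first pins down the rational homotopy type of $X$ explicitly (as one of the specific sphere products listed just before the corollary, with a separate ad hoc check that the $S^5$-factor is untwisted) and only then observes that the Borel construction is positively elliptic. You bypass this classification entirely: the chain $\lfloor n/3\rfloor\leq \rk_0(X)\leq -\chi_\pi(X)\leq \lfloor n/3\rfloor$ together with additivity of $\chi_\pi$ along $X\to X_T\to BT$ yields $\chi_\pi(X_T)=0$ directly, and ellipticity plus finite-dimensional cohomology then force $X_T$ to be $F_0$ and hence formal. This is more robust and avoids the case analysis. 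The one external input you invoke that the paper does not isolate is the Allday--Halperin inequality $\rk_0(X)\leq -\chi_\pi(X)$ for elliptic spaces; this is classical, but you should cite it explicitly.

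For Part 2 the two arguments are close in spirit but packaged differently. The paper argues via the odd spectral sequence: the associated pure model has cohomology of dimension $2^{-\chi_\pi(X)}$, and the toral rank bound from Theorem \ref{thm:TRCholdsforbla} forbids any drop, so the spectral sequence degenerates at $E_0$ and $X$ is rationally the pure model. You instead bound $\dim H^*(X)\leq 2^{-\chi_\pi(X)}$ trivially from $\dim\Lambda V$, match it against Theorem \ref{thm:TRCholdsforbla}, and then invoke Proposition \ref{prop:eq} for the identification with a product of odd spheres. Your route is shorter and leans on machinery already proved in the paper, while the paper's spectral-sequence argument is more self-contained and also makes transparent \emph{why} the differential on the minimal model must vanish.
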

\begin{proof}
As for the first part, we observe that the torus acts on a space of the rational type of one of
\begin{itemize}
\item $S^3\times \stackrel{(n/3)}{\ldots} \times S^3$,
\item $S^3\times \stackrel{((n-5)/3)}{\ldots} \times S^3 \times S^5$
\item $S^3\times \stackrel{((n-2)/3)}{\ldots}\times S^3 \times S^2$.
\end{itemize}
In order to justify this, as for the second item, it remains to observe that the rational homotopy group corresponding to $S^5$ is necessarily spherical and cannot have a twisted differential. Indeed, this being the case would reduce cohomology (cf.~the arguments for the second part) and would contradict the fact that the toral rank conjecture is confirmed on coformal two-stage spaces (see \cite [Proposition 3.1, Corollary 3.5]{AlldayPuppe1}).

The corresponding Borel constructions---since they have finite dimensional cohomology ---have the rational types of positively elliptic hence formal spaces whence the action has formal core and is $\mathcal{MOD}$-formal.

%

The second part can easily be deduced as follows: the associated pure model of the minimal model of $X$ is the one of a product of odd spheres. It has total cohomological dimension $2^{\chi_\pi(M)}$, and constitutes the $E_0$-term of the odd spectral sequence converging to the model of $X$ (see \cite[p.~438]{Bibel}). By  Theorem \ref{thm:TRCholdsforbla} we derive that cohomological dimension may not decrease whence the spectral sequence has to degenerate at the $E_0$-term. The result then follows easily.
\end{proof}
We remark that the first statement of Corollary \ref{CorEll} is a variation/generalization of \cite[Corollary C]{GKR17} in which it is shown that the TRC holds if $T^{\lfloor n/3\rfloor}$ acts almost freely on $X$. In \cite[Corollary C, Theorem D]{GKRW18} it was shown that the TRC holds on a closed smooth simply-connected rationally elliptic $n$-manifold (respectively on a closed simply-connected non-negatively curved Riemannanian manifold) which admits an additional continuous (respectively isometric) torus action (independent of the almost free torus action) which is \emph{isotropy maximal}. This notion of maximality in combination with non-negative sectional curvature and in relation to formal core and $\mathcal{MOD}$-formality will be discussed in the following.

\vspace{5mm}

So indeed, let us now come to an application in the field of manifolds with non-negative sectional curvature. For this we recall (see \cite{ES17})
the \emph{maximal symmetry rank conjecture} on these, which was originally stated in \cite{GalazSearle}. Note that a torus action of $T^k$ on the manifold $M$ is called \emph{isotropy maximal} (or \emph{slice maximal}) if there is a point for which the isotropy group is of maximal dimension, or equivalently, if and only if, the dimension of a minimal orbit is $2k-n$. In this case the action is maximal in the sense that it is not a proper subaction of an effective action of a higher dimensional torus.


\begin{conj2}
Let $T^k$ act isometrically and effectively on the simply-connected closed non-negatively curved manifold $M^n$. Then $k\leq \lfloor 2n/3\rfloor$ and the action is isotropy maximal for $k=\lfloor 2n/3\rfloor$.
\end{conj2}
In the case of an isotropy maximal effective isometric $T^k$-action on $M$ (closed, simply-connected, of non-negative curvature)
it was shown in \cite{ES17} that $M$ is equivariantly diffeomorphic to a quotient of a free linear
torus action on
\begin{align*}
Z=\prod_{i<r} S^{2n_i} \times \prod_{i\geq r} S^{2n_i-1}
\end{align*}
for all $n_i\geq 2$, and $n\leq \dim Z\leq 3n-3k$.


Recall that by a linear torus action on $Z$ we refer to the action of a subtorus of a maximal torus of $\prod_{i<r} \SO(2n_i+1) \times \prod_{i\geq r} \SO(2n_i)$. This group acts by the standard representation on $\prod_{i<r} \mathbb{R}^{2n_i+1} \times \prod_{i\geq r} \mathbb{R}^{2n_i}$ restricting to an action on $Z$.

\vspace{5mm}

In \cite[Theorem B]{GKRW18} an analog result was shown in the category of rationally elliptic manifolds and smooth actions: An isotropy maximal action on a smooth closed simply-connected manifold $M^n$ is up to equivariant rational homotopy equivalence a linear one on a quotient $Z$ on a of product of spheres (of dimension at least $3$) by a linear free torus action.

As a direct corollary of Theorem \ref{thm:TRCholdsforbla} we can prove this extended form of the maximal symmetry rank conjecture in a special case.


\begin{cor}\label{cornonneg}
Suppose an effective $T^k$-action on a simply-connected closed manifold $M^n$ which is either $\mathcal{MOD}$-formal or of formal core, and let  $\dim H^*(M)< 2^{2k+1-n}$.
\begin{itemize}
\item
Suppose the action is isometric and $M$ is  non-negatively curved manifold $M^n$. Then the action is isotropy maximal, and $M$ is equivariantly diffeomorphic to a quotient of a free linear torus action on a product of spheres $Z$ (as above).
\item
Suppose $M$ is rationally elliptic. Then the action is isotropy maximal, and $M$ is equivariantly rationally equivalent to a quotient of a free linear torus action on a product of spheres (as described above).
\end{itemize}
\end{cor}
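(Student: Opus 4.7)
The plan is to combine Theorem \ref{thm:TRCholdsforbla} with the classification results for isotropy maximal torus actions from \cite{ES17} and \cite{GKRW18}. The numerical assumption $\dim H^*(M) < 2^{2k+1-n}$ is tailored precisely so that the TRC-style lower bound forces the minimum orbit dimension to achieve its smallest possible value, namely $2k-n$.

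My first step is to record the general lower bound $c \geq 2k - n$ for the minimal orbit dimension $c$ of any effective $T^k$-action on an $n$-manifold. This is a standard slice-theoretic observation: if $H \subset T^k$ is an isotropy subtorus at some point, the slice at that point has dimension $n - k + \dim H$, and effectivity forces $H$ to act faithfully on it; since a faithful $T^{\dim H}$-representation requires real dimension at least $2\dim H$, we obtain $\dim H \leq n - k$, and hence every orbit has dimension at least $k - (n-k) = 2k - n$.

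Next, I apply Theorem \ref{thm:TRCholdsforbla} to the $\mathcal{MOD}$-formal or formal-core action to get $\dim H^*(M) \geq 2^c$. The hypothesis $\dim H^*(M) < 2^{2k+1-n}$ then gives $2^c < 2^{2k+1-n}$, so $c \leq 2k - n$; combining with the previous step yields $c = 2k - n$. This is precisely the definition of isotropy maximality: there exists an orbit of the minimal possible dimension, or equivalently a point whose isotropy subtorus has the maximal possible dimension $n - k$.

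With isotropy maximality in hand, the first bullet follows from the equivariant diffeomorphism classification of isotropy maximal isometric torus actions on simply-connected closed non-negatively curved manifolds from \cite{ES17}, and the second bullet from the corresponding equivariant rational homotopy classification in \cite[Theorem B]{GKRW18}. The main, and essentially only, obstacle is translating the cohomological hypothesis into the geometric condition of isotropy maximality, and this is handled entirely by Theorem \ref{thm:TRCholdsforbla}; the remainder is a black-box application of the cited structure theorems.
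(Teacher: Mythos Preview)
Your proof is correct and follows essentially the same approach as the paper: both arguments combine the slice-theoretic inequality $c \geq 2k-n$ (from the faithful isotropy representation on the normal slice) with the bound $\dim H^*(M) \geq 2^c$ from Theorem \ref{thm:TRCholdsforbla} to force $c = 2k-n$, and then invoke the classification results of \cite{ES17} and \cite[Theorem B]{GKRW18}. The only difference is the order in which you present the two inequalities.
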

\begin{proof}
Under these restrictions Theorem \ref{thm:TRCholdsforbla} yields that $\dim H^*(M)\geq 2^c$ with $c$ the dimension of a minimal orbit. From the dimension restriction on cohomology we deduce that
\begin{align}\label{eqnnn}
c\leq 2k-n
\end{align}
(as it is an integer).

Consider a $T^k$-orbit at $x \in T^k x\cong T^k/T'$ where $T'$ denotes the isotropy subtorus at $x$ (up to finite covering). By standard representation theory we obtain that the maximal dimension of a faithful isotropy representation of the torus $T'$ at $x$ is bounded from above by $(n-c)/2$. It follows that the maximal dimension $k$ of an effectively acting torus $T^k$ is bounded by
\begin{align*}
c + (n-c)/2 \geq k  \iff  c\geq 2k-n.
\end{align*}
Together with Equation \eqref{eqnnn} this yields that $c=2k-n$. It follows that the action is isotropy maximal. The two respective results follow from the literature as depicted above the assertion.
\end{proof}
Conversely, note that any such linear action on a product of spheres is both $\mathcal{MOD}$-formal and of formal core. Clearly, by O'Neill any such quotient can be equipped with a metric of non-negative curvature.

We also recommend to contrast this result with Proposition \ref{prop:eq}, where we showed that---applied to this concrete case---the equality $H^*(M)=2^{2k-n}$ implies that $M$ has the rational type of a product of spheres. Hence, in the setting of this Section, this result in particular is sharpened to a diffeomorphism classification.

\vspace{5mm}

Let us end this section by providing another result for rationally elliptic spaces.
\begin{lem}\label{lemform}
If $(\Lambda V,d)$ is a formal elliptic minimal Sullivan model, then so is its associated pure model $(\Lambda V,d_\sigma)$.
\end{lem}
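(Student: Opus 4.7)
The plan is to reduce everything to the very explicit structure of the minimal model of a formal elliptic cdga that was already exploited in (the proof of) Proposition~\ref{prop:fibration}: by Felix--Halperin, $(\Lambda V,d)$ admits a two-stage presentation, i.e.\ $V=V_0\oplus V_1$ with $V_1=V_1^{\mathrm{odd}}$, the differential $d$ vanishes on $V_0$, and $d$ maps a basis of $V_1$ bijectively onto a regular sequence $a_1,\ldots,a_k\in\Lambda V_0$. Moreover, as shown in the proof of Proposition~\ref{prop:fibration}, the projections $\overline{a_i}\in\Lambda V_0^{\mathrm{even}}$ actually form a \emph{maximal} regular sequence in the polynomial ring $\Lambda V_0^{\mathrm{even}}$, so in particular $k=\dim V_0^{\mathrm{even}}$.

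I would then simply read off the pure differential $d_\sigma$ on this model. Since $V^{\mathrm{even}}=V_0^{\mathrm{even}}$ and $V^{\mathrm{odd}}=V_0^{\mathrm{odd}}\oplus V_1$, we have $d_\sigma=0$ on $V_0$, while on $V_1$ the pure differential picks out exactly the $\Lambda V^{\mathrm{even}}$-components of the $a_i$, giving $d_\sigma v_i=\overline{a_i}$. Consequently
\[
(\Lambda V,d_\sigma)\;\cong\;(\Lambda V_0^{\mathrm{odd}},0)\otimes\bigl(\Lambda V_0^{\mathrm{even}}\otimes\Lambda V_1,\,d_\sigma\bigr),
\]
as cdgas, where the second factor is precisely the Koszul complex of the regular sequence $(\overline{a_i})$ in $\Lambda V_0^{\mathrm{even}}$.

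Formality now follows from standard facts about Koszul complexes: the canonical projection sending $V_1$ to $0$ and $\Lambda V_0^{\mathrm{even}}$ onto $\Lambda V_0^{\mathrm{even}}/(\overline{a_i})$ is a cdga-morphism, and it is a quasi-isomorphism because $(\overline{a_i})$ is a regular sequence, exhibiting the Koszul complex as formal with cohomology $\Lambda V_0^{\mathrm{even}}/(\overline{a_i})$. Since $(\Lambda V_0^{\mathrm{odd}},0)$ is manifestly formal and formality is preserved under tensor products, $(\Lambda V,d_\sigma)$ is formal. Ellipticity is then immediate: $V$ is finite dimensional by hypothesis, and the cohomology
\[
H^*(\Lambda V,d_\sigma)\;\cong\;\Lambda V_0^{\mathrm{odd}}\otimes\Lambda V_0^{\mathrm{even}}/(\overline{a_i})
\]
is finite dimensional because $V_0^{\mathrm{odd}}$ is finite dimensional and the quotient of a polynomial algebra by a maximal regular sequence is Artinian.

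The only non-routine ingredient is the existence of the two-stage minimal model with the specified regularity properties of $\overline{a_i}$; this is genuinely where formality plus ellipticity is used, but it is already packaged inside Proposition~\ref{prop:fibration}. Everything after that is a direct algebraic manipulation, so I do not anticipate further obstacles.
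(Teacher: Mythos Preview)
Your argument is correct and is essentially the same as the paper's proof: both use the two-stage structure from Proposition~\ref{prop:fibration} to identify the pure model as the tensor product of the free odd algebra $(\Lambda V_0^{\mathrm{odd}},0)$ with the positively elliptic Koszul complex $(\Lambda V_0^{\mathrm{even}}\otimes\Lambda V_1,d_\sigma)$, and conclude formality from formality of each factor. The paper phrases this more tersely as ``passing to the associated pure model makes this fibration degenerate to a genuine product'', while you spell out the computation of $d_\sigma$ and the Koszul quasi-isomorphism explicitly.
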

\begin{proof}
This is a direct consequence of Proposition \ref{prop:fibration}, where we showed that $(\Lambda V,d)$ admits a decomposition as a TNHZ rational fibration
\[(\Lambda B,0)\rightarrow (\Lambda B\otimes \Lambda V,d)\rightarrow (\Lambda V,\bar d),\]
where $B=B^{odd}$ and $(\Lambda V,\bar d)$ is positively elliptic. Indeed, passing to the associated pure model makes this fibration degenerate to a genuine product, and $(\Lambda V,d_\sigma)$ is formal, consequently.
\end{proof}

\begin{thm}
Let $M^n$ be a simply-connected rationally $(r-1)$-connected formal rationally elliptic closed manifold with an effective action of $T^k$ which is $\mathcal{MOD}$-formal or of formal core.
Then
\begin{align*}
k\leq n/2+n/(2r)
\end{align*}

In the case of equality the action is isotropy maximal and equivariantly rationally equivalent to a linear action on a quotient of a product of spheres by a linear free torus action---if $r\geq 3$ then $M$ is equivariantly rationally equivalent to a product of spheres.
\end{thm}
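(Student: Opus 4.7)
My plan is to sandwich $\dim H^*(M;\qq)$ between a lower bound coming from the toral rank conjecture for actions of formal core or $\mathcal{MOD}$-formal type and an upper bound forced by the formal elliptic, $(r-1)$-connected structure of $M$; the target inequality rearranges to $2k - n \leq n/r$. For the lower bound, at a point $x \in M$ of minimal orbit dimension $c$, the identity component of the isotropy acts (up to a finite kernel) faithfully on the normal slice $\mathbb{R}^{n-c}$, giving $\dim T_x^0 = k - c \leq (n-c)/2$ and hence $c \geq 2k - n$. Theorem \ref{thm:TRCholdsforbla} then yields
\[\dim H^*(M;\qq) \geq 2^c \geq 2^{2k-n}.\]

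Next I would establish $\dim H^*(M;\qq) \leq 2^{n/r}$. By Lemma \ref{lemform} and Proposition \ref{prop:fibration}, $M$ rationally admits a TNHZ fibration $F \to M \to B$ with $B$ a product of $b$ odd-dimensional spheres (each of dimension $\geq r$) and $F$ positively elliptic and formal; both inherit the $(r-1)$-connectedness. The base condition immediately gives $\dim H^*(B) = 2^b \leq 2^{\dim B/r}$. For the fiber, formality yields $H^*(F) \cong \qq[x_1,\ldots,x_q]/(f_1,\ldots,f_q)$ with $|x_i| = 2a_i$ and $|f_j| = 2b_j$; evaluating the Poincar\'e series at $t = 1$ gives $\dim H^*(F) = \prod b_j / \prod a_i$ and $\dim F = 2(\sum b_j - \sum a_i)$. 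Using the $(r-1)$-connectedness constraints $a_i \geq r/2$, $b_j \geq (r+1)/2$ together with the combinatorial fact that each $b_j$ is a nonnegative integer combination of the $a_i$ (forced by the regular sequence condition), I would prove $\dim H^*(F) \leq 2^{\dim F/r}$ by induction on $q$; the base case $q = 1$ reduces to $\log_2 k \leq k-1 \leq (2a/r)(k-1)$ for integer $k \geq 1$ and $a \geq r/2$. Multiplying the two estimates gives $\dim H^*(M;\qq) \leq 2^{n/r}$, and combining with the first step proves $k \leq n/2 + n/(2r)$.

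In the equality case all estimates collapse: $c = 2k - n = n/r$ and $\dim H^*(M;\qq) = 2^c$, the first being precisely the definition of isotropy maximality. Since $2^c < 2^{c+1} = 2^{2k+1-n}$, Corollary \ref{cornonneg} applies and yields an equivariant rational equivalence of $M$ with a quotient $Z/T^\ell$ where $Z$ is a simply-connected product of spheres carrying a free linear $T^\ell$-action. When additionally $r \geq 3$, the $(r-1)$-connectedness gives $\pi_2(M) \otimes \qq = 0$, so the rational characteristic map $\pi_2(M) \otimes \qq \to \qq^\ell$ of the principal bundle $Z \to M$ vanishes; but the simple-connectedness of $Z$ forces this map to be surjective, whence $\ell = 0$ and $M = Z$ is itself (equivariantly rationally) a product of spheres---necessarily all of dimension exactly $r$, since the $c = n/r$ factors of total dimension $n$ each have dimension at least $r$.

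The main difficulty lies in the upper bound $\dim H^*(F) \leq 2^{\dim F/r}$ for $F$ positively elliptic, formal, and $(r-1)$-connected. Written as $\sum_j \psi(b_j) \leq \sum_i \psi(a_i)$ with $\psi(x) = \log_2 x - 2x/r$, it is delicate since $\psi$ is not monotonic on $[r/2, \infty)$; the regular sequence relations tying the $b_j$ arithmetically to the $a_i$ must be exploited in an essential way. An inductive decomposition of $F$ via elementary extensions---each introducing one pair $(x_i,y_i)$ and representing a spherical fibration step---appears to be the most workable route, reducing the multivariable estimate to the single-generator case verified above.
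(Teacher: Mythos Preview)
Your overall strategy coincides with the paper's: sandwich $\dim H^*(M)$ between $2^{2k-n}$ (via the isotropy bound $c\geq 2k-n$ and Theorem~\ref{thm:TRCholdsforbla}) and $2^{n/r}$ (via the formal elliptic structure), and then handle equality through the classification of isotropy-maximal actions on rationally elliptic manifolds. Your treatment of the lower bound and of the equality case is essentially the paper's argument.

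The genuine gap lies in your upper bound $\dim H^*(F)\leq 2^{\dim F/r}$ for the positively elliptic fiber. The proposed induction via a tower of elementary extensions ``each introducing one pair $(x_i,y_i)$'' does not work in general: such a decomposition need not exist. Concretely, take $|x_1|=|x_2|=2$, $dy_1=x_1^2+x_2^2$, $dy_2=x_1x_2(x_1+x_2)$; this is a regular sequence over $\qq$, but no $\qq$-linear combination of $f_1,f_2$ is a pure power of a single linear form, so one cannot peel off a pair $(x,y)$ with $dy\in\Lambda(x)$, and setting any $x_i=0$ destroys regularity of the remaining relation. Your weaker combinatorial observation (each $b_j$ is a nonnegative integer combination of the $a_i$) is also insufficient on its own, because the single-variable estimate $\log_2 t\leq t-1$ genuinely fails for $1<t<2$; one needs the ratios $b_j/a_j\geq 2$ under a suitable pairing.

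The paper supplies precisely this missing ingredient: by \cite[Proof of Theorem~32.6, p.~443]{Bibel}, after sorting both sequences increasingly one has $b_j\geq 2a_j$ (in your notation; equivalently $a_j+1\geq 2b_j$ in the paper's degree convention). Your own $q=1$ computation then applies verbatim to each paired factor---with $t_j=b_j/a_j\geq 2$ and $a_j\geq r/2$ one gets $\log_2 t_j\leq t_j-1\leq (2a_j/r)(t_j-1)=(2/r)(b_j-a_j)$---and summing over $j$, together with the trivial bound for the odd-sphere base, yields $\dim H^*(M)\leq 2^{n/r}$ directly, with no induction needed.
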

\begin{proof}
%
%
%
Let us first estimate the dimension of the cohomology of $M$ from above. For this, due to the odd spectral sequence we may assume that $M$ is a pure space whilst preserving that it is rationally elliptic and formal in view of Lemma \ref{lemform}. Hence we may decompose its minimal model as the product of a  positively elliptic Sullivan algebra and a free algebra generated in odd degress (see for example \cite[Lemma 2.5]{Ama13}).

We denote by $(a_1,\ldots, a_\alpha)$ the multiset of the degrees of a basis of the vector space of non-trivial rational homotopy groups $\langle \pi_i(X)\otimes \qq\rangle_i$ for $i$ odd and by $(b_1,\ldots, b_\beta)$ the corresponding multiset of degrees for $i$ even. Clearly, $\alpha\geq \beta$. We may order the $a_i$ in such a way that the differential is trivial on $a_i$ for $i>\beta$. Moreover, from  \cite[Proof of Theorem 32.6, Page 443]{Bibel} we recall that we may order the $b_i$ and $a_i$ (for $i\leq \beta$) in such a way that the $b_i$ and the $a_i$ form a respective increasing sequence and $a_i+1\geq 2 b_i$ for $1\leq i\leq \beta$.

Using \cite[Theorem 32.15]{Bibel} we compute the dimension $n$ of $M$ as
\begin{align*}
\dim M &= \sum_{i=1}^\alpha a_i-\sum_{i=1}^\beta (b_i-1)=\sum_{i=1}^\alpha (a_i+1)-b_i
\intertext{(where, by convention, $\beta_i=0$ for $i>\beta$), and its cohomology as}
\dim H^*(M)&= \bigg(\prod_{i=1}^\beta \frac{a_i+1}{ b_i}\bigg) \cdot 2^{\alpha-\beta}
\end{align*}
where the first factor computes the dimension of the cohomology of the positively elliptic factor, the second factor the one of the free factor. For this we use \cite[Property (32.14), p.~446]{Bibel} in order to compute the dimension of the cohomology of a rationally elliptic pure space.

Hence we need to maximize the product of quotients ($\geq 2$) of numbers while the sum of their differences remains constant. The solution to this problem is to produce as many factors as possible. That is, we estimate the cohomology by
\begin{align*}
\dim H^*(M)&\leq 2^{n/r} 
\end{align*}

\vspace{5mm}

At the same time we recall from the proof of Corollary \ref{cornonneg} the inequality
\begin{align*}
 c\geq 2k-n.
\end{align*}
with $c$ the minimal dimension of an orbit.  Theorem \ref{thm:TRCholdsforbla} yields that $\dim H^*(M)\geq 2^c\geq 2^{2k-n}$. Combining both inequalities we obtain that
\begin{align*}
2k-n \leq n/r \iff k\leq n/2+n/(2r)
\end{align*}
If equality holds, we derive that $H^*(M)=2^c$ and $c=2k-n$, whence the action is isotropy maximal. We then draw on the depicted classification result \cite[Theorem B]{GKRW18}. The statement for $r\geq 3$ follows since the quotient of a free torus action on a product of spheres of dimension at least $3$ can only be rationally $2$-connected if the torus is trivial.

\end{proof}

\section{Further (counter)examples}\label{secex}
\begin{ex}\label{ex:restriction}
We show that the notions we defined in Section \ref{sec:notionsection} are \textbf{not preserved by restriction to subgroups of smaller rank}. Consider the threefold Hopf action of $T^3$ on $M=(S^{3})^3$. The space $M_{T^3}$ is formal so it is in particular $\mathcal{MOD}$-formal as well as formally based. However, if we restrict the action to $T^2$ along the homomorphism $(s,t)\mapsto (s,st,t)$, the model of the Borel fibration becomes
\[(\Lambda(X,Y),0)\rightarrow (\Lambda(X,Y,a,b,c),D)\rightarrow (\Lambda(a,b,c),0),\]
with $|X|=|Y|=2$, $|a|=|b|=|c|=3$, $D(a)=X^2$, $D(b)=X^2+2XY+Y^2$, and $D(c)=Y^2$. Now $2Ya-X(b-a-c)$ defines a nonzero cohomology class in degree 5 which maps to $0$ in $H^5(M)=0$. Also $H^3_{T^2}(M)=0$ so said class does not lie in $\mathfrak{m}H^*_{T^2}(M)$, where $\mathfrak{m}=(X,Y)$. It follows that the action is not spherical. It is also not formally based because of the nontrivial Massey product $\langle X,X,Y\rangle$: the image of $H^*(BT^2)\rightarrow H_{T^2}^*(M)$ is isomorphic to $\Lambda(X,Y)/(X^2,XY,Y^2)$. Let $(C,d)$ be as in Definition \ref{def:formallybased} and $\alpha,\beta,\gamma\in C$ with $ d\alpha=X^2$, $d\beta=X^2+2XY+Y^2$, and $d\gamma=Y^2$. Any morphism $(C,d)\rightarrow (\Lambda(X,Y,a,b,c),D)$ that is the identity on $R$ also has to map $\alpha\mapsto a$, $\beta\mapsto b$, $\gamma\mapsto c$, which is not possible since there is nontrivial cohomology represented in the $\Lambda(X,Y)$-span of $a,b$, and $c$ but not in that of $\alpha,\beta$, and $\gamma$.
\end{ex}

\begin{ex}\label{ex:sphericalnotalmostmod}
We construct an action which is \textbf{spherical but not almost $\mathcal{MOD}$-formal}. Consider $\Lambda (X_1,X_2,X_3)$ with $X_i$ in degree $2$ and set the differential $D$ to be trivial on the $X_i$. Add generators $a_{ij}$ for $i\leq j\in\{1,2,3\}$ and set $D(a_{ij})=X_iX_j$.
One checks that a basis of the kernel of $D$ in degree $5$ is given by
\begin{itemize}
\item $m_{iij}=X_ja_{ii}-X_ia_{ij}$, for $(i,j)\in\{1,2,3\}^2$ with $i\neq j$, where $a_{ij}:=a_{ji}$ in case $i>j$.

\item $m_{312}=X_3a_{12}-X_2a_{13}$, $m_{123}=X_1a_{23}-X_2a_{13}$.
\end{itemize}
The easiest way to verify this is by observing that the listed elements are linearly independent and that the differential maps the $18$-dimensional degree $5$ component surjectively onto the $10$-dimensional $\Lambda^3(X_1,X_2,X_3)$.

Now for each of the $m_{ijk}$ except for $m_{112}$, we introduce a generator $c_{ijk}$ in degree $4$ with $D(c_{ijk})=m_{ijk}$.
Furthermore, we add a generator $b$ in degree $3$ with $Db=0$ and glue it to the remaining Massey product by introducing the generator $c_{112}$ in degree $4$ and setting $D(c_{112})=m_{112}-X_1b$. We extend the cdga
\[(A,D)=(\Lambda(X_i,a_{ij},b,c_{ijk}),D)\]
to a minimal Sullivan algebra $(C,D)$ by adding generators in degrees $\geq 5$ to inductively kill all cohomology in degrees $\geq 6$.

Setting $R=\Lambda(X_1,X_2,X_3)$, we have $H^*(C)=R/R^{\geq 3}\otimes \Lambda (b)$. By Proposition \ref{prop:realizeasaction} we find a compact space $X$ with a $T^3$ action such that the Borel fibration has minimal model
\[R\rightarrow (C,D)\rightarrow (\overline{C},\overline{D})\]
with the latter cdga being the quotient by the ideal generated by $R^+$.
As an $R$-module, $H^*_{T^3}(X)=H^*(C)$ is generated by the classes of $1$ and $b$ which map injectively to $H^*(X)=H^*(\overline{C})$ so the action is spherical by Lemma \ref{spherical-generator-lem}.

Let us now argue that it is not almost $\mathcal{MOD}$-formal. By Proposition \ref{E3collapse}, it suffices to find a nontrivial differential on the $E_3$-page of the spectral sequence of the fibration $T^3\rightarrow X\rightarrow X/T^3$. It is obtained by filtering $(C\otimes \Lambda S,D)$ in the degree of $C$ where $S=\Lambda(s_1,s_2,s_3)$ with $Ds_i=X_i$. We find a nontrivial differential by defining a suitable zigzag: consider the element $\alpha_2=X_1s_{123}$ in filtration degree $2$, where the multi index stands for the respective product of the $s_i$. We have $D(\alpha_2)=X_1^2s_{23}-X_1X_2s_{13}+X_1X_3s_{12}$. Thus for $\alpha_3=-a_{11}s_{23}+a_{12}s_{13}-a_{13}s_{12}$, we obtain \[D(\alpha_2+\alpha_3)=(X_2a_{13}-X_3a_{12})s_1+(X_3a_{11}-X_1a_{13})s_2+(X_1a_{12}-X_2a_{11})s_3\]
in filtration degree $5$. We claim that this defines a nontrivial element in $E_3^{5,1}$, which implies that we have a nontrivial differential starting from $E_3^{2,3}$.

First note that $D(\alpha_2+\alpha_3)\equiv D(\alpha_2+\alpha_3)+D(c_{312}s_1-c_{113}s_2)\equiv-m_{112}s_3+X_1c_{213}-X_2c_{113}$ in $E_3^{5,1}$. For this to be trivial in $E_3^{5,1}$ we need to have
\[m_{112}s_3-D(e_{12}s_{12}+e_{13}s_{13}+e_{23}s_{23}+e_1s_1+e_2s_2+e_3s_3)\in C^6\]
for some $e_{ij}\in A^3$ and $e_i\in A^4$. We see that the $e_{ij}$ are necessarily closed and hence multiples of $b$. If the $e_{ij}$ were nontrivial, then their image under $D$ would produce error terms of the form $X_ib_js_k$ which cannot cancel with the $D(e_is_i)$ because no element in $R^2\cdot \langle b\rangle$ is exact. Thus we have $e_{ij}=0$. But then it follows that the projection of the whole right hand expression $D(\cdots)$ to the $A^5\otimes \langle s_3\rangle$ component is given by $D(e_3)s_3$, which can never be equal to $m_{112}s_3$ because $m_{112}$ is not exact in $A$.
\end{ex}

\begin{ex}\label{ex:almostmodnotmod}
We construct an \textbf{almost $\mathcal{MOD}$-formal but not $\mathcal{MOD}$-formal} free action. Consider the cdga $(C,D)$ from Example \ref{ex:sphericalnotalmostmod}. We set $R=\Lambda (X_1,X_2)$ and obtain a compact $T^2$-space $Y$ such that the Borel fibration is given by $R\rightarrow C$.
The $R$-module $H^*_{T^2}(Y)$ is generated by the classes of $1$, $X_3$, $b$, and $X_3b$ which map injectively to $H^*(Y)$. Thus the action is spherical by Lemma \ref{spherical-generator-lem}. A nontrivial differential past the second page of the spectral sequence from Proposition \ref{E3collapse} would need to map from $E_3^{*,2}$ to $E_3^{*,0}$ because we only have a $T^2$-action. But those are also trivial according to Proposition \ref{sphericalSS}. Thus the action is almost $\mathcal{MOD}$-formal.

Finally, we argue that the action is not $\mathcal{MOD}$-formal. To see this, we construct the lower part of the Hirsch--Brown model $\varphi\colon R\otimes V\rightarrow C$ (see \ref{HBconstruction}). In degree $0$ we introduce the generator $\overline{1}$ and set $\varphi(\overline{1})=1$. In degree $1$ there is nothing to do and in degree $2$ we introduce generators $\overline{X_3}$ and define $\varphi(\overline{X_3})=X_3$. To achieve surjectivity in degree $3$, we introduce $\overline{b}$, with $\varphi(\overline{b})=b$. Injectivity of $\varphi$ in degree $4$ is achieved by introducing $\overline{a_{11}},\overline{a_{12}},\overline{a_{22}}$ with $D(\overline{a_{ij}})=X_iX_j\cdot\overline{1}$ as well as $\overline{a_{13}}$ and $\overline{a_{23}}$
with $D(\overline{a_{i3}})=X_i\cdot\overline{X_3}$ and setting $\varphi(\overline{a_{ij}})=a_{ij}$.
The procedure is continued by adding generators of degree $\geq 4$ to form the Hirsch--Brown model.

If the action were $\mathcal{MOD}$-formal, the criterion in Lemma \ref{formality criterion} would tell us that we can choose preimages $\alpha,\beta,\gamma\in V^3$ of the exact cocycles $X_1^2\cdot\overline{1}$, $X_1X_2\cdot\overline{1}$, and $X_2^2\cdot\overline{1}$ such that every closed element in $R\otimes \langle \alpha,\beta,\gamma\rangle_\mathbb{Q}$ is exact. Every such $\alpha$, $\beta$, and $\gamma$ are of the form $\alpha=\overline{a_{11}}+t_\alpha \overline{b}$, $\beta=\overline{a_{12}}+t_\beta \overline{b}$, $\gamma=\overline{a_{22}}+t_\gamma \overline{b}$ for $t_\alpha,t_\beta,t_\gamma\in \mathbb{Q}$. But no choice of scalars fulfils the condition that
\begin{align*}
\varphi(X_2\alpha-X_1\beta)&=m_{112}+(t_\alpha X_2-t_\beta X_1)b\\ \text{and} \quad
\varphi(X_2\beta-X_1\gamma)&=-m_{221}+(t_\beta X_2-t_\gamma X_1)b
\end{align*}
are simultaneously exact: exactness of the first element would require $(t_\alpha,t_\beta)=(0,1)$, whereas for the second one we need $(t_\beta,t_\gamma)=(0,0)$.
\end{ex}

\begin{rem}\label{rem:manifoldcounterexample}
We point out that the counterexamples \ref{ex:sphericalnotalmostmod} and \ref{ex:almostmodnotmod} can be modified to produce \textbf{simply-connected compact manifolds}. By Proposition \ref{prop:realizeasaction} it suffices to extend the cdga $(C,D)$ to a cdga whose cohomology satisfies Poincaré duality and check that the arguments carry over.

To do this, choose any big enough degree $N$ in which the fundamental class shall live. We assume $N\geq 11$ for convenience in the arguments below. Now introduce generators $e_1$, $e_2$, and $e_3$ in degree $N-5$ and map them to $0$ under the differential. In degree $N-3$, cohomology is now generated by the $X_ie_j$ for $1\leq i,j\leq 3$. We introduce new generators in degree $N-4$ and map them (under the differential) to $X_ie_j$ for $i\neq j$ as well as to $X_1e_1-X_2e_2$ and $X_2e_2-X_3e_3$. Let $V^{N-4}$ denote the space spanned by the newly introduced generators. Now the cohomology in degree $N-3$ is $1$-dimensional and represented by any of the $X_ie_i$. Also, degree $N-2$ cohomology is represented by $be_1$, $be_2$, and $be_3$ and cocycles in $V^{N-4}\cdot \langle X_1,X_2,X_3\rangle$. Introduce new generators $V^{N-3}$ to kill all cohomology of the latter kind in degree $N-2$. Since the differential is injective on $C^3\cdot V^{N-4}$, cohomology in degree $N-1$ is entirely represented in $V^{N-3}\cdot \langle X_1,X_2,X_3\rangle$. We kill this cohomology by introducing generators in $V^{N-2}$.

We claim that in degree $N$, the elements of the form $X_ibe_i$ are not exact (although they are cohomologous to one another). Indeed, since for $i=N-3,N-2$ the differential maps $V^i$ into the ideal generated by $V^{i-1}$, it suffices to check whether e.g.\ $X_1be_1$ is in the image of $C^3\cdot V^{N-4}$, which is clearly not the case. We choose a complement of $\langle [X_1be_1]\rangle$ in degree $N$ cohomology and introduce generators $V^{N-1}$ which map bijectively to representatives of a basis of the complement. Now inductively kill all cohomology in degrees $>N$. Representatives for a basis of the cohomology are given by\smallskip\\
\begin{center}
\begin{tabular}{c|cccccccc}
 degree& 0 & 2 & 3 & 5 &$N-5$ & $N-3$& $N-2$& $N$ \\
\hline & & $X_1$& & $X_1b$ & $e_1$ & & $be_1$ & \\
generators & $1$ & $X_2$ & $b$ & $X_2b$ & $e_2$ & $X_ie_i$ & $be_2$ & $X_ibe_i$\\
  & & $X_3$& & $X_3b$ & $e_3$ & & $be_3$ &
\end{tabular}
\end{center}\smallskip
and we observe that Poincaré duality holds.

To check that the arguments in Examples \ref{ex:sphericalnotalmostmod} and \ref{ex:almostmodnotmod} carry over, note first that both the $T^3$-action defined by $X_1,X_2,X_3$ as well as the $T^2$-action defined by $X_1,X_2$ are still spherical: for $R=\Lambda (X_1,X_2,X_3)$, a generating set of the cohomology as an $R$-module is given by $1$, $b$, $e_i$, and $e_ib$ for $i=1,2,3$. For $R=\Lambda (X_1,X_2)$ we need to also add $X_3$ and $X_3b$ to the list. In any case, none of those generators become exact when dividing by the ideal of $R^+$ so both actions are spherical by Lemma \ref{spherical-generator-lem}. The arguments showing that the actions are not (almost) $\mathcal{MOD}$-formal took place only in the lower half of the cdgas, which we did not modify.

\end{rem}

\begin{ex}\label{ex:nontrivmodform}
We construct a \textbf{$\mathcal{MOD}$-formal action} that is \textbf{neither equivariantly formal nor has a formal homotopy quotient}. We expand on a discussion from \cite{kadeishvili}. Consider the graded vector space $H$ with Betti numbers $1,0,3,0,0,1$. For degree reasons, all operations of a unital $A_\infty$-algebra structure on $H$ vanish except for $m_3\colon H^2\otimes H^2\otimes H^2\rightarrow H^5$.  In turn one checks that any specification of $m_3$ does indeed yield an $A_\infty$-structure, where the formal one is just given by the cohomology of $S^2\vee S^2\vee S^2\vee S^5$.
For an $A_\infty$-structure to be $C_\infty$ it is required that it vanishes on all shuffles. In our case this is equivalent to $m_3$ vanishing on all elements of the form $a\otimes b\otimes c-a\otimes c\otimes b+c\otimes a\otimes b$ for $a,b,c\in H^2$. Let $\alpha\in H^5$ be a generator and let $X,Y,Z\in H^2$ be a basis. Then by setting $m_3(Z\otimes Z\otimes X)=\alpha$, $m_3(X\otimes Z\otimes Z)=-\alpha$, and $m_3(a\otimes b\otimes c)=0$ for all other tensors with $a,b,c\in\{X,Y,Z\}$, we obtain a $C_\infty$-structure on $H$.
We observe that $m_3$ vanishes on $H^2\otimes A\otimes A$, where $A=\langle X,Y\rangle$ is the sub-algebra of $H$ (with trivial multiplication) generated by $X$ and $Y$.
Consequently, the classes $X,Y$ satisfy the requirements of Remark \ref{Cinftyexampleschmiede} and we obtain a free $\mathcal{MOD}$-formal $T^2$-action on a compact space such that the orbit space has the rational homotopy type of $(H,m_3)$.

To see that this is indeed not the formal rational homotopy type, note that the only nontrivial component of a $C_\infty$-morphism $f\colon(H,m_3)\rightarrow (H,m_3')$ is $f_1$, again for degree reasons.
Thus $m_3$ and $m_3'$ yield isomorphic $C_\infty$-structures if and only if $f_1\circ m_3=m_3'\circ (f_1\otimes f_1\otimes f_1)$ for an automorphism $f_1$ of the graded vector space $H$. In particular $(H,m_3)$ and $(H,0)$ are not isomorphic.
\end{ex}

\begin{ex}\label{ex:injectively based, not spherical}
We construct a \textbf{hyperformal action} which is \textbf{not spherical}.
Consider the nilmanifold with model \[(N,d)=\Lambda(y_1,\ldots,y_5,z_1,\ldots,z_4,d)\] with $dy_i=0$, $dz_i=y_iy_{i+1}.$ The extension
\[(\Lambda(X_1,X_2),0)\rightarrow(\Lambda(X_1,X_2)\otimes N,D)\rightarrow (N,d)\]
with $X_i$ of degree 2, $Dz_1=X_1+y_1y_2$, $Dz_4=X_2+y_4y_5$, and $D=d$ on the other generators is the model of the Borel fibration of a free $T^2$-action. By dividing out a contractible ideal, we obtain a commutative diagram

\[ \xymatrix{(\Lambda(X_1,X_2),0)\ar[r]\ar[dr]& (\Lambda(X_1,X_2)\otimes N,D)\ar[d]\ar[r] & (N,d)\\ & (\Lambda(y_1,\ldots,y_5,z_2,z_3),d)\ar[ur] & } \]
where the vertical map is a quasi-isomorphism with $X_1\mapsto-y_1y_2$, $X_2\mapsto-y_4y_5$, $z_1,z_4\mapsto 0$ and which is the identity on the remaining generators. Since $y_1y_2y_4y_5$ defines a nonzero cohomology class in the lower cdga while $(y_1y_2)^2=(y_4y_5)^2=0$, the kernel of $\Lambda(X_1,X_2)\rightarrow H^*(\Lambda(X_1,X_2)\otimes N,D)$ equals $(X_1^2,X_2^2)$. Hence the action is hyperformal.

Now consider the cocycle $\alpha=-y_1y_4y_5z_2+y_1y_2y_5z_3$ in the bottom cdga of the above diagram. One checks that the cohomology class $[\alpha]$ is not in the algebra span of the degree $2$ classes which are represented by $y_iy_j$, $y_2z_2$, $y_3z_2$, $y_3z_3$, and $y_4z_3$. In particular $[\alpha]$ does not lie in $\Lambda^+(X_1,X_2)\cdot H^*(\Lambda(y_i,z_2,z_3),d)$ with respect to the module structure defined by the diagram.
However, it becomes exact in $(N,d)$ where \[\alpha=d(y_3z_1z_4-y_1z_2z_4-y_5z_1z_3).\]
This shows that the action is not spherical.
\end{ex}

\begin{ex}\label{ex:masseynotnecessary}
We show the existence of a \textbf{$\mathcal{MOD}$-formal} action that is \textbf{not formally based} by constructing the cdga $(\Lambda W,D)$ in the following way: introduce generators $X,Y$ in degree $2$ and $a,b,c$ in degree $3$ with $D(a)=X^2$, $D(b)=XY$, and $D(c)=Y^2$. Then the kernel in degree $5$ is generated by $\alpha_1=Ya-Xb$ and $\alpha_2=Yb-Xc$. Now introduce generators $d$ and $e$ in degree $4$ and set $D(d)=\alpha_1$, $D(e)=\alpha_2$. At this point, $(\Lambda W^{\leq 4},D)$ has no cohomology in degrees $3$, $4$, and $5$ while in degree $6$ representatives for a basis are given by $\alpha_3=Yd+ac+Xe$, $\alpha_4=Xd+ab$, and $\alpha_5=bc+Ye$. Now complete $\Lambda W$ to a minimal cdga by inductively killing all cohomology starting from degree $7$. By Proposition\ \ref{prop:realizeasaction}, there is a free $T^2$-action on a compact space $M$ whose homotopy quotient $M_{T^2}$ has $(\Lambda W,D)$ as minimal model. One quickly sees that $(\Lambda W,D)$ is not formal because $\alpha_1$, $\alpha_2$, and $\alpha_3$ represent nontrivial quadruple Massey products. However, we argue that it is formal as a $\Lambda(X,Y)$-module by constructing a quasi-isomorphism $\Lambda W\rightarrow H^*(M_{T^2})$. We set this map to be the canonical projection in degrees $0$ and $2$ and trivial in all other degrees except degree $6$. In degree $6$, we choose the canonical basis given by the products of the generators and map $\Lambda^3 W^2$ and $W^2\cdot W^4$ to $0$ while sending $ac$, $ab$, and $bc$ to $[\alpha_3]$, $[\alpha_4]$, and $[\alpha_5]$. It is easy to check that this map does have the desired properties.

To see that the action is not formally based, we observe that for degree reasons the only possible nontrivial operation of a unital $C_\infty$-structure on $H=H^*(M_{T^2})$ is \[m_4\colon H^2\otimes H^2\otimes H^2\otimes H^2\longrightarrow H^6.\]
This map has to be nontrivial for every $C_\infty$-model structure because $M_{T^2}$ is not formal. But $H^*(BT^2)=\Lambda (X,Y)\rightarrow H^*(\Lambda W,D)=H^*(M_{T^2})$ is surjective in degree $2$ so the requirements of Theorem  \ref{thm:Masseyformallybased} are not met.
\end{ex}

\begin{ex}\label{ex:biquotnichtinjbased}
We give an example of an \textbf{action on a homogeneous space} which has a \textbf{formal homotopy quotient} but \textbf{does not have formal core with respect to a smaller subalgebra} of $H_G^*(X)$. Consider the action of the maximal diagonal torus $T^4$ on $\U(4)$ by multiplication from the left. If $X_1,\ldots,X_4\in H^2(BT^4)$ is the basis dual to the standard basis of the Lie-Algebra of $T^4$ and $R=\Lambda(X_1,\ldots,X_4)$, then the model of the borel fibration is given as
\[(R,0)\rightarrow  (R\otimes \Lambda Z,D)\rightarrow (\Lambda Z,0)\]
where $Z$ is $4$-dimensional and $D$ maps a basis to the elementary symmetric polynomials $\sigma_1,\ldots,\sigma_4$ in the variables $X_i$. Now we make a change of basis by pulling back the action along the automorphism $\phi$ of $T^4$ which, in the standard basis of the Lie algebra, is given by the matrix
\[A=\begin{pmatrix}
1 & 0 & 0& 0\\ 0& 1& -2& 0\\ 0&0 & 1 &0\\ 0& 0& 0& 1
\end{pmatrix}.\]
This induces an automorphism $\phi^*$ of $R$ which, in the basis $X_1,\ldots,X_4$, is represented by $A^t$. The model of the Borel fibration of the new action is the same except $D$ is replaced by the differential $\tilde{D}$ which maps a basis of $Z$ to the polynomials $\phi^*(\sigma_i)$. We consider the splitting $T^4=T\times T'$ with $T$ consisting of the two circle factors on the left and $T'$ of those on the right. We claim that the $T$-action on $\U(4)/T'$ has a formal homotopy quotient but does not have formal core with respect to $\im(H^*(BT)\rightarrow H_T(\U(4)/T')$.

The model of the Borel fibration of the $T$-action is
\[(\Lambda(X_1,X_2),0)\rightarrow (R\otimes\Lambda Z,\tilde{D})\rightarrow (\Lambda(X_3,X_4)\otimes \Lambda Z, \overline{D}).\]
The middle cdga is formal so the action does indeed have a formal homotopy quotient. Let $S=\Lambda(X_1,X_2)$ and $i\colon S\rightarrow R$ be the inclusion. Then $J:=\ker (S\rightarrow H_T^*(\U(4)/T'))= i^{-1}(\phi^*(I))$, where $I\subset R$ is the ideal generated by the $\sigma_i$. One computes that $(\phi^*)^{-1}\circ i(X_1)=X_1$ as well as $(\phi^*)^{-1}\circ i(X_2)=X_2+2X_3$ and, using appropriate tools, we obtain
\[J=(X_1^4, 28X_1^3X_2^2+12X_1^2X_2^3+3X_1X_2^4,X_2^6).\]
We recommend the freely available software Macaulay2 for such computations and the ones that follow below. Let $(C,d)$ be a relative minimal model of $(S,0)\rightarrow (S/J,0)$. It is our goal to show that a morphism
\[(C\otimes \Lambda Z',d)\rightarrow (R\otimes \Lambda Z\otimes \Lambda Z',\tilde{D})\]
can not be cohomologically injective, where the differentials map the generators $Z'=\langle z_1,z_2\rangle$ to $X_1$ and $X_2$. This follows for dimensional reasons: formality provides us with quasi-isomorphisms
\[(C\otimes \Lambda Z',d)\rightarrow (S/J\otimes \Lambda Z',d)\quad\text{and}\quad
(R\otimes \Lambda Z\otimes \Lambda Z',\tilde{D})\rightarrow (R/\phi^*(I)\otimes \Lambda Z',d).\]
The cohomologies are thus given by
\[\mathrm{Tor}^S_*(S/J,S/(X_1,X_2))\quad\text{and}\quad \mathrm{Tor}^S_*(R/\phi^*(I),S/(X_1,X_2))\] which can again be computed with standard software. Doing so, one finds the first one to be nontrivial in degree $11$ (in the cdga grading) while the second one is $0$ in this degree. It follows that the action has the desired properties. As a side note we want to add that without the change of basis via $\phi$, the analogous construction does yield an action that has formal core with respect to $\im(S\rightarrow H_T(\U(4)/T')$.
\end{ex}

\appendix

\section{Differential graded modules and their minimal models}\label{app:HB}

We construct minimal models of differential graded modules. The material is certainly not new: these kinds of models were introduced in \cite{AvramovHalperin}, \cite{Bibel} (see in particular Exercise 8 of Chapter 6 for the notion of minimality) and were also described in \cite[Appendix B]{AlldayPuppe2}.
Furthermore, the concept of minimality has been discussed in the more general framework of model categories in \cite{roig1}, which applies to the setting of differential graded modules (cf.\ \cite{hinich}). More explicit applications of the general theory to differential graded modules were discussed in \cite{roig2}.
However, the theory needed in this article can be developed rather quickly (the proofs are mainly simplified versions of those for the corresponding statements for cdgas), which we present for the convenience of the reader and to be able to occasionally refer to the explicit constructions.

\subsection{Definitions and properties}

Let $k$ be some ground field and $(R,d)$ be a cdga over $k$.

\begin{defn}\begin{itemize}
\item A graded $R$-module is a graded $k$-vector space $M$ together with an $R$ module structure for which the multiplication map $R\otimes M\rightarrow M$ is a graded map of degree $0$.
\item If a graded $R$-module carries a differential $D$ of degree $1$ such that $D(a\cdot m)=da\cdot m+(-1)^{k}a\cdot Dm$ for any $a\in R^k$, then we call $(M,D)$ a differential graded $R$-module (dg$R$m).
\item A morphism $f\colon (M,D_M)\rightarrow (N,D_N)$ of dg$R$ms is a degree $0$ map which is $R$-linear and commutes with the differentials.
\item A quasi-isomorphism of dg$R$ms is a morphism that induces an isomorphism in cohomology.
\item Two morphisms $f,g\colon (M,D_M)\rightarrow (N,D_N)$ are homotopic if there is an $R$-linear map $h\colon M\rightarrow N$ of degree $-1$ which satisfies $f-g=D_Nh+ hD_M$.
\end{itemize}
\end{defn}

In the definition above we restricted ourselves to morphisms of degree $0$ to simplify the language throughout the article. Also, in what follows we shall work under the following assumptions: the cdga $R$ is concentrated in non-negative degrees and is simply-connected (i.e.\ $R^0=k$ and $R^1=0$). Furthermore all differential graded $R$-modules are assumed to be bounded from below (i.e.\ there is some $k\in\mathbb{Z}$ for which $M^{\leq k}=0$). We denote by $\mathfrak{m}=R^{+}$ the maximal homogeneous ideal in $R$.

\begin{defn}
A dg$R$m $(M,d)$ is minimal if $M=R\otimes V$ is a free module over some graded $k$-vector space $V$ and $\im(d)\subset \mathfrak{m}M$.
\end{defn}

\begin{prop}[Existence]\label{HBconstruction}
For every dg$R$m $(M,D)$ there is a minimal model, i.e.\ a quasi-isomorphism $\varphi\colon (R\otimes V,d)\rightarrow (M,D)$ from a minimal dg$R$m into $(M,D)$.
\end{prop}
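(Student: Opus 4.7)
The plan is to construct the minimal model inductively on the cohomological degree, adding generators in two passes per degree: a surjectivity pass and an injectivity pass. I will maintain as inductive hypothesis that, at the start of stage $n$, a partial model $(R\otimes V^{<n},d)$ is already built together with a morphism of dg$R$m's $\varphi\colon (R\otimes V^{<n},d)\to (M,D)$ satisfying $d(V^{<n})\subset \mathfrak{m}(R\otimes V^{<n})$, such that $\varphi^*$ is an isomorphism in degrees $<n$ and injective in degree $n$. The base case is $V^{<n_0}=0$ at the lowest degree $n_0$ with $M^{n_0}\neq 0$; this exists because $M$ is bounded below.

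For the surjectivity pass at stage $n$, I would pick cocycles $m_\alpha\in M^n$ whose classes form a basis of $\mathrm{coker}(\varphi^*)$ in degree $n$, and add cocycle generators $v_\alpha$ of degree $n$ with $dv_\alpha=0$ and $\varphi(v_\alpha)=m_\alpha$. This enlarges the model to some $(R\otimes \tilde V^{\leq n},d)$ on which $\varphi^*$ is bijective in degrees $\leq n$. For the injectivity pass, I would then choose cocycles $z_\beta\in (R\otimes\tilde V^{\leq n})^{n+1}$ whose classes form a basis of $\ker\varphi^*$ in degree $n+1$. Because $\varphi^*[z_\beta]=0$, each $\varphi(z_\beta)$ is exact with a chosen primitive $U_\beta\in M^n$, and I would adjoin generators $w_\beta$ of degree $n$ with $dw_\beta=z_\beta$ and $\varphi(w_\beta)=U_\beta$. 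Setting $V^{<n+1}$ to be the resulting enlarged space closes the induction, and the colimit $V=\bigcup_n V^{<n}$ together with the cumulative $\varphi$ yields the desired minimal model.

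Minimality of the finished $(R\otimes V,d)$ will follow by construction: the $v_\alpha$ are cocycles, while every $z_\beta$ of degree $n+1$ in $R\otimes \tilde V^{\leq n}$ is automatically a sum of terms $r\cdot v$ with $v\in V^{\leq n}$ and $|r|\geq 1$, hence $r\in\mathfrak{m}$; so $dw_\beta=z_\beta\in\mathfrak{m}(R\otimes V)$.

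The main obstacle will be verifying that the injectivity pass at stage $n$ does not spoil the cohomological invariants already established in lower degrees or in degree $n$ itself. Degrees strictly below $n$ are untouched because the new generators $w_\beta$ sit in degree $n$ and $R^{<0}=0$, so no element of degree $<n$ in $R\otimes\tilde V^{\leq n}\cup\langle w_\beta\rangle$ can involve a $w_\beta$. Preservation of $\varphi^*$-bijectivity in degree $n$ reduces to the following clean point: a hypothetical cocycle $x+\sum c_\alpha v_\alpha+\sum c_\beta w_\beta$ of degree $n$ forces $dx=-\sum c_\beta z_\beta$, so that the class $\sum c_\beta [z_\beta]\in H^{n+1}$ would be exact; by linear independence of the $[z_\beta]$ in $\ker\varphi^*$ this gives $c_\beta=0$ for all $\beta$, reducing the cocycle to one already present after the surjectivity pass. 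Once this bookkeeping is in place the remainder is formal, and the proof concludes by taking the colimit over $n$.
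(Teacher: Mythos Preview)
Your construction is exactly the paper's proof, up to a shift in indexing (you add generators of degree $n$ at stage $n$ where the paper adds generators of degree $n+1$ at step $n+1$); the two-pass surjectivity/injectivity scheme, the choice of $d$ on the new generators, and the minimality check all match.

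There is one point you gloss over that is not merely formal. To close the induction you must verify not only that bijectivity of $\varphi^*$ in degree $n$ is preserved, but also that $\varphi^*$ becomes \emph{injective in degree $n+1$} after the injectivity pass --- this is part of your own inductive hypothesis at stage $n+1$. The argument is: the new elements of degree $n+1$ introduced by adjoining the $w_\beta$ lie in $R^1\otimes\langle w_\beta\rangle$, and since $R$ is simply-connected this space is zero. Hence the cocycles in degree $n+1$ are unchanged while the coboundaries now include the $z_\beta$, so $H^{n+1}$ of the enlarged model is the old $H^{n+1}$ modulo $\mathrm{span}\,[z_\beta]=\ker\varphi^*$, and the induced $\varphi^*$ is injective. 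This is the one place where the hypothesis $R^1=0$ is genuinely needed, and the paper singles it out explicitly; you should too.
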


\begin{proof}
Assume inductively that for some $n$ we have constructed a map
\[\varphi_n\colon (R\otimes V^{\leq n},d)\longrightarrow (M,D)\] from a minimal dg$R$m, which induces an isomorphism on cohomology in degrees up to $n$ and is injective in degree $n+1$. Let $A=\coker(H^{n+1}(\varphi_n)\colon H^{n+1}(R\otimes V^{\leq n})\rightarrow H^{n+1}(M,D)$, set $d=0$ on $A$, and extend $\varphi_n$ to $R\otimes A$ by linearly choosing representatives from $(\ker D)^{n+1}$.
The resulting map $\varphi_{n+1}'\colon R\otimes (V^{\leq n}\oplus A)\rightarrow (M,D)$ induces an isomorphism in degrees up to $n+1$. To achieve injectivity in degree $n+2$, take $B=\ker H(\varphi_{n+1}')^{n+2}$, considered as a vector space in degree $n+1$, and extend $d$ to $B$ by choosing a basis of $B$ and mapping that basis onto representatives in $(\ker d)^{n+2}$. Setting $V^{n+1}=A\oplus B$, we can then extend $\varphi_{n+1}'$ to a map $\varphi_{n+1}\colon (R\otimes V^{\leq n+1},d)\rightarrow (M,D)$. As $R$ is simply-connected, the introduction of $B$ in degree $n+1$ does not generate any new cohomology in degree $n+2$. Thus $\varphi_{n+1}$ is injective on degree $n+2$ cohomology.
Also observe that $d(B)\subset (R\otimes (V^{\leq n}\oplus A))^{n+2}$ is contained in $\mathfrak{m}(V^{\leq n}\oplus A)$ so this inductive construction indeed yields a minimal model.
\end{proof}

Before discussing the fundamental properties of minimal models, we want to point out that being homotopic is an equivalence relation on the set of morphisms between two dg$R$ms and is furthermore compatible with composition of morphisms. This is easily verified and one of the points where the theory of dg$R$ms is much easier than that of cdgas. Also, we will need the following formulation of homotopy: consider the complex $I=(\langle p_0,p_1,p\rangle_k,d)$, with $p_0$, $p_1$ in degree $0$, $p$ in degree $1$ and $dp_0=p$, $dp_1=-p$, $dp=0$. It comes with two projections $i_j\colon I\rightarrow k$, $j=0,1$ defined by sending $p_j$ to $1$ and the other generators to $0$. A homotopy $h$ between two morphisms $f,g\colon M\rightarrow N$ induces a morphism
\[H\colon M\rightarrow I\otimes N\] of dg$R$ms by setting $H(x)=p\otimes h(x)+p_0\otimes f(x)+p_1\otimes g(x)$. In turn any such $H$ defines a homotopy between $(i_0\otimes \Id_N)\circ H$ and $(i_1\otimes \Id_N)\circ H$.

\begin{prop}[Lifting]\label{lifting}
Let $f\colon (N,D_N)\rightarrow (M,D_M)$ be a quasi-isomorphism and $\varphi\colon(R\otimes V,d)\rightarrow (M,D_M)$ a morphism from a minimal dg$R$m.
\begin{enumerate}[(i)]
\item If $f$ is surjective there is $\tilde{\varphi}\colon (R\otimes V,d)\rightarrow (N,D_N)$ such that $f\circ \tilde{\varphi}=\varphi$.
\item There exists $\tilde{\varphi}\colon (R\otimes V,d)\rightarrow (N,D_N)$, unique up to homotopy, such that $f\circ \tilde{\varphi}$ is homotopic to $\varphi$.
\end{enumerate}
\end{prop}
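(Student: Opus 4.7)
The plan is to mimic the classical Koszul lifting argument from rational homotopy theory, adapted to dg$R$ms. For part (i), the key input is that $K := \ker f$ is acyclic: from the short exact sequence $0 \to K \to N \to M \to 0$ the long exact sequence in cohomology, together with $f$ being a quasi-isomorphism, forces $H^*(K) = 0$. One then chooses a homogeneous basis of $V$ and builds $\tilde\varphi$ by induction on the degree of the basis element. This induction is well-founded because $V$ is bounded below and, by minimality combined with the simple-connectivity of $R$ (so $\mathfrak{m} \subset R^{\geq 2}$), for $v \in V^n$ the element $dv$ lies in $R \otimes V^{\leq n-1}$. At the inductive step pick any preimage $x \in N^n$ of $\varphi(v)$ under the surjection $f$; then $D_N x - \tilde\varphi(dv)$ is a cocycle in $K^{n+1}$, so acyclicity of $K$ supplies some $y \in K^n$ with $D_N y = D_N x - \tilde\varphi(dv)$, and $\tilde\varphi(v) := x - y$ together with $R$-linear extension completes the step.

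For existence in (ii), the plan is to reduce to (i) via a mapping path construction. Define the dg$R$m
\[E := \{(n,\omega) \in N \oplus (I \otimes M) \colon f(n) = (i_0 \otimes \Id_M)(\omega)\}\]
with componentwise differential, and consider the map $\pi \colon E \to M$, $(n, \omega) \mapsto (i_1 \otimes \Id_M)(\omega)$. This factors as the projection $E \to I \otimes M$, $(n,\omega) \mapsto \omega$, followed by $i_1 \otimes \Id_M$; both are surjective quasi-isomorphisms, since the first has kernel identifiable with $\ker f$ (acyclic) and the second has kernel $\ker(i_1) \otimes M$ with $\ker i_1 = \langle p_0, p\rangle \subset I$ acyclic. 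Applying (i) lifts $\varphi$ through $\pi$ to a morphism $R\otimes V \to E$ whose $N$-component is the desired $\tilde\varphi$ and whose $I \otimes M$-component is the sought-after homotopy from $f\tilde\varphi$ to $\varphi$.

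For uniqueness, suppose $\tilde\varphi_0, \tilde\varphi_1$ are two such lifts. The difference of the two given chain homotopies is an $R$-linear map of degree $-1$ yielding a homotopy $H \colon R \otimes V \to I \otimes M$ from $f\tilde\varphi_0$ to $f\tilde\varphi_1$. Form the dg$R$m
\[P := \{(n_0, n_1, \omega) \in N \oplus N \oplus (I \otimes M) \colon f(n_0) = (i_0 \otimes \Id_M)(\omega),\ f(n_1) = (i_1 \otimes \Id_M)(\omega)\},\]
and observe that the natural morphism $I \otimes N \to P$, $\alpha \mapsto ((i_0 \otimes \Id_N)(\alpha), (i_1 \otimes \Id_N)(\alpha), (\Id_I \otimes f)(\alpha))$, is a surjective quasi-isomorphism (surjectivity uses surjectivity of $f$, and the quasi-isomorphism property follows from an analogous acyclic-kernel computation). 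The triple $(\tilde\varphi_0, \tilde\varphi_1, H)$ defines a morphism $R \otimes V \to P$, and applying part (i) to this surjection yields a lift $R \otimes V \to I \otimes N$, i.e.\ a homotopy from $\tilde\varphi_0$ to $\tilde\varphi_1$. The main technical obstacle is the routine bookkeeping needed to confirm that $E$ and $P$ are genuine dg$R$ms with the expected differentials, and that the projections $E \to M$ and $I \otimes N \to P$ really are surjective quasi-isomorphisms; once this is in place, all of (i), existence in (ii), and uniqueness in (ii) become instances of a single lifting principle.
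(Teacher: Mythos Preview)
Your argument for (i) is correct and matches the paper's (your use of the acyclicity of $K=\ker f$ is a slightly cleaner packaging of the same induction). The gap is in part (ii), where you repeatedly invoke surjectivity of $f$, which is not assumed. In the existence argument this appears in your factorization $E\to I\otimes M\to M$: the first projection $(n,\omega)\mapsto\omega$ is surjective only when $f$ is, so your acyclic-kernel reasoning does not establish that it is a quasi-isomorphism. The conclusion that $\pi\colon E\to M$ is a surjective quasi-isomorphism is nonetheless correct --- surjectivity is immediate (take $n=0$, $\omega=p_1\otimes m$), and $\ker\pi$ is identifiable with the shifted mapping cone of $f$, hence acyclic --- so existence can be repaired.

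Uniqueness cannot be repaired as written: you explicitly note that surjectivity of $I\otimes N\to P$ ``uses surjectivity of $f$'', and it genuinely does, since given $(n_0,n_1,\omega)\in P$ whose $p$-component in $\omega$ is some $m\in M$, a preimage in $I\otimes N$ requires an $h\in N$ with $f(h)=m$. The paper fills this gap by first replacing $N$ with $N\oplus(M\oplus\delta M)$, where $(M\oplus\delta M,\delta)$ is acyclic; the resulting map to $M$ is surjective while the inclusion $N\hookrightarrow N\oplus(M\oplus\delta M)$ is a homotopy equivalence, so homotopy classes of lifts are unchanged. After this reduction your fiber product $P$ and the lift of $(\tilde\varphi_0,\tilde\varphi_1,H)$ through $I\otimes N\to P$ go through exactly as you describe and in fact coincide with the paper's own argument.
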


\begin{proof}
We prove the existence of $\tilde{\varphi}$ in $(i)$.
Assume inductively that we have constructed the lift $\tilde{\varphi}\colon R\otimes V^{\leq n}\rightarrow (N,D_N)$ for some $n$ such that $\varphi|_{R\otimes V^{\leq n}}=f\circ\tilde{\varphi}$. For any $x$ in a fixed basis of  $V^{n+1}$, we have $dx\in R\otimes V^{\leq n}$ due to the fact that $(R\otimes V,d)$ is minimal and $R$ is simply-connected. As a result, $\tilde{\varphi}(dx)$ is defined and closed. We have $f(\tilde{\varphi}(dx))=\varphi(dx)=D_M\varphi(x)$ so $\tilde{\varphi}(dx)$ is exact because $f$ is injective on cohomology. Choose $y\in N$ with $D_N y=\tilde{\varphi}(dx)$ and observe that $f(y)-\varphi(x)\in\ker D_M$. We claim that we find $z\in \ker D_N$ with $f(z)=f(y)-\varphi(x)$. If we do, we can extend $\tilde{\varphi}$ by setting $\tilde{\varphi}(x)=y-z$ which completes the induction. Since $f$ is surjective on cohomology, we find some $a\in \ker D_N$, $b\in M$ such that $f(a)=f(y)-\varphi(x)+D_Mb$. Also $f$ is surjective so we find $c\in N$ with $f(c)=b$. Thus the element $z=a-D_N c$ has the desired properties.

The existence in $(ii)$ follows from $(i)$ in the following way: Consider the acyclic module $(M\oplus \delta M,\delta)$, where the differential is an isomorphism $\delta\colon  M\cong \delta M$ and vanishes on $\delta M$. This maps surjectively onto $(M,D_M)$ by $m\mapsto m$, $\delta m\mapsto D_Mm$. So we obtain a surjective quasi-isomorphism $(N\oplus M\oplus\delta M,D_N\oplus\delta)\rightarrow (M,D_M)$. Now $(i)$ yields the dashed arrow in the diagram

\[\xymatrix{ & N\oplus M\oplus \delta M\ar[d]\ar[r] & N\ar[dl]\\
R\otimes V\ar@{-->}[ru]\ar[r] & M &
}\]
in which the left hand triangle commutes and the right hand triangle commutes up to homotopy.
Composing with the arrow to $N$ yields the desired lift.

Finally we argue that the lift is unique up to homotopy, which will be achieved by lifting homotopies. We start by the observation that two maps $\tilde{\varphi}_1,\tilde{\varphi}_2\colon R\otimes V\rightarrow N$ are homotopic if and only if their compositions with $N\rightarrow N\oplus M\oplus \delta M$ are homotopic. This holds because the latter map is a homotopy equivalence with the top horizontal arrow of the above diagram as a homotopy inverse. Thus in what follows, we can assume $f$ to be surjective.

Consider the fiber product $F=(I\otimes M)\times_{M\oplus M} (N\oplus N)$, which is explicitly given by
\[F=\{(x,y,z)\in (I\otimes M)\oplus N\oplus N~|~ (i_0\otimes \Id_M (x),i_1\otimes \Id_M (x))=(f(y),f(x))\}.\]
The map $\psi=(\Id_I\otimes f\oplus i_0\otimes \Id_N\oplus i_1\otimes \Id_N)\colon I\otimes N\rightarrow F$ defines a surjective quasi-isomorphism. This follows from a straight forward investigation of what it means to be a (exact) cocycle in the two objects, which we leave to the reader. Now if $\tilde{\varphi}_1,\tilde{\varphi}_2\colon R\otimes V\rightarrow N$ are two morphisms such that $f\circ \tilde{\varphi}_1$ and $f\circ \tilde{\varphi}_2$ are homotopic via a homotopy $H\colon R\otimes V\rightarrow I\otimes M$, then we obtain the map $H\oplus \tilde{\varphi_1}\oplus \tilde{\varphi_2}\colon R\otimes V\rightarrow F$. By $(i)$ we can lift this map through the surjective quasi-isomorphism $\psi$ which yields a homotopy between $\tilde{\varphi_1}$ and $\tilde{\varphi_2}$.
\end{proof}

\begin{prop}[Uniqueness]\label{uniqueness}
A quasi-isomorphism between minimal dg$R$ms is an isomorphism.
\end{prop}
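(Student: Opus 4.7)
The plan is to exploit minimality in two steps: first to reduce the problem to an isomorphism statement modulo $\mathfrak{m}$, and then to bootstrap this reduction back up by graded Nakayama.

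Let $\varphi\colon (R\otimes V,d)\to(R\otimes W,d')$ be a quasi-isomorphism between minimal dg$R$ms. I would first apply Proposition \ref{lifting}$(ii)$ to $\varphi$ and the identity morphism $\mathrm{id}\colon (R\otimes W,d')\to(R\otimes W,d')$ (noting that $R\otimes W$ is minimal) to obtain $\psi\colon R\otimes W\to R\otimes V$ with $\varphi\circ\psi\simeq\mathrm{id}$. Since $\varphi\circ(\psi\circ\varphi)\simeq\varphi\circ\mathrm{id}$, the uniqueness clause of Proposition \ref{lifting}$(ii)$ (applied with lifts of $\varphi$ along $\varphi$) yields $\psi\circ\varphi\simeq\mathrm{id}$ as well. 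So $\varphi$ admits a two-sided homotopy inverse.

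The key observation is then that for any $R$-linear homotopy $h$ of degree $-1$, both $dh$ and $hd$ take values in $\mathfrak{m}\cdot(R\otimes V)$: indeed $d$ has image in $\mathfrak{m}(R\otimes V)$ by minimality, and $R$-linearity of $h$ propagates this through. Hence $\psi\varphi-\mathrm{id}$ and $\varphi\psi-\mathrm{id}$ have image in the respective $\mathfrak{m}$-submodules. Passing to the quotients $V=(R\otimes V)/\mathfrak{m}(R\otimes V)$ and $W=(R\otimes W)/\mathfrak{m}(R\otimes W)$, the induced maps $\bar\varphi\colon V\to W$ and $\bar\psi\colon W\to V$ are mutually inverse, so $\bar\varphi$ is an isomorphism of graded vector spaces.

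To conclude that $\varphi$ itself is an isomorphism I would use graded Nakayama in the form: for our simply-connected $R$ (with $R^0=k$) any bounded-below graded $R$-module $M$ with $M=\mathfrak{m}M$ vanishes, as one sees by looking at the lowest nonzero degree. Surjectivity of $\varphi$ follows by applying this to $\mathrm{coker}\,\varphi$, which is bounded below and satisfies $\mathrm{coker}\,\varphi/\mathfrak{m}\,\mathrm{coker}\,\varphi\cong\mathrm{coker}\,\bar\varphi=0$. For injectivity, now that $0\to\ker\varphi\to R\otimes V\to R\otimes W\to 0$ is short exact and $R\otimes W$ is $R$-free (hence flat), tensoring with $k=R/\mathfrak{m}$ keeps the sequence exact, producing $0\to\ker\varphi\otimes_R k\to V\xrightarrow{\bar\varphi} W\to 0$. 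Since $\bar\varphi$ is an isomorphism, $\ker\varphi\otimes_R k=0$, and a final application of Nakayama (the kernel is again a bounded-below graded $R$-module) yields $\ker\varphi=0$.

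The main obstacle in this argument is the second step: everything hinges on the fact that on minimal dg$R$ms the homotopy error term $dh+hd$ lies in $\mathfrak{m}\cdot(R\otimes V)$, which simultaneously uses minimality of \emph{both} sides and the $R$-linearity of $h$. Once this is in place the rest is a standard graded-Nakayama and flatness exercise and requires no further structural input.
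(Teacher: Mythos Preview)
Your proof is correct and follows essentially the same route as the paper's: obtain a homotopy inverse $\psi$ via Proposition~\ref{lifting}, use minimality to see that the homotopy error $dh+hd$ lands in $\mathfrak{m}\cdot(-)$ so that $\bar\varphi$ and $\bar\psi$ are inverse on $V$ and $W$, and then deduce that $\varphi$ itself is an isomorphism. The only cosmetic differences are that the paper obtains $\psi\circ\varphi\simeq\Id$ by lifting through $\psi\circ\varphi$ rather than through $\varphi$, and that it leaves the graded-Nakayama step implicit where you spell it out with the cokernel/kernel and flatness argument.
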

\begin{proof}
Let $\varphi\colon (R\otimes V,d)\rightarrow (R\otimes V',d')$ be a quasi-isomorphism of minimal dg$R$ms. This induces a map $\overline{\varphi}\colon V\cong (R\otimes V)/\mathfrak{m}V\rightarrow (R\otimes V')/\mathfrak{m}V'\cong V'$. We show that $\overline{\varphi}$ is an isomorphism which implies that $\varphi$ is an isomorphism as well.

Lifting $\Id_{R\otimes V'}$ through $\varphi$ provides us with a morphism $\psi\colon (R\otimes V',d')\rightarrow (R\otimes V,d)$ such that $\varphi\circ\psi$ is homotopic to $\Id_{R\otimes V'}$.  By the definition of homotopy and minimality, it follows that $\varphi\circ\psi- \Id_{R\otimes V'}$ takes values in $\mathfrak{m}V'$. In particular $\overline{\varphi}\circ \overline{\psi}$ is an isomorphism, where $\overline{\psi}\colon V'\rightarrow V$ is defined analogously.

Since $\psi\circ \varphi \circ \psi \circ\varphi \simeq \psi\circ \varphi$, the uniqueness in Proposition \ref{lifting} implies that also $\psi\circ \varphi\simeq \Id_{R\otimes V}$. Hence, as before, we deduce that $\overline{\psi}\circ\overline{\varphi}$ is an isomorphism as well. Consequently $\overline{\varphi}$ is an isomorphism.
\end{proof}

Together, Propositions \ref{lifting} and \ref{uniqueness} imply the following

\begin{cor}
Two dg$R$ms are connected by a chain of quasi-isomorphisms (that is to say they have the same quasi-isomorphism-type) if and only if they have isomorphic minimal models.
\end{cor}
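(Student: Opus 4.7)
The plan is to prove both directions separately, with the reverse implication being essentially immediate and the forward one reducing, by a zigzag argument, to the case of a single quasi-isomorphism between two dg$R$ms.

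For the easy direction, suppose $M$ and $N$ have isomorphic minimal models. Picking minimal models $\varphi_M\colon \mathcal{M}\to M$ and $\varphi_N\colon \mathcal{N}\to N$ as guaranteed by Proposition \ref{HBconstruction}, and composing $\varphi_N$ with a fixed isomorphism $\mathcal{M}\cong \mathcal{N}$, we obtain the zigzag $M \xleftarrow{\varphi_M} \mathcal{M} \xrightarrow{\cong} \mathcal{N} \xrightarrow{\varphi_N} N$ of quasi-isomorphisms, which settles this implication.

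For the nontrivial direction, the central step I would isolate is the following key lemma: if $f\colon A\to B$ is a single quasi-isomorphism of dg$R$ms, and $\varphi_A\colon \mathcal{A}\to A$, $\varphi_B\colon \mathcal{B}\to B$ are minimal models, then there exists an isomorphism $\tilde f\colon \mathcal{A}\to \mathcal{B}$ such that $\varphi_B\circ\tilde f\simeq f\circ\varphi_A$. To prove this, I would apply Proposition \ref{lifting}(ii) to the quasi-isomorphism $\varphi_B\colon \mathcal{B}\to B$ together with the morphism $f\circ\varphi_A\colon \mathcal{A}\to B$ from the minimal dg$R$m $\mathcal{A}$, obtaining a lift $\tilde f\colon \mathcal{A}\to \mathcal{B}$, unique up to homotopy, with $\varphi_B\circ\tilde f\simeq f\circ\varphi_A$. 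Since homotopic maps induce the same map on cohomology and both $\varphi_B$ and $f\circ\varphi_A$ are quasi-isomorphisms, two-out-of-three forces $\tilde f$ to be a quasi-isomorphism; Proposition \ref{uniqueness} then upgrades $\tilde f$ to an isomorphism.

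The final step will be to extend this from a single quasi-isomorphism to an arbitrary finite zigzag $M=M_0 \leftrightarrow M_1\leftrightarrow\cdots\leftrightarrow M_k=N$ of quasi-isomorphisms (allowing arrows to point in either direction). Choose a minimal model $\mathcal{M}_i\to M_i$ for every intermediate term. Applying the key lemma to each consecutive pair (in whichever direction the quasi-isomorphism points) yields an isomorphism $\mathcal{M}_i\cong \mathcal{M}_{i+1}$; composing these gives the desired isomorphism $\mathcal{M}_0\cong \mathcal{M}_k$ between the minimal models of $M$ and $N$.

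I expect no real obstacle beyond carefully articulating that Proposition \ref{lifting}(ii) and Proposition \ref{uniqueness} combine to handle both directions of a zigzag; the main conceptual point—that quasi-isomorphisms between minimal dg$R$ms are automatically isomorphisms—has already been established, so what remains is just a clean bookkeeping argument.
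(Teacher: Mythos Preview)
Your proposal is correct and follows precisely the route the paper intends: the corollary is stated without proof as an immediate consequence of Propositions \ref{lifting} and \ref{uniqueness}, and what you have written is exactly the standard unpacking of that implication. There is nothing to add.
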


Let $(R,0)\rightarrow(R\otimes \Lambda V,D)\rightarrow(\Lambda V,d)$ be a relative minimal Sullivan model for the Borel fibration of an action of a compact Lie-group $G$ on a space $X$, where $R=H^*(BG)$ is a polynomial ring and $(\Lambda V,d)$ is the minimal model of $X$.

\begin{defn}
The minimal dg$R$m-model of $(R\otimes \Lambda V,d)$ is called the (minimal) Hirsch-Brown model of the action.\end{defn}

\begin{rem}
One can show that it has the form $(R\otimes H^*(X),D)$ (see e.g. \cite[Cor. B.2.4]{AlldayPuppe2}).
\end{rem}

\section{Strong homotopy algebras and modules}\label{secstr}

We give a brief summary of some basic definitions and results surrounding $A_\infty$-algebras and modules. The aim here is to access the results we need while staying as elementary as possible. For a more detailed introduction to the subject see e.g.\ \cite{proute}, \cite{lefevre}, or alternatively \cite{LodayValette} for the operadic viewpoint which we will occasionally refer to. For a broader overview on the subject see \cite{keller}. In what follows, all vector spaces are considered over a field $k$ of {characteristic 0}. We will make use of the Koszul sign convention:

\[(f\otimes g)(a\otimes b)=(-1)^{|g|\cdot |a|}f(a)\otimes g(b)\]
if $f$ and $g$ are graded linear maps and $a,b$ are homogeneous elements from the respective domains of $f$ and $g$. Also the general assumptions that all (commutative) differential graded algebras are non-negatively graded and unital will be suspended within this section (unless stated otherwise).

\subsection{$A_\infty$- and $C_\infty$-algebras}\label{sec:ainftycinfty}

\begin{defn}\label{def:Ainfty}
An $A_\infty$-algebra is a graded vector space $A$ together with linear maps $m_i\colon A^{\otimes i}\rightarrow A$ of degree $2-i$ for each $i\geq 1$, satisfying for each $n$ the relation
\[\sum (-1)^{jk+l}m_{i}({\bf{1}}^{\otimes j}\otimes m_k\otimes {\bf{1}}^{\otimes l})=0\]
where the sum runs over all decompositions $j+k+l=n$ with $k\geq 1$ and $i=j+l+1$.
\end{defn}

While the equations may look complicated at first glance, they take a familiar form if $n$ is small: for $n=1$ we obtain $m_1^2=0$ so $m_1$ is a differential. For $n=2$ we obtain the statement that $m_1$ is a derivation with respect to the binary product $m_2$. In general $m_2$ is not associative but it is so up to a homotopy given by $m_3$ in the equation for $n=3$.

\begin{defn}
The cohomology of $(A;m_i)$ is the cohomology of the chain complex $(A,m_1)$.
\end{defn}

By the discussion above, the product $m_2$ induces a product on cohomology. This product is associative giving the cohomology the structure of a graded algebra.

\begin{rem}\label{rem:inclusiondga} Any ordinary differential graded algebra $(A,d)$ can be considered as an $A_\infty$-algebra by taking $m_1$ to be $d$, $m_2$ the multiplication in $A$, and $m_i=0$ for $i\geq 3$.
\end{rem}

Before we turn our attention to the definition of a morphism between $A_\infty$-algebras, let us reinterpret the defining equations. We define the suspension $sA$ of $A$ via $(sA)^n=A^{n+1}$ and also denote by $s\colon A\rightarrow sA$ the canonical isomorphism of degree $-1$. Consider the reduced tensor coalgebra
\[\overline{T}sA=sA\oplus (sA\otimes sA)\oplus\ldots\]
and the map $\overline{T}sA\rightarrow  sA$ of degree $1$ which, on $n$-tensors, we define to be $b_i=-s^{-1}\circ m_i\circ s^{\otimes i}$. This map extends uniquely to a coderivation $b$ of $\overline{T}sA$ by setting
\[b|_{sA^{\otimes n}}=\sum_{i+k+l=n} \Id_{sA}^{\otimes k}\otimes b_i\otimes \Id_{sA}^{\otimes l}.\]
The equations in Definition \ref{def:Ainfty} are equivalent to the fact that $b^2=0$. In particular there is a one-to-one correspondence between $A_\infty$-structures on $A$ and codifferentials of the coalgebra $\overline{T}sA$.

\begin{defn}
The differential graded coalgebra $(\overline{T}sA,b)$ is called the bar construction of $(A;m_i)$ and denoted $BA$.
\end{defn}

\begin{rem}
The transition from the $m_i$ to the $b_i$ is not canonical and there exist different sign conventions in the literature, giving rise to different signs in the definition of $A_\infty$-algebra. We stick to the ones used e.g.\ in \cite{lefevre} and \cite{markl}.
\end{rem}

A morphism between $A_\infty$-algebras $(A;m_i)$ and $(C;m_i)$ can just be defined as a morphism of differential graded coalgebras $f\colon BA\rightarrow BC$. By the universal property of the cofree coalgebra, such a morphism is defined by the projection $\overline{f}\colon BA\rightarrow sC$. This data is equivalent to a collection of maps $f_i\colon A^{\otimes i}\rightarrow C$ of degree $i-1$ such that on $i$-tensors, $\overline{f}$ is given by $s^{-1}\circ f_i\circ s^{\otimes i}$. The condition of $f$ being a morphism of differential graded coalgebras translates, for each $n$, to the equation
\[\sum (-1)^{jk+l}f_i(\Id^{\otimes j}\otimes m_k\otimes \Id^{\otimes l})=\sum (-1)^{s}m_k(f_{i_1}\otimes\ldots\otimes f_{i_k}),
\]
where the left hand sum runs over all decompositions $n=j+k+l$ with $k\geq 1$ and $i=j+1+l$ and the right hand sum runs over all decompositions $n=i_1+\ldots+i_k$ and
\[s=\sum_{1\leq \alpha<\beta\leq k} (i_\alpha)(i_\beta+1)\]
which we could take as an alternative definition of morphism. The equations show that $f_1\colon A\rightarrow B$ is always a chain map with respect to the differentials $m_1$ on $A$ and $C$. Therefore it induces a map $H^*(A)\rightarrow H^*(C)$. We call $f$ a quasi-isomorphism if $f_1$ induces an isomorphism  on cohomology.

We briefly introduce the concept of homotopy for morphisms of $A_\infty$-algebras. Consider the dg-coalgebra $(I,d)$ where $I$ has basis $e$ in degree $-1$ and $e_0,e_1$ in degree $0$. The differential is defined by $d(e)=e_0-e_1$ and the coalgebra structure is defined by $\Delta e=e_0\otimes e+e\otimes e_1$, $\Delta e_0=e_0\otimes e_0$, and $\Delta e_1=e_1\otimes e_1$. If $A$ is an $A_\infty$-algebra, we may form the coalgebra $BA\otimes I$ together with the two inclusions $i_0$ and $i_1$ mapping $BA$ to $BA\otimes e_0$ and $BA\otimes e_1$ and the projection $p\colon BA\otimes I\rightarrow BA$ which maps $x\otimes e+x_0\otimes e_0+x_1\otimes e_1$ to $x_0+x_1$. Then $p$ is a quasi-isomorphism and $p\circ i_k=\Id_{BA}$ (see \cite[1.3.4.1]{lefevre}).

\begin{defn}\label{def:homotopy}
We say two morphisms $f,g\colon A\rightarrow C$ are homotopic if there is a morphism of coalgebras $h\colon BA\otimes I\rightarrow BC$ with $h\circ i_0=f$ and $h\circ i_1=g$.
\end{defn}

There is also a commutative version of $A_\infty$-algebras defined as follows:

\begin{defn}\label{Def:Cinfty} Let $(A;m_i)$ be an $A_\infty$-algebra.
\begin{enumerate}[(i)]
\item The shuffle product on $\overline{T}sA$ is defined by
\[(a_1\otimes \ldots\otimes a_n)*_{sh}(a_{n+1}\otimes\ldots\otimes a_{n+k})=\sum_{\sigma\in Sh(n,k)} (-1)^{s(\sigma)} a_{\sigma(1)}\otimes \ldots\otimes a_{\sigma(n+k)}\]
where $sh(n,k)$ consists of all permutations $\sigma$ with $\sigma(i)<\sigma(j)$ whenever $i<j$ and either $1\leq i,j\leq n$ or $n+1\leq i,j\leq n+k$. The sign $s(\sigma)$ is given by the usual sign of the action of the symmetric group on the graded space $(sA)^{\otimes n+k}$.
\item $(A;m_i)$ is called a $C_\infty$-algebra if the maps $m_i\circ (s^{-1})^{\otimes i}$ vanish on all shuffles in $\overline{T}sA$ (note that applying $(s^{-1})^{\otimes{i}}$ to elements of word length $i\geq 2$ changes signs by the Koszul sign rule).
\item A morphism of $C_\infty$-algebras is a morphism $f$ of $A_\infty$-algebras such that the maps $f_i\circ(s^{-1})^{\otimes i}$ vanish on all shuffles in $\overline{T}sA$.
\end{enumerate}

\begin{rem}
From an operadic viewpoint, the commutative version of an $A_\infty$-algebra would be to consider the free Lie coalgebra instead of the free coalgebra as we did in the bar construction of an $A_\infty$-algebra (cf. \cite{StasheffSchlesinger}). One can prove that the definition of a $C_\infty$-algebra above is equivalent to such a structure (see \cite[Prop.\ 13.1.14]{LodayValette}).
\end{rem}

\end{defn}

We conclude this section by introducing the unital and augmented versions of the previous structures. We consider the ground field $k$ as a cdga concentrated in degree $0$. All of the following notions for $C_\infty$-algebras have obvious analogous definitions for $A_\infty$-algebras which we will not repeat.

\begin{defn}\begin{enumerate}[(i)]
\item A strictly unital $C_\infty$-algebra is a $C_\infty$-morphism $\eta\colon  k\rightarrow (A;m_i)$ such that \[m_i(\Id_A\otimes \ldots\otimes \Id_A \otimes \eta\otimes \Id_A\otimes\ldots\otimes \Id_A)=0\] for $i\geq 3$ and $m_2(\Id_A\otimes \eta)=m_2(\eta\otimes \Id_A)=\Id_A$. A morphism of strictly unital $C_\infty$-algebras is a morphism of the underlying $C_\infty$-algebras that commutes with the units.
\item An augmented $C_\infty$-algebra is a strictly unital $C_\infty$-algebra $A$ together with a strict morphism $\varepsilon\colon A\rightarrow k$ of strictly unital $C_\infty$-algebras. By $\varepsilon$ being strict we mean that the only nontrivial component is $\varepsilon_1$. A morphism of augmented $C_\infty$-algebras is a morphism of the underlying strictly unital $C_\infty$-algebras that commutes with the augmentations.
\end{enumerate}
\end{defn}

We point out that there is an equivalence of categories between $C_\infty$-algebras and augmented $C_\infty$-algebras. In the presence of an augmentation $\varepsilon$, one can consider the non-augmented $C_\infty$-algebra $\ker \varepsilon$ with the induced structure. Conversely, for any $C_\infty$-algebra $(A;m_i)$, we can consider $A\oplus k$ with the unique strictly unital $C_\infty$-structure that agrees with the $m_i$ on $A$, where the unit and augmentation are given by the canonical inclusion and projection of $k$. The two constructions are naturally inverse to another. All notions such as quasi-isomorphisms or homotopy carry over to the augmented setting.

\begin{rem}\label{rem:augmented} If $A$ is strictly unital, concentrated in positive degrees and $A^0=k$, then it is naturally augmented by projecting onto the degree $0$ component.
\end{rem}

\subsection{$A_\infty$-modules}

\begin{defn}
Let $(A;m_i)$ be an $A_\infty$-algebra. An $A_\infty$-$A$-module structure on a graded vector space $M$ is a collection of maps $m_i^M\colon M\otimes A^{\otimes i-1}\rightarrow M$ of degree $2-i$ for each $i\geq 1$, satisfying for each $n$ the relation
\[\sum (-1)^{jk+l}m_{i}^M({\bf{1}}^{\otimes j}\otimes m_k\otimes {\bf{1}}^{\otimes l})=0\]
where the sum runs over all decompositions $j+k+l=n$ with $k\geq 1$, $i=j+l+1$, and the expression ${\bf{1}}^{\otimes j}\otimes m_k\otimes {\bf{1}}^{\otimes l}$
is interpreted as $m_k^M\otimes {\bf{1}}^{\otimes l}$ whenever $j=0$.
\end{defn}

Let $TsA=k\oplus \overline{T}sA$ be the free augmented coalgebra with comultiplication \[\Delta_{TsA}(x)=x\otimes 1+\Delta_{\overline{T}sA}(x)+1\otimes x.\] As before, the above data defines a map $sM\otimes TsA\rightarrow sM$ of degree $1$  which we define to be $-s\circ m_i\circ (s^{-1})^{\otimes i}$ on $sM\otimes sA^{\otimes i-1}$.
Said map can be extended uniquely to a differential $d$ on the cofree comodule $sM\otimes TsA$ as in the following

\begin{lem}\label{modcoderivations}
Let $V$ be a vector space and $(C,\Delta_C, d_C)$ a differential graded coalgebra. Then any linear map $\overline{d}\colon V\otimes C\rightarrow V$ can be coextended uniquely to a coderivation $d$ of the co-free comodule $V\otimes C$ by setting

\[d=\Id_V\otimes d_C+(\overline{d}\otimes \Id_C)\circ (\Id_V\otimes\Delta_C).\] For counitary coalgebras this yields a one-to-one correspondence of maps $\overline{d}$ and coderivations $d$.
\end{lem}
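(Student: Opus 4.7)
The plan is to first spell out precisely the coderivation identity to be verified. The cofree comodule structure on $V\otimes C$ is given by $\rho := \Id_V \otimes \Delta_C$, and a coderivation with respect to $d_C$ is a map $d\colon V\otimes C\to V\otimes C$ satisfying
\[\rho\circ d = (d\otimes \Id_C)\circ\rho + (\Id_{V\otimes C}\otimes d_C)\circ\rho.\]
The argument will split into two tasks: first, that the displayed formula always defines such a coderivation, and second, in the counitary case, that every coderivation arises uniquely from exactly one $\overline{d}$.

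For existence I would check the identity term by term. For the summand $\Id_V\otimes d_C$ the relation
\[\rho\circ(\Id_V\otimes d_C) = ((\Id_V\otimes d_C)\otimes\Id_C)\circ\rho + (\Id_{V\otimes C}\otimes d_C)\circ\rho\]
is exactly the statement that $d_C$ is a coderivation of $(C,\Delta_C)$. For the summand $d_{\overline{d}} := (\overline{d}\otimes \Id_C)\circ\rho$ the relation
\[\rho\circ d_{\overline{d}} = (d_{\overline{d}}\otimes \Id_C)\circ\rho\]
amounts, after factoring the common $\overline{d}\otimes \Id_C\otimes \Id_C$ on the left, to the identity $(\Id_C\otimes\Delta_C)\circ\Delta_C = (\Delta_C\otimes \Id_C)\circ\Delta_C$, which is coassociativity of $\Delta_C$. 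Adding the two relations yields the full coderivation identity for $d$.

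For the bijection in the counitary case, denote the counit by $\epsilon\colon C\to k$ and, given an arbitrary coderivation $d$, set $\overline{d} := (\Id_V\otimes \epsilon)\circ d$. I will then apply the map $\Id_V\otimes \epsilon\otimes \Id_C$ to both sides of the coderivation identity for $d$. The counit axiom $(\epsilon\otimes \Id_C)\circ \Delta_C = \Id_C$ collapses $(\Id_V\otimes\epsilon\otimes \Id_C)\circ \rho$ to $\Id_{V\otimes C}$, so the left-hand side reduces to $d$. On the right, the first summand becomes $(\overline{d}\otimes \Id_C)\circ\rho$ by the definition of $\overline{d}$, while the second summand vanishes because $\epsilon$, being a chain map into the field $k$ (which carries zero differential), satisfies $\epsilon\circ d_C = 0$. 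This recovers the displayed formula for $d$, showing that every coderivation is of that form. Conversely, starting from $\overline{d}$, building $d$ via the formula, and applying $\Id_V\otimes\epsilon$ returns $\overline{d}$ (again by counitality and $\epsilon\circ d_C=0$), which confirms the bijection.

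The main obstacle I anticipate is bookkeeping of Koszul signs: each of the coderivation identity, coassociativity, and the coderivation property of $d_C$ carries signs depending on the degree of $d_C$, so one must fix sign conventions coherently with those used elsewhere in the section before the above diagrammatic identifications produce the formula as stated. Once the convention is pinned down, everything reduces to the coassociativity and counit axioms together with the single relation $\epsilon\circ d_C=0$.
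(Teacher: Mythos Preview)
The paper states this lemma without proof, so there is nothing to compare against; your approach is the standard one and is essentially correct, with one computational slip in the surjectivity half of the bijection.

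When you apply $\Id_V\otimes\epsilon\otimes\Id_C$ to the second summand $(\Id_{V\otimes C}\otimes d_C)\circ\rho$ of the coderivation identity, $d_C$ acts on the \emph{third} tensor factor while $\epsilon$ hits the \emph{second}, so $\epsilon\circ d_C=0$ is irrelevant here. What you actually get is
\[
(\Id_V\otimes\epsilon\otimes\Id_C)\circ(\Id_V\otimes\Id_C\otimes d_C)\circ(\Id_V\otimes\Delta_C)=\Id_V\otimes\bigl((\epsilon\otimes d_C)\circ\Delta_C\bigr)=\Id_V\otimes d_C,
\]
using the counit axiom $(\epsilon\otimes\Id_C)\circ\Delta_C=\Id_C$ (there is no sign because $\epsilon$ has degree $0$ and kills everything of nonzero degree). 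So this summand contributes exactly the term $\Id_V\otimes d_C$ you need, rather than vanishing; together with the first summand you then obtain the displayed formula $d=\Id_V\otimes d_C+(\overline{d}\otimes\Id_C)\circ\rho$ on the nose. Your claim that the second summand vanishes would have left you with $d=(\overline{d}\otimes\Id_C)\circ\rho$, contradicting your own conclusion. The relation $\epsilon\circ d_C=0$ is used correctly in your converse direction, where $\epsilon$ and $d_C$ do act on the same factor.
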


\begin{defn}
The cofree differential graded comodule $(sM\otimes TsA,d)$ is called the bar construction of $(M;m_i^M)$ and denoted $BM$.
\end{defn}

A morphism of $A_\infty$-modules is defined as a dg-comodule map $f$ between the respective bar constructions. Such a map $f\colon BM\rightarrow BN$ is determined by the projection \[\overline{f}\colon sM\otimes TsA\rightarrow sN\]
and can thus be expressed by a collection of maps $f_i\colon M\otimes A^{\otimes i-1}\rightarrow N$ of degree $1-i$ such that $\overline{f}=s\circ f_i\circ (s^{-1})^{\otimes i}$ on word length $i$. The fact that $f$ commutes with the differentials of the bar constructions translates to the equations

\[\sum (-1)^{jk+l}f_i(\Id^{\otimes j}\otimes m_k\otimes \Id^{\otimes l})=\sum m_{k+1}(f_{i}\otimes \Id^{\otimes k})
\]
for every $n\geq 1$, where the left hand sum runs over all decompositions $n=j+k+l$, $k\geq 1$ and the right hand sum runs over all decompositions $n=i+k$, $i\geq 1$.
From this, $f$ is reconstructed by just setting $f=(\overline{f}\otimes \Id_{TsA})\circ (\Id_{sM}\otimes \Delta_{TsA})$.

As for $A_\infty$-algebras, the operation $m_1^M$ of an $A_\infty$-module $(M;m_i^M)$ is a differential and we define the cohomology of $M$ as that of the chain complex $(M,m_1^M)$. If $f\colon (M;m_i^M)\rightarrow (N;m_i^N)$ is a morphism of $A_\infty$-modules, then the above equation for $n=1$ implies that$f_1$ is a chain map. Thus we obtain a map $H^*(M)\rightarrow H^*(N)$ on cohomology. We call $f$ a quasi-isomorphism if the map on cohomology is an isomorphism.

\begin{rem}\label{rem:inclusiondgRm}
As for $A_\infty$-algebras, if $(A,d)$ is a regular dga, the category of classical dg-modules over $A$ can be seen as a subcategory of $A_\infty$-$A$-modules. Just define the operation $m_1^M$ to be the differential, $m_2^M$ to be the multiplication map of the module, and $m_i^M=0$ for $i\geq 3$.
\end{rem}

\begin{rem}
If $(A;m_i)$ is an $A_\infty$-algebra, it is in particular an $A_\infty$-module over itself because $\overline{T}sA=A\otimes TsA$.
\end{rem}

We discuss how $A_\infty$-modules behave with respect to maps of the underlying $A_\infty$-algebras. Analogous to the case of regular modules, a morphism $f\colon (A;m_i)\rightarrow (C;m_i)$ of $A_\infty$-algebras yields a restriction functor from $A_\infty$-$C$-modules to $A_\infty$-$A$-modules. If $(M;m_i^M)$ is a $C$-module and $f_i\colon A^{\otimes i}\rightarrow C$ are the components of $f$, then the $A$-module $f^*M$ is defined by

\[m_i^{f^*M}=\sum(-1)^{s} m_{r+1}^M(\Id_M\otimes f_{i_1}\otimes\ldots\otimes f_{i_r})
\]
where the sum runs over all decompositions $i-1=i_1+\ldots+{i_r}$ for $r=1,\ldots,i-1$ and \[s=\sum_{1\leq \alpha<\beta\leq r} i_\alpha(i_\beta+1)+\sum_{j=1}^r (i_j-1).\]

In the language of the corresponding bar constructions $BM$ and $Bf^*M$, this can be understood as the pullback of the differential along the map $f\colon BA\rightarrow BC$ in the following sense: the differential on $BM=(sM\otimes TsC,d)$ is the coextension of the projection $\overline{d}\colon sM\otimes TsC\rightarrow sM$ (see Lemma \ref{modcoderivations}). Then the corresponding differential of $Bf^*M=(sM\otimes TsA)$ is the coextension of $\overline{d}\circ (\Id_{sM}\otimes f^+)$ where $f^+=(\Id_k,f)\colon k\oplus \overline{T}sA\rightarrow k\oplus \overline{T}sC$.

\begin{lem}\label{lem:trianglemorphism}
Consider a commutative triangle
\[\xymatrix{
(R;m_i)\ar[r]^f\ar[rd]^h & (A;m_i)\ar[d]^g\\ & (C;m_i)
}\]
of $A_\infty$-Algebras. Then $g$ induces an $A_\infty$-module homomorphism between the $A_\infty$-$R$-module structures induced on $A$ and $C$ by the morphisms $f$ and $h$.
\end{lem}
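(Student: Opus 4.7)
My plan is to factor the construction through an intermediate $A_\infty$-$A$-module morphism. The key claim is that the $A_\infty$-algebra morphism $g\colon A\to C$ itself gives rise to an $A_\infty$-$A$-module morphism $\tilde g\colon A\to g^*C$, where $A$ is viewed as a module over itself and $g^*C$ denotes $C$ (as $C$-module over itself) with its $A_\infty$-$A$-module structure pulled back along $g$; its components $\tilde g_n\colon A\otimes A^{\otimes(n-1)}\to C$ are simply the components $g_n\colon A^{\otimes n}\to C$ of $g$. Granting this, applying the restriction functor $f^*$ (from $A_\infty$-$A$-modules to $A_\infty$-$R$-modules, as described just before the lemma) to $\tilde g$ produces an $A_\infty$-$R$-module morphism $f^*A\to f^*(g^*C)$. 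Functoriality of restriction of scalars yields $f^*\circ g^*=(g\circ f)^*=h^*$, hence $f^*(g^*C)=h^*C$, which is exactly the desired morphism.

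To verify the intermediate claim, I would unravel the defining equations for $\tilde g$. Their left-hand sides agree with the left-hand sides of the $A_\infty$-algebra morphism equations for $g$ because the module operation $m_i^A$ on $A$ as a module over itself is tautologically the algebra operation $m_i$. On the right-hand sides, expanding $m_{k+1}^{g^*C}$ using the explicit restriction formula recalled before the lemma produces precisely the nested sums $\sum\pm m_k^C(g_{i_1}\otimes\ldots\otimes g_{i_k})$, indexed by compositions $n=i_1+\ldots+i_k$, that constitute the right-hand side of the $A_\infty$-algebra morphism equations for $g$. Thus the $A_\infty$-module morphism equations for $\tilde g$ are literally the $A_\infty$-algebra morphism equations for $g$.

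The slickest way to see all of this, and to sidestep sign bookkeeping entirely, is through bar constructions. Under the identification $\overline{T}sA\cong sA\otimes TsA$, the algebra bar construction of $A$ coincides with the module bar construction of $A$ as a module over itself: the projection of the algebra codifferential onto the first tensor factor is precisely the structure map $\bar\mu_A\colon sA\otimes TsA\to sA$ of $A$ as an $A_\infty$-$A$-module, and its coderivation extension reproduces the algebra codifferential. Under this identification the dg-coalgebra morphism $Bg$ doubles as a morphism of dg-comodules encoding the $A_\infty$-$A$-module morphism $A\to g^*C$. Applying $f^*$ then amounts to pulling everything back along $Bf\colon BR\to BA$, and the assumption $Bh=Bg\circ Bf$ guarantees compatibility with the differentials throughout.

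The main (and essentially only) obstacle is the careful identification of the algebra and self-module bar constructions, together with the verification that signs match up---a direct but somewhat delicate exercise in unwinding the conventions of the two bar constructions side by side. Once this identification is in place, the $A_\infty$-module morphism equations for the induced map $G=f^*\tilde g$ become automatic consequences of the coalgebra-morphism properties of $Bg$ and $Bf$ combined with the composition identity $Bh=Bg\circ Bf$.
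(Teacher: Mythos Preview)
Your approach is correct and differs from the paper's mainly in organization. The paper works directly at the bar-construction level: it defines the candidate comodule map by $\overline{\varphi}=\overline{g}\circ(\Id_{sA}\otimes f^+)\colon sA\otimes TsR\to sC$, coextends it, and then verifies the single identity $\overline{D_C}\circ\varphi=\overline{\varphi}\circ D_A$ by a chain of equalities that uses $g\circ d_A=d_C\circ g$, the comultiplicativity of $f^+$, and finally the hypothesis $g^+\circ f^+=h^+$. Your proposal factors this as two conceptual steps: first the tautological $A_\infty$-$A$-module morphism $\tilde g\colon A\to g^*C$ coming from $g$, then the restriction functor $f^*$ together with $f^*g^*=h^*$. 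When unwound in bar language, your $f^*\tilde g$ has exactly the components $\overline{g}\circ(\Id_{sA}\otimes f^+)$, so the two constructions produce the same map; the paper's long computation is what one obtains by expanding your functoriality argument by hand. Your route is more modular and reusable (the intermediate statement about $\tilde g$ is of independent interest), while the paper's direct verification is self-contained and avoids having to separately establish that restriction is functorial on morphisms, a point the paper only spells out for objects.
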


\begin{proof}
Let $d_A$, $d_C$, $d_R$ be the differentials of the bar constructions $BA$, $BC$, $BR$ of $A_\infty$-algebras and let $d_A^+$, $d_C^+$, $d_R^+$ denote the extensions to $TsA$, $TsC$, $TsR$ by sending $k$ to $0$. Also denote by $\pi_A \colon BA\rightarrow sA$ and $\pi_C\colon BC\rightarrow sC$ the projections and set $\overline{g}=\pi_C\circ g$ as well as $\overline{d_A}=\pi_A\circ d_A$, $\overline{d_C}=\pi_C\circ d_C$.

We want to define a map ${\varphi}$ between the bar constructions of $A$ and $C$ when considered as $A_\infty$-modules over $(R;m_i)$. Explicitly, the bar constructions are given as $(sA\otimes TsR,D_A)$, $(sC\otimes TsR,D_C)$, where $D_A$ and $D_C$ are the coextensions (in the sense of Lemma \ref{modcoderivations}) of
\[\overline{D_A}=\overline{d_A}\circ(\Id_{sA}\otimes f^+)\quad\text{and}\quad\overline{D_C}=\overline{d_C}\circ(\Id_{sC}\otimes h^+).\]
We define $\overline{\varphi}\colon  sA\otimes TsR\rightarrow sC$ to be the composition $\overline{g}\circ(\Id_{sA}\otimes f^+)$ and define $\varphi$ to be the coextension of $\overline{\varphi}$, that is $\varphi=(\overline{\varphi}\otimes \Id_{TsR})\circ (\Id_{sA}\otimes \Delta_{TsR})$. It remains to check that $\varphi$ commutes with the differentials, which can be confirmed on cogenerators: we need to prove that $\overline{D_C}\circ\varphi=\overline{\varphi}\circ D_A$ and calculate
\begin{align*}
\overline{\varphi}\circ D_A&=\overline{g}\circ (\Id_{sA}\otimes f^+)\circ (\Id_{sA}\otimes d^+_{R}+(\overline{D_A}\otimes \Id_{TsR})\circ (\Id_{sA}\otimes \Delta_{TsR}))\\
&=\overline{g}\circ (\Id_{sA}\otimes (f^+\circ d^+_R)+(\overline{d_A}\otimes \Id_{TsA})\circ (\Id_{sA}\otimes f^+\otimes f^+)\circ (\Id_{sA}\otimes \Delta_{TsR})\\
&=\overline{g}\circ(\Id_{sA}\otimes d_A^+ +(\overline{d_A}\otimes \Id_{TsA})\circ(\Id_{sA}\otimes \Delta_{TsA}))\circ (\Id_{sA}\otimes  f^+)\\
&=\overline{g}\circ d_A\circ (\Id_{sA}\otimes f^+)\\
&=\overline{d_C}\circ g\circ (\Id_{sA}\otimes f^+)\\
&=\overline{d_C}\circ(\overline{g}\otimes g^+)\circ (\Id_{sA}\otimes \Delta_{TsA})\circ (\Id_{sA}\otimes f^+)\\
&=\overline{d_C}\circ(\overline{g}\otimes g^+)\circ (\Id_{sA}\otimes f^+\otimes f^+)\circ (\Id_{sA}\otimes \Delta_{TsR})\\
&=\overline{d_C}\circ (\Id_{sC}\otimes h^+)\circ (\overline{\varphi}\otimes \Id_{TsR})\circ (\Id_{sA}\otimes \Delta_{TsR})\\
&= \overline{D_C}\circ \varphi,
\end{align*}
where we have identified $\overline{T}sA=sA\otimes TsA$ and $\overline{T}sC=sC\otimes TsC$.
\end{proof}

\begin{lem}\label{lem:homotopicrestriction}
Let $f,g\colon (A;m_i)\rightarrow (C;m_i)$ be two homotopic morphisms between $A_\infty$-algebras. Then the two induced $A$-module structures on $C$ are quasi-isomorphic.
\end{lem}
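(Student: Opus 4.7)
The plan is to construct an explicit $A_\infty$-$A$-module morphism $\varphi\colon f^*C \to g^*C$ whose first component is $\varphi_1 = \Id_C$. This will suffice because, inspecting the pullback formula, the unary operations of both $f^*C$ and $g^*C$ reduce to $m_1^C$ (no $f$- or $g$-twisting occurs at word length one), so the underlying chain complexes coincide, and any morphism with $\varphi_1 = \Id_C$ is automatically a quasi-isomorphism of $A_\infty$-$A$-modules.

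To build $\varphi$, I will extract data from the homotopy $h\colon BA\otimes I\to BC$. Decomposing along the basis $\{e_0, e_1, e\}$ of $I$, one obtains $h|_{BA\otimes e_0} = f$, $h|_{BA\otimes e_1} = g$, and a degree $-1$ piece $K\colon BA\to BC$ defined by $h(x\otimes e) = K(x)$. Projecting $K$ to $sC$ then yields a family of maps $K_i\colon A^{\otimes i}\to C$ encoding the homotopy data. By analogy with Lemma \ref{lem:trianglemorphism}, I will define $\varphi\colon (sC\otimes TsA, D_f)\to (sC\otimes TsA, D_g)$ as the unique comodule morphism (in the sense of Lemma \ref{modcoderivations}) whose projection $\overline{\varphi}\colon sC\otimes TsA\to sC$ equals the desuspended identity on $sC\otimes k$ and is built from $K$ on the remaining word lengths. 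In the component picture, this amounts to setting $\varphi_1 = \Id_C$ and constructing the higher components $\varphi_i\colon C\otimes A^{\otimes i-1}\to C$ from appropriately twisted combinations of the $K_j$ and the $C$-module operations $m_j^C$, in direct analogy with the expression for the induced module morphism built in the proof of Lemma \ref{lem:trianglemorphism}.

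The differential compatibility $\overline{D_g}\circ\varphi = \overline{\varphi}\circ D_f$ then needs to be verified. The computation will parallel the one at the end of the proof of Lemma \ref{lem:trianglemorphism}, with the new feature being that the term measuring the failure of strict commutativity is supplied by the boundary relation encoded in $h$. Concretely, the differential condition $d_{BC}\circ h = h\circ d_{BA\otimes I}$, when evaluated on the $e$-component and using $d_I(e) = e_0 - e_1$ and $\Delta_I(e) = e_0\otimes e + e\otimes e_1$, produces an identity of the form $\overline{d_C}\circ K + K\circ d_A^+ = \overline{f} - \overline{g}$, together with its coextended comodule version on $sC\otimes TsA$; this is precisely what is needed for $\varphi$ to intertwine $D_f$ and $D_g$.

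The main obstacle I expect is the careful bookkeeping of signs and the conversion between the bar-construction picture and the component picture. While the conceptual setup is clean and closely mirrors Lemma \ref{lem:trianglemorphism}, the homotopy $h$ introduces extra terms from both $d_I$ and $\Delta_I$ that must be tracked simultaneously; these are responsible for producing the difference $\overline{D_f} - \overline{D_g}$ when transported through the comodule formalism. Once this translation is carried out correctly, the argument reduces to an application of the defining identities of $h$.
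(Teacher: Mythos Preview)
Your approach is correct in spirit but takes a genuinely different route from the paper. The paper does \emph{not} construct an explicit module morphism. Instead, it invokes the cited theorem that restriction along a quasi-isomorphism of $A_\infty$-algebras induces an equivalence of derived module categories. Since the projection $p\colon BA\otimes I\to BA$ is a quasi-isomorphism with $p\circ i_0=p\circ i_1=\Id_{BA}$, the object $h^*C$ (viewed over $BA\otimes I$) is quasi-isomorphic to $p^*M$ for some $M$, and then $f^*C=i_0^*h^*C\simeq i_0^*p^*M=M=i_1^*p^*M\simeq i_1^*h^*C=g^*C$. This is a three-line argument once the derived-equivalence theorem is available.

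Your plan, by contrast, is to build $\varphi$ by hand from the $(f,g)$-coderivation $K$ extracted from $h$. This is a legitimate and more self-contained strategy, and your identification of the two constraints coming from $d_I$ (the chain-homotopy relation) and $\Delta_I$ (the twisted coderivation property of $K$) is exactly right. The advantage of your route is that it does not rely on the heavy machinery of \cite{lefevre}; the cost is a substantial computation. One point where your proposal remains genuinely underspecified: you say $\overline{\varphi}$ is ``built from $K$'' in analogy with Lemma~\ref{lem:trianglemorphism}, but that lemma treats a strictly commuting triangle, and the correct formula here is not simply $\overline{g}\circ(\Id\otimes f^+)$ with $g$ replaced by $K$. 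The right candidate is $\overline{\varphi}=\pi_{sC}+\overline{d_C}\circ(\Id_{sC}\otimes K^+)$, where $K^+$ is the extension of $K$ to $TsA\to TsC$ by zero on $k$; verifying $\overline{D_g}\circ\varphi=\overline{\varphi}\circ D_f$ then uses \emph{both} the coderivation identity $\Delta_{BC}\circ K=(f\otimes K+K\otimes g)\circ\Delta_{BA}$ and the homotopy identity simultaneously, and the bookkeeping is more intricate than in Lemma~\ref{lem:trianglemorphism}. You should write this formula down and carry out the check rather than leave it to analogy.
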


For the proof we will need the following result (\cite[Section 6.2]{keller}, \cite[Théorème 4.1.2.4]{lefevre}). The notion of homotopy category is recalled in the next section.

\begin{thm}
Let $f\colon (A;m_i)\rightarrow (C;m_i)$ be a quasi-isomorphism of $A_\infty$-algebras. Then the restriction functor defines an equivalence between the homotopy categories $\mathcal{D}_\infty A$ and $\mathcal{D}_\infty C$ of the categories of $A_\infty$-modules over $A$ and $C$.
\end{thm}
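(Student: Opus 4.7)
My plan is to construct a quasi-inverse to $f^{*}\colon\mathcal{D}_\infty C\to\mathcal{D}_\infty A$ out of an $A_\infty$-homotopy inverse to $f$. First I would verify that $f^{*}$ actually descends to the homotopy categories: the formula for the pulled-back $A_\infty$-module structure satisfies $m_{1}^{f^{*}M}=m_{1}^{M}$ (no nontrivial decomposition of $i-1=0$ into positive parts exists), so the underlying cochain complex is unchanged by restriction, and hence $f^{*}$ sends quasi-isomorphisms to quasi-isomorphisms. Direct inspection of the restriction formula also shows strict functoriality in the morphism of $A_\infty$-algebras, i.e.\ $(h\circ k)^{*}=k^{*}\circ h^{*}$ whenever $h,k$ are composable.

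Next I would invoke the classical fact that every quasi-isomorphism of $A_\infty$-algebras admits an $A_\infty$-homotopy inverse (a standard consequence of Kadeishvili's transfer theorem; cf.\ \cite[Cor.\ 1.3.1.3]{lefevre}). Pick such a $g\colon C\to A$ with $g\circ f\simeq \Id_{A}$ and $f\circ g\simeq \Id_{C}$. By functoriality $f^{*}\circ g^{*}=(g\circ f)^{*}$ and $g^{*}\circ f^{*}=(f\circ g)^{*}$. Once it is known that restriction along $A_\infty$-homotopic morphisms produces naturally quasi-isomorphic functors on modules, the identities $(g\circ f)^{*}\simeq \Id_{A}^{*}=\Id$ and $(f\circ g)^{*}\simeq \Id_{C}^{*}=\Id$ exhibit $f^{*}$ and $g^{*}$ as mutually quasi-inverse equivalences on $\mathcal{D}_\infty A$ and $\mathcal{D}_\infty C$.

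The crux of the argument is therefore a module-level strengthening of Lemma \ref{lem:homotopicrestriction}: for $A_\infty$-homotopic morphisms $f_{0},f_{1}\colon A\to C$ and any $A_\infty$-$C$-module $M$, the restrictions $f_{0}^{*}M$ and $f_{1}^{*}M$ should be linked by a natural quasi-isomorphism of $A_\infty$-$A$-modules. The cleanest way to see this is at the level of bar constructions: the bar comodules $B(f_{0}^{*}M)$ and $B(f_{1}^{*}M)$ share the same underlying cofree comodule $sM\otimes TsA$ and only their differentials differ, each being the coextension of $\overline{d_{M}}\circ(\Id_{sM}\otimes f_{j}^{+})$ in the notation used in the proof of Lemma \ref{lem:trianglemorphism}. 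Given a coalgebra homotopy $h\colon BA\otimes I\to BC$ between $f_{0}$ and $f_{1}$ as in Definition \ref{def:homotopy}, I would tensor the $A$-coaction with the cylinder coalgebra $I$ and coextend the composite $\overline{d_{M}}\circ(\Id_{sM}\otimes\overline{h})$ to a comodule morphism $\Phi_{M}\colon B(f_{0}^{*}M)\to B(f_{1}^{*}M)$ by the same diagram chase that appears in the proof of Lemma \ref{lem:trianglemorphism}; evaluating at the endpoints $i_{0},i_{1}$ and using that the projection $BA\otimes I\to BA$ is a quasi-isomorphism of dg-coalgebras will force $\Phi_{M}$ to be a quasi-isomorphism, and naturality in $M$ is visible from the construction.

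The main obstacle is the sign- and coassociativity-bookkeeping needed to verify that $\Phi_{M}$ genuinely commutes with the two (distinct) pulled-back differentials, and that the recipe is sufficiently functorial in $M$ to produce a natural transformation rather than just a pointwise quasi-isomorphism. Once that technical point is pinned down, the previous formal steps—preservation of quasi-isomorphisms, strict functoriality in the algebra morphism, and existence of $A_\infty$-homotopy inverses—combine to give the equivalence $\mathcal{D}_\infty A\simeq\mathcal{D}_\infty C$ claimed in the statement.
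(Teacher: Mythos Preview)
The paper does not actually prove this theorem: it is quoted from the literature with references to \cite[Section 6.2]{keller} and \cite[Th\'eor\`eme 4.1.2.4]{lefevre}, and is then \emph{used} to prove Lemma \ref{lem:homotopicrestriction}. So there is no ``paper's own proof'' to compare against; you are attempting something the authors deliberately outsourced.

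Your outline is a reasonable strategy, and the easy parts are fine: restriction does not change $m_1^M$, so quasi-isomorphisms are preserved; strict functoriality $(h\circ k)^*=k^*\circ h^*$ holds because at the bar level the pulled-back differential is the coextension of $\overline{d_M}\circ(\Id_{sM}\otimes f^+)$, and $(h\circ k)^+=h^+\circ k^+$; and quasi-isomorphisms of $A_\infty$-algebras do admit homotopy inverses. Two cautions, however. First, be aware of the logical dependency in the paper: Lemma \ref{lem:homotopicrestriction} is proved \emph{via} this theorem, so you cannot invoke it---you must establish your ``module-level strengthening'' from scratch, which you seem to recognize. Second, and more seriously, your sketch of that strengthening does not type-check as written. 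You propose to coextend $\overline{d_M}\circ(\Id_{sM}\otimes\overline{h})$, but $\overline{h}$ is a map $BA\otimes I\to sC$, so $\Id_{sM}\otimes\overline{h}$ lands in $sM\otimes sC$, not in $sM\otimes TsC$ where $\overline{d_M}$ is defined; and even after fixing domains, it is unclear how ``evaluating at the endpoints $i_0,i_1$'' produces a single comodule map $B(f_0^*M)\to B(f_1^*M)$ rather than two separate objects. The diagram chase in Lemma \ref{lem:trianglemorphism} that you cite treats a strictly commuting triangle, not a homotopy-commuting one, so it does not transfer directly.

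A cleaner route to your key step is to observe that the homotopy $h\colon BA\otimes I\to BC$ lets you pull back $BM$ to a dg-comodule over $BA\otimes I$, and that the two inclusions $i_0,i_1\colon BA\to BA\otimes I$ are mutually inverse quasi-isomorphisms of coalgebras (both split $p$); restriction along these then relates $f_0^*M$ and $f_1^*M$ through an intermediate object. Alternatively, follow the cited references: Lef\`evre-Hasegawa proceeds via a Quillen-type adjunction between restriction and an induction functor, which avoids the explicit homotopy manipulation altogether.
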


\begin{proof}[Proof of the lemma.]
With the terminology of Definition \ref{def:homotopy}, it suffices to show that the restrictions of the $BA\otimes I$-module $h^*C$ along $i_0$ and $i_1$ are quasi-isomorphic, where $h$ is a homotopy between $f$ and $g$. Since $p$ is a quasi-isomorphism, restriction along $p$ defines an equivalence $\mathcal{D}_\infty BA\rightarrow \mathcal{D}_\infty BA\otimes I$ so $h^*C$ is quasi-isomorphic to $p^*M$ for some dg-$BA$-comodule $M$. But this means that $i_0^*(h^*C)$ is quasi-isomorphic to $i_0^*(p^*M)=M$ and the same holds for $i_1$.
\end{proof}

\subsection{On the homotopy categories}\label{sec:barcobarres}

Let $\mathcal{C}$ be one of the categories of augmented dgas, augmented cdgas, or dg-$A$-modules, where $A$ is a fixed dga with unit. Also let $\mathcal{C}_\infty$ be the corresponding category of either augmented $A_\infty$-algebras, augmented $C_\infty$-algebras, or strictly unital $A_\infty$-modules over $A$. The latter is defined as the full subcategory of $A_\infty$-modules $(M;m_i^M)$ over $A$ such that $m_i(\Id_M\otimes\Id_A\otimes\ldots\otimes\Id_A\otimes \eta\otimes\Id_A\otimes\ldots\otimes \Id_A)=0$ for $i\geq 3$ and $m_2(\Id_M\otimes \eta)=\Id_M$, where $\eta\colon k\rightarrow A$ is the unit of $A$.
We have seen that there is an inclusion of categories $\mathcal{C}\rightarrow \mathcal{C}_\infty$ (see Remarks \ref{rem:inclusiondga} and \ref{rem:inclusiondgRm}). The category $\mathcal{C}_\infty$ is much larger in the sense that it contains many new objects and morphisms. However, it does not introduce new quasi-isomorphism types. Hence $\mathcal{C}_\infty$ can be very useful when studying the category $\mathcal{C}$ up to quasi-isomorphism.

Let $\mathrm{Ho}(\mathcal{C})$ and $\mathrm{Ho}(\mathcal{C}_\infty)$ denote the homotopy categories of $\mathcal{C}$ and $\mathcal{C}_\infty$ which are defined as the localizations of the respective category at the set of quasi-isomorphisms. The categories $\mathcal{C}$ and $\mathrm{Ho}(\mathcal{C})$ have the same objects and two of them are isomorphic in $\mathrm{Ho}(\mathcal{C})$ if and only if they are connected by a zigzag of quasi-isomorphisms in $\mathcal{C}$ (analogous for $\mathcal{C}_\infty$).

\begin{thm}\label{thm:homotopycategories}
The inclusion $\mathcal{C}\rightarrow \mathcal{C}_\infty$ induces an equivalence of categories $\mathrm{Ho}(\mathcal{C})\simeq \mathrm{Ho}(\mathcal{C}_\infty)$.
\end{thm}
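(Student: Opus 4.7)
The plan is to prove the equivalence in each of the three cases uniformly via the cobar--bar adjunction, which provides a canonical strictification procedure. Recall that the bar construction $B$ was built into the definition of $\mathcal{C}_\infty$: an $A_\infty$-morphism is literally a dg-coalgebra morphism between bar constructions, and similarly in the module and commutative settings (where the tensor coalgebra is replaced by the Lie-coalgebra quotient). Its left adjoint $\Omega$, the cobar construction, assigns to a dg-coalgebra (resp.\ dg-comodule, resp.\ Lie dg-coalgebra) a genuine dga (resp.\ dg-module, resp.\ cdga). Composing yields a functor $\Omega B\colon \mathcal{C}_\infty\to \mathcal{C}$ together with a natural $A_\infty$-quasi-isomorphism $\varepsilon_X\colon \Omega BX\to X$ (the counit of the adjunction). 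In the $C_\infty$-case, one uses that the cobar of a Lie coalgebra is naturally commutative; in the module case one uses the relative cobar over the fixed dga $A$.

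First I would establish essential surjectivity of $\mathrm{Ho}(\mathcal{C})\to \mathrm{Ho}(\mathcal{C}_\infty)$: for every $X\in\mathcal{C}_\infty$ the map $\varepsilon_X$ is a quasi-isomorphism, and $\Omega BX\in\mathcal{C}$, so $X$ is isomorphic in $\mathrm{Ho}(\mathcal{C}_\infty)$ to an object in the image of $\mathcal{C}$. Next, for fullness and faithfulness, I would exploit the projective model structure on $\mathcal{C}$ (quasi-isomorphisms as weak equivalences, surjections as fibrations; this exists in all three of our cases, see e.g.\ \cite[B.6]{LodayValette} and \cite{hinich}). Because $\Omega BX$ is semi-free, it is cofibrant, so it serves as a functorial cofibrant replacement; morphisms in $\mathrm{Ho}(\mathcal{C})(X,Y)$ are then computed as homotopy classes $[\Omega BX,Y]_{\mathcal{C}}$.

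The crux of the argument — and the step I expect to be the main obstacle — is the identification of hom-sets
\[
[\Omega BX,Y]_{\mathcal{C}} \;\cong\; [X,Y]_{\mathcal{C}_\infty}
\]
for $X,Y\in\mathcal{C}$, where the right-hand side denotes $A_\infty$-morphisms modulo $A_\infty$-homotopy as in Definition \ref{def:homotopy}. On the level of raw morphisms this is the bar--cobar adjunction: $\mathrm{Hom}_{\mathcal{C}}(\Omega BX,Y)\cong \mathrm{Hom}_{\text{dg-coalg}}(BX,BY) = \mathrm{Hom}_{\mathcal{C}_\infty}(X,Y)$. The delicate point is to match the two homotopy relations. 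Here one uses that the coalgebra $I$ from Definition \ref{def:homotopy} is precisely the bar of a standard path object for $Y$ in $\mathcal{C}$ (the interval cdga $\Lambda(t,dt)$ in the algebra case, and its analogue in the module case); concretely, an $A_\infty$-homotopy $BX\otimes I\to BY$ corresponds under the adjunction to a cylinder/path homotopy in $\mathcal{C}$, and conversely any dg homotopy in $\mathcal{C}$ between morphisms out of the cofibrant object $\Omega BX$ can be transported back to an $A_\infty$-homotopy. Verifying this correspondence carefully is the technical heart of the theorem.

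Once the hom-set bijection is in place, essential surjectivity together with fully faithfulness (on hom-sets of the form $[\Omega BX,\Omega BY]$, which are cofinal in both homotopy categories) yields that $\mathrm{Ho}(\mathcal{C})\to \mathrm{Ho}(\mathcal{C}_\infty)$ is an equivalence in each of the three cases. The commutative and module variants of the argument are parallel to the associative algebra case; the only additional input needed is that $\Omega$ preserves the appropriate commutativity/unitality constraints and that the path objects can be chosen inside $\mathcal{C}$, which is standard (see \cite[Section 1.3.3]{lefevre} for the algebra and module cases and \cite[Ch.~11--13]{LodayValette} for the $C_\infty$ version via Koszul duality of the commutative and Lie operads).
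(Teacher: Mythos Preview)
The paper does not actually prove this theorem; it simply cites \cite[Thm.~11.4.12]{LodayValette} for the $A_\infty$/$C_\infty$ algebra cases and \cite[Lemma 4.1.3.8, Prop.~3.3.1.8]{lefevre} for the module case. Your bar--cobar outline is precisely the strategy those references implement, so in substance your proposal and the paper's ``proof'' coincide.

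One inaccuracy worth flagging: the coalgebra $I$ of Definition~\ref{def:homotopy} is \emph{not} the bar construction of the interval cdga $\Lambda(t,dt)$ --- it is the three-dimensional cellular chain coalgebra of the interval, whereas $B\Lambda(t,dt)$ is infinite-dimensional. The correct mechanism is rather that $BA\otimes I$ is a cylinder object for $BA$ in the model category of conilpotent dg-coalgebras, and the bar--cobar Quillen equivalence transports this to a cylinder for $\Omega BA$ on the strict side; the homotopy relations then match because cylinder homotopies and path-object homotopies agree for maps out of cofibrant objects into fibrant ones. This does not invalidate your plan, but the sentence identifying $I$ with ``the bar of a standard path object'' should be rewritten, as a reader who tries to verify it literally will get stuck.
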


For $A_\infty$- and $C_\infty$-algebras this follows from \cite[Thm. 11.4.12]{LodayValette} which covers the general setting of homotopy algebras over Koszul operads and naturally carries over to the augmented case through the equivalence at the end of Section \ref{sec:ainftycinfty}. The case of $A_\infty$-modules over $A$ was proved in \cite[Lemma 4.1.3.8, Prop.\ 3.3.1.8]{lefevre}. A similar result on $A_\infty$-modules is more explicitly stated in \cite[Section 4.3]{keller}, however referring to the previous reference for proofs.

\subsection{Minimal models and formality}

\label{sec:minmodsec}

It has long been known that Massey products form an obstruction to formality. Conversely, the uniform vanishing of all Massey products implies formality. This idea of uniform vanishing is best captured by seeing Massey products as the higher operations in an $A_\infty$-structure on the cohomology. The following theorem goes back to \cite{kadeishvili2} and \cite{kadeishvili1.5}. It has since been generalized in the language of operads \cite[Theorem 10.3.15]{LodayValette}.

\begin{thm}
Let $(A;m_i^A)$ be an $A_\infty$-Algebra (resp.\ $C_\infty$-algebra). Then there is an $A_\infty$- (resp.\ $C_\infty$-)algebra structure $(H^*(A);m_i)$ on the cohomology such that
\begin{itemize}
\item $m_1=0$ and $m_2$ is the product induced by $m_2^A$.
\item there is a quasi-isomorphism $(H^*(A);m_i)\rightarrow (A;m_i)$ of $A_\infty$- (resp.\ $C_\infty$-)algebras lifting the identity on cohomology.
\end{itemize}
This structure is unique in the sense that two of them are isomorphic via an isomorphism of $A_\infty$- (resp.\ $C_\infty$-)algebras whose first component is the identity.
\end{thm}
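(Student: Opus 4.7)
The plan is to follow the strategy of Kadeishvili's classical homotopy-transfer argument, which proceeds by an inductive construction of the operations $m_n$ together with the components $f_n$ of the desired quasi-isomorphism. First I would fix a Hodge-type decomposition of $(A,m_1^A)$: pick a linear map $i\colon H^*(A)\to \ker m_1^A\subset A$ that selects a cocycle representative in each cohomology class, extend the projection $\ker m_1^A\to H^*(A)$ to a surjection $p\colon A\to H^*(A)$ by choosing a complement of $\ker m_1^A$, and then produce a homotopy $h\colon A\to A$ of degree $-1$ satisfying $\Id_A - i\circ p = m_1^A h + h m_1^A$ together with the standard side conditions $h\circ i = 0$, $p\circ h = 0$, $h^2=0$ (these can always be arranged after minor modifications of $h$).

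Next I would inductively define $m_n$ and $f_n$. Set $f_1=i$ and let $m_2$ be the product induced on cohomology by $m_2^A$; for $n\geq 2$ introduce an auxiliary map
\[
\lambda_n = \sum_{\substack{j+k+l=n\\ k\geq 2}} \pm\, f_{j+1+l}\bigl(\Id^{\otimes j}\otimes m_k\otimes \Id^{\otimes l}\bigr) \;-\; \sum_{\substack{i_1+\cdots+i_r=n\\ r\geq 2}} \pm\, m_r^A\bigl(f_{i_1}\otimes\cdots\otimes f_{i_r}\bigr),
\]
and set $m_n = p\circ\lambda_n$ and $f_n = -h\circ\lambda_n$. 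The signs are dictated by the bar/cobar conventions fixed in Definition~\ref{def:Ainfty}. The core verification is that with these definitions the full $A_\infty$-relation for $(H^*(A);m_i)$ and the full morphism relation for $(f_i)$ at word length $n$ are satisfied; this is a bookkeeping induction using $p\circ m_1^A=0=m_1^A\circ i$, the homotopy identity, and the assumption that the relations hold in all lower word lengths. The main obstacle is precisely this combinatorial verification, which is where the characteristic-zero assumption and the sign discipline are essential.

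For the $C_\infty$ refinement, I would first average $i$, $p$, $h$ over the natural symmetric-group actions so that the chosen splitting is compatible with the shuffle coproduct (this uses $\mathrm{char}\,k=0$). Then a parallel induction shows that if the $m_k^A$ vanish on shuffles then so do the transferred $m_n$, and likewise the $f_n$ vanish on shuffles; equivalently, one may invoke the operadic homotopy transfer theorem for the Koszul operad $\mathrm{Com}$ (cf.\ \cite[Thm.~10.3.15]{LodayValette}).

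Finally, for uniqueness, suppose $(H^*(A);m_i)$ and $(H^*(A);m_i')$ are two structures of the required type with quasi-isomorphisms $\varphi,\varphi'$ into $(A;m_i^A)$ lifting the identity on cohomology. By Theorem~\ref{thm:homotopycategories} the two minimal models represent the same object in $\mathrm{Ho}(\mathcal{C}_\infty)$, so there is a zig-zag of $A_\infty$-quasi-isomorphisms between them; invoking the lifting property of quasi-isomorphisms (the $A_\infty$/$C_\infty$-analogue of Proposition~\ref{lifting}) one produces a direct $A_\infty$-morphism $\psi\colon(H^*(A);m_i)\to(H^*(A);m_i')$ whose first component is $\Id_{H^*(A)}$. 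A minimality argument of the same flavor as Proposition~\ref{uniqueness}---every quasi-isomorphism between minimal $A_\infty$-algebras is an isomorphism---then shows that $\psi$ is already an isomorphism of $A_\infty$- (respectively $C_\infty$-)algebras, completing the proof. I expect the sign/combinatorial bookkeeping in the existence step to be the only genuinely delicate point; once that is in place, both the $C_\infty$-refinement and uniqueness follow by formal arguments built on the general theory recalled in Section~\ref{secstr}.
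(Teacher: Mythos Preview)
Your outline is essentially correct and follows the same inductive strategy as Kadeishvili's construction, which is what the paper recalls (without proof) immediately after the theorem. The main difference is one of packaging: the paper's sketch, following Kadeishvili's original argument for (c)dgas, only fixes a cycle-choosing map $f_1$ and then at each stage defines $U_n$, sets $m_n=[U_n]$, and makes an arbitrary choice of $f_n$ with $df_n=f_1m_n-U_n$; you instead fix a full contraction $(i,p,h)$ with side conditions up front and take $m_n=p\circ\lambda_n$, $f_n=-h\circ\lambda_n$. Your $\lambda_n$ is precisely the paper's $U_n$ (up to the sign conventions), so the two constructions agree once one realizes that a choice of $h$ amounts to a uniform way of making Kadeishvili's step-by-step choices. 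Your version has the advantage of yielding closed formulas and of applying directly to general $A_\infty$-algebras rather than only dgas; the paper's version is slightly more elementary in that it avoids introducing $p$ and $h$.

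One imprecision worth flagging: in your $C_\infty$ paragraph you propose to ``average $i$, $p$, $h$ over the natural symmetric-group actions''. These maps live on $A$ itself, not on tensor powers, so there is no symmetric-group action to average over; the point is rather that when the $m_k^A$ vanish on shuffles, the transferred $m_n$ and $f_n$ automatically do as well, by the combinatorics of the tree-sum formulas (this is Markl's observation, which the paper also cites). Your alternative of invoking the operadic homotopy transfer theorem for $\mathrm{Com}$ is the clean way to handle this. Your uniqueness argument via the homotopy category and a minimality lemma is correct and more detailed than what the paper provides, since the paper simply states uniqueness without proof.
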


We will refer to the quasi-isomorphism $H^*(A)\rightarrow A$ as a minimal model for $A$. More generally we will call an $A_\infty$-(resp.\ $C_\infty$-)algebra minimal if the operation $m_1$ vanishes. The operations $m_i$, $i\geq 3$ of the minimal model are also referred to as the higher Massey products. There are several known formulas which compute these operations from the $A_\infty$-structure on $A$ (see e.g.\ \cite{merkulov}, \cite{LodayValette}). We quickly recall the original construction by Kadeishvili (\cite[Theorem 1]{kadeishvili3}) of how to compute the minimal $A_\infty$-model of a dga. It was shown in \cite{markl} that the same construction yields a minimal $C_\infty$-model when applied to a cdga.

Let $(A,d)$ be a (c)dga. Set $m_1=0$ and let $f_1\colon H^*(A)\rightarrow A$ be a cycle choosing homomorphism.
Assume inductively that $f_i$ and $m_i$ have been constructed until $i=n-1$. Define the operator $U_n\colon H^*(A)^{\otimes n}\rightarrow A$ as $U_n^1+U_n^2$ where
\begin{align*}
U_n^1&=\sum_{k=1}^{n-1} (-1)^{k(n+k+1)} f_k\cdot f_{n-k}\\
U_n^2&=-\sum_{k=2}^{n-1}\sum_{i=0}^{n-k}(-1)^{ik+n+k+i}f_{n-k+1}(\Id_A^{\otimes i} \otimes m_k \otimes \Id_A^{\otimes n-i-k}).
\end{align*}
One can check that $U_n$ maps to cocycles and we define $m_n:=[U_n]$. Now choose $f_n$ in a way that $df_n=f_1m_n-U_n$.

\begin{rem}\label{rem:minmod}
If $(A,d)$ is a formal cdga, then it has a minimal Sullivan model $(\Lambda V,d)$ with a splitting $V=W_1\oplus W_2$ such that $d(W_1)=0$ and any closed element in the ideal generated by $W_2$ is exact. In the construction of the minimal $C_\infty$-model, $f_n$ $(n\geq 2$) can be chosen to have image in the ideal generated by $W_2$ which yields $m_n=0$ for $n\geq 3$.
\end{rem}

The converse statement of the remark is also true (see e.g.\ \cite[Theorem 8]{kadeishvili}):

\begin{thm}\label{thm:formalityofcdga}
A cdga $(A,d)$ is formal if and only if it has a minimal $C_\infty$-model of the form $(H^*(A);m_i)$ with $m_i=0$ for $i\neq 2$.
\end{thm}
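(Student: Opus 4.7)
My plan is to reduce the theorem to two structural inputs stated earlier in the text: the uniqueness of minimal $C_\infty$-models (up to isomorphism with identity first component) from the theorem opening Section~\ref{sec:minmodsec}, and the equivalence of homotopy categories $\mathrm{Ho}(\mathrm{cdga})\simeq \mathrm{Ho}(\mathcal{C}_\infty\text{-alg})$ of Theorem~\ref{thm:homotopycategories}. The pivotal observation to keep in mind is that the cdga $(H^*(A),0)$, viewed as a $C_\infty$-algebra via the inclusion $\mathrm{cdga}\hookrightarrow\mathcal{C}_\infty\text{-alg}$, is already minimal and has $m_2=\cup$ (the induced cup product) as its only nontrivial operation. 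Hence the identity is a minimal $C_\infty$-model for $(H^*(A),0)$ with $m_i=0$ for $i\neq 2$.

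For the ``if'' direction I would argue as follows. If $A$ admits a minimal $C_\infty$-model of the prescribed form, then this model is literally the $C_\infty$-algebra associated to the cdga $(H^*(A),0)$, and the defining quasi-isomorphism is a $C_\infty$-quasi-isomorphism $(H^*(A),0)\to (A,d)$. Consequently $A$ and $(H^*(A),0)$ represent the same isomorphism class in $\mathrm{Ho}(\mathcal{C}_\infty\text{-alg})$; by Theorem~\ref{thm:homotopycategories} they also agree in $\mathrm{Ho}(\mathrm{cdga})$, so they are connected by a zigzag of cdga quasi-isomorphisms, which is precisely the definition of formality.

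For the ``only if'' direction, assume $A$ is formal and fix a zigzag of cdga quasi-isomorphisms linking $(A,d)$ and $(H^*(A),0)$. Pushing this through the inclusion into $\mathcal{C}_\infty\text{-alg}$ yields a chain of $C_\infty$-quasi-isomorphisms between the same two objects. Composing with the chosen minimal $C_\infty$-model $(H^*(A);m_i)\to A$, I obtain a $C_\infty$-quasi-isomorphism between two minimal $C_\infty$-models of $A$, both carried by $H^*(A)$ and restricting to the identity in cohomology. Uniqueness of minimal $C_\infty$-models then provides a strict $C_\infty$-isomorphism $(H^*(A);m_i)\cong (H^*(A);0,\cup,0,\ldots)$, forcing $m_i=0$ for all $i\neq 2$.

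The bulk of the work is shouldered by Theorem~\ref{thm:homotopycategories}; without it, the ``if'' direction would require directly manipulating Sullivan models, in the spirit of Remark~\ref{rem:minmod}, to show that the vanishing of higher $m_i$ suffices to produce a bigraded splitting $V=W_1\oplus W_2$ of a minimal Sullivan model $\Lambda V$ witnessing formality. The uniqueness half of the argument is the comparatively easy part, so I expect no real obstacle once the two cited inputs are in hand.
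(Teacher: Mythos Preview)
Your argument is essentially correct and somewhat different from the route the paper takes. For the ``only if'' direction the paper does not invoke uniqueness of minimal $C_\infty$-models at all: it proceeds constructively via Remark~\ref{rem:minmod}, using the bigraded splitting $V=W_1\oplus W_2$ of a minimal Sullivan model of a formal cdga to show that in Kadeishvili's inductive construction one may choose all $f_n$ ($n\geq 2$) with image in the ideal generated by $W_2$, whence each $U_n$ is exact and $m_n=0$. For the ``if'' direction the paper gives no argument and simply cites Kadeishvili; your deduction from Theorem~\ref{thm:homotopycategories} is a clean way to obtain it within the framework already set up in the appendix. Your approach is more conceptual and makes the role of the homotopy-category equivalence transparent; the paper's approach is more hands-on and avoids appealing to that equivalence.

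Two small points of care. First, Theorem~\ref{thm:homotopycategories} is formulated for \emph{augmented} (c)dgas and $C_\infty$-algebras, so you should either note that one may reduce to the augmented case (e.g.\ via Remark~\ref{rem:augmented} when $A$ is connected, or by adjoining a unit) or phrase the argument accordingly. Second, your final clause ``forcing $m_i=0$ for all $i\neq 2$'' overstates what uniqueness gives: a $C_\infty$-isomorphism with identity first component can have nontrivial higher components, so it does not literally force the higher operations of a \emph{given} minimal model to vanish. What you have actually shown (and what the theorem asks for) is the \emph{existence} of a minimal $C_\infty$-model with trivial higher operations, namely $(H^*(A);0,\cup,0,\ldots)$ together with the $C_\infty$-quasi-isomorphism to $A$ obtained by inverting the zigzag. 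Rephrasing that last sentence accordingly would make the argument airtight.
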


We will also make use of the following observations. A minimal model as in $(ii)$ will be called a unital minimal model.
\begin{lem}\label{lem:inftyminmodkram}
\begin{enumerate}[(i)]
\item Let $\varphi\colon A\rightarrow B$ be a cohomologically injective morphism of (c)dgas. Then for arbitrary choices of the $f_n^A$ in the construction of the minimal model $(H^*(A);m_i^A)$ above, we can choose the maps $f_n^B$ for the construction of $(H^*(B);m_i^B)$ in a way that $\varphi^*\circ m_n^A=m_n^B\circ(\varphi^*)^{\otimes n}$ and $\varphi\circ f_n^A=f_n^B\circ(\varphi^*)^{\otimes n}$.
\item The $A_\infty$- ($C_\infty$-)minimal model of a unital $(c)dga$ $A$ can be constructed in a way that $H^*(A)\rightarrow A$ is a morphism of strictly unital $A_\infty$- ($C_\infty$-)algebras. If $A$ is formal, then the construction of the unital minimal model is compatible with Remark \ref{rem:minmod}.
\item If $\varphi\colon A\rightarrow B$ is a cohomologically injective morphism of unital (c)dgas and $A$ is formal, then the unital minimal model of $B$ can be constructed such that $m_n^B\circ (\varphi^*)^{\otimes n}$ vanishes for $n\geq 3$.
\end{enumerate}
\end{lem}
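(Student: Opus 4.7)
The plan is to prove the three parts of the lemma by a coordinated induction on the arity $n$, leveraging the explicit Kadeishvili recipe recalled just before the lemma statement. For part (i), I would set up an inductive construction in which the base case chooses $f_1^B$ on $\varphi^*(H^*(A))$ to be $\varphi\circ f_1^A$ (well-defined since $\varphi^*$ is injective by assumption) and extends arbitrarily to a cycle-choosing map on a complementary subspace. The inductive step hinges on a direct calculation showing $\varphi\circ U_n^A=U_n^B\circ(\varphi^*)^{\otimes n}$ on the image: the $U_n^1$-contribution works because $\varphi$ is a (c)dga morphism and thus commutes with multiplication, and the $U_n^2$-contribution works by the inductive compatibility hypotheses on lower $f_k^B,m_k^B$. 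Passing to cohomology gives the desired identity for $m_n$, and the definition of $f_n^B$ on $(\varphi^*)^{\otimes n}(H^*(A)^{\otimes n})$ as $\varphi\circ f_n^A$ automatically satisfies Kadeishvili's defining equation $df_n^B=f_1^Bm_n^B-U_n^B$ there; one then extends to the chosen complement in the usual way.

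For part (ii), the strategy is again inductive, tracking strict unitality alongside Kadeishvili's construction. I would choose $f_1$ with $f_1(1)=1$ and assume that the partial data $f_k,m_k$ is strictly unital for $k<n$. Evaluating $U_n$ on a tensor containing a $1$-entry, almost every term vanishes because one of the $f_k$ or $m_k$ factors swallows the $1$ via the inductive strict-unitality hypothesis; the few surviving $U_n^1$-terms (those in which a $1$-entry is isolated in an $f_1$) telescope to zero after using the unit axiom for the multiplication of $A$. This allows one to set $m_n$ to vanish on such tensors and choose $f_n$ accordingly. The formal-case statement then follows because the freedom in choosing $f_n$ in Remark~\ref{rem:minmod} (image in the ideal generated by $W_2$) is automatically consistent with the extra vanishing condition imposed by strict unitality, since $0$ lies in that ideal.

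Part (iii) is a direct composition of (i) and (ii). Applying (ii), I build a strictly unital minimal model of the formal algebra $A$ whose higher operations $m_n^A$ all vanish for $n\geq 3$. Then I apply the construction of (i) to transport this data to $B$, but I need to carry strict unitality along with the compatibility. This amounts to verifying two things: that the distinguished value $f_n^B=\varphi\circ f_n^A$ on $(\varphi^*)^{\otimes n}(H^*(A)^{\otimes n})$ is already strictly unital (which follows from strict unitality of $f_n^A$ and of $\varphi$, via $\varphi(1)=1$); and that the extension to a complement can likewise be chosen strictly unital, which follows by the same sign pattern as in (ii). The conclusion $m_n^B\circ(\varphi^*)^{\otimes n}=\varphi^*\circ m_n^A=0$ for $n\geq 3$ is then immediate.

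I expect the main obstacle to be the sign bookkeeping underlying part (ii): the vanishing of $U_n$ on tensors containing a unit relies on a precise cancellation between the $U_n^1$- and $U_n^2$-terms governed by the Koszul convention of Definition~\ref{def:Ainfty}, and one has to verify that these signs conspire correctly. A secondary concern is making sure in (iii) that the two inductive processes---compatibility with $\varphi$ and strict unitality---can be run in parallel without conflict, but as indicated above this reduces to a clean compatibility on the $\varphi^*$-image that propagates to the chosen complements by repeating the argument of (ii).
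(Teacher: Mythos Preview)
Your proposal is correct and follows essentially the same inductive approach as the paper's proof: set up $f_1^B$ compatibly on $\im\varphi^*$ and extend, push the relation $\varphi\circ U_n^A=U_n^B\circ(\varphi^*)^{\otimes n}$ through the induction for (i), track strict unitality through Kadeishvili's recipe for (ii), and combine the two for (iii). One small correction to your sketch of (ii): the surviving terms are not only $U_n^1$-terms with the unit isolated in an $f_1$-slot---there are also surviving $U_n^2$-terms in which the unit lands in an $m_2$-slot (since $m_2(1\otimes a)=a$ does not vanish), and it is precisely the cancellation between these two kinds of survivors that makes $U_n$ vanish on tensors containing $1$.
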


\begin{proof}
In the situation of $(i)$ we choose any cycle choosing homomorphisms $f_1^A$. Due to the injectivity of $\varphi^*$ we can define $f_1^B$ in a way that $\varphi\circ f_1^A=f_1^B\circ \varphi^*$. Assume inductively that we have constructed $m_{n-1}^A$, $m_{n-1}^B$, $f_{n-1}^A$, and $f_{n-1}^B$ such that the statement of the lemma holds. By the formulas in the construction we also have $\varphi\circ U_n^A=U_n^B\circ (\varphi^*)^{\otimes n}$. As a result \[\varphi(f_1^A m_n^A-U_n^A)=(f_1^B m_n^B-U_n^B)\circ (\varphi^*)^{\otimes n}.\]
Now for any choice of $f_n^A$ we can define $f_n^B$ on $\im(\varphi^*)^{\otimes n}$ to be $\varphi\circ f_n^A\circ((\varphi^*)^{\otimes n})^{-1}$ and complete the definition arbitrarily on a complement. Then $f_n^A$, $f_n^B$, $m_n^A$, and $m_n^B$ satisfy the desired properties.

For the proof of $(ii)$ we can choose $f_1$ such that it maps $1\in H^*(A)$ to $1\in A$. The map $f_2$ can be chosen such that $f_2(1\otimes a)=f_2(a\otimes 1)=0$ for all $a\in H^*(A)$. From there one proves inductively that $U_n$ vanishes on $a_1\otimes\ldots \otimes a_n$, whenever $n\geq 3$ and $a_j=1$ for some $1\leq j\leq n$, and thus also $f_n$ can be chosen equal to $0$ on this kind of tensor. If $A$ is formal and has a minimal Sullivan model $(\Lambda V,d)$ as in Remark \ref{rem:minmod}, then we can choose $f_n$ to have image in the ideal generated by $W_2$ whenever we want $f_n$ to be nontrivial.

Finally, $(iii)$ results from the fact that the choices made in $(i)$ and $(ii)$ can be performed in a compatible way: as in $(ii)$, we construct the unital minimal model of $A$ such that the higher operations vanish. Then the condition $f_n^B|_{\im(\varphi^*)^{\otimes n}}=\varphi\circ f_n^A\circ((\varphi^*)^{\otimes n})^{-1}$ already implies that $f_n^B$ vanishes on pure tensors in $\im (\varphi^*)^{\otimes n}$ which have the unit as a factor. Thus we can extend this partial definition of $f_n^B$ in a way that produces a unital minimal model of $B$ as in $(ii)$.
\end{proof}

\begin{rem}
We have only discussed minimal models of $A_\infty$- and $C_\infty$-algebras but analogous concepts do exists for $A_\infty$-modules.
\end{rem}

\def\cprime{$'$}

\
\vfill

%
%

\vfill

\noindent
\parbox[t]{0.5\textwidth}{\raggedright
\tiny \noindent \textsc
{Manuel Amann} \\
\textsc{Institut f\"ur Mathematik}\\
\textsc{Universit\"at Augsburg}\\
\textsc{Universit\"atsstra{\ss}e 14}\\
\textsc{86159 Augsburg}\\
\textsc{Germany}\\
[1ex]
\textsf{manuel.amann@math.uni-augsburg.de}\\
\textsf{www.math.uni-augsburg.de/prof/diff/arbeitsgruppe/amann/}
}
\hfill\parbox[t]{0.4\textwidth}{
\tiny \noindent \textsc
{Leopold Zoller} \\
\textsc{Mathematisches Institut}\\
\textsc{Universit\"at M\"unchen}\\
\textsc{Theresienstr.\ 39}\\
\textsc{80333 M\"unchen}\\
\textsc{Germany}\\
[1ex]
\textsf{L.Zoller@lmu.de}\\
}

\end{document}